\documentclass[11pt,reqno]{amsart} 

\usepackage[utf8]{inputenc}
\usepackage{bbm}
\usepackage{hyperref}
\usepackage{enumitem}
\usepackage[margin=2cm]{geometry}
\usepackage{amsfonts,amsmath,amsthm,amssymb}
\usepackage{graphicx}
\usepackage{siunitx}
\usepackage{mathrsfs} % \mathscr for script font, \mathcal unaffected
\usepackage{verbatim}

\numberwithin{equation}{section}
\newtheorem{theorem}{Theorem}[section]
\newtheorem{corollary}[theorem]{Corollary}
\newtheorem{lemma}[theorem]{Lemma}

\theoremstyle{definition}
\newtheorem{definition}[theorem]{Definition}
\newtheorem{remark}[theorem]{Remark}

\newcommand{\ep}{\varepsilon}

\newcommand{\R}{\mathbb{R}}
\newcommand{\C}{\mathbb{C}}
\newcommand{\Z}{\mathbb{Z}}

\title{Desingularization of small moving corners for the Muskat equation}

\author[Eduardo Garc\'ia-Ju\'arez]{Eduardo Garc\'ia-Ju\'arez}
\author[Javier G\'omez-Serrano]{Javier G\'omez-Serrano}
\author[Susanna V. Haziot]{Susanna V. Haziot}
\author[Beno\^it Pausader]{Beno\^it Pausader}

\date{\today}

\begin{document}
	
	\maketitle

 \begin{abstract}

In this paper, we investigate the dynamics of solutions of the Muskat equation with initial interface consisting of multiple corners allowing for linear growth at infinity. Specifically, we prove that if the initial data contains a finite set of small corners then we can find a precise description of the solution showing how these corners desingularize and move at the same time.

At the analytical level, we are solving a small data critical problem which requires renormalization. This is accomplished using a nonlinear change of variables which serves as a logarithmic correction and accurately describes the motion of the corners during the evolution. 
  
\end{abstract}

 \setcounter{tocdepth}{1}
	\tableofcontents

\section{Introduction}
The Muskat problem models the interactions between two immiscible, incompressible fluids propagating through porous media. Their motion is governed by the experimental Darcy's law under the restoring force of gravity. Since the interface between the two fluid regions is an unknown which needs to be solved for as part of any solution, this is a free boundary problem. Its formulation can be reduced to an integral evolution equation on the interface and in the case when the viscosities of the two fluids are equal and the profile of the interface is graphical, this formulation has a particularly compact form.  In the last two decades, the Muskat problem has generated a rapidly growing interest and has been studied extensively. A great majority of the work carried out on this problem has been done in the sub-critical setting, which is well-understood by now.       

In this paper we study the small-data critical theory for the Muskat equation and more precisely, we seek to understand the behavior of an interface whose initial data consists of a superposition of a finite number of small corners. Despite the fact that the problem is quasi-linear, its parabolic nature enables us to construct our solutions using semi-linear methods such as a renormalization process and a fixed-point argument. This allows us to get a good understanding of the behavior and shape of the corners as well as of the spaces in which they live. Moreover, we can even explicitly calculate how they move as they desingularize. 
\subsection{Presentation of the problem}
We denote by $\Omega^\pm$ the two fluid domains in the $(x,y)$-plane, separated by the graphical interface $\eta(t,x)$. Since the fluids are propagating through porous media, the fluid velocities $u^\pm$ and pressures $p^\pm$ satisfy Darcy's law in their respective domains 
\begin{equation*}
	\begin{aligned}
	\nabla\cdot u^\pm&=0&\qquad&\text{in }\Omega^\pm,\\
	\mu^\pm u^\pm&=-\nabla_{x,y}p^\pm-(0,\rho\pm)&\qquad&\text{in }\Omega^\pm,
	\end{aligned}
\end{equation*}
where $\mu^\pm>0$ and $\rho^\pm>0$ denote the viscosity and density constants in $\Omega^\pm$. Moreover, we assume that $\rho^->\rho^+$, ensuring that the denser fluid lies below. Denoting by $$n=\frac{1}{\sqrt{1+(\partial_x\eta)^2}}(-\partial_x\eta,1)$$ 
the upward pointing normal vector on the interface, these equations are coupled with the dynamic boundary conditions
\begin{equation*}
	\begin{aligned}
	u^+\cdot n&=u^-\cdot n&\qquad&\text{on }\eta(t,x)\\
	p^+&=p^-&\qquad&\text{on }\eta(t,x),
	\end{aligned}
\end{equation*}
guaranteeing continuity of the normal velocity fields and pressures of the fluids across the interface, as well as the kinematic boundary condition
\begin{equation*}
	\partial_t\eta=\sqrt{1+(\partial_x\eta)^2}u^-\cdot n\qquad\text{on }\eta(t,x),
\end{equation*}
ensuring that the interface moves with the fluids. 
\begin{figure}
	\centering
	\includegraphics[scale=0.7]{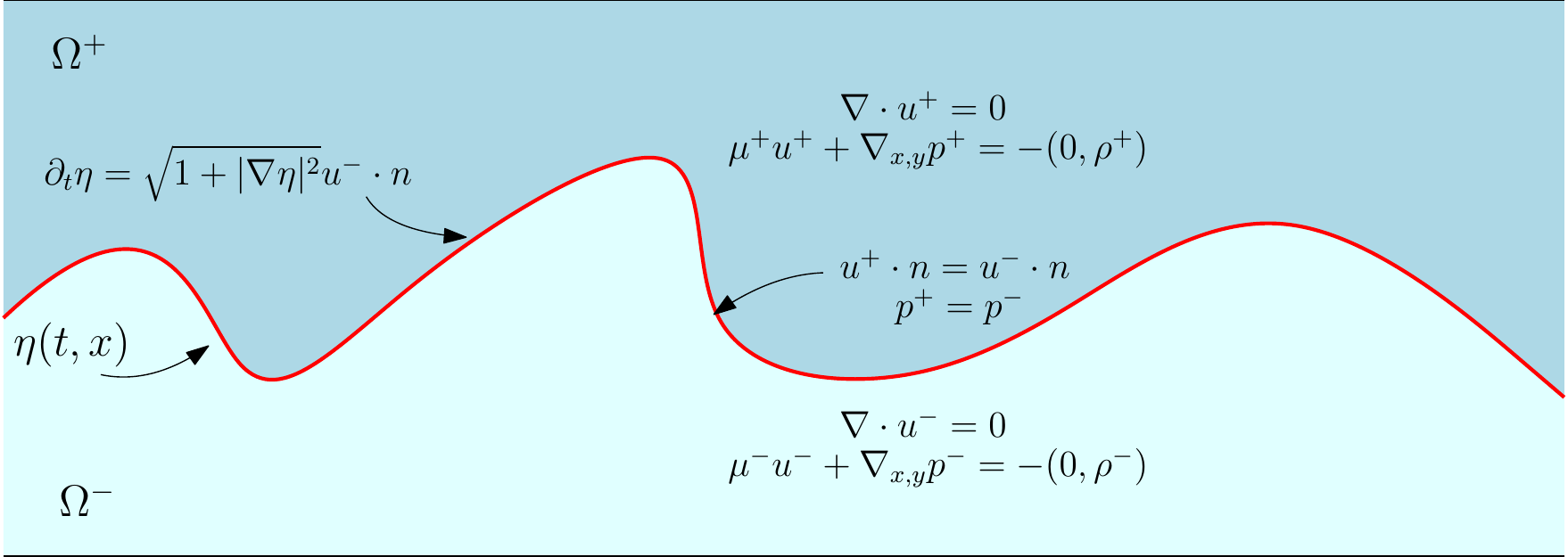}
	\caption{The Muskat problem. The interface $\eta$ separates the two fluids.}
		\label{fig:muskat}
\end{figure}
The Muskat problem can be reformulated as an integral evolution problem on the interface and specifically, in the case of equal viscosities $\mu^-=\mu^+$, it admits a particularly elegant and compact formulation. Upon renormalizing all physical constants we get the one-dimensional evolution equation of the form
\begin{equation}\label{form1}
	\partial_t\eta=\frac{1}{\pi}\int_{\R}\frac{\partial_{x}\Delta_\alpha\eta}{1+(\Delta_\alpha\eta)^2}d\alpha,\qquad\Delta_\alpha \eta(x,\alpha):=\frac{\eta(x)-\eta(x-\alpha)}{\alpha},
\end{equation}
which we refer to as the Muskat equation.
The problem admits a scaling invariance in the sense that if $\eta(t,x)$ is a solution to the problem, then so is $\lambda^{-1}\eta(\lambda t,\lambda x)$ for any $\lambda>0$. Spaces like $\dot{H}^{3/2}$ and $\dot{W}^{1,\infty}$ are thus critical. For the rest of this paper, it will be convenient to work with the slope of the interface $h(t,x):=\partial_{x}\eta(t,x)$. We can hence rewrite \eqref{form1} as the following initial value problem
\begin{equation}\label{eq0}
\begin{split}
	\partial_th(t,x)&=\frac{1}{\pi}\frac{d}{dx}\int_{\R}\frac{\partial_{x}h^*(t,x,\alpha)}{1+(h^*(t,x,\alpha))^2}d\alpha\qquad\text{with}\qquad h^*(t,x,\alpha):=\frac{1}{\alpha}\int_{x-\alpha}^{x}h(t,y)\,dy,\\h(0,\cdot)&=h_0.
 \end{split}
\end{equation}

Our goal is to prove local well-posedness of \eqref{eq0} in a critical space that allows for discontinuous initial data and to describe the evolution of such discontinuities:
\begin{equation*}
\begin{aligned}
h_0(x)=\sum_{j\in\mathcal{J}}h_{j,0}(x),
\end{aligned}
\end{equation*}
where the integers $j\in\mathcal{J}:=\{1,\ldots,M\}$ index elements in the finite set of points $\mathcal{C}=\{a_1,\dots, a_M\}$ and $h_{j,0}$ has a  discontinuity at $x=a_j\in\R$ . To analyze the problem it is important to allow for a renormalization procedure 
\begin{equation}\label{h_g}
\begin{aligned}
h(t,x)&=g(t,x+q(t,x)).
\end{aligned}
\end{equation}
We require the function $q$ to satisfy the bounds
\begin{equation}\label{q_bounds1}
\begin{aligned}
|q(t,x)|\lesssim \ep\, t\ln(2/t)\Phi_{\leq 0}(x),\qquad
\sup_{x\in\R}|\partial_x q(t,x)|\lesssim \ep,\qquad \partial_tq(t,x)=0\quad\text{for }t\geq 1,
\end{aligned}
\end{equation}
for some suitable, smooth cutoff function $\Phi$ defined more precisely in \eqref{q_bounds}. We then perform a fixed point argument in the critical spaces $Z_1$ and $Z_2$. Here $Z_1$ denotes the space of functions $F:[0,\infty)\times\R\to\C$ induced by the norm
\begin{equation}\label{normZ1}
\begin{split}
\|F\|_{Z_1}:=\sup_{t\in[0,\infty)}\big\{\|F(t)\|_{L^\infty}+\sup_{k\in\mathbb Z}(2^{k}t)^{1/10}\|P_{k}F(t)\|_{L^\infty}+\|x\partial_xF(t)\|_{L^\infty}+\sup_{k\in\mathbb Z}(2^{k}t)^{1/10}\|P_{k}(x\partial_{x}F(t))\|_{L^\infty}\big\},
\end{split}
\end{equation}
and $Z_2$ denotes the space of functions $F:[0,\infty)\times \R\to\C$ induced by the norm
\begin{equation}\label{normZ2}
\|F\|_{Z_2}:=\sup_{t\in[0,\infty)}\big\{\sup_{k\in\Z}2^{k/2}\max\{2^kt,1/(2^kt)\}^{1/10}\|P_k F(t)\|_{L^2} \big\}.
\end{equation}
We define the $Z$ space as the sum space $Z=Z_1+Z_2$. 

Here and above, $P_k$, $k\in\mathbb{Z}$, denote  standard Littlewood-Paley operators on $\R$, given by $P_k f(x)=\mathcal{F}^{-1}(\varphi_k(\xi)\widehat{f}(\xi))(x)$, where $\mathcal{F}^{-1}$ denotes inverse Fourier transform, $\widehat{f}$ denotes the Fourier transform of the function $f$, and $\varphi_k(\cdot)=\varphi_0(2^{-k}\cdot)$ with $\varphi_0$ an even, smooth function compactly supported on an annulus, such that $\{\varphi_k\}_k$ is a dyadic partition of unity.

Furthermore, it will be useful to define $N$ to be the space of functions $F:[0,\infty)\times\mathbb{R}\to\mathbb{C}$ induced by the norm
\begin{equation}\label{normN}
\|F\|_N:=\sup_{t\in[0,\infty)}\big\{\sup_{k\in\Z}2^{k/2}(2^kt)^{-1/10}\|P_kF\|_{L^2}\big\}
\end{equation}
which measures the nonlinearity. We remark that the $N$-norm admits a loss for $2^kt<1$. However, this will not be an issue after running Duhamel's formula for the fixed point argument.

\subsection{Statement of the main result}
The main result of this paper is the following theorem.

\begin{theorem}\label{main_thm}
	Let $\mathcal{C}=\{a_1,\ldots,a_M\}$ be a finite set of points indexed by $j\in\mathcal{J}=\{1,\ldots,M\}$ and let $\ep_0>0$ depend only on  $\mathcal{C}$. Let $h_{0}=\sum_{j\in\mathcal{J}}h_{j,0}$ be such that
	\begin{equation*}\label{ini_data}
	\begin{split}
	&\sum_{j\in\mathcal{J}}\big(\|h_{j,0}\|_{L^\infty}+\|(x-a_j)\partial_{x}h_{j,0}\|_{L^\infty})=\ep\leq \varepsilon_0\ll 1,\quad\text{and}\quad
	\sum_{j\in\mathcal{J}}\|h_{j,0}(x+a_j)+h_{j,0}(a_j-x)\|_{L^p}\le 1,
	\end{split}	
	\end{equation*}
    for some $p\in [1,\infty)$. Then there is a unique solution $h:[0,\infty)\times\R\to\R$ to the initial value problem of the form
	\begin{equation*}
	h(t,x)=\sum_{j\in\mathcal{J}}g_j(t,x-a_j+q(t,x))
	\end{equation*}
	for a suitable change of variables $q(t,x)$ which satisfies the bounds \eqref{q_bounds1} and functions $g_j\in Z$ with $\|g_j\|_Z\lesssim\ep$.
\end{theorem}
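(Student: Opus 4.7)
I would treat this as a small-data critical fixed-point problem for the coupled pair $(g_1,\ldots,g_M;q)$ defined by the ansatz $h(t,x)=\sum_j g_j(t,x-a_j+q(t,x))$. Since $\dot W^{1,\infty}$ is scaling-critical for \eqref{eq0}, direct iteration in a single Banach space is obstructed by logarithmic contributions that correspond physically to the slow translation of each corner. The purpose of $q$ is exactly to absorb these obstructions: $q$ tracks the collective low-frequency drift while each $g_j\in Z$ describes a single desingularizing corner in the moving frame. The spaces $Z_1, Z_2, N$ are calibrated to the parabolic linear semigroup $e^{-t|\nabla|}$ (obtained by linearizing \eqref{eq0} at $h=0$, where the integrand collapses to $\partial_x h^*$ and yields the half-Laplacian), with the weights $(2^kt)^{\pm 1/10}$ absorbing the critical logarithmic losses and the weight $\|x\partial_x F\|_{L^\infty}$ accommodating the scale-invariant self-similar profile of a desingularizing corner.

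\textbf{Plan.} First, substitute the ansatz into \eqref{eq0} and expand $(1+(h^*)^2)^{-1}=1-(h^*)^2+O((h^*)^4)$, using the composition rules for the change of variables, to derive a coupled system of the schematic form $(\partial_t+|\nabla|)g_j+\partial_t q\cdot\partial_x g_j=\mathcal N_j(g_1,\ldots,g_M;q)$, where $\mathcal N_j$ collects the trilinear Muskat nonlinearity, the inter-corner couplings, and the error terms produced by the reparametrization. Second, apply Duhamel's formula to recast the system as an integral equation. Third, split $\mathcal N_j=\mathcal N_j^{\rm res}+\mathcal N_j^{\rm rem}$, where $\mathcal N_j^{\rm rem}$ lies in $N$ and is thus controlled by the smoothing of $e^{-t|\nabla|}$, while the resonant part $\mathcal N_j^{\rm res}$ concentrates at low frequencies near $x=a_j$ and produces the log-divergent obstruction. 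Fourth, define $q$ by solving a transport-type equation in $t$ (with parametric $x$-dependence) chosen so that $\partial_t q\cdot\partial_x g_j$ cancels $\mathcal N_j^{\rm res}$ to leading order; this yields the precise logarithmic law and support properties required by \eqref{q_bounds1}, including the factor $\varepsilon t\ln(2/t)$ and the stationarity for $t\geq 1$. Finally, with $q$ so constructed, close the fixed point for the $(g_j)_j$ in $Z$ via multilinear estimates $\|\mathcal N^{\rm rem}(g,g,g)\|_N\lesssim \|g\|_Z^3$ together with the boundedness of the Duhamel operator $\int_0^t e^{-(t-s)|\nabla|}(\cdot)\,ds: N\to Z$. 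Uniqueness follows from the Lipschitz version of the same estimates.

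\textbf{Main obstacle.} The technical core will be the multilinear Littlewood--Paley analysis in the mixed space $Z=Z_1+Z_2$: one must track both the $L^\infty$-based weighted estimates (including the homogeneous weight $x\partial_x$) and the $L^2$-based estimates, handle their cross-interactions through the trilinear Muskat kernel, and verify that the Duhamel gain precisely offsets the $(2^kt)^{-1/10}$ loss encoded in $N$. A second, tightly coupled difficulty is the simultaneous determination of $q$ and of the $g_j$: since $q$ is defined through a resonance-cancellation condition that depends on all $g_j$'s, one has a genuinely nonlinear implicit system whose contractive structure must be established with care. Separating the inter-corner interactions (which are non-resonant due to the spatial separation of the $a_j$'s and thus contribute to $\mathcal N^{\rm rem}$) from the self-interactions (which are resonant and absorbed into $q$) is the step where the dependence $\varepsilon_0=\varepsilon_0(\mathcal C)$ enters the argument.
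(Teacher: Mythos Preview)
Your broad strategy---split the nonlinearity into a resonant piece absorbed by $\partial_t q\cdot\partial_x g_j$ and a remainder controlled in $N$, then close via Duhamel---is the paper's strategy. But two specific claims in your plan are wrong and would obstruct the argument.

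First, you frame the problem as a \emph{coupled} fixed point for $(g_1,\ldots,g_M;q)$ and flag the simultaneous determination of $q$ as a central difficulty. The paper's key observation is that this coupling is illusory: whenever one of the low-frequency inputs in the multilinear term lies in $Z_2$, the logarithmic loss does not occur (Lemma~\ref{lemmaG1Z2} and its extension Lemma~\ref{lem:mix_full}). Since the Duhamel integral maps $N$ into $Z_2$, every iterate beyond the zeroth differs from the free evolution by a $Z_2$ function, and hence the only resonant contributions come from the free evolutions $g_j^{(0)}=e^{-t|\nabla|}g_{j,0}\in Z_1$. Consequently $\tilde q$ is defined \emph{explicitly} as a multilinear expression in the $g_j^{(0)}$ alone (see \eqref{real_qtilde}), \emph{before} any iteration begins; the fixed point is then run for the $g_j$ with $q$ fixed. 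This disentangles the problem and eliminates the circularity you anticipate.

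Second, your assertion that inter-corner interactions are non-resonant ``due to the spatial separation of the $a_j$'s,'' with only self-interactions absorbed into $q$, is incorrect. The resonant velocity field $V[g_1,g_2]$ in \eqref{lpo1} is genuinely bilinear across \emph{distinct} base points $a_1,a_2$, and the paper explicitly notes that the logarithmic loss arises both from asymmetry of a single corner and from interaction between neighboring corners. If you put only self-interactions into $q$, the cross-terms would still produce an uncompensated $\log(2/t)$ divergence and the $N$-norm estimate would fail to close. All pairings $(j_2,\ldots,j_{2n+1})\in\mathcal{J}^{2n}$ must feed into $q$.
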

\begin{figure}
	\centering
	\includegraphics[scale=1]{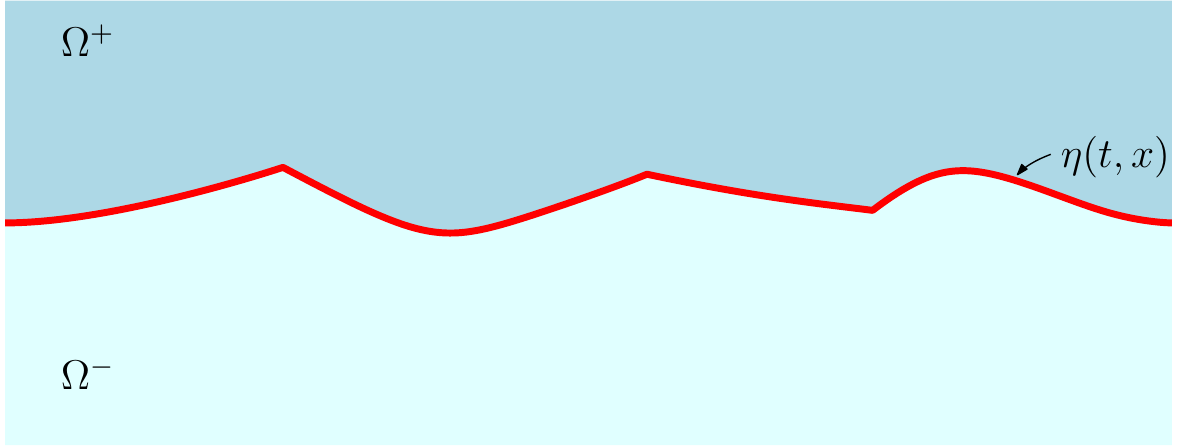}
	\caption{The initial data: the interface $\eta$ consists of a superposition of corners
		\label{fig:initial}
	}
\end{figure}

\begin{definition}
We define a \textit{symmetric corner} at a point $a$ on the interface $\eta$, to be a solution for which the derivative  $h(t,x)=\partial_x\eta(t,x)$ is odd up to lower order terms, in a neighborhood of $a$. More specifically, this means that the quantity $[{h(t,x+a)+h(t,a-x)}]/{x}$ is integrable around $x=0$. 
\end{definition}

\begin{remark} 	Informally stated, we assume the initial data of the interface $\eta(t,x)$ to be a finite superposition of not necessarily symmetric corners, see Figure~\ref{fig:initial}. We then show that the equation admits a solution for which the corners desingularize -- as if the solution were a superposition of free evolutions -- and move, before reaching time $T=1$. After the desingularization takes place, we fall back into a traditional small-data critical theory in Sobolev spaces setting and our solutions exist globally in time. We further point out that the function $q(t,x)$ is only relevant until time $T=1$, hence the extra assumption $\partial_tq(t,x)=0$ for $t\geq1$ in \eqref{q_bounds1}. Furthermore, the $L^p$ condition on the initial data ensures that any potential corner at infinity will be symmetric.
\end{remark}
\begin{remark}
    This theorem provides a precise description of the solution we construct. We are able to identify the location of each corner through the definition of the $Z_1$ norm, and then capture the nonlinear movement of these corners (of order $t\log(t)$), as they desingularize through the normalization in $q$. The parabolic nature of the problem ensures that these corners do indeed smooth out (see \eqref{smoothing}). We point out that after time $t>0$, the solution is given by a shifted free evolution plus a remainder that lands in subcritical spaces (see the definition of the $Z_2$ norm in \eqref{normZ2}), and hence become instantaneously $C^\infty$.
\end{remark}
\begin{remark}\label{discre_self}
    In comparison with previous results, Theorem \eqref{main_thm} allows interfaces with linear growth at infinity. Therefore, it includes as a particular case the self-similar solutions constructed in \cite{GarciaJuarez-GomezSerrano-Nguyen-Pausader:self-similar-muskat}. Moreover, Theorem \eqref{main_thm} also allows to construct discretely self-similar solutions, $h(t,x)=f(\log t,x/t)$, with $f(s,y)$ bounded, periodic in $s$ and H\"older regular in $y$ (discretely self-similar solutions originate from initial data such as $h_0=\varepsilon\,\text{sign}(x)\sin{(\log{(|x|)})}$).
    %The function $f$ is periodic in the second variable and inherits the regularity of the $Z_1$ space in the first variable: $\sup_{\tau\in \mathbb{R}}\{\|(1-\partial_y^2)^{1/20}f(y,\tau)\|_{L^\infty_y}+\|y\partial_y(1-\partial_y^2)^{1/20}f(y,\tau)\|_{L^\infty_y}\}<\infty$.
    In this regard, we notice that while our main interest is the description of evolving corners, the result also covers other types of discontinuities.
\end{remark}
\begin{remark}
    In the case of a single, symmetric corner, there is no need for the logarithm correction term $q(t,x)$. Indeed, in this setting, our choice of function spaces for $Z_1$ and $Z_2$ are sufficient for closing the fixed point argument. As will become apparent in the proof, the logarithmic loss arises either when a corner is not symmetric, or when two neighboring corners interact in a given setting.
\end{remark}

\begin{remark}
The natural analogue of Theorem~\ref{main_thm} also holds for periodic initial data $h_0$. The proof follows in a very similar fashion, with Fourier transforms replaced by Fourier series (since frequencies are now integers instead of real numbers). Furthermore, we have the additional conservation law $\int_{\mathbb T}h(t,x)\,dx=0$, and the additional condition on the initial data $\sum_{j\in\mathcal{J}}\|h_{j,0}(x+a_j)+h_{j,0}(a_j-x)\|_{L^p}\le 1$ is not needed in this setting. 
\end{remark}

%\begin{remark}
%	Although this is a small data critical theory result, it is interesting to note that the problem is not fully critical in the sense that the smallness condition $\varepsilon$ depends on the non-scale invariant quantity $\inf_{i\ne j}\{\vert a_i-a_j\vert\} $.
%\end{remark}
\begin{remark}\label{remark16}
	Although we prove well-posedness in the sum space $Z=Z_1+Z_2$, the $Z_1$ space only captures the free evolution and the $Z_2$ space, the nonlinearity after applying Duhamel's formula. More specifically, we will show that the solution $h(t,x)$ can be written as the free evolution of each corner translated by $q(t,x)$ plus a perturbation term $h^2$ satisfying better bounds,
	\begin{equation}\label{h_fixed_point}
	\begin{aligned}
	h(t,x)=\sum_{j\in\mathcal{J}}h_{j}^1(t,x)+h^2(t,x),\quad \text{where}  \quad h_{j}^1(t,x)=e^{-t|\nabla|}h_{j,0}(t,x+q(t,x)).
	\end{aligned}
	\end{equation}
    An important observation is that the correction function $q(t,x)$ only appears in the part of the solution which lies in the $Z_1$ space, and hence only depends on the free evolution. In particular, it can be calculated explicitly as a bilinear expression of the initial data. This significantly disentangles the problem when it comes to carrying out the fixed point iterations with Duhamel's formula.
\end{remark}

\subsection{Historical considerations}
The Muskat problem \cite{Muskat:porous-media} was derived in the 1930's by Morris Muskat as a model for oil extraction. It has since attracted a lot of attention both analytically and numerically, and has proven to have many interesting behaviors. These include the formation of singularities, such as the ones in the works by Castro {\em{et al.}} \cite{Castro-Cordoba-Fefferman-Gancedo-LopezFernandez:rayleigh-taylor-breakdown, Castro-Cordoba-Fefferman-Gancedo:breakdown-muskat} in the form of overhanging interfaces leading to loss of regularity, switch of stability as shown by C\'ordoba, G\'omez-Serrano and Zlato\v{s} \cite{Cordoba-GomezSerrano-Zlatos:stability-shifting-muskat, Cordoba-GomezSerrano-Zlatos:stability-shifting-muskat-II}, with interfaces that turn but then go back to their equilibrium, and splash singularities in the one-phase setting \cite{Castro-Cordoba-Fefferman-Gancedo:splash-singularities-muskat}. See also the work of Shi \cite{Shi:regularity-muskat} on analyticity of solutions that have turned over. We refer to the excellent surveys by Gancedo and Granero-Belinch\'on-Lazar \cite{Gancedo:survey-muskat}, \cite{GraneroBelinchon-Lazar:growth-muskat} for extended background on the problem.

Concerning well-posedness, there has been a flurry of work done for the sub-critical regime in the last two and half decades. The local in time well-posedness of the Cauchy problem on sub-critical Sobolev spaces, as well as the global existence for small data are now well understood. The first  results in high-regularity Sobolev spaces date back to Yi \cite{Yi:local-muskat, Yi:global-muskat}, Ambrose \cite{Ambrose:well-posedness-hele-shaw} and Caflisch, Howison, and Siegel \cite{Siegel-Caflisch-Howison:global-existence-muskat}, who additionally showed that some unstable settings are ill-posed.
D. C\'ordoba and Gancedo \cite{Cordoba-Gancedo:contour-dynamics-3d-porous-medium} proved well-posedness in the infinite depth setting without viscosity jump in the space $H^{d+2}(\mathbb {R}^d)$, for $d=1,2$, and later, with A. C\'ordoba, extended this result to allow for viscosity jump and non-graphical interface \cite{Cordoba-Cordoba-Gancedo:interface-heleshaw-muskat}, \cite{Cordoba-Cordoba-Gancedo:muskat-3d}. The work by Cheng, Granero-Belinchón and Shkoller \cite{Cheng-GraneroBelinchon-Shkoller:well-posedness-h2-muskat} lowered the regularity needed in two dimensions to $H^2$, without relying on the contour equation and hence permitting more general domains. Later works focused on lowering the regularity up to barely subcritical spaces.
In the case of constant viscosity, Constantin, Gancedo, Shvydkoy and Vicol \cite{Constantin-Gancedo-Shvydkoy-Vicol:global-regularity-muskat-finite-slope} constructed solutions with initial data in $W^{2,p}$ for $p\in(1,\infty]$, Matioc \cite{Matioc:viscous-displacement-muskat,Matioc:local-existence-muskat-hs} with initial data in $H^2$ and $H^{3/2+\ep}$ respectively and Abels and Matioc \cite{Abels-Matioc:muskat-lp-subcritical} for the $L^p$-Sobolev subcritical range. Alazard and Lazar \cite{Alazard-Lazar:paralinearization-muskat} were able to allow non $L^2$-data. Finally, H. Q. Nguyen and Pausader \cite{Nguyen-Pausader:paradifferential-muskat} proved that the full $d$-dimensional Muskat problem (with or without bottom and with or without viscosity jump) is well-posed in $H^s(\mathbb{R}^d)$ for all $s>d/2+1$.

We now turn to the study of the Muskat problem in critical spaces. The first results are small data solutions in the Wiener algebra $\mathcal{L}_{1,1}$, space which consists of taking one derivative of the function and evaluating the $L^1$-norm in Fourier space. Constantin, C\'ordoba, Gancedo and Strain \cite{Constantin-Cordoba-Gancedo-Strain:global-existence-muskat} proved the existence of small-data solutions, without viscosity jump, and then, along with Piazza in \cite{Constantin-Cordoba-Gancedo-RodriguezPiazza-Strain:muskat-global-2d-3d}, extended this result to the 3D setting. This result was improved by Gancedo, Garc\'ia-Ju\'arez, Patel and Strain \cite{Gancedo-GarciaJuarez-Patel-Strain:muskat-viscosity-global} who constructed small-data strong solutions allowing for viscosity jump in both 2D and 3D. 
Further small data critical results include H. Q. Nguyen \cite{Nguyen:global-solutions-muskat-besov} who proved a well-posedness result in the Besov space $\dot{B}_{\infty,1}^1$, a space embedded in the critical space $\dot{W}^{1,\infty}$, and Cameron,  who first studied well-posedness for interfaces in $\dot{W}^{1,\infty}\cap L^2$ in 2D \cite{Cameron:global-wellposedness-muskat-slope-less-1}, and then in 3D with just sub-linear growth at infinity \cite{Cameron:global-muskat-3D}. Cameron's works have the additional particularity of allowing for ``medium"-sized initial data, in the sense that it is bounded by $1$ as opposed to by some small $\ep$-value. For large data, global solutions are not generally expected given that singularities may arise. Nonetheless, Deng, Lei and Lin \cite{Deng-Lei-Lin:2d-muskat-monotone-data} constructed global weak solutions assuming that the initial interface is monotonic.

Recently, there has also been significant work done in the critical Sobolev space $\dot{H}^{3/2}$. The first result dates back to C\'ordoba and Lazar \cite{Cordoba-Lazar:global-wellposedness-muskat-H32}, who although working in the subcritical space $\dot{H}^{5/2}$, considered smallness and derived the a priori estimates in the critical space. This result was then extended to the 3D setting by Gancedo and Lazar \cite{Gancedo-Lazar:global-wellposedness-muskat-3d}. The first fully critical result is due to Alazard and Q. H. Nguyen, constructing 2D solutions with initial data in 
$H^{3/2} \cap \dot{W}^{1,\infty}$ \cite{Alazard-QHNguyen:cauchy-muskat-ii-critical}, together with the log-subcritical work
\cite{Alazard-QHNguyen:muskat-non-lipschitz} where unbounded slopes are allowed.
In these works, the solutions are either large data, local in time, or small data global in time. They later were able to obtain well-posedness in $H^{3/2}$ \cite{Alazard-QHNguyen:muskat-endpoint}, dropping the $L^\infty$ assumption altogether. They then \cite{Alazard-QHNguyen:muskat-3d} further extended this to the 3D setting in $\dot{H}^{2}\cap W^{1,\infty}$.  

Critical well-posedness in the Sobolev norm does not allow for corners, which leads us to the work of Chen, Q. H. Nguyen and Xu \cite{Chen-Nguyen-Xu:muskat-c1} who studied initial data for the interface in $\dot{C}^1$, again, either large data, local in time, or small data, global in time. In this work, although the interface is smooth, the uniqueness result does allow for discontinuities. Very recently \cite{GarciaJuarez-GomezSerrano-Nguyen-Pausader:self-similar-muskat}, three of the authors of this paper proved the existence of small data self-similar solutions, starting from exact corners, which desingularize instantaneously. 

Our result differs from the previous ones in the sense that we carry out our analysis by using a fixed point argument, rather than energy estimates or maximum principles. We start our fixed point method from an ansatz differing from the free evolution (cf. Remark \ref{h_fixed_point}). This argument enables us to describe the solution in a much more precise way as a convergent series in well-adapted function spaces in which a suitable shifted free evolution is the first term. As a result, we can attain a better understanding of which space the corners lie in, what they look like (not necessarily symmetric) as well as what their behavior is (movement and immediate desingularization). Moreover, as opposed to \cite{Cameron:global-wellposedness-muskat-slope-less-1, Cameron:global-muskat-3D} and all previous works, the space $Z_1$ includes functions with linear growth at infinity: these encompass not only self-similar solutions \cite{GarciaJuarez-GomezSerrano-Nguyen-Pausader:self-similar-muskat} but also discretely self-similar solutions (see Remark \ref{discre_self}).

Finally, it is interesting to notice that we show instant desingularization of the corners, hence no `waiting time' phenomena is possible for small corners in two-phase Muskat. We refer to \cite{Choi-Kim:waiting-time-hele-shaw, Choi-Jerison-Kim:one-phase-hele-shaw-lipschitz, Bazaliy-Vasylyeva:two-phase-hele-shaw} for related results in the one-phase setting and in Hele-Shaw flows.

We finish this subsection highlighting the following recent papers on the one-phase problem: \cite{Dong-Gancedo-Nguyen:one-phase-muskat, Agrawal-Patel-Wu:corners-one-phase-muskat, Nguyen-Tice:one-phase-muskat-traveling}.

\subsection{Outline of the paper}
The main difficulty of this problem is that unless we restrict ourselves to the setting of a single, symmetric corner, we end up with a logarithmic loss in our a priori estimates. As a result, the first crucial step we must do is a renormalization of the problem, and subsequently, in \eqref{ga_eq}, a reformulation of the problem in terms of the renormalized function $g(t,x-x_j+q(t,x))$. This necessitates the introduction of the function $q(t,x)$ (see equations \eqref{q_bounds}-\eqref{qtilde_bounds}). These ensure on the one hand, that we do in fact have a change of variables, and on the other, that the function spaces remain invariant by the renormalization, Lemma~\ref{lem:g_h_Z}. The definition of $q(t,x)$ will depend only on the free evolution of the initial data, hence avoiding a circular argument (see Remark \ref{remark16}).

After performing a Taylor expansion on the denominator of the integral term in \eqref{nonlin2}, which reveals the parabolicity of the problem \eqref{smoothing}, we reformulate it as a pseudoproduct in order to study the interactions between the various frequencies. In an effort to keep the arguments transparent and easier to follow, we first carry out the entire analysis for the trilinear setting (in which we only consider the first term in the Taylor expansion). We provide a generalization of all lemmas to the full nonlinearity in Section~\ref{sect:full}. 

Our main goal will now be to find suitable a priori bounds on the localized nonlinearity, ensuring that the integral term in Duhamel's formula remains a mere perturbation in the $Z_2$ space. To this end, we carefully study the kernels of the pseudoproduct in Lemmas~\ref{lemma:K}-\ref{lemma:K2}, which in turn enable us to perform localized $L^2$ estimates, Lemma~\ref{lemma:TP}. These estimates will prove to yield desired bounds on all high-high frequency interactions, Lemma~\ref{lemmaG2} as well as for all high-low interactions which contain at least one function in the $Z_2$ space, Lemma~\ref{lemmaG1Z2}. We remark that from the definition of the $Z_2$ space in \eqref{normZ2}, this space allows us to sum over each frequency at a time. 

The remaining difficulty is the setting for which all functions in the pseudoproduct are in the $Z_1$ space. The difficulty is two-fold. On the one hand, we need to put the highest frequency in $L^2$ to prevent a loss of derivative, an inflexible restriction which prevents us from getting any more gain from this function. On the other hand, the $Z_1$ norm as defined in \eqref{normZ1}, does not contain any $P_k$. By considering the elements in this space localized at frequencies, we lose information which in turn is responsible for logarithmic losses. To rectify this, we carry out a more careful study of the functions in the $Z_1$ space, Lemma\ref{lemma:error}. In particular, we find that they can be split into core and error terms. 

Upon further pursuing the analysis of the pseudoproduct when all functions lie in $Z_1$, we seek to identify the exact term for which the logarithmic loss occurs. By gradually peeling away all bounded terms Lemmas~\ref{lem:G1less}-\ref{lemma:L2}, we are eventually left with a velocity-like bilinear expression \eqref{lpo1}. Indeed, it then becomes apparent that subtracting this velocity term from the localized pseudoproduct yields the desired bounds on the nonlinearity, Lemma~\ref{lem:G1greq4}. 

From the formulation of the problem in terms of the renormalized function \eqref{ga_eq}, it is apparent that this velocity term must be equal to the time derivative of the correction function, $\partial_tq(t,x)$. Checking that the bounds of $q(t,x)$ assumed at the beginning are indeed satisfied, Lemma~\ref{lemma:L-inf}, hence completes the renormalization bootstrap procedure. In Section~\ref{sect:full} we show how all the results in the trilinear case generalize to the full nonlinearity, and finally, we conclude with Section~\ref{CloseProof}  in which we explicitly construct the correction term $q$ and carry out a fixed point argument to construct the solution. 
	
\section{Preliminaries}
\subsection{Reformulation of the problem}
In addition to a critical well-posedness result, we want to track the location of the corners and understand their behavior. As mentioned previously, unless we restrict ourselves to the setting in which the interface consists of a single, symmetric corner, our estimates produce a logarithmic term. This suggests the introduction of a renormalization procedure. We define the new function $g$ as in \eqref{h_g}, such that $h(t,x)=g(t,x+q(t,x))$ and $q(t,x)$ satisfies the bounds
\begin{equation}\label{q_bounds}
\begin{aligned}
|q(t,x)|\lesssim \ep\, t\ln(2/t)\Phi_{\leq 0}(x),\qquad
\sup_{x\in\R}|\partial_x q(t,x)|\lesssim \ep,\qquad \partial_tq(t,x)=0\quad\text{for }t\geq 1.
\end{aligned}
\end{equation}
Here, for any $l\in\Z$, $\Phi_{\leq l}:\R\to[0,1]$ is a smooth cutoff function supported in $\cup_{j\in\mathcal{J}}(a_j-2^{l+2},a_j+2^{l+2})$ and equal to $1$ in the set $\cup_{j\in\mathcal{J}}(a_j-2^{l-2},a_j+2^{l-2})$ and satisfying natural bounds $\|\partial^n_x\Phi_{\leq l}(x)\|_{L^\infty}\lesssim 2^{-nl}$, $n\in\{1,2,3\}$

In particular, the function $Q(t,x):=x+q(t,x)$ is strictly increasing and bijective on $\R$. We define its inverse $\widetilde{Q}(t,y)=y+\widetilde{q}(t,y)$, 
\begin{equation}\label{changeV}
Q(t,\widetilde{Q}(t,x))=x,\qquad \widetilde{Q}(t,Q(t,y))=y.
\end{equation}
We assume that the function $\widetilde{q}$ satisfies the bounds
\begin{equation}\label{qtilde_bounds}
|\partial_y^n\widetilde{q}(t,y)|\lesssim\varepsilon\sum_{j\in\mathcal{J}}\frac{1}{|y-a_j|^{n-1}},\qquad n\in\{1,2,3\},\,t\in[0,\infty),\,y\in\R.
\end{equation} 
\begin{remark}
    The bounds above can be viewed as a norm for a function space in which $q(t,x)$ and $\tilde{q}(t,x)$ must lie. We remark that this space is also critical, and indeed, if it were not, $q$ would be perturbative and wouldn't be necessary to begin with.
\end{remark}

Performing a Taylor expansion in the denominator, we get for $h=h(t,x):[0,\infty)\times\mathbb{R}\to\mathbb{R}$, the following one-dimensional evolution equation
\begin{equation}\label{nonlin1}
(\partial_t+|\nabla|)h=\mathcal{N},\qquad h(0,\cdot)=h_0,
\end{equation}
with
\begin{equation}\label{nonlin2}
	\mathcal{N}=\sum_{n\geq1}\mathcal{N}_{n}[h,\ldots,h]\quad\text{ where }\quad\mathcal{N}_{n}[h_1,h,\ldots,h](x):=\frac{(-1)^n}{\pi}\frac{d}{dx}\int_{\R}\partial_x h_1^\ast(x,\alpha)\cdot\big(h^*(x,\alpha)\big)^{2n}\,d\alpha.
\end{equation}
From \eqref{nonlin1}, we see that the linear solution takes the form
\begin{equation}\label{smoothing}
    h^{(0)}(t,x)=e^{-t|\nabla|}h_0(x),
\end{equation}
thus revealing the parabolic nature of the equation. This is the key property which enables us to treat the problem as a semi-linear one (in the sense that we can perform a fixed point argument to construct the solution), despite the fact that the nonlinear term contains a derivative.

We denote
\begin{equation}\label{Nha2}
\mathcal{N}_{h_j}(x):=\sum_{n\geq1}\mathcal{N}_{n}[h_j,h,\ldots,h](x).
\end{equation}
We will construct $h(t,x)$ (correspondingly $g(t,x)$) as the sum 
\begin{equation*}\label{g_ga}
h(t,x)=\sum_{j\in\mathcal{J}}h_j(t,x),
\end{equation*}
where we are denoting 
\begin{equation}\label{ha_ga}
h_j(t,x):=g_j(t,x-a_j+q(t,x)),
\end{equation}
each $g_j$ will be defined to satisfy the following equation (see Lemma \ref{lem:ga_eq}):
\begin{equation}\label{ga_eq}
\begin{aligned}
\partial_t g_j(t,x)\!+\!|\nabla|g_j(t,x)&=F_j(t,x+a_j),\\
g_j(0,x)&=g_{j,0}(x)=h_{j,0}(x+a_j),
\end{aligned}
\end{equation}
with
\begin{equation*}
	\begin{aligned}
	F_j(t,x+a_j)&=\partial_{x}h_j(t,\tilde{Q}(t,x+a_j))\partial_t\tilde{q}(t,x+\!a_j)+\mathcal{N}_{h_j}(t,\tilde{Q}(t,x+a_j))\\
	&\quad-|\nabla|g_j(t,x)\partial_{x}q(t,\tilde{Q}(t,x+a_j))-E_j(t,\tilde{Q}(t,x+a_j)),
	\end{aligned}
\end{equation*}
and $q,\widetilde{q}$ are defined as above. 
We remark that 
\begin{equation*}\label{q_sum}
    q(t,x)=\sum_{n\geq 1}q_n(t,x),
\end{equation*}
where each $q_n$ serves as a logarithmic correction term for its respective associated nonlinearity $\mathcal{N}_n$, as defined in \eqref{nonlin2}.

We need to solve the system of equations for $g_j$, with $j\in\mathcal{J}$. The aim is to construct the functions $g_j$ solving \eqref{ga_eq} using a fixed-point argument in the space $Z_2$. We will decompose each $g_j$, $$g_j=g_{j}^1+g_{j}^2=e^{-t|\nabla|}g_{0,j}+\int_0^te^{-(t-s)|\nabla|}F_j(s)ds,$$
so that $g_{j}^1\in Z_1$ and $g_{j}^2\in Z_2$. 

From \eqref{ga_eq}, we see that the only way to cancel any logarithmic loss which may arise in the nonlinearity is by means of the correction term $\partial_t\tilde{q}$. In order to identify $\tilde{q}$, we must first strip off all the terms in the nonlinear part which can be bounded until we are only left with the logarithmic singularity. We then conclude the bootstrap argument by verifying that this leftover term satisfies better bounds than the ones in \eqref{q_bounds}-\eqref{qtilde_bounds}.

\subsection{Function Spaces Lemmas}\label{sec:spaces}
	
In this section, we gather useful lemmas concerning the functions spaces defined in \eqref{normZ1}-\eqref{normN}. Most significantly, we verify that the change of variables does not alter the norms.  
	\begin{lemma}\label{lemma:Z}
		For $F_1\in Z_1$, $F_2\in Z_2$, $k\in\Z$ and $t\in[0,\infty)$ we have
		\begin{align}\label{Z1Z2est1}
		\|P_k F_1(t)\|_{L^2}&\lesssim 2^{-k/2}\min\{1,(2^kt)^{-1/10}\}\|F_1\|_{Z_1},\\\label{Z1Z2est2}
		\|P_k F_2(t)\|_{L^2}&\lesssim2^{-k/2}\min\{(2^kt)^{1/10},(2^kt)^{-1/10}\}\|F_2\|_{Z_2}.
		\end{align}
		In particular, for any $F\in Z$, we have 
		\begin{align}\label{ZtotEst1}
		(1+2^k t)^{1/10}\|P_kF(t)\|_{L^\infty}&\lesssim\|F\|_Z,\\\label{ZtotEst2}
		(1+2^k t)^{1/10}\|P_kF(t)\|_{L^2}&\lesssim2^{-k/2}\|F\|_Z.
		\end{align}
	\end{lemma}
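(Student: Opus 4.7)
The inequality \eqref{Z1Z2est2} is a direct rearrangement of the definition of the $Z_2$-norm, using the identity $\max\{a,1/a\}^{-1/10}=\min\{a^{1/10},a^{-1/10}\}$. The consequences \eqref{ZtotEst1} and \eqref{ZtotEst2} will then follow from \eqref{Z1Z2est1}, \eqref{Z1Z2est2}, and Bernstein's inequality $\|P_kF_2\|_{L^\infty}\lesssim 2^{k/2}\|P_kF_2\|_{L^2}$: writing $F=F_1+F_2$ with $\|F_1\|_{Z_1}+\|F_2\|_{Z_2}\lesssim\|F\|_Z$, one checks in the two cases $2^kt\le 1$ and $2^kt\ge 1$ separately that the prefactor $(1+2^kt)^{1/10}$ is absorbed. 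Thus the substance of the lemma is in \eqref{Z1Z2est1}.

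For \eqref{Z1Z2est1}, I will prove the two bounds $\|P_kF_1\|_{L^2}\lesssim 2^{-k/2}\|F_1\|_{Z_1}$ and $\|P_kF_1\|_{L^2}\lesssim 2^{-k/2}(2^kt)^{-1/10}\|F_1\|_{Z_1}$ separately, then combine them. Both start from the cancellation identity
\[
P_kF_1(x)=\int_\R\psi_k(x-y)\bigl(F_1(y)-F_1(x)\bigr)\,dy,
\]
valid since $\psi_k(z)=2^k\check\varphi(2^kz)$ has mean zero (as $\varphi_k$ vanishes near the origin). For the uniform bound I split $\R=\{|x|\le 2^{1-k}\}\cup\{|x|>2^{1-k}\}$. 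On the small region, $\|P_kF_1\|_{L^\infty}\le\|F_1\|_{L^\infty}\le\|F_1\|_{Z_1}$ yields an $L^2$ contribution of size $2^{-k/2}\|F_1\|_{Z_1}$. On the complement I further split the $y$-integral at $|y-x|=|x|/2$: in the near zone $y$ has the same sign as $x$ and $|s|\gtrsim |x|$ along the segment $[x,y]$, so $|F_1(y)-F_1(x)|\le\|s\partial_sF_1\|_{L^\infty}\cdot 2|y-x|/|x|\lesssim \|F_1\|_{Z_1}|y-x|/|x|$, and combining with the first-moment bound $\int|z||\psi_k(z)|\,dz\lesssim 2^{-k}$ gives $|P_kF_1(x)|_{\text{near}}\lesssim 2^{-k}|x|^{-1}\|F_1\|_{Z_1}$; in the far zone the rapid decay of $\psi_k$ together with $|F_1(y)-F_1(x)|\le 2\|F_1\|_{L^\infty}$ produces the same bound. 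Squaring $(2^{-k}/|x|)^2$ and integrating over $|x|>2^{1-k}$ produces the desired $2^{-k}\|F_1\|_{Z_1}^2$.

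For the improved bound with the $(2^kt)^{-1/10}$-factor, I apply the uniform estimate just proved to $g:=P_{\sim k}F_1:=P_{k-1}F_1+P_kF_1+P_{k+1}F_1$, which satisfies $P_kg=P_kF_1$. The inputs required are $\|g\|_{L^\infty}+\|x\partial_xg\|_{L^\infty}\lesssim (2^kt)^{-1/10}\|F_1\|_{Z_1}$. The bound on $\|g\|_{L^\infty}$ is immediate from the frequency-localized piece of $\|F_1\|_{Z_1}$. For $\|x\partial_xg\|_{L^\infty}$ I expand
\[
x\partial_xg=P_{\sim k}(x\partial_xF_1)+[x,P_{\sim k}]\partial_xF_1,
\]
where the first summand is controlled by the $\|P_j(x\partial_xF_1)\|_{L^\infty}$-piece of the $Z_1$-norm. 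A direct Fourier computation shows that $[x,P_{\sim k}]\partial_x$ is the Fourier multiplier with symbol $-\xi\,\partial_\xi\varphi_{\sim k}(\xi)$, which is bounded and supported in $|\xi|\sim 2^k$; consequently it acts as a translation-invariant, frequency-localized multiplier, so that $[x,P_{\sim k}]\partial_xF_1$ equals its action on a slightly fattened projection $P_{\approx k}F_1$ and is again bounded by $(2^kt)^{-1/10}\|F_1\|_{Z_1}$.

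The technically most delicate step is this commutator estimate: naively $[x,P_{\sim k}]\partial_x$ looks like an unbounded multiplication composed with a derivative and a projection, and it is essential to recognize that these combine into a bounded Fourier multiplier localized at frequency $2^k$. Only this observation allows the crucial $(2^kt)^{-1/10}$-gain for $F_1$ at frequency $2^k$, rather than the crude $\|F_1\|_{L^\infty}$-bound, to propagate through the commutator into the final $L^2$-estimate.
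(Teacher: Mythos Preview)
Your proof is correct and shares the paper's overall strategy --- split at $|x|\sim 2^{-k}$, use $\|F\|_{L^\infty}$ on the small region and $\|x\partial_xF\|_{L^\infty}$ on the large region --- but the execution differs. The paper works on the Fourier side: it splits the integral defining $\widehat{P_kF}(\xi)$ at $|x|=2^{-k}$ and integrates by parts in the outer piece to produce $\partial_xF$. You work in physical space via the cancellation representation $P_kF_1(x)=\int\psi_k(x-y)(F_1(y)-F_1(x))\,dy$ together with a mean-value estimate. More notably, the paper's displayed computation only uses the unlocalized quantities $\|F\|_{L^\infty}$ and $\|x\partial_xF\|_{L^\infty}$ and asserts the final bound with the factor $(1+2^kt)^{1/10}$ without further comment; you make this step explicit by re-applying the uniform estimate to $g=P_{\sim k}F_1$ and identifying $[x,P_{\sim k}]\partial_x$ as a bounded Fourier multiplier supported at frequency $\sim 2^k$, so that $\|x\partial_xg\|_{L^\infty}$ inherits the $(2^kt)^{-1/10}$ decay. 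The Fourier-side argument is a little cleaner; your physical-space version is slightly more hands-on but more self-contained on the time-decay.
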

 \begin{remark}
     From \eqref{ZtotEst2} we see that by taking the $L^2$ norm of frequency localized functions, we have a gain in derivatives. As a result, since we have a loss of derivatives in the nonlinear term, we will always take the function with the highest frequency in $L^2$. 
 \end{remark}
	\begin{proof}
		From the definition of $Z_2$, for $F\in Z_2$ we clearly have $(1+2^k t)^{1/10}\|P_kF(t)\|_{L^2}\lesssim 2^{-k/2}\|F\|_{Z_2}$. That $P_kF$ is also in $L^\infty$ follows from the Sobolev embedding
		\begin{equation*}
		\|P_kF(t)\|_{L^\infty}\lesssim2^{k/2}\|P_kF(t)\|_{L^2}.
		\end{equation*}
		For $F\in Z_1$, we clearly have $(1+2^k t)^{1/10}\|P_kF(t)\|_{L^{\infty}}\lesssim\|F\|_{Z_1}$. It remains to check that $(1+2^k t)^{1/10}P_kF(t)$ is also in $L^2$. We do this in the Fourier space. We have
		\begin{align*}
		\widehat{P_k F}(\xi)&=\varphi_k(\xi)\int_{\R}F(x)e^{-ix\xi}dx=\varphi_k(\xi)\int_{|x|\leq 2^{-k}}F(x)e^{-ix\xi}dx+\varphi_k(\xi)\int_{|x|\geq 2^{-k}}F(x)e^{-ix\xi}dx\\
		&=:A(\xi)+B(\xi).
		\end{align*}
		We can easily check that
		\begin{equation*}
		\|A(\xi)\|_{L^2}\lesssim2^{-k/2}\|F(x)\|_{L^\infty}.
		\end{equation*}
		Moreover, using integration by parts, we get
		\begin{align*}
		\int_{|x|\geq 2^{-k}}F(x)e^{-ix\xi}dx=\frac{F(-2^{-k})}{-i\xi}e^{i2^{-k}\xi}-\frac{F(2^{-k})}{-i\xi}e^{-i2^{-k}\xi}-\frac{i}{\xi}\int_{|x|\geq 2^{-k}}\partial_{x}F(x)e^{-ix\xi}dx
		\end{align*}
		which yields
		\begin{equation*}
		\|B(\xi)\|_{L^2}\lesssim2^{-k/2}\|F(x)\|_{L^\infty}+2^{-k}\Big\|\frac{1_{|x|\geq 2^{-k}}}{x}x\partial_{x}F(x)\Big\|_{L^2}\lesssim2^{-k/2}\|F(x)\|_{L^\infty}+2^{-k/2}\|x\partial_{x}F(x)\|_{L^\infty}.
		\end{equation*}
		Combining with Plancherel's theorem, we now get
		\begin{align*}
		\|(1+2^k t)^{1/10}P_kF(x)\|_{L^2}\lesssim2^{-k/2}\|F(x)\|_{Z_1},
		\end{align*}	
		which concludes the proof.
	\end{proof}
	
	Since the nonlinearity is expressed in terms of the function $h$ but we have $Z$-norm control on the solutions $g_j$, we begin with the following two  lemmas.
	For the remainder of this section, we will not explicitly include the dependence on $t$ for simplicity of notation.
	
\begin{lemma}\label{lem:g_h}
	Assume $g\in L^2$, $q$ satisfies \eqref{q_bounds}--\eqref{qtilde_bounds}, $t\in[0,\infty)$, $a\in\R$ and $h(x):=g(x-a+q(t,x))$ (compare with \eqref{ha_ga}). Then, for any $k\in\Z$ we have
	\begin{equation}\label{cook1}
	\|P_k h\|_{L^2}\lesssim\sum_{k'\leq k}\|P_{k'}g\|_{L^2}\,2^{-(k-k')}+\sum_{k'\geq k}\|P_{k'}g\|_{L^2}\,2^{-\frac{1}{2}(k'-k)}.
	\end{equation}
	Moreover, for $g(y)=h(y+a+\tilde{q}(t,y+a))$, for any $k\in\mathbb{Z}$ we have
		\begin{equation}\label{cook1-1}
	\|P_k g\|_{L^2}\lesssim\sum_{k'\leq k}\|P_{k'}h\|_{L^2}\,2^{-(k-k')}+\sum_{k'\geq k}\|P_{k'}h\|_{L^2}\,2^{-\frac{1}{2}(k'-k)}.
	\end{equation}
\end{lemma}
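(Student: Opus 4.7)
The estimate \eqref{cook1-1} is simply the mirror image of \eqref{cook1} under the substitution $(h,q)\leftrightarrow(g,\widetilde{q})$; since $\widetilde{q}$ satisfies the same first-derivative bound as $q$ (namely $|\partial_y\widetilde{q}|\lesssim\ep$ from \eqref{qtilde_bounds} with $n=1$), the two estimates follow from identical arguments. I therefore focus on \eqref{cook1}.

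The plan is to decompose $g=\sum_{k'\in\Z}P_{k'}g$ and, by linearity of composition and the triangle inequality, reduce to the one-scale estimate
\begin{equation*}
\|P_k\widetilde{f}\|_{L^2}\lesssim\min\{2^{-(k-k')},\,2^{-(k'-k)/2}\}\,\|f\|_{L^2},
\end{equation*}
where $f:=P_{k'}g$ and $\widetilde{f}(x):=f(Q(t,x)-a)$. Two elementary ingredients will be used throughout: (i) $Q(t,\cdot)$ is bi-Lipschitz with constants close to $1$ (from $|\partial_x q|\lesssim\ep\ll 1$), so a direct change of variables gives $\|\widetilde{f}\|_{L^2}\lesssim\|f\|_{L^2}$; (ii) by the chain rule $\partial_x\widetilde{f}=(1+\partial_x q)(f'\circ(Q-a))$ combined with Bernstein applied to $f=P_{k'}g$, one has $\|\partial_x\widetilde{f}\|_{L^2}\lesssim 2^{k'}\|f\|_{L^2}$.

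For the high-frequency range $k'\leq k$, combining (ii) with the standard $\|P_kF\|_{L^2}\lesssim 2^{-k}\|\partial_xF\|_{L^2}$ immediately delivers the bound $2^{-(k-k')}\|f\|_{L^2}$. For the low-frequency range $k'\geq k$, I argue by duality and fractional Sobolev regularity. Testing against a unit vector $\phi$ with $\widehat{\phi}$ supported in $|\xi|\sim 2^k$, changing variables $y=Q(t,x)-a$, and using self-adjointness of $P_{k'}$,
\begin{equation*}
\langle\widetilde{f},\phi\rangle=\int f(y)\,\widetilde{\phi}(y)\,dy=\langle g,P_{k'}\widetilde{\phi}\rangle\leq\|P_{k'}g\|_{L^2}\,\|P_{k'}\widetilde{\phi}\|_{L^2},
\end{equation*}
where $\widetilde{\phi}(y):=\phi(\widetilde{Q}(t,y+a))\bigl(1+\partial_y\widetilde{q}(t,y+a)\bigr)$. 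Since $\widetilde{Q}(t,\cdot)$ is itself bi-Lipschitz, composition with $\widetilde{Q}$ is bounded on $\dot H^{1/2}(\R)$ (by a double change of variables in the Gagliardo double integral defining the seminorm), and the multiplier $1+\partial_y\widetilde{q}$ is $L^\infty$-close to $1$; together with the Bernstein-type bound $\|\phi\|_{\dot H^{1/2}}\lesssim 2^{k/2}\|\phi\|_{L^2}$ this yields $\|\widetilde{\phi}\|_{\dot H^{1/2}}\lesssim 2^{k/2}$. Since $\|P_{k'}F\|_{L^2}\lesssim 2^{-k'/2}\|F\|_{\dot H^{1/2}}$, taking the supremum over $\phi$ produces $\|P_k\widetilde{f}\|_{L^2}\lesssim 2^{-(k'-k)/2}\|P_{k'}g\|_{L^2}$, as desired.

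The main obstacle is the rigorous control of the multiplier $1+\partial_y\widetilde{q}$ in $\dot H^{1/2}$ near the corner points $a_j$, where $\partial_y^2\widetilde{q}$ fails to lie in $L^\infty$ and is only controlled pointwise by $\ep\,|y-a_j|^{-1}$. This is precisely the obstruction that prevents a full-derivative estimate and forces us to use only half a derivative, producing the characteristic $2^{-(k'-k)/2}$ factor rather than the cleaner $2^{-(k'-k)}$ that would hold if $\widetilde{q}$ were $C^{1,1}$. It is handled by splitting the multiplier into a globally Lipschitz part and a localized piece whose contribution is absorbed using the $\ep$-smallness and the Gagliardo characterization.
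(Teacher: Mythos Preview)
Your low-frequency argument ($k'\le k$) coincides with the paper's. For $k'\ge k$ your duality route is genuinely different: the paper does not pass through $\dot H^{1/2}$ at all, but instead writes the composition as an oscillatory integral with kernel $L(\xi,\eta)=\int e^{i(\eta y-\xi\widetilde Q(t,y))}\widetilde Q'(t,y)\,dy$, splits the $y$-integral with cutoffs $\Phi_{\le -k'}$ near the corner set, and then integrates by parts \emph{twice} in the far region to obtain the pointwise bound $|L(\xi,\eta)|\lesssim 2^{-k'}$ (this is where the third-derivative hypothesis $n=3$ in \eqref{qtilde_bounds} is actually used). A Cauchy--Schwarz on the Fourier side then gives the $2^{(k-k')/2}$ factor.

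Your approach is appealing because it needs only two derivatives of $\widetilde q$, but the treatment of the Jacobian factor $1+\partial_y\widetilde q$ has a genuine gap as written. Being $L^\infty$-close to $1$ does not make a function a multiplier on $\dot H^{1/2}$, and the physical-side ``globally Lipschitz plus localized'' splitting you propose runs into a logarithmic divergence near each $a_j$: after the Gagliardo split one is left with $\int|\psi(y)|^2\,G(y)\,dy$ where $G(y)\sim\varepsilon^2|y-a_j|^{-1}$, and the endpoint Hardy inequality $\int|f|^2/|x|\,dx\lesssim\|f\|_{\dot H^{1/2}}^2$ fails in one dimension, so ``$\varepsilon$-smallness and Gagliardo'' alone do not absorb it. What \emph{does} close your argument is a paraproduct decomposition of $\psi\cdot\partial_y\widetilde q$ together with the frequency-localized bound $\|P_l(\partial_y\widetilde q)\|_{L^2}\lesssim\varepsilon\,2^{-l/2}$, which follows from $|\partial_y^2\widetilde q|\lesssim\varepsilon\sum_j|y-a_j|^{-1}$; using also $\|\psi\|_{L^\infty}\lesssim 2^{k/2}$ and $\|\psi\|_{\dot H^{1/2}}\lesssim 2^{k/2}$, each of the low--high, high--low and high--high pieces yields $\|P_{k'}(\psi\,\partial_y\widetilde q)\|_{L^2}\lesssim\varepsilon\,2^{(k-k')/2}$. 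With this correction your scheme works and gives an alternative to the paper's kernel estimate, trading one derivative of $\widetilde q$ for a product decomposition.
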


\begin{proof}
	Given $k\in\Z$ we write
	\begin{equation}\label{cook2}
	\begin{split}
	&h(x)=\sum_{k'\in\Z}(P_{k'}g)(x-a+q(t,x))=h_1(x)+h_2(x),\\
	&h_1(x):=\sum_{k'\leq k+4}(P_{k'}g)(x-a+q(t,x)),\\
	&h_2(x):=\sum_{k'\geq k+5}(P_{k'}g)(x-a+q(t,x)).
	\end{split}
	\end{equation}
	
	To estimate $\|P_kh_1\|_{L^2}$ we take a derivative in $x$ and write
	\begin{equation*}
	h'_1(x)=\sum_{k'\leq k+4}(1+q'(t,x))(P_{k'}g)'(x-a+q(t,x))
	\end{equation*}
	Therefore, since $|q'(t,x)|\lesssim\ep$, it follows that
	\begin{equation*}
	\|h'_1\|_{L^2}\lesssim \sum_{k'\leq k+4}\|(P_{k'}g)'\|_{L^2}\lesssim \sum_{k'\leq k+4}2^{k'}\|P_{k'}g\|_{L^2}.
	\end{equation*}
	Therefore
	\begin{equation}\label{cook3}
	\|P_kh_1\|_{L^2}\lesssim \sum_{k'\leq k+4}2^{k'-k}\|P_{k'}g\|_{L^2}.
	\end{equation}
	
	To estimate $\|P_kh_2\|_{L^2}$ we let $G_{k'}(x):=(P_{k'}g)(x-a+q(t,x))$ and take the Fourier transform to write
	\begin{equation*}\label{cook3.5}
	\begin{split}
	\widehat{G_{k'}}(\xi)=\int_{\R}G_{k'}(x)e^{-ix\xi}dx&=\int_{\R}(P_{k'}g)(x-a+q(t,x))e^{-ix\xi}dx\\
	&=\frac{1}{2\pi}\int_{\R}\int_{\R}\widehat{P_{k'}g}(\eta)e^{i\eta(x-a+q(t,x))}e^{-ix\xi}dxd\eta\\
	&=\frac{1}{2\pi}\int_{\R}\widehat{P_{k'}g}(\eta)L(\xi,\eta)d\eta,
	\end{split}
	\end{equation*}
	where here
	\begin{equation*}
	L(\xi,\eta):=\int_{\R}e^{-ix\xi}e^{i\eta (x-a+q(t,x))}dx.
	\end{equation*}
	We make the change of variables $x=\widetilde{Q}(t,y)$ and decompose the kernel $L$ as $L=L_1+L_2$, where
	\begin{equation*}\label{cook4}
	\begin{split}
	&L_1(\xi,\eta):=e^{-i\eta a}\int_{\R}e^{-i\xi\widetilde{Q}(t,y)}e^{i\eta y}\widetilde{Q}'(t,y)\Phi_{\leq -k'}(y)\,dy,\\
	&L_2(\xi,\eta):=e^{-i\eta a}\int_{\R}e^{-i\xi\widetilde{Q}(t,y)}e^{i\eta y}\widetilde{Q}'(t,y)(1-\Phi_{\leq -k'})(y)\,dy,
	\end{split}
	\end{equation*}
	and the functions $\Phi_{\leq l}$ are defined as before. 
	
	We would like to prove that if $|\xi|\in[2^{k-1},2^{k+1}]$, $|\eta|\in[2^{k'-1},2^{k'+1}]$, and $k'\geq k+5$ then
	\begin{equation}\label{cook4.5}
	|L_n(\xi,\eta)|\lesssim 2^{-k'},\qquad n\in\{1,2\}.
	\end{equation}
	The bounds follow easily for the function $L_1$, due to the support restriction on $y$. To bound $L_2$ we integrate by parts in $y$,
	\begin{equation*}
	\begin{split}
	L_2(\xi,\eta)&=ie^{-i\eta a}\int_{\R}e^{i(\eta y-\xi\widetilde{Q}(t,y))}\frac{d}{dy}\Big\{\frac{\widetilde{Q}'(t,y)(1-\Phi_{\leq -k'}(y))}{\eta-\xi\widetilde{Q}'(t,y)}\Big\}\,dy\\
	&=ie^{-i\eta a}\int_{\R}e^{i(\eta y-\xi\widetilde{Q}(t,y))}\frac{\eta\widetilde{Q}''(t,y)}{(\eta-\xi\widetilde{Q}'(t,y))^2}(1-\Phi_{\leq -k'}(y))\,dy+O(2^{-k'}).
	\end{split}
	\end{equation*}
	This is not enough to prove the bounds \eqref{cook4.5}, because of the factors $1/|y-x_j|$ in the bounds for $|\widetilde{Q}''(t,y)|$ in \eqref{qtilde_bounds}, which lead to logarithmic losses. However, we can integrate by parts in $y$ once more to see that
	\begin{equation*}
	\begin{split}
	|L_2&(\xi,\eta)|\lesssim 2^{-k'}+\Big|\int_{\R}e^{i(\eta y-\xi\widetilde{Q}(t,y))}\frac{\eta\widetilde{Q}''(t,y)}{(\eta-\xi\widetilde{Q}'(t,y))^2}(1-\Phi_{\leq -k'}(y))\,dy\Big|\\
	&\lesssim 2^{-k'}+2^{k'}\Big|\int_{\R}e^{i(\eta y-\xi\widetilde{Q}(t,y))}\frac{d}{dy}\Big\{\frac{\widetilde{Q}''(t,y)(1-\Phi_{\leq -k'}(y))}{(\eta-\xi\widetilde{Q}'(t,y))^3}\Big\}\,dy\Big|\\
	&\lesssim 2^{-k'}+2^{k'}\int_{\R}\frac{|\widetilde{Q}'''(t,y)||1-\Phi_{\leq -k'}(y)|}{2^{3k'}}+\frac{|\widetilde{Q}''(t,y)||\Phi'_{\leq -k'}(y)|}{2^{3k'}}+\frac{|\xi||\widetilde{Q}''(t,y)|^2|1-\Phi_{\leq -k'}(y)|}{2^{4k'}}\,dy.
	\end{split}
	\end{equation*}
	Using now \eqref{qtilde_bounds} we estimate
	\begin{equation*}
	\begin{split}
	|L_2(\xi,\eta)|&\lesssim 2^{-k'}+2^{-2k'}\int_{\R}\Big(\sum_{j\in\mathcal{J}}\frac{1}{|y-x_j|^2}\Big)|1-\Phi_{\leq -k'}(y)|+\Big(\sum_{j\in\mathcal{J}}\frac{1}{|y-x_j|}\Big)|\Phi'_{\leq -k'}(y)|\,dy\\
	&\lesssim 2^{-k'}.
	\end{split}
	\end{equation*}
	This completes the proof of \eqref{cook4.5} for $n=2$. 
	
	Given \eqref{cook4.5} we can now use the Cauchy-Schwartz inequality to estimate
	\begin{align*}
	\|P_kh_2\|_{L^2}&\lesssim \sum_{k'\geq k+5}\|P_kG_{k'}\|_{L^2}\lesssim \sum_{k'\geq k+5}\|\varphi_k(\xi)\widehat{G_{k'}}(\xi)\|_{L^2_\xi}\lesssim \sum_{k'\geq k+5}2^{k/2}\|\varphi_k(\xi)\widehat{G_{k'}}(\xi)\|_{L^\infty_\xi}\\
	&\lesssim \sum_{k'\geq k+5}2^{k/2}\|\varphi_k(\xi)L(\xi,\eta)\mathbf{1}_{[2^{k'-1},2^{k'+1}]}(|\eta|)\|_{L^\infty_\xi L^2_\eta}\|P_{k'}g\|_{L^2}\lesssim \sum_{k'\geq k+5}2^{k/2}2^{-k'/2}\|P_{k'}g\|_{L^2}.
	\end{align*}
	The desired bounds \eqref{cook1} follow using also \eqref{cook2} and \eqref{cook3}. The proof for \eqref{cook1-1} follows similarly, except that no change of variables is required.
\end{proof}

As an application, we show that $h$ satisfies similar estimates as those for $g$ in Lemma \ref{lemma:Z}. Moreover, the change of variables does not significantly alter the $N$-norm.

\begin{lemma}\label{lem:g_h_Z}
	For $k\in\Z$, $t\in[0,\infty)$, $f\in Z$, and $f^\tau$ defined by $f^\tau(t,x):=f(t,x-a+q(t,x))$, we have that 
	\begin{equation*}
	\begin{aligned}
	\|P_k f^\tau(t,x)\|_{L^2}&\lesssim 2^{-k/2}\min\{1,(2^kt)^{-1/10}\}\|f\|_{Z_1},\\
	\|P_k f^\tau(t,x)\|_{L^2}&\lesssim2^{-k/2}\min\{(2^kt)^{1/10},(2^kt)^{-1/10}\}\|f\|_{Z_2}.
	\end{aligned}
	\end{equation*}
	Furthermore, for $f\in N$  and $f^\tau(t,x)$ as above, we have that
	\begin{equation}\label{N1}
	\|P_k f^\tau(t,x)\|_{L^2}\lesssim2^{k/2}(2^kt)^{-1/10}\|f\|_{N}.	
	\end{equation}
\end{lemma}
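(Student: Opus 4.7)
The plan is to reduce the lemma to Lemma~\ref{lem:g_h} and then carry out summations of geometric series. Since $f^\tau(t,x)=f(t,x-a+q(t,x))$ matches the setting of Lemma~\ref{lem:g_h} (with $g\leftarrow f$ and $h\leftarrow f^\tau$), inequality \eqref{cook1} gives
\begin{equation*}
\|P_k f^\tau\|_{L^2}\lesssim\sum_{k'\le k}2^{-(k-k')}\|P_{k'}f\|_{L^2}+\sum_{k'\ge k}2^{-(k'-k)/2}\|P_{k'}f\|_{L^2}.
\end{equation*}
Into this I will insert, in turn, the three estimates for $\|P_{k'}f\|_{L^2}$ that are already at my disposal: \eqref{Z1Z2est1} for $f\in Z_1$, \eqref{Z1Z2est2} for $f\in Z_2$, and the defining bound $\|P_{k'}f\|_{L^2}\le 2^{-k'/2}(2^{k'}t)^{1/10}\|f\|_N$ for $f\in N$ coming from \eqref{normN}.

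Each substitution reduces the argument to estimating two geometric series in $k'$, with ratios $2^{-1}$ and $2^{-1/2}$, weighted by the slowly varying factors $(2^{k'}t)^{\pm 1/10}$ (and, in the $Z_2$ case, by the minimum of these two). Because the exponents $1/2$ and $1$ both strictly dominate $1/10$, the series converge and are controlled, up to universal constants, by the value of the summand at the boundary frequency $k'=k$. That boundary value is exactly $2^{-k/2}$ times the time factor appearing on the right-hand side of the claim, which gives the bounds.

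The one point that needs care is the estimate in the $Z_2$ and $N$ cases when the transition frequency $k_0$ defined by $2^{k_0}t\sim 1$ lies between $k$ and the running summation index $k'$: there the weight switches monotonicity and the dominant contribution to each tail may come either from $k'=k$ or from $k'=k_0$. I would handle this by splitting the two tails of the displayed inequality at $k_0$ and checking each of the four resulting regions separately, verifying that the quantity $2^{-k/2}t^{\mp 1/10}2^{\mp k/10}$ indeed bounds the sum in every regime. The $N$ case is analogous and in fact slightly simpler, since the relevant time weight is monotone in $k'$. No new ideas beyond Lemma~\ref{lem:g_h} are needed; the proof is essentially bookkeeping of geometric-series tails, and the main potential source of error is mishandling the splitting at $k_0$ in the $Z_2$ estimate.
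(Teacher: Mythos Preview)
Your approach is exactly the paper's: apply \eqref{cook1} from Lemma~\ref{lem:g_h} and substitute the frequency-localized bounds \eqref{Z1Z2est1}, \eqref{Z1Z2est2}, and the $N$-norm bound, then sum the resulting geometric series. The $Z_1$ and $Z_2$ cases go through as you describe.

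There is one concrete gap in the $N$ case. You write the defining bound as $\|P_{k'}f\|_{L^2}\le 2^{-k'/2}(2^{k'}t)^{1/10}\|f\|_N$, which is what a literal reading of \eqref{normN} gives. But with that bound the summation cannot produce the target $2^{k/2}(2^kt)^{-1/10}$: already for $q\equiv 0$ and $f$ supported at frequency $k$ one gets $\|P_kf^\tau\|_{L^2}\sim 2^{-k/2}(2^kt)^{1/10}$, which is not $\lesssim 2^{k/2}(2^kt)^{-1/10}$ unless $t\lesssim 2^{4k}$. The display \eqref{normN} appears to carry a sign misprint; consistently with how the $N$-norm is used throughout (e.g.\ the estimates $\|P_k F\|_{L^2}\lesssim 2^{k/2}(2^kt)^{-1/10}$ on the nonlinearity in Section~\ref{CloseProof}), the intended bound is
\[
\|P_{k'}f\|_{L^2}\le 2^{k'/2}(2^{k'}t)^{-1/10}\|f\|_N,
\]
and this is exactly what the paper's own proof inserts. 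With this correction your geometric-series argument for the $N$ case goes through (and is indeed monotone in $k'$, as you note), matching the paper's computation line by line.
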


\begin{proof} First, let $f\in Z_2$ and assume without loss of generality that $\|f\|_{Z_2}=1$. Using Lemmas \ref{lem:g_h} and \ref{lemma:Z}, we have
	\begin{equation*}
	\begin{aligned}
	\|P_{k}f^\tau(t,x)\|_{L^2}&\lesssim \sum_{k'\leq k}2^{-k'/2}(1+2^{k'}t)^{-2/10}(2^{k'}t)^{1/10}2^{-(k-k')}\\
	&\quad+\sum_{k'\geq k}2^{-k'/2}(1+2^{k'}t)^{-2/10}(2^{k'}t)^{1/10}2^{-\frac12(k'-k)}\\
	&\lesssim2^{-k}2^{\frac{k}{2}}\min\{(2^kt)^{1/10},(2^kt)^{-1/10}\}+2^{\frac{k}{2}}2^{-k}\min\{(2^kt)^{1/10},(2^kt)^{-1/10}\}\\
	&\lesssim2^{-\frac{k}{2}}\min\{(2^kt)^{1/10},(2^kt)^{-1/10}\}.
	\end{aligned}
	\end{equation*}
	Now let $f\in Z_1$, $\|f\|_{Z_1}=1$, then, using again Lemmas \ref{lem:g_h} and \ref{lemma:Z},
	\begin{equation*}
	\begin{split}
	\|P_{k}f^\tau(t,x)\|_{L^2}&\lesssim \sum_{k'\leq k}2^{-k'/2}(1+2^{k'}t)^{-1/10}2^{-(k-k')}+\sum_{k'\geq k}2^{-k'/2}(1+2^{k'}t)^{-1/10}2^{-\frac12(k'-k)}\\	&\lesssim2^{-\frac{k}2}\min\{1,(2^kt)^{-1/10}\}+2^{-\frac{k}2}\min\{1,(2^kt)^{-1/10}\}\\
	&\lesssim2^{-\frac{k}2}\min\{1,(2^kt)^{-1/10}\}.
	\end{split}
	\end{equation*}
	We now let $f\in N$, $\|f\|_N=1$, then, using once more Lemma~\ref{lem:g_h},
	\begin{equation*}
	\begin{split}
		\|P_kf^\tau(t,x)\|_{L^2}&\lesssim\sum_{k'\leq k}2^{k'/2}(2^{k'}t)^{-1/10}2^{-(k-k')}
		+\sum_{k'\geq k}2^{k'/2}(2^{k'}t)^{-1/10}2^{-\frac{1}{2}(k'-k)}\\
		&\lesssim2^{-k}\sum_{k'\leq k}2^{\frac{3}{2}k'}(2^kt)^{-1/10}+2^{k/2}\sum_{k'\geq k}(2^kt)^{-\frac{1}{10}}\\&
		\lesssim2^{\frac{k}{2}}(2^kt)^{-1/10},
	\end{split}
	\end{equation*}
	thus concluding the proof.
\end{proof}

\subsection{Implementing the change of variables}
In this section, we demonstrate how to move from the physical to the renormalized formulation, and bound all linear error terms which arise in the process.   	

The first thing we consider is how the $|\nabla|$ handles the change of variables. The following lemma shows that, up to error terms with sufficiently nice bounds, we essentially have the chain rule. We point out that in order to handle $|\nabla|$, we must consider positive and negative frequencies independently, thus justifying the use of frequency projection operators to positive and negative frequencies in the proof below.	
	\begin{lemma}\label{lem:ga_eq} 
		Assume that $h_0$ is as in Theorem \ref{main_thm} and $q:[0,\infty)\times\R\to\R$ satisfies the bounds \eqref{q_bounds}. Assume that the functions $g_j\in Z$, $j\in\{1,\ldots,M\}$ solve the system \eqref{ga_eq}, with initial data $g_j(0,x):=h_{j,0}(x+a_j)$, where
		\begin{equation}\label{musk1}
		\begin{split}
		&h_j(t,x):=g_j(t,x-a_j+q(t,x)),\qquad h(t,x)=\sum_{j\in\mathcal{J}}g_j(t,x-a_j+q(t,x)),\\
		&\text{E}_j(t,x):=(|\nabla| h_j)(t,x)-(|\nabla| g_j)(x-a_j+q(t,x))(1+q'(t,x)),\\
		\end{split}
		\end{equation}
		
		Then $h$ solves the initial value problem \eqref{nonlin1}. Moreover, the remainders $E_j$ satisfy the bounds
		\begin{equation}\label{nabla4}
		\begin{aligned}
		\|P_k E_j\|_{L^2}&\lesssim\varepsilon 2^k\sum_{k'\leq k}\|P_{k'}g_j\|_{L^2}\,2^{-(k-k')}+\varepsilon2^k\sum_{k'\geq k}\|P_{k'}g_j\|_{L^2}\,2^{-\frac{1}{2}(k'-k)},\\
		\|P_k E_j\|_{L^2}&\lesssim \varepsilon2^{k/2}\min\{1,(2^kt)^{-1/10}\}\|g_j\|_{Z}.	
		\end{aligned}
		\end{equation}
	\end{lemma}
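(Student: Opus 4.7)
\emph{Part 1: $h$ solves \eqref{nonlin1}.} The plan is a direct verification via the chain rule, done one index $j$ at a time and then summed using the multilinearity of each $\mathcal{N}_n$ in its first argument (so that $\sum_j \mathcal{N}_{h_j} = \mathcal{N}$). Setting $y := x - a_j + q(t,x)$ and differentiating in $t$ yields $\partial_t h_j = (\partial_t g_j)(t,y) + (\partial_x g_j)(t,y)\,\partial_t q(t,x)$. Using \eqref{ga_eq} evaluated at $y$, together with the identities $y+a_j = Q(t,x)$ and $\widetilde{Q}(t,Q(t,x))=x$, the term $(\partial_t g_j)(t,y)$ equals $-|\nabla| g_j(t,y) + F_j(t,Q(t,x))$. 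Adding the definition $|\nabla| h_j = (|\nabla| g_j)(t,y)(1+\partial_x q) + E_j(t,x)$, the terms $-(|\nabla| g_j)(\partial_x q)$ and $-E_j$ sitting inside $F_j(t,Q(t,x))$ cancel exactly with their counterparts coming from $|\nabla| h_j$. What remains is $\mathcal{N}_{h_j}(t,x) + \partial_x h_j(t,x)\,\partial_t \widetilde{q}(t,Q(t,x)) + (\partial_x g_j)(t,y)\,\partial_t q(t,x)$, and the last two terms cancel by combining $\partial_x h_j = (\partial_x g_j)(y)(1+\partial_x q)$ with the identity $\partial_t \widetilde{q}(t,Q(t,x)) = -\partial_t q(t,x)/(1+\partial_x q(t,x))$, which is obtained by differentiating $\widetilde{Q}(t,Q(t,x))=x$ in $t$.

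\emph{Part 2: Bounds on $E_j$.} The plan is to prove the first estimate in \eqref{nabla4} by decomposing $g_j = \sum_{k'} P_{k'}g_j$ and analyzing the contributions to $P_k E_j$ coming from low frequencies ($k' \leq k+4$) and high frequencies ($k' \geq k+5$), in close parallel with the proof of Lemma \ref{lem:g_h}. The second estimate in \eqref{nabla4} will then follow from the first by the same summation argument that passes from Lemma \ref{lem:g_h} to Lemma \ref{lem:g_h_Z}, combined with Lemma \ref{lemma:Z}.

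For the low-frequency contributions, using $|\nabla| = H\partial_x$ with $H$ the Hilbert transform, and writing $\phi(x) := x-a_j+q(t,x)$, $f := P_{k'}g_j$, a direct computation gives
\[
|\nabla|(f\circ\phi) - (|\nabla| f)\circ \phi\cdot (1+q') = [H,\,q'](f'\circ\phi) + (1+q')\bigl\{H(f'\circ\phi) - (Hf')\circ\phi\bigr\}.
\]
The first piece is a classical Hilbert-transform commutator controlled by $\|q'\|_{L^\infty}\lesssim \ep$, and the second is a composition-commutator whose kernel is estimated exactly as in the proof of Lemma \ref{lem:g_h} (again with $\ep$ smallness from \eqref{q_bounds}--\eqref{qtilde_bounds}). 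Both contribute $\lesssim \ep \,2^{k'}\|P_{k'}g_j\|_{L^2}$, and applying $P_k$ to a function essentially smooth on scale $2^{-k'}$ produces the decay $2^{-(k-k')}$, yielding $\ep\, 2^k\cdot 2^{-(k-k')}\|P_{k'}g_j\|_{L^2}$ per block.

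For the high-frequency contributions, the estimate is done on the Fourier side using the same representation $\widehat{(f\circ\phi)}(\xi) = \int \widehat{f}(\eta) L(\xi,\eta)\,d\eta$ as in \eqref{cook2}. Applying $|\nabla|$ multiplies by $|\xi|$, which provides the extra factor $2^k$ on the output side, so the analogue of \eqref{cook4.5} one has to prove is $|\xi L_n(\xi,\eta)| \lesssim \ep\, 2^{k-k'/2}$ (rather than just $2^{-k'}$). The key point is that the non-decaying leading term in $|\xi| L_2(\xi,\eta)$, coming from stationary phase, is by design exactly cancelled by the subtraction of $(|\nabla| g_j)\circ\phi\cdot(1+q')$ in the definition of $E_j$; the remainder is controlled by one additional integration by parts in $y$ and the $\ep$-bounds on $\widetilde{q}''$ and $\widetilde{q}'''$ in \eqref{qtilde_bounds}, which yields the $2^{-(k'-k)/2}$ decay with the $\ep$ prefactor.

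\emph{Main obstacle.} The delicate step is the high-frequency estimate: one must verify that the subtraction $(|\nabla| g_j)\circ\phi\cdot(1+q')$ cancels the leading contribution of $|\xi| L_2(\xi,\eta)$ \emph{exactly}, so that after this cancellation the remaining kernel can be integrated by parts twice in $y$ without picking up a logarithmic divergence from the $1/|y-a_j|$ singularities in $\widetilde{q}''$. This is precisely the reason $E_j$ is defined as the chain-rule remainder rather than merely as a frequency error, and it is the point where the structural choice made in \eqref{musk1} pays off.
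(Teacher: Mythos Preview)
Your Part~1 is correct and is essentially the paper's computation run in reverse.

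For Part~2 your approach diverges from the paper's and has a real gap at the key step. The paper does not split into low/high input frequencies nor use commutators; instead it derives an \emph{exact} formula for $E_j$ by writing $|\xi|/|\eta|=\pm\xi/\eta$ according to the relative signs of $\xi$ and $\eta$. After one integration by parts in $x$, the same-sign contributions reproduce $(|\nabla|g_j)(x-a_j+q(t,x))(1+q')$ identically, so $E_j$ is \emph{purely} cross-sign: $E_j=-2P^+\big[(P^-|\nabla|g_j)\circ\phi\,(1+q')\big]-2P^-\big[(P^+|\nabla|g_j)\circ\phi\,(1+q')\big]$ (with your notation $\phi(x)=x-a_j+q(t,x)$). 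For cross-sign interactions the phase derivative in the kernel $\int e^{i(\eta y-\xi\widetilde{Q}(y))}\cdots\,dy$ satisfies $|\eta-\xi\widetilde{Q}'(y)|\gtrsim 2^{\max(k,k')}$ since $\xi$ and $\eta$ have opposite signs; two integrations by parts in $y$ (using \eqref{qtilde_bounds}) then give $|L(\xi,\eta)|\lesssim\ep\,2^{k}2^{-2\max(k,k')}$, from which both lines of \eqref{nabla4} follow at once, uniformly in $k,k'$.

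In your low-frequency argument the $[H,q']$ piece is fine via Coifman--Rochberg--Weiss, but the second piece $(1+q')\{H(f'\circ\phi)-(Hf')\circ\phi\}$ is precisely where the cross-sign structure is hidden (indeed it equals $-2i(1+q')[P^+(f'\circ\phi)-(P^+f')\circ\phi]$), and your reference to Lemma~\ref{lem:g_h} does not bound it: that lemma carries no $\ep$ gain, and each of the two terms separately is only $O(2^{k'}\|P_{k'}g_j\|_{L^2})$. Your high-frequency claim that ``stationary phase'' produces a leading term exactly cancelled by the subtraction is not right either---there is no stationary point; the mechanism is again the sign condition making the phase uniformly non-degenerate. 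Without identifying the $P^\pm$ structure (or alternatively invoking a small-Lipschitz Cauchy-integral/T(1) estimate for the composition commutator, which you do not do), the $\ep$ factor is unaccounted for.
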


	\begin{proof}
		Consider $h_j$ as in \eqref{musk1}. Assume that 
		\begin{equation}\label{musk4}
		\partial_t h_j(t,x)+|\nabla|h_j(t,x)=\mathcal{N}_{h_j}(t,x),
		\end{equation}
		with $h_j(0,x)=h_{j,0}(x)$, which implies that $h=\sum_{j\in\mathcal{J}}h_j$ solves \eqref{nonlin1}. We compute
		\begin{align*}
		\partial_t g_j(t,x-a_j)&=\partial_t h_j(t,x+\tilde{q}(t,x))+\partial_{x}h_j(t,x+\tilde{q}(t,x))\partial_t\tilde{q}(t,x)\\
		&=\mathcal{N}_{h_j}(t,x+\tilde{q}(t,x))-(|\nabla|h_j)(t,x+\tilde{q}(t,x))+(\partial_{x}h_j)(t,x+\tilde{q}(t,x))\partial_t\tilde{q}(t,x),
		\end{align*}
		from which we get
		\begin{align*}
		\partial_t g_j(t,x-a_j)+(|\nabla|h_j)(t,x+\tilde{q}(t,x))=\mathcal{N}_{h_j}(t,x+\tilde{q}(t,x))+(\partial_{x}h_j)(t,x+\tilde{q}(t,x))\partial_t\tilde{q}(t,x),
		\end{align*}
		and consequently, using the second identity in \eqref{musk1}, we obtain \eqref{ga_eq}. Hence, reversing the process, we conclude that the identities \eqref{ga_eq} and \eqref{musk4} are equivalent.
		
		To prove the bounds \eqref{nabla4} we write
		\begin{equation*}
		\widehat{h}_j(\xi)=\int_{\R}h_j(x)e^{-ix\xi}\,dx=\int_{\R}g_j(x-a_j+q(x))e^{-ix\xi}\,dx=\frac{1}{2\pi}\int_\R\int_\R\widehat{g}_j(\eta)e^{i\eta(x-x_j+q(x))}e^{-ix\xi}\,dx d\eta.
		\end{equation*}
		Therefore
		\begin{equation}\label{nabla5}
		\widehat{|\nabla|h}_j(\xi)=\frac{1}{2\pi}\int_\R\int_\R\widehat{|\nabla|g}_j(\eta)\frac{|\xi|}{|\eta|}e^{i\eta(x-a_j+q(x))}e^{-ix\xi}\,dx d\eta.
		\end{equation}
		Let $P^+$ and $P^-$ denote the frequency projection operators to positive and negative frequencies,
		\begin{equation*}
		\widehat{P^+f}(\rho):=\widehat{f}(\rho)\mathbf{1}_+(\rho),\qquad \widehat{P^-f}(\rho):=\widehat{f}(\rho)\mathbf{1}_-(\rho).
		\end{equation*}
		It follows from \eqref{nabla5} that
		\begin{equation}\label{nabla6}
		\begin{split}
		\mathbf{1}_+(\xi)\widehat{|\nabla|h}_j(\xi)&=\mathbf{1}_+(\xi)\frac{1}{2\pi}\int_\R\int_\R\widehat{|\nabla|g}_j(\eta)\frac{|\xi|}{|\eta|}e^{i\eta(x-a_j+q(x))}e^{-ix\xi}\,dx d\eta=F_1(\xi)+G_1(\xi),\\
		F_1(\xi)&:=\mathbf{1}_+(\xi)\frac{1}{2\pi}\int_\R\int_\R\mathbf{1}_+(\eta)\widehat{|\nabla|g}_j(\eta)\frac{|\xi|}{|\eta|}e^{i\eta(x-a_j+q(x))}e^{-ix\xi}\,dx d\eta,\\
		G_1(\xi)&:=\mathbf{1}_+(\xi)\frac{1}{2\pi}\int_\R\int_\R\mathbf{1}_-(\eta)\widehat{|\nabla|g}_j(\eta)\frac{|\xi|}{|\eta|}e^{i\eta(x-a_j+q(x))}e^{-ix\xi}\,dx d\eta.
		\end{split}
		\end{equation}
		We can integrate by parts in $x$ to calculate $F_1(\xi)$,
		\begin{equation*}
		\begin{split}
		F_1(\xi)&=\mathbf{1}_+(\xi)\frac{1}{2\pi}\int_0^\infty\widehat{|\nabla|g}_j(\eta)\int_\R\frac{\xi}{\eta}e^{i\eta(x-a_j+q(x))}e^{-ix\xi}\,dx d\eta\\
		&=\mathbf{1}_+(\xi)\frac{1}{2\pi}\int_0^\infty\widehat{|\nabla|g}_j(\eta)\int_\R\frac{i}{\eta}e^{i\eta(x-a_j+q(x))}\frac{d}{dx}\{e^{-ix\xi}\}\,dx d\eta\\
		&=\mathbf{1}_+(\xi)\frac{1}{2\pi}\int_0^\infty\widehat{|\nabla|g}_j(\eta)\int_\R (1+q'(x))e^{i\eta(x-a_j+q(x))}e^{-ix\xi}\,dx d\eta\\
		&=\mathbf{1}_+(\xi) \int_\R (P^+|\nabla|g_j)(x-a_j+q(x))(1+q'(x))e^{-ix\xi}\,dx.
		\end{split}
		\end{equation*}
		Similarly,
		\begin{equation}\label{nabla6.2}
		\begin{split}
		G_1(\xi)=-\mathbf{1}_+(\xi) \int_\R (P^-|\nabla|g_j)(x-a_j+q(x))(1+q'(x))e^{-ix\xi}\,dx.
		\end{split}
		\end{equation}
		Similarly, we multiply the identity \eqref{nabla5} by $\mathbf{1}_-(\xi)$ and integrate by parts in $x$ to derive the identity
		\begin{equation}\label{nabla7}
		\begin{split}
		\mathbf{1}_-(\xi)\widehat{|\nabla|h}_j(\xi)&=F_2(\xi)+G_2(\xi),\\
		F_2(\xi)&:=\mathbf{1}_-(\xi)\int_\R (P^-|\nabla|g_j)(x-a_j+q(x))(1+q'(x))e^{-ix\xi}\,dx,\\
		G_2(\xi)&:=-\mathbf{1}_-(\xi) \int_\R (P^+|\nabla|g_j)(x-a_j+q(x))(1+q'(x))e^{-ix\xi}\,dx.
		\end{split}
		\end{equation}
		We add up the identities \eqref{nabla6} and \eqref{nabla7} and notice that
		\begin{equation*}
		\begin{split}
		&F_1(\xi)=\int_\R (P^+|\nabla|g_j)(x-a_j+q(x))(1+q'(x))e^{-ix\xi}\,dx + G_2(\xi),\\
		&F_2(\xi)=\int_\R (P^-|\nabla|g_j)(x-a_j+q(x))(1+q'(x))e^{-ix\xi}\,dx + G_1(\xi).
		\end{split}
		\end{equation*}
		Thus
		\begin{equation*}
		\widehat{|\nabla|h_j}(\xi)=\int_\R (|\nabla|g_j)(x-a_j+q(x))(1+q'(x))e^{-ix\xi}\,dx+2G_1(\xi)+2G_2(\xi).
		\end{equation*}
		Therefore, we have the precise identity
		\begin{equation*}\label{nabla8}
		\begin{split}
		&|\nabla|h_j(x)=(|\nabla|g_j)(x-a_j+q(x))(1+q'(x)) + E(x),\\
		&E(x):=\frac{1}{2\pi}\int_{\R}(2G_1(\xi)+2G_2(\xi))e^{ix\xi}\,d\xi.
		\end{split}
		\end{equation*}
		
		It remains to prove the bounds \eqref{nabla4} on the error term. We decompose $E=2E_1+2E_2$,
		\begin{equation*}
		E_1(x):=\frac{1}{2\pi}\int_{\R}G_1(\xi)e^{ix\xi}\,d\xi,\qquad E_2(x):=\frac{1}{2\pi}\int_{\R}G_2(\xi)e^{ix\xi}\,d\xi.
		\end{equation*}
		We begin with $E_1$ and use the formula \eqref{nabla6.2} to write
		\begin{equation}\label{nabla12}
		\begin{split}
		\widehat{P_kE_1}(\xi)&=-\varphi_k(\xi)\mathbf{1}_+(\xi)\int_\R\sum_{k'\in\Z} (P_{k'}^-|\nabla|g_j)(x-a_j+q(x))(1+q'(x))e^{-ix\xi}\,dx\\
		&=-\varphi_k(\xi)\mathbf{1}_+(\xi)\frac{1}{2\pi}\int_{-\infty}^0\sum_{k'\in\Z}\varphi_{k'}(\eta)\widehat{|\nabla|g_j}(\eta)\int_\R e^{i\eta(x-a_j+q(x))}(1+q'(x))e^{-ix\xi}\,dx d\eta,\\
		&=-\varphi_k(\xi)\mathbf{1}_+(\xi)\frac{1}{2\pi}\int_{-\infty}^0\sum_{k'\in\Z}\varphi_{k'}(\eta)\widehat{|\nabla|g_j}(\eta)L(\xi,\eta) d\eta,
		\end{split}
		\end{equation}
		with 
		\begin{equation*}
			L(\xi,\eta):=\int_\R e^{i\eta(x-a_j+q(x))}(1+q'(x))e^{-ix\xi}\,dx.
		\end{equation*}
		We make the change of variables $x=\tilde{Q}(y)$, as defined in \eqref{changeV}, and integrate by parts in $y$ once to get
		\begin{equation*}
			L(\xi,\eta)=e^{-i\eta a_j}\int_{\R}e^{-i\xi\tilde{q}(y)}e^{iy(\eta -\xi)}dy=\frac{\xi e^{-i\eta a_j}}{\eta-\xi}\int_{\R}\tilde{q}'(y)e^{i(\eta y-\xi\tilde{Q}(y))}dy.
		\end{equation*}
		Notice that we expanded $\tilde{Q}(y)=y+\tilde{q}(y)$ in order to get the necessary smallness factor from $\tilde{q}'(y)$ which we don't have for $\tilde{Q}'(y)$. 
  
  For any $k,k'\in\Z$ we let $k^\ast=\max(k,k')$ and decompose the kernel $L$ as $L=L_1+L_2$ with
		\begin{equation*}
			\begin{split}
			&L_1(\xi,\eta):=\frac{\xi e^{-i\eta a_j}}{\eta-\xi}\int_{\R}\tilde{q}'(y)e^{i(\eta y-\xi\tilde{Q}(y))}\Phi_{\leq -k^\ast}(y)dy,\\
			&L_2(\xi,\eta):=\frac{\xi e^{-i\eta a_j}}{\eta-\xi}\int_{\R}\tilde{q}'(y)e^{i(\eta y-\xi\tilde{Q}(y))}(1-\Phi_{\leq -k^\ast})(y)dy,
			\end{split}
		\end{equation*}
		where here $\Phi_{\leq -k^\ast}$ is defined as in \eqref{q_bounds}. We claim that for $\xi>0$, $|\xi|\in[2^{k-1},2^{k+1}]$ and $\eta<0$, $|\eta|\in[2^{k'-1},2^{k'+1}]$, we have
		\begin{equation}\label{claim}
			|L_n(\xi,\eta)|\lesssim\ep 2^{k-2k^\ast},\qquad n\in\{1,2\}.
		\end{equation}
		
		The bounds for $L_1$ follow in a straightforward way from the support of the cutoff function and the smallness factor from \eqref{q_bounds}. For the bounds on $L_2$ we integrate by parts once more in $y$ to get
		\begin{equation*}
		\begin{split}
			L_2(\xi,\eta)&=i\frac{\xi}{\eta-\xi}e^{-i\eta a_j}\int_{\R}e^{i(\eta y-\xi\tilde{Q}(y))}\frac{d}{dy}\Big\{\frac{\tilde{q}'(y)(1-\Phi_{\leq -k^\ast}(y))}{\eta-\xi\tilde{Q}'(y)}\Big\}dy\\
			&=i\xi e^{-i\eta a_j}\int_{\R}e^{i(\eta y-\xi\tilde{Q}(y))}\frac{ \tilde{q}''(y)}{(\eta-\xi\tilde{Q}'(y))^2}(1-\Phi_{\leq -k^\ast}(y))\,dy+\ep O(2^{k-2k^\ast}).
		\end{split}
		\end{equation*}
		Since the bounds for $\partial_y^2\tilde{q}$ are not sufficient and lead to logarithmic losses, we perform another integration by parts in $y$ to obtain
		\begin{equation*}
		\begin{split}
			|L_2(\xi,\eta)|&\lesssim \ep\frac{2^k}{2^{2k^\ast}}+2^k\bigg|\int_{\R}e^{i(\eta y-\xi\tilde{Q}(y))}\frac{d}{dy}\Big\{\frac{\tilde{q}''(y)(1-\Phi_{\leq -k^\ast}(y))}{(\eta-\xi\tilde{Q}'(y))^3}\Big\}dy\bigg|\\
			&\lesssim\ep\frac{2^k}{2^{2k^\ast}}+2^k\int_{\R}\Big(\frac{|\tilde{q}^{(3)}(y)|\cdot|1-\Phi_{\leq -k^\ast}(y)|}{2^{3k^\ast}}+\frac{|\tilde{q}^{\prime\prime}(y)|\cdot|\partial_y(\Phi_{\leq -k^\ast})(y)|}{2^{3k^\ast}}+\frac{|\xi||\tilde{q}^{\prime\prime}(y)|^2\cdot|1-\Phi_{\leq -k^\ast}(y)|}{2^{4k^\ast}}\Big)dy\\
			&\lesssim\ep 2^{k-2k^\ast},
		\end{split}	
		\end{equation*}
		where we used \eqref{qtilde_bounds} in the last line, thus proving the claim \eqref{claim}. It now follows from \eqref{nabla12} that
		\begin{equation*}
		\begin{split}
		|\widehat{P_kE_1}(\xi)|&\lesssim \ep \varphi_k(\xi)\int_\R\sum_{k'\in\Z}\frac{2^k}{2^{2k}+2^{2k'}}|\varphi_{k'}(\eta)\widehat{|\nabla|g}_j(\eta)|\,d\eta\\
		&\lesssim\ep \varphi_k(\xi)\sum_{k'\in\Z}\frac{2^k2^{k'}}{2^{2k}+2^{2k'}}2^{k'/2}\|P_{k'}g_j\|_{L^2}
		\end{split}
		\end{equation*}
		By using the Cauchy-Schwartz inequality we now get
		\begin{equation*}
		\begin{split}
	\|\widehat{P_kE_1}\|_{L^2}
 %&\lesssim \ep 2^{k/2}\sum_{k'\in\Z}\frac{2^k2^{k'}}{2^{2k}+2^{2k'}}2^{k'/2}\|P_{k'}g_j\|_{L^2}\\
		&\lesssim\ep\sum_{k'\in\Z}\frac{2^{3k/2}2^{3k'/2}}{2^{2k}+2^{2k'}}\|P_{k'}g_j\|_{L^2}.
		\end{split}
		\end{equation*}
		The expression $P_kE_2(x)$, can be estimated in a similar way, thus concluding the proof.
	\end{proof}
	
	The free evolution of $g_j$ will lie in $Z_1$, while we will show that the forced terms land in $Z_2$. We thus have to study the right-hand side terms in \eqref{ga_eq} in $Z_2$ for $g_j$ in the sum space $Z$. The following lemma provides estimates for one of the terms. Due to the bounds on $q$, given in \eqref{q_bounds}, it is natural to expect that the term involving a first derivative of $q$ would be an error term.

	\begin{lemma}\label{lem:err2}
		
		Let $g_j\in Z$ and $q(t,x)$ satisfy \eqref{q_bounds}. Then, for any $k\in\Z$ and $t\in[0,\infty)$,
		$$\|P_k\big((\partial_xq)(t,\tilde{Q}(t,x))|\nabla|g_j(t,x)\big)\|_{L^2}\lesssim \varepsilon2^{k/2}(2^kt)^{-1/10}\|g_j\|_{Z}.$$
		
	\end{lemma}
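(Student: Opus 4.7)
Let me abbreviate $F(t,x):=(\partial_xq)(t,\tilde{Q}(t,x))$ and $B(t,x):=|\nabla|g_j(t,x)$. The key point is that $F$ is a (small) $L^\infty$ multiplier of size $\varepsilon$, so the product $F\cdot B$ should essentially inherit the frequency-localized $L^2$ bounds of $B$, with an extra $\varepsilon$ gain and a slight loss for $2^kt\le 1$ that is exactly what the $N$-norm exponent $(2^kt)^{-1/10}$ is built to absorb.

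First I would record the pointwise bound $\|F(t,\cdot)\|_{L^\infty}\lesssim\varepsilon$, which is immediate from \eqref{q_bounds} together with the fact that $\tilde{Q}(t,\cdot)\colon\R\to\R$ is a bijection. Since $q(t,\cdot)$ is supported where $\Phi_{\le 0}$ is, and $\tilde{Q}$ is close to the identity with uniformly controlled Jacobian, $F$ is essentially compactly supported, so in addition $\|F(t,\cdot)\|_{L^p}\lesssim\varepsilon$ for every $p\in[1,\infty]$.

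The main step is a Bony-type paraproduct decomposition
\[
P_k(F\cdot B)=P_k(F_{\le k-5}\cdot B_{\sim k})+P_k(F_{\sim k}\cdot B_{\le k-5})+\sum_{k'\ge k-5}P_k(F_{\sim k'}\cdot B_{\sim k'}).
\]
For the low-high piece, I would pair $\|F_{\le k-5}\|_{L^\infty}\lesssim\varepsilon$ with the $L^2$ bound on $B_{\sim k}$ supplied by Lemma~\ref{lemma:Z}, namely $\|P_{k}|\nabla|g_j\|_{L^2}\lesssim 2^{k/2}(1+2^kt)^{-1/10}\|g_j\|_Z$, to obtain the required estimate (using $(1+2^kt)^{-1/10}\lesssim(2^kt)^{-1/10}$). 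For the high-low and high-high pieces I need frequency-localized $L^2$ control of $F$. Here I would use the composition structure $F=(\partial_xq)\circ\tilde Q$ and mimic the oscillatory-integral integration-by-parts argument from the proofs of Lemmas~\ref{lem:g_h} and \ref{lem:ga_eq} --- crucially exploiting the pointwise bounds \eqref{qtilde_bounds} on derivatives of $\tilde q$ --- to transfer the uniform $L^2$ bound on $\partial_xq$ to $F$, obtaining $\|P_{k_1}F\|_{L^2}\lesssim\varepsilon$ (with further decay when $k_1$ is far from the natural scale of $q$). The high-low piece is then closed by pairing this with a Bernstein bound on $P_{\le k-5}B$ (costing a factor $2^k$), and the high-high piece by applying Bernstein on the output $P_k$ to trade two $L^2$ inputs for an $L^2$ output (gaining $2^{k/2}$ via $L^1\hookrightarrow L^2$ for functions supported at frequency $2^k$), then summing over $k'\ge k-5$.

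The main obstacle is the high-high sum: since $\|P_{k'}B\|_{L^2}$ grows like $2^{k'/2}$, summability in $k'\ge k$ must come entirely from the weight $(1+2^{k'}t)^{-1/10}$ inherited from the $Z$-norm of $g_j$, which supplies geometric decay only in the regime $2^{k'}t\gtrsim 1$. The resulting loss --- which is precisely what forces the appearance of $(2^kt)^{-1/10}$ rather than $(1+2^kt)^{-1/10}$ on the right-hand side --- is matched exactly by the $N$-norm weight, and the estimate closes.
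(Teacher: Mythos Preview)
Your paraproduct decomposition and the treatment of the low--high and high--low pieces are essentially the paper's argument and are fine. The genuine gap is in the high--high sum. With only $\|P_{k'}F\|_{L^2}\lesssim\varepsilon$ (which is all that compact support and the $L^\infty$ bound give), the Bernstein-on-the-output estimate yields
\[
\sum_{k'\ge k-5}2^{k/2}\|P_{k'}B\|_{L^2}\|P_{k'}F\|_{L^2}
\;\lesssim\;\varepsilon\,2^{k/2}\sum_{k'\ge k-5}2^{k'/2}(1+2^{k'}t)^{-1/10}\|g_j\|_Z,
\]
and this sum \emph{diverges}: $2^{k'/2}(2^{k'}t)^{-1/10}=2^{2k'/5}t^{-1/10}$ grows in $k'$. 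So the $(1+2^{k'}t)^{-1/10}$ weight cannot by itself supply summability, contrary to your final paragraph; you need genuine high-frequency decay of $F$ to cancel the $2^{k'/2}$ coming from $P_{k'}|\nabla|g_j$.

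The paper obtains exactly this decay, $\|P_{l}F\|_{L^2}\lesssim\varepsilon\,2^{-l/2}$, but not via oscillatory integrals on the composition. Instead it uses the algebraic identity $(\partial_xq)(\tilde Q(t,x))=-\dfrac{\partial_x\tilde q(t,x+a_j)}{1+\partial_x\tilde q(t,x+a_j)}$ (a consequence of $Q\circ\tilde Q=\mathrm{id}$), splits via a physical cutoff $\Phi_{\le -l}$ near the corners, and on the complement gains a factor $2^{-l}$ by taking one $x$-derivative and invoking the bound $|\partial_x^2\tilde q|\lesssim\varepsilon|x-a_j|^{-1}$ from \eqref{qtilde_bounds}. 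This $2^{-l/2}$ then exactly cancels the $2^{k'/2}$ in the high--high sum, leaving $\sum_{k'\ge k}(1+2^{k'}t)^{-1/10}\lesssim (2^kt)^{-1/10}$, which is where the $N$-norm loss actually enters. Your proposed route through Lemmas~\ref{lem:g_h}--\ref{lem:ga_eq} would instead require frequency-localized $L^2$ decay of $\partial_x q$ itself, for which \eqref{q_bounds} gives no higher-derivative control; the needed regularity lives on the $\tilde q$ side via \eqref{qtilde_bounds}.
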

	\begin{proof}
		Let us denote $u(t,x)=(\partial_xq)(t,\tilde{Q}(t,x))$. We expand
		\begin{equation*}
		\begin{aligned}
		P_k(|\nabla|g_j\cdot u)&=P_k[(P_{[k-3,k+3]}|\nabla|g_j)\cdot P_{\leq k-4}u]+P_k[(P_{\leq k-4}|\nabla|g_j)\cdot P_{[k-3,k+3]}u]\\
		&+\sum_{k_1,k_2\geq k-3,\,|k_1-k_2|\leq 6}P_k[(P_{k_1}|\nabla|g_j)\cdot P_{k_2}u]\\
		&:=S_1+S_2+S_3.
		\end{aligned}
		\end{equation*}
		Then we estimate
		\begin{equation}\label{musk6}
		\begin{split}
		\|S_1\|_{L^2}\lesssim \|P_{[k-3,k+3]}|\nabla|g_j\|_{L^2}\cdot\|P_{\leq k-4}u\|_{L^\infty}\lesssim\varepsilon2^{k/2}\min\{1,(2^kt)^{-1/10}\}\|g_j\|_{Z},\\
		\|S_2\|_{L^2}\lesssim \|P_{\leq k-4}|\nabla|g_j\|_{L^2}\cdot\|P_{[k-3,k+3]}u\|_{L^\infty}\lesssim\varepsilon2^{k/2}\min\{1,(2^kt)^{-1/10}\}\|g_j\|_{Z},
		\end{split}
		\end{equation}
		using \eqref{ZtotEst2} and \eqref{q_bounds}.
		
		To estimate $S_3$, recalling that $Q(t,\tilde{Q}(t,x))=x$, we calculate that $u(t,x)$ can be rewritten as
		\begin{equation*}
			u(t,x)=\frac{-\partial_x\tilde{q}(t,x+a_j)}{1+\partial_x\tilde{q}(t,x+a_j)}.
		\end{equation*}
		We want to show that for any $l\in\mathbb{Z}$ we have
		\begin{equation}\label{claim2}
			\|P_lu\|_{L^2}\lesssim\ep2^{-l/2}.
		\end{equation}
		By translation invariance, in what follows, we can assume without loss of generality that $a_j=0$. We begin by splitting $P_lu(t,x)=P_lu_1(t,x)+P_lu_2(t,x)$ where 
		\begin{equation*}
		\begin{split}
			&P_lu_1(t,x):=P_l\Big(\frac{-\partial_x\tilde{q}(t,x)}{1+\partial_x\tilde{q}(t,x)}\Phi_{\leq -l}(x)\Big),\\
			&P_lu_2(t,x):=P_l\Big(\frac{-\partial_x\tilde{q}(t,x)}{1+\partial_x\tilde{q}(t,x)}(1-\Phi_{\leq -l}(x))\Big),	
		\end{split}
		\end{equation*}
	where the cutoff functions $\Phi_{\leq -l}$ are defined as in \eqref{q_bounds}. For $P_lu_1(t,x)$, using \eqref{q_bounds}, we easily get
	\begin{equation}\label{claim21}
		\|P_lu_1\|_{L^2}\lesssim\ep2^{-l/2}.
	\end{equation}
	To estimate $P_lu_2$ we need to take an extra derivative.  We write
	\begin{equation}\label{claim3}
		\|P_lu_2\|_{L^2}\lesssim2^{-l}\|P_lu'_2\|_{L^2},
	\end{equation}
	and we compute 
	\begin{equation*}
		P_l\partial_xu_2(t,x)=P_l\bigg(\frac{-\partial_x^2\tilde{q}(t,x)(1-\Phi_{\leq -l}(x))}{(1+\partial_x\tilde{q}(t,x))^2}+\frac{\partial_x\tilde{q}(t,x)(\partial_x\Phi_{\leq -l})(x)}{1+\partial_x\tilde{q}(t,x)}\bigg),
	\end{equation*}
	which, using \eqref{qtilde_bounds}, yields
	\begin{equation}\label{claim4}
		\|P_l\partial_xu_2\|\lesssim\ep2^{l/2}.
	\end{equation}
	Combining \eqref{claim3} with \eqref{claim4} and \eqref{claim21} yields \eqref{claim2}.Using again \eqref{ZtotEst2} and the Cauchy-Schwartz inequality,
		\begin{equation}\label{musk7}
		\begin{split}
		\|S_3\|_{L^2}&\lesssim \sum_{k_1,k_2\geq k-3,\,|k_1-k_2|\leq 6}2^{k/2}\|P_{k_1}|\nabla|g_j\|_{L^2}\cdot \|P_{k_2}u\|_{L^2}\\
		&\lesssim 2^{k/2}\sum_{k_1,k_2\geq k-3,\,|k_1-k_2|\leq 6}2^{k_1/2}(1+2^{k_1}t)^{-1/10}\|g_j\|_Z\cdot \varepsilon 2^{-k_2/2}\\
		&\lesssim \varepsilon2^{k/2}(2^kt)^{-1/10}\|g_j\|_{Z}.
		\end{split}
		\end{equation}
		The desired conclusion follows from \eqref{musk6} and \eqref{musk7}.
	\end{proof}

	\section{Reformulation in terms of a pseudoproduct}\label{sec:LitPal}

	We would now like to rewrite the nonlinearity $\mathcal{N}$ defined in \eqref{nonlin2} as a pseudoproduct. We begin by introducing some extra notation. Here and for the rest of the paper, we will define $\underline{\xi}\in\R^{2n+1}$ as $\underline{\xi}:=(\xi_1,\ldots,\xi_{2n+1})$ for scalars and similarly, for functions, $\underline{f}:=(f_1,f_2,\ldots,f_{2n+1})$. 
	
	We now pass to Fourier variables and write
	\begin{equation*}\label{eq23}
	\begin{split}
	&h(x)=\frac{1}{2\pi}\int_\R \widehat{h}(\xi)e^{ix\xi}\,d\xi,\\
	&h^\ast(x,\alpha)=\frac{1}{2\pi}\int_{x-\alpha}^x\frac{1}{\alpha}\Big\{\int_\R \widehat{h}(\xi)e^{iy\xi}\,d\xi\Big\}\,dy=\frac{1}{2\pi}\int_\R \widehat{h}(\xi)e^{ix\xi}\frac{1-e^{-i\alpha\xi}}{i\alpha\xi}\,d\xi,\\
	&\partial_xh^\ast(x,\alpha)=\frac{1}{2\pi}\int_\R \widehat{h}(\xi)e^{ix\xi}\frac{1-e^{-i\alpha\xi}}{\alpha}\,d\xi.
	\end{split}
	\end{equation*}
	We substitute these formulas into \eqref{nonlin2}, and we now need to understand the multilinear pseudoproduct
	\begin{equation}\label{nonlin3}
	\mathcal{N}_n(x)=\frac{(-1)^n}{\pi}\frac{1}{(2\pi)^{2n+1}}\frac{d}{dx}\bigg\{\int_{\R^{2n+1}}\prod_{\ell=1}^{2n+1}\hat{h}(\xi_\ell)e^{ix(\xi_1+\ldots+\xi_{2n+1})}m_n(\underline{\xi})\,d\underline{\xi}\bigg\},
	\end{equation}
	where the multiplier $m_n:\mathbb{R}^{2n+1}\to\mathbb{C}$ is given by
	\begin{equation*}\label{nonlin4}
	m_n(\underline{\xi}):=-\xi_1\sum_{\ell=1}^{2n+1}\xi_\ell\int_{\R}\prod_{\imath=1}^{2n+1}\frac{1-e^{-i\alpha\xi_\imath}}{i\alpha\xi_\imath}d\alpha.
	\end{equation*}
	By writing
	\begin{equation*}
		\widehat{f_\ell}(\xi_\ell)=\int_\R f_\ell(y_\ell)e^{-i\xi_\ell y_\ell}\,dy_\ell,\qquad \ell\in\{1,\ldots,2n+1\},
	\end{equation*}
	the identity \eqref{nonlin3} can be written in the physical space
	\begin{equation*}\label{nonlin5}
	\mathcal{T}_n(\underline{f})(x)=\frac{(-1)^n}{\pi}\frac{d}{dx}\int_{\R^{2n+1}}\frac{d}{dy_1}\prod_{i=1}^{2n+1}f_i(y_i)K_n(x-\underline{y})d\underline{y},
	\end{equation*}
	with the kernel defined as
	\begin{equation*}\label{nonlin6}
	K_n(\underline{z}):=\int_{\R^{2n+1}}e^{i\sum_{\ell=1}^{2n+1}z_\ell\xi_\ell}m_n^s(\underline{\xi})\,d\underline{\xi}\qquad\text{with}\qquad m_n^s(\underline{\xi})=\frac{1}{(2\pi)^{2n+1}}\int_{\R}\prod_{i=1}^{2n+1}\frac{1-e^{-i\alpha\xi_i}}{i\alpha\xi_i}d\alpha.
	\end{equation*}
	We remark that pseudoproducts can also be analyzed in conjunction with the Littlewood-Paley projections. Indeed, \eqref{nonlin3} shows that
	\begin{equation}\label{nonlin8}
	\mathcal{T}_n(\underline{f})(x)=\frac{(-1)^n}{\pi}\frac{d}{dx}\bigg(\sum_{\underline{k}\in\mathbb{Z}^{2n+1}}\int_{\R^{2n+1}}i\xi_1\prod_{\ell=1}^{2n+1}\widehat{P_{k_\ell}f_\ell}(\xi_\ell) m^s_{\underline{k}}\,d\underline{\xi}\bigg)\quad\text{with}\quad
	m^s_{\underline{k}}(\underline{\xi})=m^s(\underline{\xi})\prod_{i=1}^{2n+1}\tilde{\varphi}_{k_i}(\xi_i),
	\end{equation}
	where, with $\widetilde{\varphi}_l:=\sum_{|a|\leq 2}\varphi_{l+a}$, 
	\begin{equation*}\label{eq31}
	\begin{split}
	m^s_{k_1,\ldots,k_{2n+1}}(\underline{\xi}):=m^s(\underline{\xi})\prod_{\ell=1}^{2n+1}\widetilde{\varphi}_{k_\ell}(\xi_\ell).
	\end{split}
	\end{equation*}
	Throughout our analysis, we will need frequency cutoffs in \eqref{nonlin8} on both the functions (in order to estimate the pseudoproduct) and on the multiplier (to obtain bounds on the kernel). In order to be allowed to do this, we need to ensure that $\varphi_l\tilde{\varphi}_l=\varphi_l$, and hence take a slightly larger support for $\tilde{\varphi}_l$.
	
	In order to work in the physical space, we define
	\begin{equation}\label{Lk}
	L_k(x,\alpha)=\frac{1}{2\pi}\int_\R\widetilde{\varphi}_k(\xi)\frac{1-e^{-i\alpha\xi}}{i\xi\alpha}e^{ix\xi}\,d\xi,
	\end{equation}
	and, with $\widetilde{\varphi}_{\leq k}:=\varphi_{\leq k+2}$,
	\begin{equation}\label{Llk}
	L_{\leq k}(x,\alpha)=\frac{1}{2\pi}\int_\R\widetilde{\varphi}_{\leq k}(\xi)\frac{1-e^{-i\alpha\xi}}{i\xi\alpha}e^{ix\xi}\,d\xi.
	\end{equation}
	The full kernel can now be expressed as
	\begin{equation}\label{kernel1}
	K_{k_1, \ldots, k_{2n+1}}(\underline{x})=\int_\R \prod_{\ell=1}^{2n+1} L_{k_\ell}(x_\ell,\alpha)\,d\alpha.
	\end{equation}
	In physical space, the trilinear pseudoproduct hence takes the form
	\begin{equation}\label{nonlin8.1}
	\mathcal{T}_n(P_{k_1}f_1,\ldots,P_{k_{2n+1}}f_{2n+1})(x)=\frac{(-1)^n}{\pi}\frac{d}{dx}\int_{\R^{2n+1}}\frac{d}{dy_1}\prod_{\ell=1}^{2n+1}P_{k_\ell}f_\ell(y_\ell)K_{\underline{k}}(x-\underline{y})\,d\underline{y}.
	\end{equation}

	\subsection{The kernels $L_k$ and $L_{\leq k}$}
	
	Our goal is to analyze the kernels $K_{k_1,\ldots,k_{2n+1}}$ defined in \eqref{kernel1}. To this end, we have the following lemma.
	
	\begin{lemma}\label{lemma:K}
		(i) Let $L_k$ and $L_{\leq k}$ be defined as in \eqref{Lk} and \eqref{Llk}. Then the following bounds hold:
		\begin{equation}\label{min}
		\int_{\R}|L_k(x,\alpha)|\,dx\lesssim\min(1,(2^k|\alpha|)^{-1}),
		\end{equation}
		and
		\begin{equation}\label{Lleq_k}
		\int_{\R}|L_{\leq k}(x,\alpha)|\,dx\lesssim 1.
		\end{equation}
		
		(ii) Letting
		\begin{equation*}\label{kernel2}
		K_{k_1, k_2, \leq k_3\ldots,\leq k_{2n+1}}(\underline{x}):=\int_\R |L_{k_1}(x_1,\alpha)L_{ k_2}(x_2,\alpha)\prod_{\ell=3}^{2n+1}L_{\leq k_\ell}(x_\ell,\alpha)|\,d\alpha,
		\end{equation*}
		we have
		\begin{equation}\label{musk10.5}
		\|K_{k_1, k_2, \leq k_3,\ldots,\leq k_{2n+1}}\|_{L^1(\R^{2n+1})}\lesssim 2^{-\max(k_1,k_2)}(1+|k_1-k_2|).
		\end{equation}
	\end{lemma}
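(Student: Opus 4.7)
The plan is to obtain two representations of $L_k(x,\alpha)$, each optimized for one of the two bounds in (i), and then to deduce (ii) by a straightforward application of Fubini together with an elementary $\alpha$-integration.

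\textbf{Part (i).} Since $\frac{1-e^{-i\alpha\xi}}{i\xi\alpha}=\frac{1}{\alpha}\int_0^\alpha e^{-is\xi}\,ds$, one can exchange the order of integration in \eqref{Lk} and write
\begin{equation*}
L_k(x,\alpha)=\frac{1}{\alpha}\int_0^\alpha \check{\widetilde\varphi}_k(x-s)\,ds,
\end{equation*}
where $\check{\widetilde\varphi}_k(x)=2^k\check{\widetilde\varphi}_0(2^k x)$ is the inverse Fourier transform of $\widetilde\varphi_k$, hence a rescaled Schwartz function with $L^1$-norm independent of $k$. Fubini in the $x$-variable then gives the uniform bound $\int_\R|L_k(x,\alpha)|\,dx\lesssim 1$, and the very same argument yields \eqref{Lleq_k} since $\widetilde\varphi_{\leq k}$ is also a fixed function rescaled by $2^k$.

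For the decaying bound in \eqref{min} we use instead the crucial fact that $\widetilde\varphi_k$ is supported away from the origin, so $\Psi_k(x):=\frac{1}{2\pi}\int_\R\frac{\widetilde\varphi_k(\xi)}{i\xi}e^{ix\xi}\,d\xi$ is a well-defined rescaled Schwartz function, $\Psi_k(x)=\Psi_0(2^k x)$, satisfying $\|\Psi_k\|_{L^1}\lesssim 2^{-k}$. Splitting $1-e^{-i\alpha\xi}$ into its two summands yields the telescoping identity
\begin{equation*}
L_k(x,\alpha)=\frac{1}{\alpha}\bigl[\Psi_k(x)-\Psi_k(x-\alpha)\bigr],
\end{equation*}
which by the triangle inequality gives $\int|L_k(x,\alpha)|\,dx\lesssim (2^k|\alpha|)^{-1}$. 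The minimum of the two bounds is \eqref{min}.

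\textbf{Part (ii).} By Fubini,
\begin{equation*}
\|K_{k_1,k_2,\leq k_3,\dots,\leq k_{2n+1}}\|_{L^1(\R^{2n+1})}\leq \int_\R \Bigl(\int_\R |L_{k_1}(x_1,\alpha)|\,dx_1\Bigr)\Bigl(\int_\R |L_{k_2}(x_2,\alpha)|\,dx_2\Bigr)\prod_{\ell=3}^{2n+1}\Bigl(\int_\R |L_{\leq k_\ell}(x_\ell,\alpha)|\,dx_\ell\Bigr)\,d\alpha.
\end{equation*}
Applying \eqref{min} to the first two factors and \eqref{Lleq_k} to the rest, and assuming without loss of generality that $k_1\leq k_2$, reduces the matter to the one-dimensional integral $\int_\R \min(1,(2^{k_1}|\alpha|)^{-1})\min(1,(2^{k_2}|\alpha|)^{-1})\,d\alpha$. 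Splitting into the three regimes $|\alpha|\leq 2^{-k_2}$, $2^{-k_2}\leq|\alpha|\leq 2^{-k_1}$, and $|\alpha|\geq 2^{-k_1}$ gives contributions of order $2^{-k_2}$, $2^{-k_2}(k_2-k_1)$, and $2^{-k_2}$ respectively, which sum to $2^{-\max(k_1,k_2)}(1+|k_1-k_2|)$, establishing \eqref{musk10.5}.

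The only minor subtlety is the logarithmic middle regime, which is precisely what produces the $1+|k_1-k_2|$ factor; this is unavoidable and is the source of the high-high correction factor that will need to be absorbed later by the frequency-localization gains in the $Z_2$ norm. No substantial obstacle is anticipated: this is a clean kernel calculation of the sort that appears often in Littlewood–Paley analysis of pseudoproducts.
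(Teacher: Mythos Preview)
Your proof is correct and follows essentially the same strategy as the paper: derive the difference/average representation of $L_k$, use it to get the two bounds in \eqref{min}, and then do the three-regime $\alpha$-integration for part (ii). The decaying bound and part (ii) are identical to the paper's argument.

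The one place where you differ slightly is the uniform bound. You write $L_k(x,\alpha)=\frac{1}{\alpha}\int_0^\alpha \check{\widetilde\varphi}_k(x-s)\,ds$ as an average of translates of a Schwartz bump and apply Fubini; this handles $L_k$ and $L_{\leq k}$ in one stroke since $\check{\widetilde\varphi}_{\leq k}$ is also a rescaled Schwartz function with $L^1$-norm independent of $k$. The paper instead works from the telescoping formula $L_k(x,\alpha)=\alpha^{-1}[\psi_0(2^kx)-\psi_0(2^k(x-\alpha))]$ and uses a pointwise Lipschitz-type bound on $\psi_0$; for $L_{\leq k}$ this forces an extra step, because $\psi_{\leq 0}=\mathcal{F}^{-1}(\widetilde\varphi_{\leq 0}/i\xi)$ is \emph{not} Schwartz (only its derivative is), so the paper rewrites the difference as $\int_{2^k(x-\alpha)}^{2^k x}\psi'_{\leq 0}$ and argues from there. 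Your averaging formulation sidesteps this subtlety entirely, which is a small but genuine simplification.
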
 
	
	\begin{proof}
		(i) From \eqref{Lk}, we have
		\begin{equation*}
		L_k(x,\alpha)=\frac{1}{2\pi}\int_{\R}\widetilde{\varphi}_0(\xi/{2^k})\frac{1-e^{-i\alpha\xi}}{i\xi\alpha}e^{ix\xi}\,d\xi=\frac{1}{2\pi}\int_{\R}\widetilde{\varphi}_0(\xi)e^{i2^kx\xi}\frac{1-e^{-i\alpha\xi2^k}}{i\xi\alpha}\,d\xi.
		\end{equation*}
		We now denote by $\psi_0(x)$ the inverse Fourier transform of $\widetilde{\varphi}_0(\xi)/i\xi$ and we thus obtain the formula 
		\begin{equation}\label{Lk_formula}
		L_k(x,\alpha)=\frac{\psi_0(2^kx)-\psi_0(2^k(x-\alpha))}{\alpha}.
		\end{equation}
		We now use \eqref{Lk_formula} to evaluate the $L^1$-norm of $L_k$. For $2^k|\alpha|\geq1$, we can write
		\begin{equation*}
		\int_{\R}\bigg|\frac{[\psi_0(2^kx)-\psi_0(2^k(x-\alpha))]}{\alpha}\bigg|\,dx\leq
		\int_{\R}\bigg|\frac{\psi_0(y)}{2^k\alpha}\bigg|+\bigg|\frac{\psi_0(y-2^k\alpha)}{2^k\alpha}\bigg|\,dy\lesssim(2^k|\alpha|)^{-1},
		\end{equation*}
		where in the last step we used the fact that $\psi_0$, being the inverse of the Fourier transform of $\widetilde{\varphi}_0(\xi)/(i\xi)$,  is a Schwartz function, and is hence bounded in $L^1$-norm. For $2^k|\alpha|\leq1$, we use the property that all Schwartz functions satisfy
		\begin{equation*}
		|\psi_0(p)-\psi_0(\rho+p)|\lesssim|\rho|\langle p\rangle^{-4},
		\end{equation*}
		for $|\rho|\leq1$ and any $p\in\R$. As a result, we get
		\begin{equation*}\label{aux1}
		\int_{\R}\bigg|\frac{[\psi_0(2^kx)-\psi_0(2^k(x-\alpha))]}{\alpha}\bigg|\,dx\lesssim\int_{\R}\frac{|\alpha2^k|\langle2^kx\rangle^{-4}}{|\alpha|}\,dx=\int_{\R}\langle y\rangle^{-4}dy\lesssim1,
		\end{equation*}
		thus completing the proof for \eqref{min}.
		
		We now prove \eqref{Lleq_k}. We have
		\begin{equation*}
		L_{\leq k}(x,\alpha)=\frac{1}{2\pi}\int\widetilde{\varphi}_{\leq 0}(\xi)e^{i2^kx\xi}\frac{1-e^{-i\alpha\xi2^k}}{i\xi\alpha}\,d\xi.
		\end{equation*}
		Denoting by $\psi_{\leq 0}(x)$ the inverse Fourier transform of $\widetilde{\varphi}_{\leq 0}(\xi)/i\xi$ yields
		\begin{equation}\label{Lleq_k_formula}
		L_{\leq k}(x,\alpha)=\frac{\psi_{\leq0}(2^kx)-\psi_{\leq0}(2^k(x-\alpha))}{\alpha}.
		\end{equation}
		However, since $\widetilde{\varphi}_{\leq0}(\xi)/i\xi$ has a singularity at $0$ the function $\psi_{\leq0}$ is not in the class of Schwartz functions, thus preventing us from directly taking the $L^1$-norm as we did above. Instead, we notice that the derivative of $\psi_{\leq 0}$, given by 
		\begin{equation*}
		\psi_{\leq0}'(x)=\frac{1}{2\pi}\partial_{x}\bigg(\int_{\R}\frac{\varphi_{\leq 2}(\xi)e^{ix\xi}}{i\xi}\,d\xi\bigg)=\frac{1}{2\pi}\int_{\R}\varphi_{\leq 2}(\xi)e^{ix\xi}\,d\xi,
		\end{equation*}
		is in the class of Schwartz functions. We thus rewrite the difference in \eqref{Lleq_k_formula} as
		\begin{equation*}\label{psi'}
		\Big|\frac{\psi_{\leq0}(2^kx)-\psi_{\leq0}(2^k(x-\alpha))}{\alpha}\Big|=\frac{1}{|\alpha|}\Big|\int_{2^k(x-\alpha)}^{2^kx}\psi_{\leq0}'(y)\,dy\Big|\lesssim\frac{1}{|\alpha|}\int_{2^k(x-\alpha)}^{2^kx}\langle y\rangle^{-6}\,dy.
		\end{equation*}
		We thus obtain the bounds
		\begin{equation}\label{alternate_bounds}
		\Big|\frac{\psi_{\leq0}(2^kx)-\psi_{\leq0}(2^k(x-\alpha))}{\alpha}\Big|\lesssim\frac{1}{|\alpha|}\bigg(\mathbbm{1}_{|x|\geq2|\alpha|}2^k|\alpha|\langle2^kx\rangle^{-4}+\mathbbm{1}_{|x|\leq2|\alpha|} \bigg).
		\end{equation}
		Integrating \eqref{alternate_bounds} in $x$ now yields \eqref{Lleq_k}.
		
		(ii) Without loss of generality we may assume that $k_1\geq k_2$. Using \eqref{min}--\eqref{Lleq_k} we estimate
		\begin{equation*}
		\begin{split}
		\int_{\R^3}|K_{k_1,k_2,\leq k_3,\ldots,\leq k_{2n+1}}(\underline{x})|\,d\underline{x}&\lesssim \int_{\R}\min(1,(2^{k_1}|\alpha|)^{-1})\min(1,(2^{k_2}|\alpha|)^{-1})\,d\alpha\\
		&\lesssim 2^{-k_1}(1+|k_1-k_2|),
		\end{split}
		\end{equation*}
		as claimed.
	\end{proof}
	
	\subsection{The modified kernels}
	For certain estimates of the pseudoproduct, working simply with the $L^1$-norm of the kernel will lead to a logarithmic loss. To overcome this difficulty, we introduce the following decomposition of  ${L}_k$ and $L_{\leq k}$,
	\begin{equation}\label{modified_kernel}
	\begin{split}
	L_k(x,\alpha)&=\widetilde{L}_k(x,\alpha)+\frac{1}{\alpha}\min(1,2^k\alpha)\psi_0'(2^kx),\\
	L_{\leq k}(x,\alpha)&=\widetilde{L}_{\leq k}(x,\alpha)+\frac{1}{\alpha}\min(1,2^k\alpha)\psi_{\leq 0}'(2^kx),
	\end{split}
	\end{equation}
	where here
	\begin{equation}\label{Lktilde}
	\begin{split}
	\widetilde{L}_k(x,\alpha)&:=\frac{\psi_0(2^kx)-\psi_0(2^k(x-\alpha))}{\alpha}-\frac{1}{\alpha}\min(1,2^k\alpha)\psi_0'(2^kx),\\
	\widetilde{L}_{\leq k}(x,\alpha)&:=\frac{\psi_{\leq 0}(2^kx)-\psi_{\leq 0}(2^k(x-\alpha))}{\alpha}-\frac{1}{\alpha}\min(1,2^k\alpha)\psi'_{\leq 0}(2^kx).
	\end{split}
	\end{equation}
	The following lemma provides us with bounds on the $L^1$-norm of the modified kernels $\widetilde{L}_k$ and $\widetilde{L}_{\leq k}$ and highlights the improvement from Lemma \ref{lemma:K}.
	
	\begin{lemma}\label{lemma:K2}
		(i) Let $\widetilde{L}_k$ and $\widetilde{L}_k$ be defined as in \eqref{Lktilde}. Then 
		\begin{equation}\label{tilde_L}
		\begin{split}
		&\int_{\R}|\widetilde{L}_k(x,\alpha)|\,dx\lesssim\min(2^k|\alpha|,(2^k|\alpha|)^{-1}),\\
		&\int_{\R}|\widetilde{L}_{\leq k}(x,\alpha)|\,dx\lesssim\min(2^k|\alpha|,1).
		\end{split}
		\end{equation}
		
		(ii) In particular, if 
		\begin{equation}\label{kernel6}
		\widetilde{K}_{k_1, k_2, \leq k_3,\ldots,\leq k_{2n+1}}(\underline{x}):=\int_\R |L_{k_1}(x_1,\alpha)\widetilde{L}_{ k_2}(x_2,\alpha)L_{\leq k_3}(x_3,\alpha)\ldots L_{\leq k_{2n+1}}|\,d\alpha,
		\end{equation}
		and if $k_1\geq k_2$, then
		\begin{equation}\label{musk10.6}
		\|\widetilde{K}_{k_1, k_2, \leq k_3\ldots,\leq k_{2n+1}}\|_{L^1(\R^{2n+1})}\lesssim 2^{-k_1}.
		\end{equation}
		
	\end{lemma}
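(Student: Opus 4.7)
The plan is to leverage the explicit formulas \eqref{Lk_formula} and \eqref{Lleq_k_formula} together with a Taylor expansion argument in the small-$|\alpha|$ regime, then feed the improved $L^1_x$-bounds into a Fubini calculation for part (ii).

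For part (i), I split based on the size of $2^k|\alpha|$. When $2^k|\alpha|\ge 1$, the subtracted term in \eqref{Lktilde} becomes $\alpha^{-1}\psi_0'(2^kx)$ (resp.\ $\alpha^{-1}\psi_{\leq 0}'(2^kx)$), whose $L^1_x$-norm is $\lesssim 2^{-k}|\alpha|^{-1}=(2^k|\alpha|)^{-1}$ because both $\psi_0'$ and $\psi_{\leq 0}'$ are Schwartz; combined with the bounds on the difference quotients from Lemma \ref{lemma:K}, this gives $\int|\widetilde L_k(x,\alpha)|\,dx\lesssim (2^k|\alpha|)^{-1}$ and $\int|\widetilde L_{\leq k}(x,\alpha)|\,dx\lesssim 1$ in this regime. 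When $2^k|\alpha|\le 1$, the subtraction equals $2^k\psi_0'(2^kx)$, so $\widetilde L_k$ is precisely the integral remainder of the second-order Taylor expansion of $\psi_0$: writing
$$\widetilde L_k(x,\alpha)=-2^{2k}\alpha\int_0^1(1-s)\psi_0''(2^kx-s2^k\alpha)\,ds,$$
a change of variables $u=2^kx$ and $\|\psi_0''\|_{L^1}<\infty$ yield $\int|\widetilde L_k|\,dx\lesssim 2^k|\alpha|$.

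The main technical subtlety lies in the analogous bound for $\widetilde L_{\le k}$, because $\psi_{\leq 0}$ is only tempered (with a $\mathrm{sign}$-like behavior at infinity coming from the $1/\xi$ singularity of $\widetilde\varphi_{\leq 0}(\xi)/(i\xi)$ at the origin), so a direct second-order Taylor expansion of $\psi_{\leq 0}$ is not justified. To get around this, I perform the expansion ``one derivative up'': by the fundamental theorem of calculus,
$$\widetilde L_{\le k}(x,\alpha)=2^k\int_0^1\bigl[\psi_{\leq 0}'(2^kx-s2^k\alpha)-\psi_{\leq 0}'(2^kx)\bigr]\,ds,$$
and then I Taylor-expand the Schwartz function $\psi_{\leq 0}'$ to produce an extra factor of $2^k|\alpha|$ multiplying an integral of $\psi_{\leq 0}''$, whose $L^1$-norm is finite. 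Integration in $x$ yields $\int|\widetilde L_{\leq k}|\,dx\lesssim 2^k|\alpha|$, completing the two bounds in \eqref{tilde_L}.

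For part (ii), Fubini together with the product of the $L^1_x$-bounds just established (and the bound $\int|L_{\le k_\ell}(x_\ell,\alpha)|\,dx_\ell\lesssim 1$ from \eqref{Lleq_k} for the $\ell\ge 3$ slots) reduces everything to a one-dimensional integral in $\alpha$,
$$\|\widetilde K_{k_1,k_2,\le k_3,\dots,\le k_{2n+1}}\|_{L^1}\lesssim \int_\R \min\bigl(1,(2^{k_1}|\alpha|)^{-1}\bigr)\min\bigl(2^{k_2}|\alpha|,(2^{k_2}|\alpha|)^{-1}\bigr)\,d\alpha.$$
Splitting along the three natural regions $|\alpha|\le 2^{-k_1}$, $2^{-k_1}\le|\alpha|\le 2^{-k_2}$, $|\alpha|\ge 2^{-k_2}$ and using $k_1\ge k_2$, direct evaluation shows each contributes $\lesssim 2^{-k_1}$. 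The key improvement over Lemma \ref{lemma:K} is visible in the intermediate region, which was responsible for the logarithmic loss $(1+|k_1-k_2|)$: here the factor $2^{k_2}|\alpha|$ exactly cancels $(2^{k_1}|\alpha|)^{-1}$ from the other kernel, leaving a constant integrand $2^{k_2-k_1}$ on a range of length $\sim 2^{-k_2}$ and producing the clean $2^{-k_1}$ bound without any logarithm, which is precisely the point of introducing the modified kernels.
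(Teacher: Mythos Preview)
Your proof is correct and follows essentially the same route as the paper: triangle inequality against Lemma~\ref{lemma:K} in the regime $2^k|\alpha|\ge 1$, a second-order Taylor remainder involving $\psi_0''$ (respectively $\psi_{\leq 0}''$) in the regime $2^k|\alpha|\le 1$, and the same Fubini-plus-splitting computation in $\alpha$ for part~(ii). Your treatment of $\widetilde L_{\le k}$ via the fundamental theorem of calculus applied to $\psi_{\leq 0}'$ is exactly what the paper means by ``similar, since $\psi'_{\leq 0}$ is a Schwartz function,'' and your use of the integral remainder form is equivalent to the paper's nested-integral estimate.
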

	
	\begin{proof}
		For $2^k|\alpha|\geq1$, the bounds follow from Lemma \ref{lemma:K} and \eqref{modified_kernel}.
    %we have
    %		\begin{equation*}		 \widetilde{L}_k(x,\alpha)=\frac{\psi_0(2^kx)-\psi_0(2^k(x-\alpha))}{\alpha}-\frac{1}{    \alpha}\psi_0'(2^kx).
    %		\end{equation*}
	%	As a result
	%	\begin{align*}	 &\int_\R|\widetilde{L}_k(x,\alpha)|\,dx\leq\int_{\R}\bigg|\frac{\psi_0(y)}{2^k\alpha}\bigg|+\bigg|\frac{\psi_0(y-\alpha)}{2^k\alpha}\bigg|+\bigg|\frac{\psi_0'(y)}{2^k\alpha}\bigg|\,dy\lesssim(2^k|\alpha|)^{-1}	
	%	\end{align*}
    %		where we have used the fact that $\psi_0$ and $\psi_0'$ are Schwartz functions.
        When $2^k|\alpha|\leq1$ we have
		\begin{equation*}
		\widetilde{L}_k(x,\alpha)=\frac{\psi_0(2^kx)-\psi_0(2^k(x-\alpha))-2^k\alpha\psi_0'(2^kx)}{\alpha},
		\end{equation*}
		from which we get
		\begin{equation*}\label{aux}
		\begin{aligned}
		\int_\R\bigg|\frac{\psi_{0}(2^kx)-\psi_{0}(2^k(x-\alpha))-2^k\alpha\psi_{0}'(2^kx)}{\alpha} \bigg|\,dx
		&\leq\int_{\R}\frac{1}{2^k|\alpha|}\bigg|\int_{y-2^k\alpha}^{y}[\psi_{0}'(p)-\psi_{0}'(y)]\,dp\bigg|dy\\
		&\leq\frac{1}{2^k|\alpha|}\int_{\R}\int_{y-2^k\alpha}^{y}\int_y^{ p}|\psi_{0}''(s)|\,dsdpdy\\
		&\lesssim2^k|\alpha|.   
		\end{aligned}
		\end{equation*}
		thus concluding the proof of the bounds in the first line of \eqref{tilde_L}.
		
		The proof of the bounds in the second line of \eqref{tilde_L} is similar, since $\psi'_{\leq 0}$ is a Schwarz function. Finally, to prove \eqref{musk10.6} we estimate
		\begin{equation*}
		\begin{split}
		\int_{\R^3}|\widetilde{K}_{k_1,k_2,\leq k_3}(x_1,x_2,x_3)|\,dx_1dx_2dx_3&\lesssim \int_{\R}\min(1,(2^{k_1}|\alpha|)^{-1})\min((2^{k_2}|\alpha|,(2^{k_2}|\alpha|)^{-1})\,d\alpha\lesssim 2^{-k_1},
		\end{split}
		\end{equation*}
		as claimed.
	\end{proof}

\subsection{Study of the functions in the $Z_1$ space}
Finally, before we can begin estimating the psuedoproducts, we need a further understanding of the space $Z_1$. Specifically, in the following lemmas, we take advantage of the shape of the functions in the space $Z_1$ to split them into core and error terms. Assume that $g\in Z_1$ and define
\begin{equation}\label{T}
T(g)(t,x,\alpha,k):=\int_{\R}g(t,x-a-y+q(t,x-y))\frac{\psi_{\leq 0}(2^{k}y)-\psi_{\leq 0}(2^{k}(y-\alpha))}{\alpha}\,dy,
\end{equation}
where $k\in\Z$ and $a\in\R$. Recall that
\begin{equation*}
\psi_{\leq 0}(x)=\frac{1}{2\pi}\int_{\R}\frac{1}{i\xi}e^{ix\xi}\varphi_{\leq 2}(\xi)\,d\xi.
\end{equation*}
It is easy to see that $\psi_{\leq 0}$ is an odd function on $\R$ and
\begin{equation}\label{musk22}
\begin{split}
&(\partial_x\psi_{\leq 0})(x)=\frac{1}{2\pi}\int_{\R}e^{ix\xi}\varphi_{\leq 2}(\xi)\,d\xi,\\
&\lim_{x\to\infty}\psi_{\leq 0}(x)=1/2,\qquad \lim_{x\to-\infty}\psi_{\leq 0}(x)=-1/2.
\end{split}
\end{equation}

By looking at \eqref{T}, we notice that if $1/\alpha$ had a power that is only slightly better than $1$, the expression would be integrable. This motivates the following lemma.

\begin{lemma}[Decomposition of functions in $Z_1$]\label{lemma:error}
	Assume $\|g\|_{Z_1}= 1$, $k\in\Z$, $a\in\R$, and $|\alpha|\gtrsim 2^{-k}$. Then
	\begin{equation*}
	\big|T(t,x,\alpha,k)-p(t,x,\alpha)\big|\lesssim\bigg[\frac{|\alpha||x-a+q(t,x)|}{|x-a+q(t,x)|^2+|\alpha|^2}\bigg]^{9/10}+\frac{1}{2^{k}|\alpha|},
	\end{equation*}
	where here
	\begin{equation}\label{pbigDef}
	p(t,x,\alpha):=
	\begin{cases}
	0 & \text{if }|\alpha|\in [|x-a+q(t,x)|/4,4|x-a+q(t,x)|],\\
	g^+(t,x,\alpha)&\text{if }|\alpha|> 4|x-a+q(t,x)|,\quad\alpha>0,\\
	g^-(t,x,\alpha) &\text{if }|\alpha|> 4|x-a+q(t,x)|,\quad\alpha<0,\\
	g(t,x-a+q(t,x)) &\text{if }|\alpha|< |x-a+q(t,x)|/4,
	\end{cases}
	\end{equation}
	\begin{equation}\label{gpgm}
	\begin{aligned}
	g^+(t,x,\alpha)&:=\frac{1}{\alpha}\int_0^\alpha g(t,-y+q(t,x-y)-q(t,x))\,dy,\\
	g^-(t,x,\alpha)&:=\frac{-1}{\alpha}\int_{\alpha}^0 g(t,-y+q(t,x-y)-q(t,x))\,dy.
	\end{aligned}
	\end{equation}
\end{lemma}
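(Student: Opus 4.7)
The plan is to split the kernel $[\psi_{\leq 0}(2^k y)-\psi_{\leq 0}(2^k(y-\alpha))]/\alpha$ into a sharp ``sign'' part plus an $L^1$ remainder, and then analyze the main part by case analysis on the ratio $|\alpha|/|z|$ where $z:=x-a+q(t,x)$. Setting $\tilde u(y):=-y+q(t,x-y)-q(t,x)$ so that the argument of $g$ in the integrand is exactly $z+\tilde u(y)$, the key tool is an interpolation between the two components $\|g\|_{L^\infty}\le 1$ and $\|x\partial_x g\|_{L^\infty}\le 1$ provided by $\|g\|_{Z_1}=1$. To carry this out, I first write $\psi_{\leq 0}(x)=\tfrac{1}{2}\text{sign}(x)+\tilde\psi(x)$; since $\psi_{\leq 0}'=\mathcal{F}^{-1}(\widetilde{\varphi}_{\leq 2})$ is Schwartz and $\psi_{\leq 0}(\pm\infty)=\pm 1/2$ by \eqref{musk22}, the remainder $\tilde\psi$ lies in $L^1(\R)\cap L^\infty(\R)$ with rapid decay at infinity. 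This yields
\begin{equation*}
\frac{\psi_{\leq 0}(2^k y)-\psi_{\leq 0}(2^k(y-\alpha))}{\alpha}=\frac{\text{sign}(\alpha)}{\alpha}\mathbf{1}_{I(\alpha)}(y)+\frac{\tilde\psi(2^k y)-\tilde\psi(2^k(y-\alpha))}{\alpha},
\end{equation*}
with $I(\alpha)=(0,\alpha)$ if $\alpha>0$ and $(\alpha,0)$ otherwise. Integrated against $g$, the remainder contributes $\lesssim\|g\|_\infty\cdot 2^{-k}\|\tilde\psi\|_{L^1}/|\alpha|\lesssim(2^k|\alpha|)^{-1}$, accounting for the second term in the claimed error.

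Assuming $\alpha>0$ (the case $\alpha<0$ is identical after $y\mapsto -y$ and produces $g^-$ in place of $g^+$), the remaining ``core'' piece is
\begin{equation*}
M(t,x,\alpha):=\frac{1}{\alpha}\int_0^\alpha g(t,z+\tilde u(y))\,dy,
\end{equation*}
and by \eqref{q_bounds} one has $|\tilde u(y)-(-y)|\le\ep|y|$, hence $|\tilde u(y)|\sim|y|$. When $|\alpha|<|z|/4$, one has $|z+\tilde u(y)|\in[|z|/2,2|z|]$ for $y\in[0,\alpha]$, and the pointwise bound $|\partial_x g(\zeta)|\le 1/|\zeta|$ combined with the mean value theorem gives $|g(t,z+\tilde u(y))-g(t,z)|\lesssim|y|/|z|$; integrating yields $|M-g(t,z)|\lesssim|\alpha|/|z|\le(|\alpha|/|z|)^{9/10}$, which matches the claim since $|\alpha||z|/(|z|^2+|\alpha|^2)\sim|\alpha|/|z|$ here. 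When $|\alpha|\in[|z|/4,4|z|]$ we have $p=0$ and the raw bound $|M|\lesssim\|g\|_\infty\lesssim 1$ suffices, since $|\alpha||z|/(|z|^2+|\alpha|^2)\sim 1$ in this regime. When $|\alpha|>4|z|$, the target is $g^+=\frac{1}{\alpha}\int_0^\alpha g(t,\tilde u(y))\,dy$, so I must estimate
\begin{equation*}
M-g^+=\frac{1}{\alpha}\int_0^\alpha\bigl[g(t,z+\tilde u(y))-g(t,\tilde u(y))\bigr]\,dy.
\end{equation*}
The $Z_1$ bounds give $|g(t,z+\tilde u)-g(t,\tilde u)|\le\min\bigl(2\|g\|_\infty,\,|z|\sup_{|\zeta|\ge|\tilde u|/2}|\partial_xg(\zeta)|\bigr)\lesssim\min(1,|z|/|y|)$, and the elementary inequality $\min(1,r)\le r^{9/10}$ (valid for all $r>0$) then yields
\begin{equation*}
|M-g^+|\lesssim\frac{1}{\alpha}\int_0^\alpha\Bigl(\frac{|z|}{|y|}\Bigr)^{9/10}dy\lesssim\Bigl(\frac{|z|}{|\alpha|}\Bigr)^{9/10},
\end{equation*}
which matches the claim since $|\alpha||z|/(|z|^2+|\alpha|^2)\sim|z|/|\alpha|$ in this regime.

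The main obstacle is producing precisely the exponent $9/10$ in the third case. A naive split of $[0,\alpha]$ at $|y|\sim|z|$, using the $L^\infty$ bound for small $y$ and the derivative bound for large $y$, produces a logarithmic factor $(|z|/|\alpha|)\log(|\alpha|/|z|)$ that cannot be absorbed into any pure power strictly below $1$ uniformly in the ratio $|\alpha|/|z|$. The interpolation inequality $\min(1,r)\le r^{9/10}$, which is trivial in both regimes $r\le 1$ and $r\ge 1$, is exactly the device that converts the would-be logarithmic divergence into the convergent integral $\int_0^\alpha y^{-9/10}\,dy\lesssim\alpha^{1/10}$ and produces the stated H\"older-type exponent.
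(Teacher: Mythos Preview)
Your proof is correct and follows essentially the same strategy as the paper: extract the step-function part of the kernel, pick up the $(2^k|\alpha|)^{-1}$ error, and then do a case analysis on $|\alpha|/|z|$ for the resulting average $M$. Your global decomposition $\psi_{\leq 0}=\tfrac12\,\mathrm{sign}+\tilde\psi$ is a clean repackaging of what the paper does region by region (splitting $y\le 0$, $y\in[0,\alpha]$, $y\ge\alpha$ and using $\psi_{\leq 0}(\pm\infty)=\pm\tfrac12$), and your pointwise interpolation $\min(1,|z|/|y|)\le(|z|/|y|)^{9/10}$ is a slightly slicker way to reach the same endpoint.

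One remark on your final paragraph: the claim that the ``naive split'' at $|y|\sim|z|$ produces a term $(|z|/|\alpha|)\log(|\alpha|/|z|)$ that \emph{cannot} be absorbed into a power below $1$ is not correct. In fact $r\log(1/r)\lesssim r^{9/10}$ uniformly for $r\in(0,1]$, and this is precisely what the paper does: it splits at $|y|=2|z|$, gets $\tfrac{|z|}{\alpha}+\tfrac{|z|}{\alpha}\log(\alpha/|z|)$, and then bounds this by $(|z|/\alpha)^{9/10}$. So your interpolation device is a valid and arguably tidier alternative, but it is not overcoming a genuine obstacle---the naive route works too.
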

\begin{proof}
	We fix $|\alpha|\gtrsim 2^{-k}$ and $x\in\mathbb{R}$. For $|x-a+q(t,x)|\leq |\alpha|/4$ and $\alpha>0$, we begin by observing that
	\begin{equation*}
	\psi_{\leq 0}(2^{k}y)-\psi_{\leq 0}(2^{k}(y-\alpha))=2^{k}\int_{y-\alpha}^{y}\psi_{\leq 0}'(2^{k}z)\,dz=\int_{2^{k}(y-\alpha)}^{2^{k}y}\psi_{\leq 0}'(\rho)\,d\rho,
	\end{equation*}
	where $\psi_{\leq 0}'$ is a Schwartz function. As a result, for the values of $y$ such that $y$ and $y-\alpha$ have the same sign we get errors terms. In fact, for $y\geq\alpha $, we have
	\begin{equation*}
	\bigg|\int_{y\geq\alpha}g(t,x-a-y+q(t,x-y))\frac{\psi_{\leq 0}(2^{k}y)-\psi_{\leq 0}(2^{k}(y-\alpha))}{\alpha}dy\bigg|\lesssim\frac{\Vert g\Vert_{L^\infty}}{|\alpha|}\bigg|\int_{y\geq\alpha}\langle 2^{k}(y-\alpha)\rangle^{-2}dy\bigg|\lesssim\frac{1}{2^{k}|\alpha|}.
	\end{equation*}
	Similarly, for $y\leq 0$, we get
	\begin{equation*}
	\bigg|\int_{y\leq 0}g(t,x-a-y+q(t,x-y))\frac{\psi_{\leq 0}(2^{k}y)-\psi_{\leq 0}(2^{k}(y-\alpha))}{\alpha}dy\bigg|\lesssim\frac{\Vert g\Vert_{L^\infty}}{|\alpha|}\bigg|\int_{y\leq 0}\langle 2^{k}y\rangle^{-2}dy\bigg|\lesssim\frac{1}{2^{k}|\alpha|}.
	\end{equation*}	
	It remains to understand the core terms, when $y\in [0,\alpha]$ and we have to integrate through $0$. The idea here is to  approximate the $\psi_{\leq 0}(2^{k}y)$ and $\psi_{\leq 0}(2^{k}(y-\alpha))$ terms by the value of $\psi_{\leq 0}$ at $\pm\infty$ respectively. More precisely, using \eqref{musk22} we write
	\begin{equation}\label{C_0}
	\begin{split}
	&|\psi_{\leq 0}(2^{k}y)-1/2|=\Big|\int_{2^{k}y}^{\infty}\psi_{\leq 0}'(\rho)d\rho\Big|\lesssim \langle 2^ky\rangle^{-2},\\
	&|\psi_{\leq 0}(2^{k}(y-\alpha))+1/2|=\Big|\int_{-\infty}^{2^{k}(y-\alpha)}\psi_{\leq 0}'(\rho)d\rho\Big|\lesssim \langle 2^k(y-\alpha)\rangle^{-2}.
	\end{split}
	\end{equation}
	Estimating as before and recalling also the definition \eqref{gpgm} we have
	\begin{equation*}\label{core1}
	\begin{aligned}
	&\int_{0}^{\alpha}g(t,x-a-y+q(t,x-y))\frac{\psi_{\leq 0}(2^{k}y)-1/2+1/2-(\psi_{\leq 0}(2^{k}(y-\alpha))+1/2-1/2)}{\alpha}dy\\
	&=\int_0^\alpha\frac{g(t,x-a-y+q(t,x-y))}{\alpha}dy+O\left(\frac{\Vert g\Vert_{L^\infty}}{2^{k}|\alpha|}\right)\\
	&=g^+(t,x,\alpha)+\int_0^\alpha\frac{g(t,x-a-y+q(t,x-y))-g(t,-y+q(t,x-y)-q(t,x))}{\alpha}dy+O\left(\frac{1}{2^{k}|\alpha|}\right).
	\end{aligned}
	\end{equation*}
	
	To estimate the last error term, for simplicity of notation, we set $$X(t,x):=x-a+q(t,x),$$
 and we recall that we are assuming $\vert X(t,x)\vert\le \vert\alpha\vert/4$. Then
	\begin{equation}\label{errorB}
	\begin{split}
	&\Big|\int_0^\alpha\frac{g(t,x-a-y+q(t,x-y))-g(t,-y+q(t,x-y)-q(t,x))}{\alpha}dy\Big|\\
	&\lesssim \frac{\vert X(t,x)\vert}{\alpha}\Vert g\Vert_{L^\infty}+\frac{1}{\alpha}\Big|\int_{2|X(t,x)|}^\alpha\int_0^{X(t,x)}g'(t,\beta-y+q(t,x-y)-q(t,x))\,d\beta dy\Big|\\
	&\lesssim \frac{\vert X(t,x)\vert}{\alpha}+\frac{1}{\alpha}\int_{2|X(t,x)|}^\alpha \frac{|X(t,x)|}{|y|}\,dy\lesssim \bigg[\frac{\vert X(t,x)\vert}{\alpha}\bigg]^{9/10},
	\end{split}
	\end{equation}
	since from the definition of the $Z_1$-norm, we know that $|y||g'(t,\beta-y+q(t,x-y)-q(t,x))|\lesssim 1$. Indeed, since $|y|\geq 2|X(t,x)|$ and $|\beta|\leq |X(t,x)|$, we can assume that the argument in the second line of \eqref{errorB} is of size $y$. This completes the proof in the case $|x-a+q(t,x)|\leq |\alpha|/4$ and $\alpha>0$.
	
	Assume now that $|x-a+q(t,x)|\leq |\alpha|/4$ and $\alpha<0$. The error estimates follow identically and we can thus assume that $y\in[\alpha,0]$. Arguing exactly as above, we obtain
	\begin{equation*}
	\begin{aligned}
	\int_{\alpha}^{0}&g(t,x-a-y+q(t,x-y))\frac{\psi_{\leq 0}(2^{k}y)+1/2-1/2-(\psi_{\leq 0}(2^{k}(y\!-\!\alpha))-1/2+1/2)}{\alpha}dy\\
	&=-\int_\alpha^0\frac{g(t,x-a-y+q(t,x-y))}{\alpha}dy+\frac{1}{2^{k}|\alpha|}O(\Vert g\Vert_{L^\infty})\\
	&=g^-(t,x,\alpha)-\int_\alpha^0\frac{g(t,x-a-y+q(t,x-y))-g(t,-y+q(t,x-y)-q(t,x))}{\alpha}dy+\frac{1}{2^{k}|\alpha|}O(1).
	\end{aligned}
	\end{equation*}
	The error term can be estimated as in \eqref{errorB}, and the desired conclusion follows in this case as well.
	
	We consider now the case $|x-a+q(t,x)|\geq 4|\alpha|$ with $\alpha>0$. For $y\geq\alpha$ and $y\leq0$, we get the same error terms as for the previous case. It remains to set $y\in[0,\alpha]$. Using \eqref{C_0} and estimating any remaining error terms as above, we obtain
	\begin{align*}
	\int_{0}^{\alpha}&g(t,x-a-y+q(t,x-y))\frac{\psi_{\leq 0}(2^{k}y)-\psi_{\leq 0}(2^{k}(y-\alpha))}{\alpha}dy\\
	&=\int_0^\alpha\frac{g(t,x-a-y+q(t,x-y))}{\alpha}dy+\frac{1}{2^{k}|\alpha|}O(1)\\
	&=\frac{1}{\alpha}\int_{0}^\alpha \bigg(g(t,x-a+q(t,x))+\int_{x-a+q(t,x)}^{x-a-y+q(t,x-y)}g'
	(t,\rho)d\rho\bigg) dy+\frac{1}{2^{k}|\alpha|}O(1)\\
	&=g(t,x-a+q(t,x))+\bigg(\frac{|\alpha|}{|x-a+q(t,x)|}+\frac{1}{2^{k}|\alpha|}\bigg)O(1).	
	\end{align*}
	The last step follows from the fact that $\|xg'\|_{L^\infty}\le \Vert g\Vert_{Z_1}\le 1$, the regularity of $q$ in \eqref{q_bounds}, and the assumption $4|\alpha|\leq |x-a+q(t,x)|$. The case $\alpha<0$ follows identically. This completes the proof. 
\end{proof}
\begin{remark}
	Using the change of variable \eqref{changeV}, we can rewrite the core terms $p$ in terms of $y$ to get
	\begin{equation}\label{lpo22}
	\widetilde{p}_j(t,y,\alpha):=
	\begin{cases}
	0 & \text{if }|\alpha|\in [|y-a_j|/4,4|y-a_j|],\\
	\widetilde{g}^+_j(t,y,\alpha)&\text{if }|\alpha|> 4|y-a_j|,\quad\alpha>0,\\
	\widetilde{g}^-_j(t,y,\alpha) &\text{if }|\alpha|> 4|y-a_j|,\quad\alpha<0,\\
	g_j(t,y-a_j) &\text{if }|\alpha|< |y-a_j|/4,
	\end{cases}
	\end{equation}
	\begin{equation}\label{lpo23}
	\begin{aligned}
	\widetilde{g}^+_j(t,y,\alpha)&:=\frac{1}{\alpha}\int_{0}^\alpha g_j[t,-\rho+q(t,\widetilde{Q}(t,y)-\rho)-q(t,\widetilde{Q}(t,y))]\,d\rho,\\
	\widetilde{g}^-_j(t,y,\alpha)&:=\frac{-1}{\alpha}\int_{\alpha}^{0} g_j[t,-\rho+q(t,\widetilde{Q}(t,y)-\rho)-q(t,\widetilde{Q}(t,y))]\,d\rho.
	\end{aligned}
	\end{equation}
\end{remark}

We can further approximate the core terms $p$ of functions in the $Z_1$ space, provided $2^kt\geq2$ and $|\alpha|\leq2t$. The idea of this lemma comes from the uncertainty principle, which tells us that a function with frequencies less than $2^k$ must be constant on intervals of size $2^{-k}$.
\begin{lemma}\label{lemma:error2}
	Assume $\|g\|_{Z_1}= 1$, $a\in\R$, and define the function $p=p(g)$ as in \eqref{pbigDef}--\eqref{gpgm}. Then
	\begin{equation*}
	\big|p(t,x,\alpha)-p^\ast(t,x,\alpha)\big|\lesssim [|\alpha|/t]^{1/50},
	\end{equation*}
	provided that $|\alpha|\leq 2t$, $k(t,|\alpha|)$ is the smallest integer satisfying $2^{k(t,|\alpha|)}\geq (|\alpha|t)^{-1/2}$, and
	\begin{equation*}\label{pbigDef7}
	p^\ast(t,x,\alpha):=
	\begin{cases}
	0 & \text{if }|\alpha|\in [|x-a+q(t,x)|/4,4|x-a+q(t,x)|],\\
	P_{\leq k(t,\alpha)}g(t,0)&\text{if }|\alpha|> 4|x-a+q(t,x)|,\\
	g(t,x-a+q(t,x)) &\text{if }|\alpha|< |x-a+q(t,x)|/4.
	\end{cases}
	\end{equation*}
\end{lemma}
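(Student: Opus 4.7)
I first reduce to the only nontrivial regime. When $|\alpha|\in[|X|/4,4|X|]$ (with $X:=x-a+q(t,x)$) both $p$ and $p^\ast$ vanish by definition, and when $|\alpha|<|X|/4$ both equal $g(t,X)$. Hence it suffices to treat the regime $|\alpha|>4|X|$, and by symmetry I may assume $\alpha>0$, so that $p(t,x,\alpha)=g^+(t,x,\alpha)=\frac{1}{\alpha}\int_0^\alpha g(t,z(y))\,dy$ where $z(y):=-y+q(t,x-y)-q(t,x)$. The goal becomes the single inequality
\[
\Big|g^+(t,x,\alpha)-P_{\leq k(t,|\alpha|)}g(t,0)\Big|\lesssim (|\alpha|/t)^{1/50}.
\]

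The key point is that for $y\in[0,\alpha]$ the Lipschitz bound $\sup_x|\partial_x q|\lesssim\ep$ from \eqref{q_bounds} forces $|z(y)|\le(1+\ep)|y|\lesssim|\alpha|$, while the choice of $k=k(t,|\alpha|)$ gives $2^k\sim(|\alpha|t)^{-1/2}$, and in particular $2^kt\sim(t/|\alpha|)^{1/2}\gtrsim 1$ thanks to the hypothesis $|\alpha|\le 2t$. Thus all arguments $z(y)$ lie in a window that is small compared to the uncertainty scale $2^{-k}$, and $P_{\leq k}g$ is nearly constant on that window.

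I then split $g=P_{\leq k}g+P_{>k}g$ and estimate each contribution to $|g(t,z(y))-P_{\leq k}g(t,0)|$ separately, after which I average in $y$. For the low-frequency part, Bernstein together with $\|g\|_{L^\infty}\lesssim\|g\|_{Z_1}\le 1$ gives $\|\partial_x P_{\leq k}g\|_{L^\infty}\lesssim 2^k$, hence
\[
|P_{\leq k}g(t,z(y))-P_{\leq k}g(t,0)|\lesssim 2^k|z(y)|\lesssim 2^k|\alpha|\lesssim(|\alpha|/t)^{1/2}.
\]
For the high-frequency part, the definition \eqref{normZ1} of the $Z_1$ norm yields $\|P_{k'}g\|_{L^\infty}\lesssim(2^{k'}t)^{-1/10}$ for each $k'$, and since $2^kt\gtrsim 1$ the geometric tail sums to
\[
\|P_{>k}g\|_{L^\infty}\le\sum_{k'>k}\|P_{k'}g\|_{L^\infty}\lesssim\sum_{k'>k}(2^{k'}t)^{-1/10}\lesssim(2^kt)^{-1/10}\lesssim(|\alpha|/t)^{1/20}.
\]

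Combining these two bounds by the triangle inequality (and integrating in $y$, which preserves both estimates) I obtain
\[
|g^+(t,x,\alpha)-P_{\leq k}g(t,0)|\lesssim(|\alpha|/t)^{1/2}+(|\alpha|/t)^{1/20}.
\]
Since $|\alpha|/t\le 2$, the larger (i.e.\ weaker) exponent $1/50$ in the claim dominates both terms, completing the argument. The only subtle point is the step where I sum $\|P_{k'}g\|_{L^\infty}$ in the high-frequency piece: this requires $2^kt\gtrsim 1$ so that we are genuinely in the decaying tail, and this is precisely what the hypothesis $|\alpha|\le 2t$ guarantees through the choice of $k(t,|\alpha|)$.
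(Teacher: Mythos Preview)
Your proof is correct and follows essentially the same approach as the paper: reduce to the regime $|\alpha|>4|X|$, split $g=P_{\leq k}g+P_{>k}g$ with $k=k(t,|\alpha|)$, bound the low-frequency piece via the Bernstein-type derivative estimate to get $(|\alpha|/t)^{1/2}$, and sum the high-frequency tail using the $Z_1$ decay $\|P_{k'}g\|_{L^\infty}\lesssim(2^{k'}t)^{-1/10}$ to get $(|\alpha|/t)^{1/20}$. The paper's proof is slightly more terse but the argument and all estimates coincide.
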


\begin{proof} We only need to consider the case $|\alpha|> 4|x-a+q(t,x)|$. We decompose 
	\begin{equation*}
	g= P_{\leq k(t,|\alpha|)}g+\sum_{l\geq k(t,|\alpha|)+1}P_lg.
	\end{equation*}
	We examine the formula \eqref{gpgm}. For $\alpha>0$ we estimate
	\begin{equation*}
	\begin{split}
	\Big|\frac{1}{\alpha}&\int_0^\alpha g(t,-y+q(t,x-y)-q(t,x))\,dy-P_{\leq k(t,\alpha)}g(t,0)\Big|\\
	&\leq\frac{1}{\alpha}\Big|\int_0^\alpha [P_{\leq k(t,|\alpha|)}g(t,-y+q(t,x-y)-q(t,x))-P_{\leq k(t,\alpha)}g(t,0)]\,dy\Big|\\
	&+\sum_{l\geq k(t,|\alpha|)+1}\frac{1}{\alpha}\int_0^\alpha \big|P_{l}g(t,-y+q(t,x-y)-q(t,x))\big|\,dy\\
	&\lesssim |\alpha|\|\partial_xP_{\leq k(t,|\alpha|)}g(t)\|_{L^\infty_x}+\sum_{l\geq k(t,|\alpha|)+1}\|P_lg(t)\|_{L^\infty}\\
	&\lesssim |\alpha|2^{k(t,|\alpha|)}+(2^{k(t,|\alpha|)}t)^{-1/10},
	\end{split}
	\end{equation*}
	where we used the definition of the $Z_1$ norm in the last line. The desired bounds follow due to the choice of $k(t,|\alpha|)$, $2^{k(t,|\alpha|)}\approx (|\alpha|t)^{-1/2}$. The analysis is similar in the case $\alpha<0$, and the lemma is proved.
\end{proof}
	
	\section{Bounds on trilinear operators}\label{sec:G1G2}
	
	In this section we start our analysis of the nonlinearities $\mathcal{N}_{h_j}$ defined in \eqref{Nha2}. We decompose these nonlinearities and show that most of the components satisfy suitable perturbative estimates, while one particular component requires renormalization.

  For the benefit of the reader, we carry out the analysis for the trilinear nonlinearity (taking into account only the first term of the Taylor expansion in \eqref{nonlin2}). A generalization to the full nonlinearity is carried out in the next section. We hence define the trilinear operator 
\begin{equation*}
\label{Nha1}
\mathcal{N}_{3}[f_1,f_2,f_3](x):=-\frac{1}{\pi}\frac{d}{dx}\bigg\{\int_{\R}\partial_x f_1^*(x,\alpha)f^*_2(x,\alpha)f^*_3(x,\alpha)d\alpha\bigg\}
\end{equation*}
and denote
\begin{equation*}\label{Nha3}
\mathcal{N}_{3,h_j}(x):=\mathcal{N}[h_j,h,h](x),
\end{equation*}
where $h=h(t,x):[0,\infty)\times\mathbb{R}\to\mathbb{R}$ solves
\begin{equation*}\label{eq1}
(\partial_t +|\nabla|)h=\mathcal{N}_{3},\qquad h(0,\cdot)=h_0.
\end{equation*}
Since each logarithmic correction $q_{2n+1}$ cancels the logarithmic loss arising in the respective nonlinearity $\mathcal{N}_{2n+1}$, we also denote by
\begin{equation*}\label{q3}
\mathfrak{q}(t,x):=q_{\{2n+1=3\}}(t,x),\quad\text{associated to }\quad\mathcal{N}_3,
\end{equation*}
as defined above. Similarly we define
\begin{equation}\label{Q3}
\mathfrak{Q}(t,x):=x+\mathfrak q(t,x)\qquad\text{and}\qquad\widetilde{\mathfrak{Q}}(t,y):=y+\widetilde{\mathfrak q} (t,y),
\end{equation}
the analogues of $Q$ and $\widetilde Q$ as defined in \eqref{changeV}, for the trilinear setting.	

From \eqref{nonlin8.1}, we see that the trilinear pseudoproducts we need to understand are of the form

\begin{equation}\label{PT}
\begin{aligned}
\mathcal{T}_3(&P_{k_1}f_1,P_{k_2}f_2,P_{k_3}f_3)(x)\\
&=-\frac{1}{\pi}\frac{d}{dx}\int_{\R^3}\frac{d}{dy_1}(P_{k_1}f_1)(y_1)(P_{k_2}f_2)(y_2)(P_{k_3}f_3)(y_3)K_{k_1,k_2,k_3}(x-y_1,x-y_2,x-y_3)\,dy_1dy_2dy_3.
\end{aligned}
\end{equation}	
	
	\subsection{Localized $L^2$ estimates}

	The following lemma provides bounds for trilinear pseudoproducts and will prove to be of fundamental importance throughout the rest our analysis. 
	\begin{lemma}\label{lemma:TP}
		Let $\mathcal{T}_3(P_{k_1}f_1,P_{k_2}f_2,P_{k_3}f_3)$ be defined as in \eqref{PT}. Then we have
		\begin{equation*}\label{trilinear1}
\|P_k\big[\mathcal{T}_3(P_{k_1}f_1,P_{k_2}f_2,P_{k_3}f_3)\big]\|_{L^2}\lesssim2^{k}\|P_{k_1}f_1\|_{L^2}\|P_{k_2}f_2\|_{L^\infty}\|P_{k_3}f_3\|_{L^\infty}.
		\end{equation*}
		Alternatively, we also have
		\begin{equation*}\label{trilinear2}
		\|P_k\big[\mathcal{T}_3(P_{k_1}f_1,P_{k_2}f_2,P_{k_3}f_3)\big]\|_{L^2}\lesssim2^{k}2^{k/2}\|P_{k_1}f_1\|_{L^2}\|P_{k_2}f_2\|_{L^2}\|P_{k_3}f_3\|_{L^\infty}.
		\end{equation*}
		
	\end{lemma}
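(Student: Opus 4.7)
The plan is to represent $P_k\mathcal{T}_3$ via its kernel \eqref{kernel1} and reduce both bounds to an $L^1$-estimate of $K_{k_1,k_2,k_3}$ combined with Bernstein's inequality. I would write $P_k\mathcal{T}_3 = -\pi^{-1}P_k\partial_x \mathcal{S}_3$, where
\begin{equation*}
\mathcal{S}_3(x) := \int_{\R^3} K_{k_1,k_2,k_3}(x-\underline{y})\,\partial_{y_1}(P_{k_1}f_1)(y_1)\,P_{k_2}f_2(y_2)\,P_{k_3}f_3(y_3)\,d\underline{y},
\end{equation*}
and use Bernstein's inequality $\|P_k\partial_x g\|_{L^r}\lesssim 2^k\|g\|_{L^r}$ (for $r\in\{1,2\}$) to produce the outer factor $2^k$ (and, after one further Bernstein step, the extra $2^{k/2}$) that appears in the two bounds.

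For the first estimate, I would change variables $z_i = x - y_i$, apply Minkowski in $\underline{z}$, and then H\"older in $x$ with $L^2\cdot L^\infty\cdot L^\infty$ to obtain
\begin{equation*}
\|\mathcal{S}_3\|_{L^2_x} \leq \|K_{k_1,k_2,k_3}\|_{L^1(\R^3)}\,\|\partial_x P_{k_1}f_1\|_{L^2}\,\|P_{k_2}f_2\|_{L^\infty}\,\|P_{k_3}f_3\|_{L^\infty}.
\end{equation*}
Using Bernstein once more, $\|\partial_x P_{k_1}f_1\|_{L^2}\lesssim 2^{k_1}\|P_{k_1}f_1\|_{L^2}$, so the first estimate reduces to showing $\|K_{k_1,k_2,k_3}\|_{L^1}\cdot 2^{k_1}\lesssim 1$. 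For the second estimate, I would additionally use the $1$D Bernstein bound $\|P_kg\|_{L^2}\lesssim 2^{k/2}\|P_kg\|_{L^1}$ on the output, and repeat the Minkowski/H\"older step with H\"older $L^2\cdot L^2\cdot L^\infty\to L^1$, yielding the analogous $L^1$ version of the inequality above and an overall factor $2^k\cdot 2^{k/2}=2^{3k/2}$.

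The main obstacle is establishing the clean kernel bound $\|K_{k_1,k_2,k_3}\|_{L^1}\lesssim 2^{-\max(k_1,k_2)}$ without a logarithmic loss in the delicate case $k_1\geq k_2$: Lemma~\ref{lemma:K}(ii) gives only $\|K\|_{L^1}\lesssim 2^{-k_1}(1+|k_1-k_2|)$, which would leave a spurious logarithm. To absorb it, I would decompose $L_{k_2} = \widetilde{L}_{k_2} + L^0_{k_2}$ as in \eqref{modified_kernel}. The modified piece $\widetilde{L}_{k_2}$ immediately yields the improved $L^1$ bound $\lesssim 2^{-k_1}$ from Lemma~\ref{lemma:K2}(ii), with no logarithm. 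For the rank-one remainder $L^0_{k_2}(x_2,\alpha) = \alpha^{-1}\min(1,2^{k_2}\alpha)\,\psi_0'(2^{k_2}x_2)$, the pseudoproduct factorizes as $2^{-k_2}P_{k_2}f_2(x)\cdot \mathcal{W}(x)$ (since $\psi_0'(2^{k_2}\cdot)*P_{k_2}f_2 = 2^{-k_2}P_{k_2}f_2$ by Plancherel), where $\mathcal{W}$ is a bilinear convolution in $f_1,f_3$ whose kernel is easily estimated by integrating in $\alpha$. In the opposite regime $k_1\leq k_2$ no refinement is needed, as Lemma~\ref{lemma:K}(ii) already gives $\|K\|_{L^1}\cdot 2^{k_1}\lesssim 2^{k_1-k_2}(1+|k_1-k_2|)\lesssim 1$.
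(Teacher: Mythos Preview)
Your overall strategy matches the paper's, but there is a genuine gap in the treatment of the rank-one remainder. You write that after factoring out $2^{-k_2}P_{k_2}f_2(x)$, the remaining bilinear object $\mathcal W$ has a kernel ``easily estimated by integrating in $\alpha$''. This is precisely where the logarithm you tried to avoid reappears. The kernel of $\mathcal W$ is
\[
J(x_1,x_3)=\int_\R L_{k_1}(x_1,\alpha)\,\frac{\min(1,2^{k_2}\alpha)}{\alpha}\,L_{k_3}(x_3,\alpha)\,d\alpha,
\]
and on the range $|\alpha|\in[2^{-k_1},2^{-k_2}]$ (assuming $k_1\ge k_2\ge k_3$) one has $\alpha^{-1}\min(1,2^{k_2}\alpha)=2^{k_2}$, $\|L_{k_1}(\cdot,\alpha)\|_{L^1}\sim(2^{k_1}|\alpha|)^{-1}$, and $\|L_{k_3}(\cdot,\alpha)\|_{L^1}\lesssim1$, so this range contributes $\sim 2^{k_2-k_1}(k_1-k_2)$ to $\|J\|_{L^1}$. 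After multiplying by the $2^{-k_2}$ from the factorization and the $2^{k_1}$ from Bernstein on $\partial_{y_1}$, you are left with exactly $(1+|k_1-k_2|)$ --- the same logarithm as in Lemma~\ref{lemma:K}(ii). No pure $L^1$ kernel estimate removes it.

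The paper fixes this by decomposing one step further (splitting $L_{k_3}$ as well) and isolating the piece supported on $|\alpha|\in[2^{-k_1},2^{-k_2}]$ with both rank-one factors present. That piece cannot be controlled by an $L^1$ kernel bound; instead one uses the identity \eqref{simple_id2}, namely $\int\partial_{y_1}P_{k_1}f_1(y_1)L_{k_1}(x-y_1,\alpha)\,dy_1=[P_{k_1}f_1(x)-P_{k_1}f_1(x-\alpha)]/\alpha$, together with the oddness of $1/\alpha$, to recognize this contribution as $P_{k_2}f_2(x)\,P_{k_3}f_3(x)$ times a \emph{truncated Hilbert transform} of $P_{k_1}f_1$. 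The uniform $L^2$-boundedness of the truncated Hilbert transform then closes the estimate without a logarithmic loss. This is the missing ingredient in your proposal.
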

	\begin{proof}
		Without loss of generality we may assume that $k_1\geq k_2\geq k_3$. We introduce \eqref{Lktilde} to split
		\begin{align*}
		K_{k_1,k_2,k_3}(z_1,z_2,z_3)=K_{k_1,k_2,k_3}^1(z_1,z_2,z_3)+K_{k_1,k_2,k_3}^2(z_1,z_2,z_3)+K_{k_1,k_2,k_3}^3(z_1,z_2,z_3),
		\end{align*}
		where
		\begin{equation*}
		K_{k_1,k_2,k_3}^1(z_1,z_2,z_3):=\int_{\R}L_{k_1}(z_1,\alpha)\widetilde{L}_{k_2}(z_2,\alpha)L_{k_3}(z_3,\alpha)\,d\alpha,
		\end{equation*}
		\begin{equation*}
		K_{k_1,k_2,k_3}^2:=K_{k_1,k_2,k_3}^{2a}+K_{k_1,k_2,k_3}^{2b}+K_{k_1,k_2,k_3}^{2c},
		\end{equation*}
		with
		\begin{align*}
		K_{k_1,k_2,k_3}^{2a}(z_1,z_2,z_3)&:=	\int_{\R}L_{k_1}(z_1,\alpha)\frac{1}{\alpha}\min(1,2^{k_2}\alpha)\psi_{0}'(2^{k_2}z_2)\widetilde{L}_{k_3}(z_3,\alpha)\,d\alpha, \\
		K_{k_1,k_2,k_3}^{2b}(z_1,z_2,z_3)&:=\int_{|\alpha|\leq2^{-k_1}}L_{k_1}(z_1,\alpha)\frac{1}{\alpha}\min(1,2^{k_2}\alpha)\psi_{0}'(2^{k_2}z_2)\frac{1}{\alpha}\min(1,2^{k_3}\alpha)\psi_{0}'(2^{k_3}z_3)\,d\alpha,\\
		K_{k_1,k_2,k_3}^{2c}(z_1,z_2,z_3)&:=\int_{|\alpha|\geq2^{-k_2}}L_{k_1}(z_1,\alpha)\frac{1}{\alpha}\min(1,2^{k_2}\alpha)\psi_{0}'(2^{k_2}z_2)\frac{1}{\alpha}\min(1,2^{k_3}\alpha)\psi_{0}'(2^{k_3}z_3)\,d\alpha,
		\end{align*}	
		and
		\begin{equation*}
		K_{k_1,k_2,k_3}^{3}(z_1,z_2,z_3):=2^{k_2+k_3}\psi_{0}'(2^{k_2}z_2)\psi_{0}'(2^{k_3}z_3)\int_{|\alpha|\in [2^{-k_1},2^{-k_2}]}L_{k_1}(z_1,\alpha)\,d\alpha.
		\end{equation*}
		
		Using Lemmas~\ref{lemma:K} and~\ref{lemma:K2} we get
		\begin{equation}\label{musk10}
		\begin{split}
		&\|K_{k_1,k_2,k_3}^1\|_{L^1(\R^3)}\lesssim2^{-k_1},\\
		&\|K_{k_1,k_2,k_3}^{2a}\|_{L^1(\R^3)}\lesssim2^{-k_1},\qquad
		\|K_{k_1,k_2,k_3}^{2b}\|_{L^1(\R^3)}\lesssim2^{-k_1},\qquad
		\|K_{k_1,k_2,k_3}^{2c}\|_{L^1(\R^3)}\lesssim2^{-k_1}.
		\end{split}
		\end{equation}
		By expressing $L_{k_1}$ using \eqref{Lk_formula} we obtain
		\begin{align}\label{eq.4.20}
		\int_{|\alpha|\in [2^{-k_1},2^{-k_2}]}\frac{[\psi_{0}(2^{k_1}x)-\psi_{0}(2^{k_1}(x-\alpha))]}{\alpha}\,d\alpha=\int_{|\alpha|\in [2^{-k_1},2^{-k_2}]}\frac{[-\psi_{0}(2^{k_1}(x-\alpha))]}{\alpha}\,d\alpha,
		\end{align}
		where the cancellation is a result of $1/\alpha$ being odd. Reinserting \eqref{eq.4.20} into $K^3$ yields
		\begin{align*}
		K_{k_1,k_2,k_3}^3=2^{k_2+k_3}\psi_{0}'(2^{k_2}z_2)\psi_{0}'(2^{k_3}z_3)\int_{|\alpha|\in [2^{-k_1},2^{-k_2}]}\frac{[-\psi_{0}(2^{k_1}(x-\alpha))]}{\alpha}\,d\alpha.
		\end{align*}
		
		For $l\in\{1,2,3\}$ we define
		\begin{align*}
		\mathcal{T}_3^l&(P_{k_1}f_1,P_{k_2}f_2,P_{k_3}f_3)(x)\\
		&:=-\frac{1}{\pi}\frac{d}{dx}\int_{\R^3}\frac{d}{dy_1}(P_{k_1}f_1)(y_1)(P_{k_2}f_2)(y_2)(P_{k_3}f_3)(y_3)K_{k_1,k_2,k_3}^l(x-y_1,x-y_2,x-y_3)\,dy_1dy_2dy_3.
		\end{align*}
		In view of \eqref{musk10}, for $l\in\{1,2\}$ we have
		\begin{equation}\label{musk11}
		\begin{split}
		&\|P_k\big[\mathcal{T}_3^l(P_{k_1}f_1,P_{k_2}f_2,P_{k_3}f_3)\big]\|_{L^2}\lesssim2^{k}\|P_{k_1}f_1\|_{L^2}\|P_{k_2}f_2\|_{L^\infty}\|P_{k_3}f_3\|_{L^\infty},\\
		&\|P_k\big[\mathcal{T}_3^l(P_{k_1}f_1,P_{k_2}f_2,P_{k_3}f_3)\big]\|_{L^2}\lesssim2^{k}2^{k/2}\|P_{k_1}f_1\|_{L^2}\|P_{k_2}f_2\|_{L^2}\|P_{k_3}f_3\|_{L^\infty}.
		\end{split}
		\end{equation}
		We would like to prove similar bounds for $l=3$. By rearranging all the terms, we obtain
		\begin{align}\label{T5}
		\mathcal{T}_3^3(P_{k_1}h_1,P_{k_2}h_2,P_{k_3}h_3)(x)=-\frac{1}{\pi}\frac{d}{dx}\Big\{P_{k_2}h_2(x)P_{k_3}h_3(x)\!\!\int_{|\alpha|\in[2^{-k_1},2^{-k_2}]}\frac{-P_{k_1}h_1(x-\alpha)}{\alpha}\,d\alpha\Big\},
		\end{align}
		where we have used that by definition $\widehat{\partial_x\psi_0}=\widetilde{\varphi}_0$, hence
		\begin{equation*}\label{aux2}
		P_{k_j}f_j(x)=\int_{\R}2^{k_j}(P_{k_j}f_j)(x-y_j)\psi_{0}'(2^{k_j}y_j)\,dy_j,\qquad j\in\{1,2,3\}.
		\end{equation*}
		From the boundedness of the truncated Hilbert transform we have
		\begin{equation}\label{musk12}
		\Big\|\int_{|\alpha|\in[2^{-k_1},2^{-k_2}]}\frac{-P_{k_1}h_1(x-\alpha)}{\alpha}\,d\alpha\Big\|_{L^2_x}\lesssim \|P_{k_1}h_1\|_{L^2},
		\end{equation}
		uniformly in $k_1$ and $k_2$. The bounds \eqref{musk11} for $l=3$ follow from \eqref{T5} and \eqref{musk12}. This completes the proof of the lemma.
	\end{proof}

	\subsection{Trilinear estimates in the space $Z_2$} 
	
	For any triple $(k_1,k_2,k_3)\in\Z^3$ we let $(k_1^\ast,k_2^\ast,k_3^\ast)\in\Z^3$ denote its non-increasing rearrangement, $k_1^\ast\geq k_2^\ast\geq k_3^\ast$. Given $k\in\Z$ we define two sets:
	\begin{align*}
	S_{k,1}&:=\{(k_1,k_2,k_3)\in\Z^3:\,k^\ast_1\in[k-3,k+3]\text{ and }k^\ast_2,k^\ast_3\leq k-6\},\\
	S_{k,2}&:=\{(k_1,k_2,k_3)\in\Z^3:\,|k^\ast_1-k^\ast_2|\leq10 \text{ where } k_1^\ast\geq k-3 \text{ and }k_2^\ast\geq k-5 \}.
	\end{align*}	
	We localize and decompose the nonlinearities $\mathcal{N}_3$; more generally we decompose
	\begin{equation*}\label{G1G2}
	P_k\big[\mathcal{T}_3(h_{1},h_2,h_3)\big](x)=G_{k,1}(x)+G_{k,2}(x),
	\end{equation*}
	where, for $l\in\{1,2\}$,
	\begin{equation*}\label{musk15}
	\begin{split}
	&G_{k,l}[h_1,h_2,h_3](x):=-\frac{1}{\pi}\sum_{k_1,k_2,k_3\in S_{k,l}}\\
	&\qquad P_k\frac{d}{dx}\int_{\R^3}\frac{d}{dy_1}P_{k_1}h_1(y_1)P_{k_2}h_2(y_2)P_{k_3}h_3(y_3)K_{k_1,k_2,k_3}(x-y_1,x-y_2,x-y_3)\,dy_1dy_2dy_3.
	\end{split}
	\end{equation*}
	
	\subsubsection{High-high-to-low estimates} We estimate first the term $G_{k,2}$, for all input functions in the space $Z$.
	
	\begin{lemma}\label{lemmaG2}
		
		Assume that $f_{1},f_{2}, f_{3}\in Z$, and define $h_j(t,x):=f_j(t,x-a_j+\mathfrak q(t,x))$ as in \eqref{ha_ga}, for some points $a_j\in\R$. Then for any $k\in\Z$ and $t\in[0,\infty)$ we have
		\begin{equation}\label{sum00}
		\|G_{k,2}[h_1,h_2,h_3](t)\|_{L^2}\lesssim 2^{k/2}(1+2^k t)^{-2/10}\|f_{1}\|_Z\|f_{2}\|_Z\|f_{3}\|_{Z}.
		\end{equation}
	\end{lemma}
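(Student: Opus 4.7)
The plan is to bound $G_{k,2}$ via Lemma~\ref{lemma:TP} combined with Lemma~\ref{lem:g_h_Z}, after aggregating the low-frequency sum to avoid a logarithmic loss. By the definition of $S_{k,2}$, every triple $(k_1,k_2,k_3)$ appearing in $G_{k,2}$ has its two largest entries $k^*_1 \sim k^*_2 =: k^*$ with $k^* \geq k-5$, while the smallest entry ranges freely below $k^* + O(1)$. I will place the two high-frequency factors in $L^2$ (gaining two factors of $2^{-k^*/2}(1+2^{k^*}t)^{-1/10}$) and the low-frequency factor in $L^\infty$ via a uniform embedding.

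By the symmetry of $\mathcal{T}_3$ in its second and third arguments there are essentially two subcases. In the main one, $k_1 \sim k_2 = k^*$ and $k_3 \leq k^*_2 + O(1)$. Rather than summing over $k_3$ individually, I aggregate $\sum_{k_3 \leq k^*_2 + C} P_{k_3} h_3 = P_{\leq k^*_2 + C} h_3$. A mild extension of Lemma~\ref{lemma:TP} with $L_{k_3}$ replaced by $L_{\leq k^*_2 + C}$ (whose $L^1_x$ norm is controlled by Lemma~\ref{lemma:K}(ii), so that the kernel bound $\|K\|_{L^1} \lesssim 2^{-k^*}$ persists) gives
\begin{equation*}
\|P_k \mathcal{T}_3(P_{k_1}h_1, P_{k_2}h_2, P_{\leq k^*_2+C}h_3)\|_{L^2} \lesssim 2^{3k/2}\|P_{k_1}h_1\|_{L^2}\|P_{k_2}h_2\|_{L^2}\|P_{\leq k^*_2+C}h_3\|_{L^\infty}.
\end{equation*}
Applying Lemma~\ref{lem:g_h_Z} to the two $L^2$ factors and the embedding $\|h_3\|_{L^\infty} \lesssim \|f_3\|_Z$ (a consequence of \eqref{ZtotEst1} plus the summability over dyadic scales of the $Z_2$ decay $\min\{(2^kt)^{1/10},(2^kt)^{-1/10}\}$) to the $L^\infty$ factor, the right-hand side is bounded by $2^{3k/2 - k^*}(1 + 2^{k^*}t)^{-2/10}\prod_j \|f_j\|_Z$. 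Summing the geometric series $\sum_{k^* \geq k-3} 2^{3k/2 - k^*}(1 + 2^{k^*}t)^{-2/10} \lesssim 2^{k/2}(1 + 2^k t)^{-2/10}$ then yields \eqref{sum00} in this subcase.

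In the secondary subcase ($k_1$ low, $k_2 \sim k_3 = k^*$ high) I instead aggregate $\sum_{k_1 \leq k^*_2+C} P_{k_1}h_1 = P_{\leq k^*_2+C}h_1$ and place $h_2, h_3$ in $L^2$ with $h_1$ in $L^\infty$. The derivative $\partial_{y_1}$ on the low-frequency factor contributes at most $2^{k_1} \leq 2^{k^*}$, which pairs against the kernel bound $2^{-k^*}$ from Lemma~\ref{lemma:K}(ii); an analogous $L^\infty L^2 L^2$ variant of Lemma~\ref{lemma:TP} (derived by the same Young-type argument but placing $h_1$ in the $L^\infty$ slot) closes the estimate with the same chain of inequalities.

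The principal obstacle is precisely the avoidance of a logarithmic loss when summing over the low-frequency index. A direct application of Lemma~\ref{lemma:TP} with $P_{k_3}h_3$ in $L^\infty$ and the $Z_1$-type decay $\|P_{k_3}h_3\|_{L^\infty}\lesssim(1+2^{k_3}t)^{-1/10}$ would require summing $\sum_{k_3 \leq k^*_2}(1+2^{k_3}t)^{-1/10}$, which diverges like $\log(2/t)$ on the $Z_1$ part at low frequencies. Aggregating into $P_{\leq k^*_2+C}$ sidesteps this by exchanging the low-frequency time decay for a plain uniform $L^\infty$ bound, which is permissible because the two high-frequency inputs already deliver the full $(1+2^kt)^{-2/10}$ decay required by \eqref{sum00}.
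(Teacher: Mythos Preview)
Your proposal is correct and follows essentially the same approach as the paper: both split according to which index is the smallest, aggregate the low-frequency sum into a single $P_{\leq k^\ast+C}$ projection (so the low factor enters only through the uniform bound $\|h_3\|_{L^\infty}\lesssim\|f_3\|_Z$), place the two high-frequency factors in $L^2$ via Lemma~\ref{lem:g_h_Z}, invoke the kernel bound \eqref{musk10.5} with $L_{\leq}$ in the third slot, and sum the geometric series $\sum_{k^\ast\geq k-5}2^{3k/2-k^\ast}(1+2^{k^\ast}t)^{-2/10}$. Your handling of the case where $k_1$ is the smallest index (the derivative costs only $2^{k_1}\leq 2^{k^\ast}$, absorbed by the kernel) also matches the paper's remark that this case is easier.
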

	\begin{proof} We write $S_{k,2}$ as a disjoint union $S_{k,2}=S_{k,2}^1\cup S_{k,2}^2\cup S_{k,2}^3$ where
		\begin{equation*}\label{musk20}
		\begin{split}
		&S_{k,2}^1:=\{(k_1,k_2,k_3)\in S_{k,2}:\, k_1\leq\min(k_2-1,k_3-1)\},\\		
  &S_{k,2}^2:=\{(k_1,k_2,k_3)\in S_{k,2}:\, k_2\leq\min(k_1,k_3-1)\},\\
  &S_{k,2}^3:=\{(k_1,k_2,k_3)\in S_{k,2}:\, k_3\leq\min(k_1,k_2)\}.
		\end{split}
		\end{equation*}
		Then we decompose the functions accordingly, $G_{k,2}=G_{k,2}^1+G_{k,2}^2+G_{k,2}^3$, where
		\begin{equation*}\label{musk21}
		\begin{split}
		&G_{k,2}^a[h_1,h_2,h_3](x):=-\frac{1}{\pi}\sum_{k_1,k_2,k_3\in S_{k,2}^a}\\
		&\qquad P_k\frac{d}{dx}\int_{\R^3}\frac{d}{dy_1}P_{k_1}h_1(y_1)P_{k_2}h_2(y_2)P_{k_3}h_3(y_3)K_{k_1,k_2,k_3}(x-y_1,x-y_2,x-y_3)\,dy_1dy_2dy_3,
		\end{split}
		\end{equation*}
		for $a\in\{1,2,3\}$.
		
		We will only prove the estimates \eqref{sum00} for the functions $G_{k,2}^3$; the estimates for the functions $G_{k,2}^2$ are similar, while the estimates for $G_{k,2}^1$ are easier because the derivative hits the low frequency factor. 
		
		Assume without loss of generality that $\|f_{j}\|_{Z}=1$. As in the proof of Lemma \ref{lemma:TP} and using \eqref{musk10.5} and Lemma \ref{lem:g_h_Z}, we estimate
		\begin{equation*}
		\begin{aligned}
		\|G^3_{k,2}(t)\|_{L^2}&\lesssim \sum_{\substack{|k_1-k_2|\leq10,\,k_1,k_2\geq k-5}}2^{k}2^{k/2}2^{k_1}\|P_{k_1}h_1(t)\|_{L^2}\|P_{k_2}h_2(t)\|_{L^2}\|h_3(t)\|_{L^\infty}\|K_{k_1,k_2,\leq\min(k_1,k_2)}\|_{L^1}\\
		&\lesssim \sum_{k_1\geq k-5}2^{k}2^{k/2}2^{-k_1}\min\{1,(2^{k_1}t)^{-2/10}\}\\
		&\lesssim 2^{k/2}\min\{1,(2^kt)^{-2/10}\},
		\end{aligned}
		\end{equation*}
		as claimed.
	\end{proof}
	
	\subsubsection{Low-high-to-high estimates} We estimate now most of the components of the trilinear expressions $G_{k,1}$. We write $S_{k,1}$ as a disjoint union $S_{k,1}=S_{k,1}^1\cup S_{k,1}^2\cup S_{k,1}^3$ where
	\begin{equation*}\label{musk30}
	S_{k,1}^l:=\{(k_1,k_2,k_3)\in S_{k,1}:\, k_l=\max(k_1,k_2,k_3)\},\qquad l\in\{1,2,3\}.
	\end{equation*}
	Then we decompose the functions accordingly, $G_{k,1}=G_{k,1}^1+G_{k,1}^2+G_{k,1}^3$, where, for $a\in\{1,2,3\}$,
	\begin{equation}\label{musk31}
	\begin{split}
	&G_{k,1}^a[h_1,h_2,h_3](x):=-\frac{1}{\pi}\sum_{k_1,k_2,k_3\in S_{k,1}^a}\\
	&\qquad P_k\frac{d}{dx}\int_{\R^3}\frac{d}{dy_1}P_{k_1}h_1(y_1)P_{k_2}h_2(y_2)P_{k_3}h_3(y_3)K_{k_1,k_2,k_3}(x-y_1,x-y_2,x-y_3)\,dy_1dy_2dy_3.
	\end{split}
	\end{equation}
	
	We show first how to bound the trilinear expressions $G_{k,1}^2$ and $G_{k,1}^3$.
	
	\begin{lemma}\label{lemmaG12G13}
		Assume that $f_1,f_2,f_3\in Z$ and define $h_j(t,x):=f_j(t,x-a_j+\mathfrak q(t,x))$ as in \eqref{ha_ga} for some points $a_j\in\R$. Then, for $a\in\{2,3\}$, $k\in\Z$ and $t\in[0,\infty)$ we have
		\begin{equation*}
		\|G_{k,1}^a(t)\|_{L^2}\lesssim 2^{k/2}(1+2^k t)^{-2/10}\|f_{1}\|_Z\|f_{2}\|_Z\|f_{3}\|_{Z}.	
		\end{equation*}
	\end{lemma}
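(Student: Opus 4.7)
The strategy is to apply a symmetrized version of Lemma~\ref{lemma:TP} that places the highest-frequency factor in $L^2$, exploiting the key structural feature of $G_{k,1}^a$ for $a\in\{2,3\}$: the maximum frequency sits in slot $a\neq 1$, while the inner derivative $\frac{d}{dy_1}$ falls on a low-frequency factor $h_1$. Via Bernstein this produces an extra small factor $2^{k_1}$ which balances against the kernel's $L^1$-norm $\lesssim 2^{-k^\ast}$ from Lemma~\ref{lemma:K}(ii), yielding the summable ratio $2^{k_1-k^\ast}\ll 1$ that makes these terms perturbative.

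I will spell out the case $a=2$; the case $a=3$ is identical after swapping $h_2$ and $h_3$ in the kernel. For $(k_1,k_2,k_3)\in S_{k,1}^2$ one has $k_2\in[k-3,k+3]$ and $k_1,k_3\leq k-6$. Repeating the decomposition $K=K^1+K^{2a}+K^{2b}+K^{2c}+K^3$ from the proof of Lemma~\ref{lemma:TP} with $k_2$ playing the role of the maximum frequency instead of $k_1$, and noting that the $L^1$-bounds from Lemmas~\ref{lemma:K}--\ref{lemma:K2} are symmetric under permutations of the frequency indices (and that the truncated-Hilbert-transform step from $K^3$ still applies, now acting on $P_{k_2}h_2$ and controlled by \eqref{musk12}), I will obtain
\begin{equation*}
\|P_k\mathcal{T}_3(P_{k_1}h_1,P_{k_2}h_2,P_{k_3}h_3)\|_{L^2}\lesssim 2^{k}\cdot 2^{k_1-k_2}(1+|k_1-k_2|)\,\|P_{k_1}h_1\|_{L^\infty}\|P_{k_2}h_2\|_{L^2}\|P_{k_3}h_3\|_{L^\infty}.
\end{equation*}

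From Lemma~\ref{lem:g_h_Z} together with Bernstein I then use $\|P_{k_2}h_2\|_{L^2}\lesssim 2^{-k/2}(1+2^kt)^{-1/10}\|f_2\|_Z$ and $\|P_{k_j}h_j\|_{L^\infty}\lesssim (1+2^{k_j}t)^{-1/10}\|f_j\|_Z$ for $j=1,3$. The summation in $k_3\leq k-6$ is immediate by collapsing $\sum_{k_3} P_{k_3}h_3=P_{\leq k-6}h_3$, whose $L^\infty$-norm is $\lesssim\|h_3\|_{L^\infty}\lesssim\|f_3\|_Z$. The sum over $k_1\leq k-6$ of $2^{k_1-k_2}(1+|k_1-k_2|)(1+2^{k_1}t)^{-1/10}$ is dominated by its upper end $k_1\sim k-6$ (the log factor being absorbed by the geometric series in $2^{k_1-k_2}$), which yields $\lesssim(1+2^kt)^{-1/10}$. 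Multiplying the three bounds and summing over the finite range $k_2\in[k-3,k+3]$ produces the desired estimate $2^{k/2}(1+2^kt)^{-2/10}\|f_1\|_Z\|f_2\|_Z\|f_3\|_Z$, with the two factors of $(1+2^kt)^{-1/10}$ coming respectively from the $L^2$-bound on the high-frequency factor and from the resummation of the low-frequency derivative on $h_1$.

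The main technical step is the symmetrized variant of Lemma~\ref{lemma:TP}. The original proof singles out slot~1 precisely because it carries the inner derivative; here I need the same estimate with $P_{k_2}h_2$ in the $L^2$ slot. This goes through because the kernel bounds of Lemmas~\ref{lemma:K}--\ref{lemma:K2} are permutation-invariant, the truncated-Hilbert-transform cancellation in the $K^3$ piece is an $L^2\to L^2$ fact independent of which slot it acts on, and the inner derivative factor $2^{k_1}$ is now \emph{small} (so no delicate analysis is required to handle it). No further obstacle is expected.
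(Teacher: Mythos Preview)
Your proposal is correct and follows essentially the same route as the paper: put the high-frequency factor $P_{k_2}h_2$ in $L^2$, the low-frequency factors in $L^\infty$, extract the factor $2^{k_1}$ from the inner derivative on $h_1$, and absorb the resulting $2^{k_1-k}|k_1-k|$ into a convergent geometric sum over $k_1\leq k-6$, picking up the extra $(1+2^kt)^{-1/10}$ from the top of that sum. The only minor difference is that you propose re-running the full $K^1+\cdots+K^3$ decomposition from Lemma~\ref{lemma:TP}, whereas the paper simply invokes the kernel bound \eqref{musk10.5} directly (with its logarithmic factor $(1+|k_1-k_2|)$) together with Young's inequality; since that log is killed by $2^{k_1-k_2}$, the finer decomposition is unnecessary here.
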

	
	\begin{proof} The two cases are similar, so we will assume that $a=2$. We may also assume that $\|f_{j}\|_{Z}=1$. As in the proof of Lemma \ref{lemma:TP} and using \eqref{musk10.5} and Lemma \ref{lem:g_h_Z}, we estimate
		\begin{equation*}
		\begin{aligned}
		\|G^2_{k,1}(t)\|_{L^2}&\lesssim \sum_{\substack{|k_2-k|\leq 3,\,k_1\leq k-6}}2^{k}2^{k_1}\|P_{k_1}h_1(t)\|_{L^\infty}\|P_{k_2}h_2(t)\|_{L^2}\|h_3(t)\|_{L^\infty}\|K_{k_1,k_2,\leq k-6}\|_{L^1}\\
		&\lesssim \sum_{k_1\leq k-6}2^{k}2^{k_1}\min\{1,(2^{k_1}t)^{-1/10}\}2^{-k/2}\min\{1,(2^{k}t)^{-1/10}\}2^{-k}|k-k_1|\\
		&\lesssim \sum_{k_1\leq k-6}2^{k/2}2^{k_1-k}|k-k_1|\min\{1,(2^{k_1}t)^{-1/10}\}\min\{1,(2^{k}t)^{-1/10}\}\\
		&\lesssim 2^{k/2}\min\{1,(2^{k}t)^{-1/10}\}^2,
		\end{aligned}
		\end{equation*}
		as claimed.
	\end{proof}
	
	We estimate now the trilinear expression $G_{k,1}^1$ when one of the low frequency inputs is in the space $Z_2$.
	
	\begin{lemma}\label{lemmaG1Z2}
		Assume that $(f_1,f_2,f_3)\in Z\times Z\times Z_2$ or $(f_1,f_2,f_3)\in Z\times Z_2\times Z$ and define $h_j(t,x)=f_j(t,x-a_j+\mathfrak q(t,x))$ as before. Then, for any $k\in\Z$ and $t\in[0,\infty)$,
		\begin{equation*}
		\begin{split}
		&\|G_{k,1}^1\|_{L^2}\lesssim2^{k/2}\min\{1,(2^kt)^{-1/10}\}\|f_{1}\|_{Z}\|f_{2}\|_{Z}\|f_{3}\|_{Z_2},\\
		&\|G_{k,1}^1\|_{L^2}\lesssim2^{k/2}\min\{1,(2^kt)^{-1/10}\}\|f_{1}\|_{Z}\|f_{2}\|_{Z_2}\|f_{3}\|_{Z}.
		\end{split}
		\end{equation*}
	\end{lemma}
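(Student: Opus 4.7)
The natural approach is to apply the localized $L^2$ estimate of Lemma~\ref{lemma:TP} term by term to the sum defining $G_{k,1}^1$. Since $k_1$ is the maximum in $S_{k,1}^1$ and $k_1\in[k-3,k+3]$, the first bound of Lemma~\ref{lemma:TP} gives
\begin{equation*}
\|P_k\mathcal{T}_3(P_{k_1}h_1,P_{k_2}h_2,P_{k_3}h_3)\|_{L^2}\lesssim 2^k\|P_{k_1}h_1\|_{L^2}\|P_{k_2}h_2\|_{L^\infty}\|P_{k_3}h_3\|_{L^\infty}.
\end{equation*}
By Lemma~\ref{lem:g_h_Z} one has $\|P_{k_1}h_1\|_{L^2}\lesssim 2^{-k/2}\min\{1,(2^kt)^{-1/10}\}\|f_1\|_Z$, and combining Lemma~\ref{lem:g_h_Z} with Bernstein's inequality gives, for each low-frequency factor, $\|P_{k_j}h_j\|_{L^\infty}\lesssim \min\{(2^{k_j}t)^{1/10},(2^{k_j}t)^{-1/10}\}\|f_j\|_{Z_2}$ when $f_j\in Z_2$ and $\|P_{k_j}h_j\|_{L^\infty}\lesssim \min\{1,(2^{k_j}t)^{-1/10}\}\|f_j\|_Z$ otherwise. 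The crucial advantage of the $Z_2$-bound is the extra decay factor $(2^{k_j}t)^{1/10}$ for low $k_j$, so that summing over the frequencies of the $Z_2$-function gives a uniformly bounded quantity.

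The main obstacle is the summation over the frequencies of the other, possibly-only-$Z_1$, factor, since the pointwise bound $\min\{1,(2^{k_j}t)^{-1/10}\}$ contributes infinitely many terms of size $1$ as $k_j\to-\infty$. To circumvent this, I would collapse the remaining low-frequency sum into the pseudoproduct itself, using the identity $\sum_{k'\leq k-6}L_{k'}=L_{\leq k-6}$, which is equivalent to $\sum_{k'\leq k-6}(P_{k'}h_j)^*=(P_{\leq k-6}h_j)^*$. Then instead of summing $\|P_{k_j}h_j\|_{L^\infty}$ term by term, one uses the single $L^\infty$-bound $\|P_{\leq k-6}h_j\|_{L^\infty}\leq\|h_j\|_{L^\infty}\lesssim\|f_j\|_Z$.

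The remaining technical difficulty is that a direct application of Lemma~\ref{lemma:K} to the resulting combined kernel would produce a logarithmic factor $(1+|k_1-k_3|)$ in the $L^1$-norm, which is incompatible with the target decay $(2^kt)^{-1/10}$. To eliminate this log loss, I would invoke the modified-kernel decomposition from Lemma~\ref{lemma:K2}, splitting $L_{\leq k-6}=\widetilde{L}_{\leq k-6}+\frac{1}{\alpha}\min(1,2^{k-6}\alpha)\psi'_{\leq 0}(2^{k-6}\cdot)$. The modified piece $\widetilde{L}_{\leq k-6}$ gives a pseudoproduct whose $L^1$-kernel is bounded by $\lesssim 2^{-k_1}$ uniformly (by the computations behind Lemma~\ref{lemma:K2}), while the correction piece, after factoring the resulting $P_{\leq k-6}h_j(x)$ outside the $\alpha$-integral, reduces to a truncated Hilbert transform applied to $P_{k_1}h_1$ (modulated by $(P_{k_3}h_3)^*$) and is controlled in $L^2$ uniformly, exactly as in the $K^3$-analysis in the proof of Lemma~\ref{lemma:TP}. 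Summing the resulting estimates over $k_3$, where the $Z_2$-decay supplies convergence without any log loss, then yields the desired bound $\|G_{k,1}^1\|_{L^2}\lesssim 2^{k/2}\min\{1,(2^kt)^{-1/10}\}\|f_1\|_Z\|f_2\|_Z\|f_3\|_{Z_2}$. The second case $(f_1,f_2,f_3)\in Z\times Z_2\times Z$ is identical by the symmetry of $\mathcal{T}_3$ in its second and third arguments.
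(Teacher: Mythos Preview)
Your overall strategy is the right one and matches the paper's: collapse the non-$Z_2$ low-frequency sum into a single $P_{\leq}$ block, then use the modified-kernel decomposition together with the truncated Hilbert transform to kill the logarithmic loss, while the $Z_2$ factor supplies the summability over the remaining dyadic index. However, the execution has a concrete error in \emph{which} kernel you choose to split.

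You propose splitting $L_{\leq k-6}$, the kernel attached to the collapsed $Z$-factor. But the logarithm does not come from that factor: it comes from the interaction of the two \emph{localized} kernels $L_{k_1}$ and $L_{k_3}$ over the range $|\alpha|\in[2^{-k_1},2^{-k_3}]$. On that range $\widetilde{L}_{\leq k-6}$ satisfies only $\|\widetilde{L}_{\leq k-6}(\cdot,\alpha)\|_{L^1}\lesssim 1$ (since $2^{k-6}|\alpha|\gtrsim 1$ there), so
\[
\int_{2^{-k_1}}^{2^{-k_3}}\|L_{k_1}\|_{L^1}\,\|\widetilde{L}_{\leq k-6}\|_{L^1}\,\|L_{k_3}\|_{L^1}\,d\alpha
\;\lesssim\;\int_{2^{-k_1}}^{2^{-k_3}}(2^{k_1}|\alpha|)^{-1}\,d\alpha
\;\approx\;2^{-k_1}(k_1-k_3),
\]
and the $(1+|k_1-k_3|)$ persists. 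Your claim that the $\widetilde{L}_{\leq k-6}$ piece has kernel $\lesssim 2^{-k_1}$ uniformly is therefore incorrect; Lemma~\ref{lemma:K2}(ii) is stated for a tilde on the localized second-highest frequency, not on an $L_{\leq}$ block.

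The fix, and what the paper does, is to apply the tilde to the $Z_2$ factor instead. If you split $L_{k_3}=\widetilde{L}_{k_3}+\frac{1}{\alpha}\min(1,2^{k_3}\alpha)\psi_0'(2^{k_3}\cdot)$, then on the log region $\|\widetilde{L}_{k_3}\|_{L^1}\lesssim 2^{k_3}|\alpha|$ provides the missing decay and the integral becomes $\int_{2^{-k_1}}^{2^{-k_3}}(2^{k_1}|\alpha|)^{-1}\cdot 2^{k_3}|\alpha|\,d\alpha\lesssim 2^{-k_1}$ uniformly. The correction piece then factors out $P_{k_3}h_3(x)$ and, after one further splitting in $|\alpha|$ (the paper's kernels $J^1,J^2,J^3$), reduces to a truncated Hilbert transform applied to $P_{k_1}h_1$ times $P_{k_3}h_3(x)\cdot P_{\leq}h_2(x)$. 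The paper also organizes the collapse slightly differently---it first peels off the range $k_3\in[k_2-3,k-6]$ and then collapses the $Z_1$ factor at threshold $k_2-4$ rather than $k-6$---but the essential correction to your argument is just to move the tilde to the localized $Z_2$ kernel.
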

	
	\begin{proof}
		The two bounds are similar, so we will only prove the bounds in the second line, corresponding to $(f_1,f_2,f_3)\in Z\times Z_2\times Z$. We have two cases: if $f_3\in Z_2$ then we may assume that $\|f_1\|_{Z}=\|f_2\|_{Z_2}=\|f_3\|_{Z_2}=1$ and use Lemmas \ref{lemma:TP} and \ref{lem:g_h_Z} to estimate
		\begin{align*}
		\|G_{k,1}^1(t)\|_{L^2}&\lesssim\sum_{\substack{k_1\in[k-3,k+3],\,k_2,k_3\leq k-6}}2^{k}\|P_{k_1}h_1(t)\|_{L^2}\|P_{k_2}h_2(t)\|_{L^\infty}\|P_{k_3}h_3(t)\|_{L^\infty}\\
		&\lesssim 2^k 2^{-k/2}\min\{1,(2^kt)^{-1/10})\}\Big(\sum_{k_2\leq k-6}\min\{(2^{k_2}t)^{1/10},(2^{k_2}t)^{-1/10})\}\Big)^2\\
		&\lesssim 2^{k/2}\min\{(2^{k}t)^{2/10},(2^kt)^{-1/10}\}.
		\end{align*}
		as claimed.
		On the other hand, if $f_3\in Z_1$ then we may assume that $\|f_1\|_{Z}=\|f_2\|_{Z_2}=\|f_3\|_{Z_1}=1$. We examine the formula \eqref{musk31} and decompose $G_{k,1}^1=G_{k,1}^{1,a}+G_{k,1}^{1,b}$ where
		\begin{equation*}\label{musk35}
		\begin{split}
		&G_{k,1}^{1,a}[h_1,h_2,h_3](x):=-\frac{1}{\pi}\sum_{|k_1-k|\leq 3,\,k_2\leq k-6,\,k_3\in[k_2-3,k-6]}\\
		&\qquad P_k\frac{d}{dx}\int_{\R^3}\frac{d}{dy_1}P_{k_1}h_1(y_1)P_{k_2}h_2(y_2)P_{k_3}h_3(y_3)K_{k_1,k_2,k_3}(x-y_1,x-y_2,x-y_3)\,dy_1dy_2dy_3,
		\end{split}
		\end{equation*}
		\begin{equation*}\label{musk36}
		\begin{split}
		&G_{k,1}^{1,b}[h_1,h_2,h_3](x):=-\frac{1}{\pi}\sum_{|k_1-k|\leq 3,\,k_2\leq k-6}\\
		&\qquad P_k\frac{d}{dx}\int_{\R^3}\frac{d}{dy_1}P_{k_1}h_1(y_1)P_{k_2}h_2(y_2)P_{\leq k_2-4}h_3(y_3)K_{k_1,k_2,\leq k_2-4}(x-y_1,x-y_2,x-y_3)\,dy_1dy_2dy_3.
		\end{split}
		\end{equation*}
		
		The function $G_{k,1}^{1,a}$ can be estimated using just Lemmas \ref{lemma:TP} and \ref{lem:g_h_Z},
		\begin{equation}\label{musk37}
		\begin{split}
		\|G_{k,1}^{1,a}(t)\|_{L^2}&\lesssim 2^{k}\sum_{k_1\in[k-3,k+3]}\sum_{k_2\leq k-6, k_3\in [k_2-3,k-6]}\|P_{k_1}h_1(t)\|_{L^2}\|P_{k_2}h_2(t)\|_{L^\infty}\|P_{k_3}h_3(t)\|_{L^\infty}\\
		&\lesssim \sum_{k_2\leq k-6, k_3\in [k_2-3,k-6]} 2^{k/2}(1+2^{k}t)^{-1/10}(1+2^{k_2}t)^{-2/10}(2^{k_2}t)^{1/10}(1+2^{k_3}t)^{-1/10}\\
		&\lesssim2^{k/2}\min\{1,(2^kt)^{-1/10}\},
		\end{split}
		\end{equation}
		as desired.
		To estimate $G_{k,1}^{1,b}$, we further decompose $G_{k,1}^{1,b}=G_{k,1}^{1,c}+G_{k,1}^{1,d}$ where
		\begin{equation*}\label{musk38}
		\begin{split}
		&G_{k,1}^{1,c}[h_1,h_2,h_3](x):=-\frac{1}{\pi}\sum_{|k_1-k|\leq 3,\,k_2\leq k-6}\\
		&\qquad P_k\frac{d}{dx}\int_{\R^3}\frac{d}{dy_1}P_{k_1}h_1(y_1)P_{k_2}h_2(y_2)P_{\leq k_2-4}h_3(y_3)\widetilde{K}_{k_1,k_2,\leq k_2-4}(x-y_1,x-y_2,x-y_3)\,dy_1dy_2dy_3,
		\end{split}
		\end{equation*}
		\begin{equation}\label{musk39}
		\begin{split}
		G_{k,1}^{1,d}[h_1,h_2,h_3](x):=&-\frac{1}{\pi}\sum_{|k_1-k|\leq 3,\,k_2\leq k-6}P_k\frac{d}{dx}\int_{\R^3}\frac{d}{dy_1}P_{k_1}h_1(y_1)P_{k_2}h_2(y_2)P_{\leq k_2-4}h_3(y_3)\\
		&\times(K_{k_1,k_2,\leq k_2-4}-\widetilde{K}_{k_1,k_2,\leq k_2-4})(x-y_1,x-y_2,x-y_3)\,dy_1dy_2dy_3.
		\end{split}
		\end{equation}
		The kernels $\widetilde{K}$ are defined in \eqref{kernel6}. 
		Using \eqref{musk10.6} and Lemma \ref{lem:g_h_Z} we estimate
		\begin{equation}\label{musk37.1}
		\begin{split}
		\|G_{k,1}^{1,c}(t)\|_{L^2}&\lesssim 2^{k}\sum_{k_1\in[k-3,k+3]}\sum_{k_2\leq k-6}2^{k_1}\|P_{k_1}h_1(t)\|_{L^2}\|P_{k_2}h_2(t)\|_{L^\infty}\|P_{\leq k_2-4}h_3(t)\|_{L^\infty}2^{-k_1}\\
		&\lesssim \sum_{k_2\leq k-6} 2^{k/2}(1+2^{k}t)^{-1/10}(1+2^{k_2}t)^{-2/10}(2^{k_2}t)^{1/10}\\
		&\lesssim2^{k/2}\min\{1,(2^kt)^{-1/10}\}.
		\end{split}
		\end{equation}
		To bound $G_{k,1}^{1,d}$ we write
		\begin{equation*}
		\begin{split}
		(K_{k_1,k_2,\leq k_2-4}-\widetilde{K}_{k_1,k_2,\leq k_2-4})(y_1,y_2,y_3)&=\int_{\R}L_{k_1}(y_1,\alpha)\frac{\min(1,2^{k_2}\alpha)}{\alpha}\psi'_0(2^{k_2}y_2)L_{\leq k_2-4}(y_3,\alpha)\,d\alpha\\
		&=J^1_{k_1,k_2}(y_1,y_2,y_3)+J^2_{k_1,k_2}(y_1,y_2,y_3)+J^3_{k_1,k_2}(y_1,y_2,y_3),
		\end{split}
		\end{equation*}
		where, using also the decomposition \eqref{modified_kernel},
		\begin{equation*}
		\begin{split}
		&J^1_{k_1,k_2}(y_1,y_2,y_3):=\int_{|\alpha|\notin[2^{-k_1},2^{-k_2}]}L_{k_1}(y_1,\alpha)\frac{\min(1,2^{k_2}\alpha)}{\alpha}\psi'_0(2^{k_2}y_2)L_{\leq k_2-4}(y_3,\alpha)\,d\alpha,\\
		&J^2_{k_1,k_2}(y_1,y_2,y_3):=\int_{|\alpha|\in[2^{-k_1},2^{-k_2}]}L_{k_1}(y_1,\alpha)\frac{\min(1,2^{k_2}\alpha)}{\alpha}\psi'_0(2^{k_2}y_2)\widetilde{L}_{\leq k_2-4}(y_3,\alpha)\,d\alpha,\\
		&J^3_{k_1,k_2}(y_1,y_2,y_3):=\int_{|\alpha|\in[2^{-k_1},2^{-k_2}]}L_{k_1}(y_1,\alpha)\frac{\min(1,2^{k_2}\alpha)}{\alpha}\psi'_0(2^{k_2}y_2)\frac{\min(1,2^{k_2-4}\alpha)}{\alpha}\psi_{\leq 0}'(2^{k_2-4}y_3)\,d\alpha.
		\end{split}
		\end{equation*}
		Then we define the operators $G_{k,1}^{1,e}$, $G_{k.1}^{1,f}$, and $G_{k,1}^{1,g}$ as in \eqref{musk39}, by replacing the kernel $(K_{k_1,k_2,\leq k_2-4}-\widetilde{K}_{k_1,k_2,\leq k_2-4})$ with the kernels $J^1_{k_1,k_2}$, $J^2_{k_1,k_2}$, and $J^3_{k_1,k_2}$ respectively. In view of Lemmas \ref{lemma:K}  and \ref{lemma:K2} we have
		\begin{equation*}
		\|J^1_{k_1,k_2}\|_{L^1(\R^3)}+\|J^2_{k_1,k_2}\|_{L^1(\R^3)}\lesssim 2^{-k_1},
		\end{equation*}
		so one can estimate as in \eqref{musk37.1} 
		\begin{equation}\label{musk37.2}
		\|G_{k,1}^{1,e}(t)\|_{L^2}+\|G_{k,1}^{1,f}(t)\|_{L^2}\lesssim2^{k/2}\min\{1,(2^kt)^{-1/10}\}.
		\end{equation}
		Finally, to bound the functions $G_{k,1}^{1,g}$ we notice that
		\begin{equation*}\label{simple_id}
		\begin{split}
		&\int_{\R}P_{k_2}h_2(x-y_2)2^{k_2}\psi'_0(2^{k_2}y_2)\,dy_2=P_{k_2}h_2(x),\\
		&\int_{\R}P_{\leq k_2-4}h_3(x-y_3)2^{k_2-4}\psi'_{\leq 0}(2^{k_2-4}y_2)\,dy_2=P_{\leq k_2-4}h_3(x).
		\end{split}
		\end{equation*}
		Moreover, using the formula \eqref{Lk_formula} and integration by parts in $y_1$, 
		\begin{equation}\label{simple_id2}
		\int_{\R}\frac{d}{dy_1}P_{k_1}h_1(y_1)L_{k_1}(x-y_1,\alpha)\,dy_1=\frac{P_{k_1}h_1(x)-P_{k_1}h_1(x-\alpha)}{\alpha}.
		\end{equation}
		Therefore
		\begin{equation*}
		\begin{split}
		G_{k,1}^{1,g}[h_1,h_2,h_3](x):=&-\frac{1}{\pi}\sum_{|k_1-k|\leq 3,\,k_2\leq k-6}P_k\frac{d}{dx}\Big\{P_{k_2}h_2(x)P_{\leq k_2-4}h_3(x)\\
		&\times\int_{|\alpha|\in[2^{-k_1},2^{-k_2}]}\frac{P_{k_1}h_1(x)-P_{k_1}h_1(x-\alpha)}{\alpha}\,d\alpha\Big\}.
		\end{split}
		\end{equation*}
		In view of the boundedness of the truncated Hilbert transform we can now estimate 
		\begin{equation}\label{musk37.3}
		\begin{split}
		\|G_{k,1}^{1,g}(t)\|_{L^2}&\lesssim 2^{k}\sum_{k_1\in[k-3,k+3]}\sum_{k_2\leq k-6}\|P_{k_1}h_1(t)\|_{L^2}\|P_{k_2}h_2(t)\|_{L^\infty}\|P_{\leq k_2-4}h_3(t)\|_{L^\infty}\\
		&\lesssim \sum_{k_2\leq k-6} 2^{k/2}(1+2^{k}t)^{-1/10}(1+2^{k_2}t)^{-2/10}(2^{k_2}t)^{1/10}\\
		&\lesssim2^{k/2}\min\{1,(2^kt)^{-1/10}\}.
		\end{split}
		\end{equation}
		The conclusion of the lemma follows from \eqref{musk37}, \eqref{musk37.1}, \eqref{musk37.2}, and \eqref{musk37.3}. 
	\end{proof}

	\subsubsection{Estimating the high-low-to-high term for $Z_1$ inputs}
	
	It remains to understand the term $G_{k,1}^1$
	\begin{equation*}\label{musk44}
	\begin{aligned}
	G_{k,1}^1[h_1,h_2,h_3](x)&=-\frac{1}{\pi}\sum_{k_1\in[k-3,k+3]}P_k \frac{d}{dx}\int_{\R^4}\frac{d}{dx}P_{k_1}h_{1}(x-y_1)P_{\leq k-6}h_2(x-y_2)P_{\leq k-6}h_3(x-y_3)\\
	&\hspace{4cm}\times L_{k_1}(y_1,\alpha)L_{\leq k-6}(y_2,\alpha)L_{\leq k-6}(y_3,\alpha)d\alpha dy_1dy_2dy_3.
	\end{aligned}
	\end{equation*}
	for inputs $f_2$ and $f_3$ in $Z_1$. Ideally we would like to prove similar bounds as before
	\begin{equation*}\label{musk45}
	\|G_{k,1}^1(t)\|_{L^2}\lesssim2^{k/2}\min\{1,(2^kt)^{-1/10}\}\|f_{1}\|_{Z}\|f_{2}\|_{Z_1}\|f_{3}\|_{Z_1}.
	\end{equation*}
	Unfortunately this is not possible due to a logarithmic loss. In the next three lemmas we prove these bounds for certain parts of the trilinear operators $G_{k,1}^1$.

	\begin{lemma}\label{lem:G1less} 
		Assume that $f_{1}\in Z$, $f_{2}, f_{3}\in Z_1$, and define $h_j(t,x):=f_j(t,x-a_j+\mathfrak q(t,x))$ as before, for some points $a_j\in\R$. Define
		\begin{equation*}
		\begin{aligned}
		G_{k,1}^{1,1}[h_1,h_2,h_3](x)&:=-\frac{1}{\pi}\sum_{k_1\in[k-3,k+3]}P_k \frac{d}{dx}\int_{|\alpha|\leq 2^{-k}}\int_{\R^3}\frac{d}{dx}P_{k_1}h_{1}(x-y_1)P_{\leq k-6}h_2(x-y_2)P_{\leq k-6}h_3(x-y_3)\\
		&\hspace{4cm}\times L_{k_1}(y_1,\alpha)L_{\leq k-6}(y_2,\alpha)L_{\leq k-6}(y_3,\alpha)d\alpha dy_1dy_2dy_3.
		\end{aligned}
		\end{equation*}
		Then
		\begin{equation*}
		\|G_{k,1}^{1,1}(t)\|_{L^2}\lesssim 2^{k/2}\min\{1,(2^kt)^{-1/10}\}\|f_{1}\|_{Z}\|f_{2}\|_{L^\infty}\|f_{3}\|_{L^\infty}.
		\end{equation*}
	\end{lemma}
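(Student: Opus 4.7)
The key observation is that the $\alpha$-cutoff $|\alpha|\leq 2^{-k}$ exactly matches the smallest kernel scale, so all three $L^1_y$-norms of the kernels $L_{k_1}(\cdot,\alpha)$ and $L_{\leq k-6}(\cdot,\alpha)$ are $O(1)$, and the $\alpha$-integration contributes a free factor of $2^{-k}$ which absorbs one of the two $2^k$ derivative losses. This is precisely why this piece is perturbative with no logarithmic cost, and why no fine structure of $Z_1$ beyond the trivial $L^\infty$-bound on $f_2,f_3$ is needed.

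I would carry this out as follows. By Minkowski's inequality, bring the $L^2_x$-norm inside the $\alpha$-integral. For fixed $\alpha$ in the prescribed range, the inner $\underline{y}$-integral is a multilinear convolution, so Young's inequality applied with $\partial_{y_1}P_{k_1}h_1$ in $L^2$ and $P_{\leq k-6}h_j$ ($j=2,3$) in $L^\infty$ against the kernels in $L^1_y$ produces
\begin{equation*}
\bigl\|I(\alpha,\cdot)\bigr\|_{L^2_x}\lesssim 2^{k_1}\|P_{k_1}h_1\|_{L^2}\,\|h_2\|_{L^\infty}\|h_3\|_{L^\infty}\,\|L_{k_1}(\cdot,\alpha)\|_{L^1_y}\|L_{\leq k-6}(\cdot,\alpha)\|_{L^1_y}^2,
\end{equation*}
where $I(\alpha,x)$ denotes the inner integrand. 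By Lemma \ref{lemma:K}, $\|L_{k_1}(\cdot,\alpha)\|_{L^1_y}\lesssim 1$ (since $2^{k_1}|\alpha|\lesssim 1$ in the given range) and $\|L_{\leq k-6}(\cdot,\alpha)\|_{L^1_y}\lesssim 1$ uniformly. Integrating in $\alpha$ over $\{|\alpha|\leq 2^{-k}\}$ gives a factor $2^{1-k}$, and the outer Fourier multiplier $P_k\partial_x$ is $L^2\to L^2$-bounded with norm $\lesssim 2^k$, so altogether
\begin{equation*}
\|G_{k,1}^{1,1}\|_{L^2}\lesssim 2^k\cdot 2^{-k}\cdot 2^{k_1}\,\|P_{k_1}h_1\|_{L^2}\,\|f_2\|_{L^\infty}\|f_3\|_{L^\infty}\lesssim 2^k\|P_{k_1}h_1\|_{L^2}\|f_2\|_{L^\infty}\|f_3\|_{L^\infty},
\end{equation*}
where I also used that $\|h_j\|_{L^\infty}=\|f_j\|_{L^\infty}$ since $h_j$ is the composition of $f_j$ with the bijection $x\mapsto x-a_j+\mathfrak{q}(t,x)$.

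The final step is to apply Lemma \ref{lem:g_h_Z} to $f_1\in Z$, yielding $\|P_{k_1}h_1\|_{L^2}\lesssim 2^{-k/2}\min\{1,(2^kt)^{-1/10}\}\|f_1\|_Z$, which combines with the previous display to produce exactly the asserted estimate. I do not foresee any obstacle in this lemma: the entire argument is driven by the integrability of the $\alpha$-domain together with the $L^1$-boundedness of the kernels, and the $Z$-structure is only used through its Sobolev-like consequence in Lemma \ref{lem:g_h_Z}. The genuinely subtle work that will eventually produce the renormalizing velocity $\partial_t\mathfrak{q}$ is deferred to the complementary region $|\alpha|\geq 2^{-k}$, where the core/error decomposition provided by Lemmas \ref{lemma:error} and \ref{lemma:error2} becomes essential.
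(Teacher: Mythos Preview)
Your proposal is correct and follows essentially the same approach as the paper: both arguments use the uniform $L^1$ bounds on $L_{k_1}$ and $L_{\leq k-6}$ from Lemma~\ref{lemma:K} on the range $|\alpha|\leq 2^{-k}$, exploit the resulting $2^{-k}$ from the $\alpha$-integration to cancel one derivative, and finish with Lemma~\ref{lem:g_h_Z}. Your write-up is slightly more explicit about the Minkowski/Young step and the identity $\|h_j\|_{L^\infty}=\|f_j\|_{L^\infty}$, but the content is the same.
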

	
	\begin{proof} This follows directly from Lemmas \ref{lemma:K} and \ref{lem:g_h_Z}:
		\begin{equation*}
		\begin{split}
		\|G_{k,1}^{1,1}(t)\|_{L^2}&\lesssim 2^{k}\sum_{k_1\in[k-3,k+3]}2^{k_1}\|P_{k_1}h_1(t)\|_{L^2}\|P_{\leq k-6}h_2(t)\|_{L^\infty}\|P_{\leq k-6}h_3(t)\|_{L^\infty}2^{-k_1}\\
		&\lesssim 2^{k/2}(1+2^{k}t)^{-1/10}\|f_{1}\|_{Z}\|f_{2}\|_{L^\infty}\|f_{3}\|_{L^\infty},	
		\end{split}
		\end{equation*}
		as claimed.
	\end{proof}

	Next, we proceed with the integral over $2^{k}|\alpha|\geq 1$. We recall the formulas \eqref{Lk_formula} (which we use for the kernel $L_{k_1}$) and \eqref{simple_id2} and decompose $G_{k,1}^1-G_{k,1}^{1,1}=G_{k,1}^{1,2}+G_{k,1}^{1,3}+G_{k,1}^{1,4}$ where
	\begin{equation}\label{G11}
	\begin{split}
	G_{k,1}^{1,2}[h_1,h_2,h_3](x)&:=-\frac{1}{\pi}\sum_{k_1\in[k-3,k+3]}P_k\int_{|\alpha|\geq 2^{-k}}\int_{\R^3}\frac{d}{dx}P_{k_1}h_{1}(x-y_1) \frac{d}{dx}\big\{P_{\leq k-6}h_2(x-y_2)P_{\leq k-6}h_3(x-y_3)\big\}\\
	&\hspace{4cm}\times L_{k_1}(y_1,\alpha)L_{\leq k-6}(y_2,\alpha)L_{\leq k-6}(y_3,\alpha)d\alpha dy_1dy_2dy_3.
	\end{split}
	\end{equation}
	\begin{equation}\label{G12}
	\begin{aligned}
	G_{k,1}^{1,3}[h_1,h_2,h_3]:=\frac{1}{\pi}\sum_{k_1\in[k-3,k+3]}P_{k}&\int_{|\alpha|\geq 2^{-k}}\int_{\R^2}\frac{d}{dx}P_{k_1}h_1(x-\alpha)P_{\leq k-6}h_2(x-y_2)P_{\leq k-6}h_3(x-y_3)\\
	&\times\frac{L_{\leq k-6}(y_2,\alpha)L_{\leq k-6}(y_3,\alpha)}{\alpha}d\alpha dy_2 dy_3.
	\end{aligned}
	\end{equation}
	\begin{equation}\label{G13}
	\begin{aligned}
	G_{k,1}^{1,4}[h_1,h_2,h_3]:=-\frac{1}{\pi}\sum_{k_1\in[k-3,k+3]}P_{k}\Big\{\frac{d}{dx}P_{k_1}h_1(x)&\int_{|\alpha|\geq 2^{-k}}\int_{\R^2}P_{\leq k-6}h_2(x-y_2)P_{\leq k-6}h_3(x-y_3)\\
	&\times\frac{L_{\leq k-6}(y_2,\alpha)L_{\leq k-6}(y_3,\alpha)}{\alpha}d\alpha dy_2 dy_3\Big\}.
	\end{aligned}
	\end{equation}
	We remark that we used integration by parts for the term in \eqref{G12}, as was done in \eqref{simple_id2}.	We prove now suitable bounds on the functions $G_{k,1}^{1,2}$.

	\begin{lemma}\label{lem:G1greq1}
		Assume that $f_{1}\in Z$, $f_{2}, f_{3}\in Z_1$, and define $h_j(t,x):=f_j(t,x-a_j+\mathfrak q(t,x))$ as before, for some points $a_j\in\R$. 
		Then, with $G_{k,1}^{1,2}$ as in \eqref{G11},
		\begin{equation*}\|G_{k,1}^{1,2}(t)\|_{L^2}\lesssim 2^{k/2}\min\{1,(2^kt)^{-1/10}\}\|f_{1}\|_{Z}\|f_{2}\|_{L^\infty}\|f_{3}\|_{L^\infty}.
		\end{equation*}
	\end{lemma}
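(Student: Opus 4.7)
The plan is to exploit the product-rule derivative built into $G_{k,1}^{1,2}$: both the derivative on the high-frequency factor (already present in $G_{k,1}^1$) and the extra derivatives distributed onto the low-frequency factors will conspire, through the identity $(\partial f)^\ast(x,\alpha) = [f(x)-f(x-\alpha)]/\alpha$, to produce enough decay in $\alpha$ to kill the logarithmic divergence that would otherwise appear.

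Concretely, I would first apply the product rule to write
\[
\frac{d}{dx}\bigl\{P_{\leq k-6}h_2(x-y_2)P_{\leq k-6}h_3(x-y_3)\bigr\} = (\partial P_{\leq k-6}h_2)(x-y_2)\,P_{\leq k-6}h_3(x-y_3) + P_{\leq k-6}h_2(x-y_2)\,(\partial P_{\leq k-6}h_3)(x-y_3),
\]
which splits $G_{k,1}^{1,2}=G_{k,1}^{1,2,a}+G_{k,1}^{1,2,b}$. By symmetry it suffices to bound the first piece. Performing the $y_1,y_2,y_3$ integrations at fixed $\alpha$ and using that $\varphi_{k'}\tilde\varphi_{k'}=\varphi_{k'}$ together with the Fourier-side formula for $L_{k'}$, one finds
\[
G_{k,1}^{1,2,a}(x) = -\frac{1}{\pi}\sum_{k_1\in[k-3,k+3]} P_k\int_{|\alpha|\geq 2^{-k}} A_\alpha(x)\,B_\alpha(x)\,C_\alpha(x)\, d\alpha,
\]
where the three factors have the explicit closed forms $A_\alpha(x)=[P_{k_1}h_1(x)-P_{k_1}h_1(x-\alpha)]/\alpha$, $B_\alpha(x)=[P_{\leq k-6}h_2(x)-P_{\leq k-6}h_2(x-\alpha)]/\alpha$, and $C_\alpha(x)=(P_{\leq k-6}h_3)^\ast(x,\alpha)$. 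These are simply the $(\partial f)^\ast$ and $f^\ast$ representations that arise when convolving a frequency-localized function with $L_{k'}(\cdot,\alpha)$.

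Next I would apply Minkowski's integral inequality in $\alpha$ and then H\"older's inequality in $x$, placing $A_\alpha$ in $L^2$ and $B_\alpha, C_\alpha$ in $L^\infty$. The closed forms immediately give $\|A_\alpha\|_{L^2}\lesssim \|P_{k_1}h_1\|_{L^2}/|\alpha|$, $\|B_\alpha\|_{L^\infty}\lesssim \|h_2\|_{L^\infty}/|\alpha|$, and $\|C_\alpha\|_{L^\infty}\lesssim \|h_3\|_{L^\infty}$, so the $\alpha$-integral collapses to $\int_{|\alpha|\geq 2^{-k}}d\alpha/|\alpha|^2\lesssim 2^k$. This produces $\|G_{k,1}^{1,2,a}\|_{L^2}\lesssim 2^k\|P_k h_1\|_{L^2}\|h_2\|_{L^\infty}\|h_3\|_{L^\infty}$. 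Combining with Lemma~\ref{lem:g_h_Z} for $\|P_k h_1\|_{L^2}\lesssim 2^{-k/2}\min\{1,(2^kt)^{-1/10}\}\|f_1\|_{Z}$ and noting that the change of variables $h_j(x)=f_j(x-a_j+\mathfrak q(x))$ is a bijection of $\R$ and therefore preserves $L^\infty$ norms yields the stated bound.

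The main obstacle to avoid is that any attempt to bound the trilinear expression by the $L^1$-norm of the kernel $\int_{|\alpha|\geq 2^{-k}} L_{k_1}L_{\leq k-6}L_{\leq k-6}\,d\alpha$ would fail: Lemmas~\ref{lemma:K} and~\ref{lemma:K2} only give $\|L_{k_1}(\cdot,\alpha)\|_{L^1}\lesssim (2^k|\alpha|)^{-1}$, while the low-frequency kernels contribute only $O(1)$ each, leaving a logarithmically divergent $\int_{|\alpha|\geq 2^{-k}}d\alpha/|\alpha|$. The decisive observation is that the second derivative, coming from the product rule on the low-frequency factors, upgrades one more convolution to the $(\partial f)^\ast$-form and thereby contributes an additional explicit $1/|\alpha|$, converting $|\alpha|^{-1}$ decay into $|\alpha|^{-2}$ and making the $\alpha$-integral finite.
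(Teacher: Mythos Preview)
Your proof is correct but takes a genuinely different route from the paper's.

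The paper further decomposes the low-frequency factor on which the derivative lands, writing $\partial_x P_{\leq k-6}h_2=\sum_{k_2\leq k-6}\partial_x P_{k_2}h_2$, and then invokes the $L^1$ kernel bound of Lemma~\ref{lemma:K}(ii), $\|K_{k_1,k_2,\leq k-6}\|_{L^1}\lesssim 2^{-k_1}(1+|k_1-k_2|)$. The derivative contributes a factor $2^{k_2}$, and the resulting sum $\sum_{k_2\leq k-6}2^{k_2-k}|k-k_2|$ converges. So your remark that ``any attempt to bound the trilinear expression by the $L^1$-norm of the kernel would fail'' is slightly too pessimistic: the paper makes precisely that approach work, but only after the extra Littlewood--Paley decomposition of $h_2$.

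Your argument is more direct and entirely self-contained: you perform the $y_j$-integrals at fixed $\alpha$ to obtain the explicit closed forms $A_\alpha,B_\alpha,C_\alpha$, apply Minkowski plus H\"older, and exploit the explicit $1/|\alpha|$ decay of $A_\alpha$ and $B_\alpha$ to get a convergent $\int_{|\alpha|\geq 2^{-k}}|\alpha|^{-2}\,d\alpha\lesssim 2^k$. This bypasses both the kernel lemmas and the inner Littlewood--Paley sum. The paper's route has the virtue of fitting the systematic kernel framework used throughout (and the same mechanism reappears in Lemma~\ref{lemmaG12G13}); your route is shorter for this particular lemma and makes the cancellation mechanism---the extra $1/|\alpha|$ produced by the product-rule derivative on a low-frequency factor---fully transparent.
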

	
	\begin{proof}
		The $d/dx$ derivative can hit either the function $h_2$ or $h_3$. The two cases are identical. Using Lemma \ref{lemma:K} (ii) and \ref{lem:g_h_Z} we estimate
		\begin{equation*}
		\begin{split}
		\|G_{k,1}^{1,2}(t)\|_{L^2}&\lesssim \sum_{k_1\in[k-3,k+3]}\sum_{k_2\leq k-6}2^{k_1}\|P_{k_1}h_1(t)\|_{L^2}2^{k_2}\|P_{k_2}h_2(t)\|_{L^\infty}\|P_{\leq k-6}h_3(t)\|_{L^\infty}2^{-k_1}|k_1-k_2|\\
		&\lesssim \sum_{k_2\leq k-6} 2^{k/2}(1+2^{k}t)^{-1/10}2^{k_2-k}|k-k_2|\|f_{1}\|_{Z}\|f_{2}\|_{L^\infty}\|f_{3}\|_{L^\infty}\\
		&\lesssim2^{k/2}\min\{1,(2^kt)^{-1/10}\}\|f_{1}\|_{Z}\|f_{2}\|_{L^\infty}\|f_{3}\|_{L^\infty},
		\end{split}
		\end{equation*}
		as claimed.
	\end{proof}
	
	In the following lemma, we estimate the nonlocal term $G_{k,1}^{1,3}$.
	
	\begin{lemma}\label{lemma:L2}
		Assume that $f_{1}\in Z$, $f_{2}, f_{3}\in Z_1$, and define $h_j(t,x):=f_j(t,x-a_j+\mathfrak q(t,x))$ as before, for some points $a_j\in\R$. 
		Then, with $G_{k,1}^{1,3}$ as in \eqref{G12},
		\begin{equation*}
		\|G_{k,1}^{1,3}(t)\|_{L^2}\lesssim 2^{k/2}\min\{1,(2^kt)^{-1/10}\}\|f_{1}\|_{Z}\|f_{2}\|_{Z_1}\|f_{3}\|_{Z_1}.
		\end{equation*}	
	\end{lemma}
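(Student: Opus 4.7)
The strategy is to reduce $G_{k,1}^{1,3}$ to a truncated Hilbert transform applied to $\partial_x P_{k_1} h_1$, for which the uniform bound \eqref{musk12} then yields the desired $L^2$ estimate. Performing the $y_2$ and $y_3$ integrations first, the $L_{\leq k-6}$ factors produce integrals of exactly the form $T(f_j)(t,x,\alpha,k-6)$ in the notation of \eqref{T}, after unfolding $h_j$ via its definition and applying a change of variables (the mismatch between the $P_{\leq k-6}$ projection and the wider cutoff $\widetilde{\varphi}_{\leq k-6}$ built into $L_{\leq k-6}$ is inessential). The task then reduces to estimating
\begin{equation*}
\sum_{k_1\in[k-3,k+3]} P_k \int_{|\alpha|\geq 2^{-k}} (\partial_x P_{k_1}h_1)(x-\alpha)\, T(f_2)(t,x,\alpha,k-6)\, T(f_3)(t,x,\alpha,k-6)\, \frac{d\alpha}{\alpha}.
\end{equation*}

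The main step is to apply Lemma~\ref{lemma:error} (which requires $|\alpha|\gtrsim 2^{6-k}$) to write $T(f_j) = p(f_j) + R_j$ and control the remainders $R_j$. The leftover sliver $2^{-k}\leq|\alpha|\leq 2^{6-k}$ has only constant logarithmic length and is handled directly by placing $P_{k_1}h_1$ in $L^2$ and the $h_j^\ast$ factors in $L^\infty$. For the remainder contributions $R_j$, the pointwise bound from Lemma~\ref{lemma:error} is integrable against $d\alpha/|\alpha|$; after Cauchy-Schwartz in $\alpha$ together with Minkowski in $x$, these contribute acceptably when paired with $\partial_x P_{k_1} h_1(x-\alpha)$.

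For the principal contribution involving $p(f_2)\,p(f_3)/\alpha$ we decompose the $\alpha$-integral according to the trichotomy in \eqref{pbigDef}, separately for $j=2$ and $j=3$. In the regime $|\alpha|<|X_j|/4$ (with $X_j := x-a_j+\mathfrak{q}(t,x)$) the factor $p(f_j)(t,x,\alpha)=f_j(t,X_j)$ is independent of $\alpha$ and can be pulled outside, reducing that slice to a truncated Hilbert transform of $\partial_x P_{k_1}h_1$ bounded via \eqref{musk12}. In the regime $|\alpha|>4|X_j|$ we further invoke Lemma~\ref{lemma:error2} to replace $g_j^\pm$ by $P_{\leq k(t,|\alpha|)}f_j(t,0)$, whose slow dependence on $\alpha$ (of order $(|\alpha|/t)^{1/50}$) is summable across a dyadic decomposition in $|\alpha|$, again leaving a frozen-in-$x$ factor multiplying a truncated Hilbert kernel. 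The intermediate regime contributes zero by definition of $p$. At each step the highest-frequency factor is placed in $L^2$, contributing $2^{-k/2}(1+2^kt)^{-1/10}\|f_1\|_Z$, while the low-frequency factors only contribute their $Z_1$ norms through $L^\infty$ bounds.

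The principal difficulty is the intertwined dependence of the core terms $p(f_j)(t,x,\alpha)$ on both the space variable $x$ (through the cutoffs at $|X_j|$, which moreover differ between $j=2$ and $j=3$ since $a_2\neq a_3$ in general) and on $\alpha$ itself (through the averages $g_j^\pm$). Disentangling these so that the resulting $\alpha$-integral is a pure truncated Hilbert transform, and in particular ensuring that the $\alpha$-variation of $p(f_j)$ in the $|\alpha|>4|X_j|$ regime is slow enough to absorb the $1/\alpha$ singularity without incurring a logarithmic loss, is precisely what necessitates the refined approximations of Lemmas~\ref{lemma:error} and~\ref{lemma:error2}.
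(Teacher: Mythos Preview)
Your approach is considerably more elaborate than the paper's and, as written, has real gaps. The paper does not invoke Lemmas~\ref{lemma:error} or~\ref{lemma:error2} here at all. After rewriting
\[
G_{k,1}^{1,3}=\frac{1}{\pi}\sum_{k_1\in[k-3,k+3]}P_k\int_{|\alpha|\geq 2^{-k}}\partial_x P_{k_1}h_1(x-\alpha)\,K(x,\alpha)\,d\alpha,\qquad K(x,\alpha):=\frac{T_2(x,\alpha)T_3(x,\alpha)}{\alpha},
\]
it simply observes that $|K|\lesssim|\alpha|^{-1}$ and $|\partial_\alpha K|\lesssim|\alpha|^{-2}$ (both immediate from $|T_j|\lesssim\|f_j\|_{L^\infty}$ and $|\partial_\alpha T_j|\lesssim\|f_j\|_{L^\infty}/|\alpha|$, which follow directly from the averaging formula). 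Since $\partial_x P_{k_1}h_1(x-\alpha)=-\partial_\alpha[P_{k_1}h_1(x-\alpha)]$, one integrates by parts in $\alpha$: the boundary terms at $|\alpha|=2^{-k}$ contribute $O(2^k)\|P_{k_1}h_1\|_{L^2}$, and the bulk term is bounded in $L^2_x$ by Minkowski together with $\int_{|\alpha|\geq 2^{-k}}|\alpha|^{-2}\,d\alpha\lesssim 2^k$. That is the entire proof.

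Your route through the core/error decomposition has two concrete problems. First, the remainder from Lemma~\ref{lemma:error} satisfies $\int|R_j(x,\alpha)|\,|\alpha|^{-1}d\alpha\lesssim 1$ pointwise in $x$, but its $L^\infty_x$ norm at each fixed $\alpha$ is of order $1$ (the supremum over $X$ of $[|\alpha||X|/(|X|^2+|\alpha|^2)]^{9/10}$ is attained at $|X|=|\alpha|$). Neither Minkowski in $x$ followed by $\sup_x$, nor Cauchy--Schwarz in $\alpha$, then closes: the resulting $\alpha$-integral $\int_{|\alpha|\geq 2^{-k}}|\alpha|^{-1}d\alpha$ diverges. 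Second, Lemma~\ref{lemma:error2} requires $|\alpha|\leq 2t$, whereas the integral in $G_{k,1}^{1,3}$ has no upper truncation; and even where it applies, $P_{\leq k(t,|\alpha|)}f_j(t,0)$ is not constant in $\alpha$, so you still face a dyadic sum of truncated Hilbert transforms with $x$-dependent endpoints that does not obviously sum. These are exactly the difficulties that make $G_{k,1}^{1,4}$ delicate in Lemma~\ref{lem:G1greq4}; the point of separating out $G_{k,1}^{1,3}$ is that the nonlocal placement of the derivative allows the integration-by-parts shortcut and avoids them entirely.
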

	
	\begin{proof}
		We may assume that $\|f_{1}\|_Z=\|f_{2}\|_{Z_1}=\|f_{3}\|_{Z_1}=1$. Formula \eqref{G12} shows that
		\begin{equation*}
		G_{1,k}^{1,3}=\frac{1}{\pi}\sum_{k_1\in[k-3,k+3]}P_k\int_{|\alpha|\geq 2^{-k}}\frac{d}{dx}P_{k_1}h_1(x-\alpha)\frac{T_2(x,\alpha,k-8)T_3(x,\alpha,k-8)}{\alpha}d\alpha,
		\end{equation*}
		where $T_2=T(f_2)$ and $T_3=T(f_3)$ are defined as in \eqref{T}.
		We begin by denoting
		\begin{equation*}
		K(x,\alpha):=\frac{T_2(x,\alpha,k-8)T_3(x,\alpha,k-8)}{\alpha},
		\end{equation*}
		and we notice that, for all $\alpha$ and $x$, we have
		\begin{equation}\label{K}
		|K(x,\alpha)|\lesssim\frac{1}{|\alpha|}\qquad\text{and}\qquad|\partial_\alpha K(\alpha,x)|\lesssim\frac{1}{\alpha^2}.
		\end{equation}
		Since a simple change of variable yields $\frac{d}{dx}P_{k_1}h_1(x-\alpha)=-\frac{d}{d\alpha}[P_{k_1}h_1(x-\alpha)]$, we can integrate by parts to see that
		\begin{equation*}
		\begin{split}
		-\int_{|\alpha|\geq 2^{-k}}\frac{d}{dx}P_{k_1}h_1(x-\alpha)K(x,\alpha)\,d\alpha&=-\int_{|\alpha|\geq 2^{-k}}P_{k_1}h_1(x-\alpha)\partial_\alpha K(x,\alpha)\,d\alpha\\
		&-P_{k_1}h_1(x-2^{-k})K(x,2^{-k})+P_{k_1}h_1(x+2^{-k})K(x,-2^{-k}).
		\end{split}	
		\end{equation*}
		Using the bounds \eqref{K} and Lemma \ref{lem:g_h_Z} we have
		\begin{equation*}
		\|G_{k,1}^{1,3}(t)\|_{L^2}\lesssim\sum_{k_1\in[k-3,k+3]}2^k\|P_{k_1}h_1(t)\|_{L^2}\lesssim 2^{k/2}\min\{1,(2^kt)^{-1/10}\},
		\end{equation*}
		as claimed.
	\end{proof}

	It remains to understand the term $G^{1,4}_{k,1}$ in \eqref{G13}. To this end, we need to define a new object, which we refer to as the associated velocity field. We provide the following general definition: for any two functions $g_1,g_2\in Z_1$ and any two base points $a_1,a_2\in\R$  we define the bilinear expression
	\begin{equation}\label{lpo1}
	V[g_1,g_2](t,x):=-\frac{1}{\pi}\int_{|\alpha|\geq t}\frac{p_1(t,x,\alpha)p_2(t,x,\alpha)}{\alpha}\,d\alpha,
	\end{equation}
	where $p_1$ and $p_2$ are defined as in \eqref{pbigDef}, with the indices $\{1,2\}$ referring to the numbering of the base points $a_1$ and $a_2$.

	\begin{lemma}\label{lem:G1greq4}
		Assume that $f_{1}\in Z$, $f_{2}, f_{3}\in Z_1$, and define $h_j(t,x):=f_j(t,x-a_j+\mathfrak q(t,x))$ as before, for some points $a_j\in\R$. 
		Then, with $G_{k,1}^{1,4}$ as in \eqref{G13},
		\begin{equation*}
		\Big\|G_{k,1}^{1,4}(t,x)-P_{k}\Big\{\frac{d}{dx}h_1(t,x)\cdot P_{\leq k-4}V[f_2,f_3](t,x)\Big\}\Big\|_{L^2_x}\lesssim 2^{k/2}(2^kt)^{-1/10}\|f_{1}\|_{Z}\|f_{2}\|_{Z_1}\|f_{3}\|_{Z_1}.
		\end{equation*}
	\end{lemma}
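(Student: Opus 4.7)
Since $L_{\leq k-6}(\cdot,\alpha)$ has spectral support in $\{|\xi|\lesssim 2^{k-4}\}$, we may, up to harmless errors absorbable in the final bound, rewrite the inner $y_2,y_3$ integrals in \eqref{G13} in terms of the operator $T$ of \eqref{T}. Abbreviating the resulting $\alpha$-integral as $W(t,x)$, we obtain
$$G_{k,1}^{1,4}(t,x)=-\frac{1}{\pi}\sum_{k_1\in[k-3,k+3]}P_k\bigl\{\partial_xP_{k_1}h_1(t,x)\cdot W(t,x)\bigr\},\qquad W(t,x):=\int_{|\alpha|\geq 2^{-k}}\frac{T(f_2)(t,x,\alpha,k-6)\,T(f_3)(t,x,\alpha,k-6)}{\alpha}\,d\alpha,$$
where $T(f_j)$ is understood with base point $a_j$. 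The theorem reduces to showing the pointwise identity $W(t,x)=-\pi\,V[f_2,f_3](t,x)+E(t,x)$ with $\|E(t,\cdot)\|_{L^\infty}\lesssim\|f_2\|_{Z_1}\|f_3\|_{Z_1}$. Indeed, summing over $k_1$ then reconstructs $\partial_xh_1$ localized at frequency $\sim 2^k$, and a paraproduct argument along the lines of Lemma~\ref{lemmaG12G13} lets us insert $P_{\leq k-4}$ in front of $V[f_2,f_3]$; the bound $\|\partial_xP_{k_1}h_1\|_{L^2}\lesssim 2^{k/2}(2^kt)^{-1/10}\|f_1\|_Z$ from Lemma~\ref{lem:g_h_Z} then closes the $L^2$ estimate.

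To analyze $W$, I invoke Lemma~\ref{lemma:error} to split $T(f_j)(t,x,\alpha,k-6)=p_j(t,x,\alpha)+R_j(t,x,\alpha)$ with $|R_j|\lesssim [|\alpha X_j|/(X_j^2+\alpha^2)]^{9/10}+(2^k|\alpha|)^{-1}$ and $X_j:=x-a_j+\mathfrak q(t,x)$. Expanding $T(f_2)T(f_3)-p_2p_3=R_2p_3+p_2R_3+R_2R_3$ and integrating against $d\alpha/\alpha$ over $|\alpha|\geq 2^{-k}$, a scaling change of variables $\beta=\alpha/X_j$ shows that $\int_{|\alpha|\geq 2^{-k}}|R_j(t,x,\alpha)|\,|\alpha|^{-1}\,d\alpha\lesssim 1$ uniformly in $x$ and $t$; combined with $\|p_j\|_{L^\infty}\lesssim\|f_j\|_{Z_1}$, this puts the remainder contributions into $E$.

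What remains is to compare $\int_{|\alpha|\geq 2^{-k}}p_2p_3\,d\alpha/\alpha$ with $\int_{|\alpha|\geq t}p_2p_3\,d\alpha/\alpha$, i.e.\ to bound $\int_{2^{-k}\leq|\alpha|\leq t}p_2p_3\,d\alpha/\alpha$ uniformly in $x$ (when $2^kt\geq 1$; otherwise the two cutoffs coincide). Because $1/\alpha$ is odd, this quantity cancels whenever $p_2p_3$ is essentially even in $\alpha$. I split the range according to the relative sizes of $|\alpha|$, $|X_2|$, $|X_3|$: on $|\alpha|\ll\min(|X_2|,|X_3|)$ each factor $p_j$ is $\alpha$-independent by the last branch of \eqref{pbigDef} and the symmetric integral vanishes; on $|\alpha|\gg\max(|X_2|,|X_3|)$ I invoke Lemma~\ref{lemma:error2} to replace each $p_j$ by $p_j^\ast$, whose $\alpha$-dependence enters only through the slowly varying cutoff $k(t,|\alpha|)$, so that the principal part again cancels by symmetry and the error is of size $(|\alpha|/t)^{1/50}$, integrable against $d\alpha/\alpha$; the transition annuli $|\alpha|\sim|X_j|$ are treated using the vanishing of $p_j$ there together with the integrable gain $|\alpha/X_j|^{9/10}$ inherited from the preceding step.

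The main obstacle is this last point: the naive $L^\infty\times L^1$ estimate on $\int_{2^{-k}\leq|\alpha|\leq t}|p_2p_3|\,|\alpha|^{-1}\,d\alpha$ would lose a factor $\log(2^kt)$, precisely the logarithmic loss that makes the renormalization $\mathfrak q$ necessary in the first place. Avoiding this logarithm requires the precise structural decompositions of $Z_1$ functions provided by Lemmas~\ref{lemma:error} and~\ref{lemma:error2}, combined with the oddness cancellation of the Hilbert-type kernel $d\alpha/\alpha$ over symmetric annuli.
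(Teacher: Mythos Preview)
Your approach is essentially the same as the paper's: reduce to the $L^\infty$ bound on $W+\pi V$, use Lemma~\ref{lemma:error} to pass from $T_j$ to $p_j$ with integrable errors, and exploit the evenness in $\alpha$ of $p^\ast_j$ (Lemma~\ref{lemma:error2}) to kill the symmetric annulus between $2^{-k}$ and $t$ when $2^kt\geq 2$. One small correction: when $2^kt<1$ the two cutoffs do \emph{not} coincide---the relevant region is $\{t\leq|\alpha|\leq 2^{-k}\}$---but there the trivial bound $|p_2p_3|\lesssim\|f_2\|_{Z_1}\|f_3\|_{Z_1}$ gives $\lesssim\log(2+(2^kt)^{-1})\lesssim(2^kt)^{-1/10}$, which is exactly how the paper handles this case in \eqref{lpo4}.
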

	
	\begin{proof} From the definitions and Lemma \ref{lem:g_h_Z} it suffices to prove that
		\begin{equation}\label{lpo2}
		\Big\|-\frac{1}{\pi}\int_{|\alpha|\geq 2^{-k}}\frac{T_2(t,x,\alpha,k-8)T_3(t,x,\alpha,k-8)}{\alpha}d\alpha-V[f_2,f_3](t,x)\Big\|_{L^\infty_x}\lesssim [1+(2^kt)^{-1/10}]\|f_{2}\|_{Z_1}\|f_{3}\|_{Z_1},
		\end{equation}
		where $T_2=T_2(f_2)$ and $T_3=T_3(f_3)$ are defined as in \eqref{T}.
		
		We may assume that $\|f_2\|_{Z_1}=\|f_3\|_{Z_1}=1$. In order to prove \eqref{lpo2}, we proceed in various steps. We begin by exploiting the structure of the functions in the $Z_1$ space to remove all the error terms. Indeed, from Lemma \ref{lemma:error}, for any $t,x\in[0,\infty)\times\R$  we have
		\begin{equation}\label{lpo3}
		\begin{split}
		&\Big|\int_{|\alpha|\geq 2^{-k}}\frac{T_2(t,x,\alpha,k-8)T_3(t,x,\alpha,k-8)}{\alpha}d\alpha-\int_{|\alpha|\geq 2^{-k}}\frac{p_2(t,x,\alpha)p_3(t,x,\alpha)}{\alpha}d\alpha\Big|\\
		&\lesssim \int_{|\alpha|\geq 2^{-k}}\frac{1}{|\alpha|}\Big\{\frac{|\alpha||x-a_2+\mathfrak q(t,x)|}{|x-a_2+\mathfrak q(t,x)|^2+|\alpha|^2}+\frac{|\alpha||x-a_3+\mathfrak q(t,x)|}{|x-a_3+\mathfrak q(t,x)|^2+|\alpha|^2}+\frac{1}{2^{k}|\alpha|}\Big\}^{9/10}\,d\alpha\\
		&\lesssim 1.
		\end{split}
		\end{equation}
		Next, we notice that if $2^kt\lesssim 1$ then
		\begin{equation}\label{lpo4}
		\Big|\int_{|\alpha|\geq 2^{-k}}\frac{p_2(t,x,\alpha)p_3(t,x,\alpha)}{\alpha}d\alpha-\int_{|\alpha|\geq t}\frac{p_2(t,x,\alpha)p_3(t,x,\alpha)}{\alpha}d\alpha\Big|\lesssim \log(2+(2^kt)^{-1}),
		\end{equation}
		since $|p_2(t,x,\alpha)|+|p_3(t,x,\alpha)|\lesssim 1$. We see now that the bounds \eqref{lpo2} follow from \eqref{lpo3} and \eqref{lpo4} if $2^kt\lesssim 1$. 
		
		It remains to prove that
		\begin{equation*}\label{lpo5}
		\Big|\int_{|\alpha|\geq 2^{-k}}\frac{p_2(t,x,\alpha)p_3(t,x,\alpha)}{\alpha}d\alpha-\int_{|\alpha|\geq t}\frac{p_2(t,x,\alpha)p_3(t,x,\alpha)}{\alpha}d\alpha\Big|\lesssim 1,
		\end{equation*}
		uniformly for any $(t,x)\in[0,\infty)\times\R$ satisfying $2^kt\geq 2$. This follows from Lemma \ref{lemma:error2} in which we show that for $2^kt\geq2$ and $|\alpha|\leq2t$, the functions $p$ can be approximated by some function $p^*$, which is even in $\alpha$. Thus we obtain
		\begin{equation*}
		\int_{|\alpha|\in[2^{-k},t]}\frac{p^\ast_2(t,x,\alpha)p^\ast_3(t,x,\alpha)}{\alpha}d\alpha=0.
		\end{equation*}
		for any $(t,x)\in[0,\infty)\times\R$ satisfying $2^kt\geq 2$.
	\end{proof}

\subsection{The velocity field $V$} We examine now the formula \eqref{lpo1} defining the velocity field $V[g_1,g_2]$. Assume that $\mathfrak q$ satisfies the bounds \eqref{q_bounds} and, in addition,
\begin{equation}\label{q_boundsN}
\|\partial_x\mathfrak q(t,.)\|_{L^4_x}\lesssim\varepsilon t^{1/4}\qquad\text{ for any }t\in[0,T].
\end{equation}
Assume that the functions $g_1,g_2\in Z_1$ satisfy the bounds
\begin{equation}\label{lpo10}
\|g_1\|_{Z_1}+\|g_2\|_{Z_1}\lesssim \varepsilon,\qquad \|g_1^e\|_{L^\infty_tL^p_x}+\|g_2^e\|_{L^\infty_tL^p_x}\lesssim 1,
\end{equation}
for some $p\in[1,\infty)$, where here $g_j^e(t,x):=g_j(t,x)+g_j(t,-x)$, denotes the even part of the function $g_j$. Assume that $a_1,a_2\in\R$ are two points, and define the velocity field $V[g_1,g_2]$ as in \eqref{lpo1}. Then:

\begin{lemma}\label{lemma:L-inf}
	With the assumptions above and $\widetilde{\mathfrak Q}$ defined as in \eqref{Q3} and \eqref{changeV}, we can decompose
	\begin{equation*}\label{lpo11}
	V[g_1,g_2](t,\widetilde{\mathfrak Q}(t,y))=V_1[g_1,g_2](t,y)+V_2[g_1,g_2](t,y),
	\end{equation*}
	where
	\begin{equation}\label{lpo11.1}
	\begin{split}
	&|V_1[g_1,g_2](t,y)|\lesssim \varepsilon^2\log (2/t)\mathbf{1}_{[0,2^{-20}]}(t)\sum_{j\in\{1,2\}}\mathbf{1}_{[a_j-2^{-10},a_j+2^{-10}]}(y),\\
	&|\partial_y^nV_1[g_1,g_2](t,y)|\lesssim\varepsilon^2\sum_{j\in\{1,2\},\,|y-a_j|\geq t}\frac{1}{|y-a_j|^n}\ln\Big(\frac{2|y-a_j|}{t}\Big),
	\end{split}
	\end{equation}
	for $n\in\{1,2,3\}$, and
	\begin{equation}\label{lpo11.2}
	|V_2[g_1,g_2](t,y)|\lesssim \varepsilon.
	\end{equation}
	The sum in the second line of \eqref{lpo11.1} is taken over all the indices $j$ with the property that $|y-a_j|\geq t$ (so it vanishes if $|y-a_j|\leq t$ for all $j\in\{1,2\}$).
\end{lemma}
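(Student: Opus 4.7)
The plan is to first perform the change of variable $x = \widetilde{\mathfrak Q}(t,y)$, which by \eqref{changeV} satisfies $x - a_j + \mathfrak{q}(t,x) = y - a_j$, converting $V[g_1,g_2](t,\widetilde{\mathfrak Q}(t,y))$ into
\begin{equation*}
-\frac{1}{\pi}\int_{|\alpha|\geq t} \frac{\widetilde{p}_1(t,y,\alpha)\widetilde{p}_2(t,y,\alpha)}{\alpha}\,d\alpha,
\end{equation*}
where $\widetilde{p}_j$ are as in \eqref{lpo22}--\eqref{lpo23}. Using the oddness of $1/\alpha$, this equals $\frac{2}{\pi}\int_t^\infty[\widetilde{p}_1(\alpha)\widetilde{p}_2(\alpha) - \widetilde{p}_1(-\alpha)\widetilde{p}_2(-\alpha)]\frac{d\alpha}{\alpha}$, so only the odd-in-$\alpha$ part of the product contributes.

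I would then split $\alpha \in (0,\infty)$ according to where it sits relative to the thresholds $|y-a_1|$, $|y-a_2|$, producing a finite case analysis across the three regimes of \eqref{lpo22}. The dead-zone cases (where one $\widetilde{p}_j$ vanishes) contribute nothing. The (A,A) case, in which both $\widetilde{p}_j$ are constants in $\alpha$, also vanishes by oddness. Thus the nontrivial contributions are the (C,A), (A,C) and (C,C) cases, which involve the quantities $\widetilde{g}_j^+(\alpha) \pm \widetilde{g}_j^-(-\alpha)$. A direct computation using \eqref{lpo23} and \eqref{q_bounds}, \eqref{qtilde_bounds} shows that the \emph{odd} part $\widetilde{g}_j^+(\alpha) - \widetilde{g}_j^-(-\alpha)$ is controlled pointwise by $\|g_j\|_{Z_1}\lesssim \varepsilon$, while the \emph{even} part $\widetilde{g}_j^+(\alpha)+\widetilde{g}_j^-(-\alpha)$ telescopes, modulo corrections of size $\varepsilon$ from $\partial_x\mathfrak{q}$, to the average $\frac{1}{\alpha}\int_0^\alpha g_j^e$, which by \eqref{lpo10} is in $L^p_\alpha$.

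Based on this dichotomy, I would define $V_1$ as the contribution from $|\alpha|\leq 2^{-10}$ together with the localizations $t\leq 2^{-20}$ and $|y-a_j|\leq 2^{-10}$, and set $V_2 := V - V_1$. For the $V_1$ bounds: when $y$ is near $a_j$ and $|\alpha|$ traverses the mixed-regime range $[t, c|y-a_j|]$, the integrand is the product of a constant $g_{j'}(t,y-a_{j'}) = O(\varepsilon)$ and a bounded odd-$\alpha$ piece of size $\varepsilon$; integration against $d\alpha/\alpha$ over this range yields the $\log(|y-a_j|/t)$ factor. This accounts for both lines of \eqref{lpo11.1}: the pointwise bound uses $|y-a_j|\leq 2^{-10}$, while the derivative bounds follow by differentiating the explicit formula, using the $\|x\partial_x g\|_{L^\infty}$ part of the $Z_1$-norm together with \eqref{qtilde_bounds}; each $\partial_y$ either falls on $g_j(t,y-a_j)$ (gaining $1/|y-a_j|$) or produces a boundary term at the cutoff $|\alpha|\sim |y-a_j|$ of the same size.

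For the bound \eqref{lpo11.2} on $V_2$: the contribution from $|\alpha|\geq 2^{-10}$ is handled by splitting $\widetilde{p}_j$ into odd and even parts in $\alpha$ and applying H\"older with $\|1/\alpha\|_{L^{p'}(|\alpha|\geq 2^{-10})} < \infty$; odd$\times$odd gives $\varepsilon^2$ directly, while odd$\times L^p$ and $L^p\times L^p$ give $\varepsilon\cdot \|g^e_j\|_{L^p}\lesssim \varepsilon$ after H\"older. The contribution from $y$ away from all $a_j$'s reduces (at small $\alpha$) to the (A,A) cancellation and (at larger $\alpha$) to the same H\"older argument. The hard part of the proof will be the bookkeeping of the threshold geometry, in particular verifying that the commutation between the $\alpha$-cutoff at $2^{-10}$, the $y$-cutoff at $|y-a_j|=2^{-10}$, and the dynamic thresholds at $|\alpha|=t$ and $|\alpha|=|y-a_j|$ produces only admissible boundary contributions, and that the $L^p$-control of $g_j^e$ from \eqref{lpo10} transfers faithfully to $\widetilde{g}_j^\pm$ through the nonlinear change of variable $\widetilde{\mathfrak Q}$, where \eqref{q_boundsN} provides the necessary integrability of $\partial_x\mathfrak{q}$.
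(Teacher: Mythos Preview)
Your approach is essentially the same as the paper's: the change of variable to $\widetilde{p}_j$, the oddness of $1/\alpha$, the threshold case analysis, and the use of the $L^p$ hypothesis on $g_j^e$ together with \eqref{q_boundsN} to control the (C,C) contribution at large $|\alpha|$ all match the paper's Steps~1--4.

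The one substantive difference is in how $V_1$ is defined. The paper does not take $V_1$ to be a truncation of the $\widetilde{p}_j$-integral; instead it first replaces $\widetilde{g}_j^\pm(t,y,\alpha)$ by the $y$-independent average $g_j^\ast(t,0,\alpha)$ of \eqref{lpo25}, and simultaneously replaces the sharp regime thresholds by smooth cutoffs $\varphi_{\leq -4}$, producing the functions $r_j$ of \eqref{lpo26} and the explicit formula \eqref{lpo27} for $V_1$. The payoff is that in Step~5 the only $y$-dependence of $V_1$ sits in $g_j(t,y-a_j)$ and in the smooth cutoffs, so the derivative bounds in \eqref{lpo11.1} reduce to a short direct computation. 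In your scheme, if $V_1$ is built from $\widetilde{p}_j$ itself, then $\partial_y$ acting on the $\widetilde{g}_j^\pm$ factor feeds through $\widetilde{Q}(t,y)$ into the integrand of \eqref{lpo23}, and extracting the clean $|y-a_j|^{-n}$ decay for $n=2,3$ is not obvious without first passing to $g_j^\ast$. The estimate $|\widetilde{g}_j - g_j^\ast| \lesssim \varepsilon|y-a_j|/|\alpha| + \varepsilon t^{1/4}|\alpha|^{-1/4}$ (the paper's \eqref{lpo28}, driven by \eqref{q_boundsN}) is precisely what lets one absorb this discrepancy into $V_2$; you allude to this transfer at the end of your proposal, but it should be used to \emph{define} $V_1$ rather than treated as bookkeeping after the fact.
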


\begin{proof} From the definitions, we have
	\begin{equation*}\label{lpo21}
	V[g_1,g_2](t,\widetilde{\mathfrak Q}(t,y))=-\frac{1}{\pi}\int_{|\alpha|\geq t}\frac{\widetilde{p}_1(t,y,\alpha)\widetilde{p}_2(t,y,\alpha)}{\alpha}\,d\alpha,
	\end{equation*}
	where $\widetilde{\mathfrak Q}$ is defined as in \eqref{Q3}, and for $j\in\{1,2\}$, $\widetilde p_j$, using the change of variables \eqref{changeV}, is defined as in \eqref{lpo22}-\eqref{lpo23}. Moreover, recall the definition of $g_j^*(t,0,\alpha)$ for $j\in\{1,2\}$, defined in \eqref{eq0}: 
	\begin{equation}\label{lpo25}
	g_j^*(t,0,\alpha):=\frac{1}{\alpha}\int_0^\alpha g_j(t,-\rho)\,d\rho.
	\end{equation}

{\bf{Step 1.}} We begin by showing an important inequality which will be used several times in the proof of this lemma. We have that
	\begin{equation}\label{lpo28}
	|g_j^*(t,0,\alpha)-\widetilde{g}_j(t,y,\alpha)|\lesssim \varepsilon\frac{|y-a_j|}{|\alpha|}+\varepsilon |t|^{1/4}|\alpha|^{-1/4}
	\end{equation}
	for $j\in\{1,2\}$ and $t,\alpha,y$ satisfying $|\alpha|\geq 4|y-a_j|$, $|\alpha|\geq t$, where $\widetilde{g}_j(t,y,\alpha):= \widetilde{g}^+_j(t,y,\alpha)\mathbf{1}_{[0,\infty)}(\alpha)+\widetilde{g}^-_j(t,y,\alpha)\mathbf{1}_{(-\infty,0]}(\alpha)$. Indeed, assuming for example that $\alpha>0$ and using the bounds $|g_j(t,z)|+|zg'_j(t,z)|\lesssim\|g_j\|_{Z_1}\lesssim \varepsilon$ for any $(t,z)\in[0,\infty)\times\R$ we estimate
	\begin{equation*}
	\begin{split}
	|g_j^*(t,0,\alpha)-\widetilde{g}_j(t,y,\alpha)|&\lesssim \frac{\varepsilon|y-a_j|}{|\alpha|}+\frac{1}{\alpha}\int_{2|y-a_j|}^{\alpha}\big|g_j(t,-\rho)-g_j[t,-\rho+\mathfrak q(t,\widetilde{\mathfrak Q}(t,y)-\rho)-\mathfrak q(t,\widetilde{\mathfrak Q}(t,y))]\big|\,d\rho\\
&\lesssim\frac{\varepsilon|y-a_j|}{|\alpha|}+\frac{\varepsilon}{\alpha}\int_{2|y-a_j|}^{\alpha}\bigg|\int_{-\rho+\mathfrak q(t,\widetilde{\mathfrak Q}(t,y)-\rho)-\mathfrak q(t,\widetilde{\mathfrak Q}(t,y))}^{-\rho}|\eta|^{-1}d\eta\bigg|\,d\rho\\
&=\frac{\varepsilon|y-a_j|}{|\alpha|}+\frac{\varepsilon}{\alpha}\int_{2|y-a_j|}^{\alpha}\bigg|\log{\bigg(1-\frac{\mathfrak q(t,\widetilde{\mathfrak Q}(t,y)-\rho)-\mathfrak q(t,\widetilde{\mathfrak Q}(t,y))}{\rho}\bigg)}\bigg|\,d\rho,
	\end{split}
	\end{equation*}
 thus we obtain
 	\begin{equation*}
	\begin{split}
	|g_j^*(t,0,\alpha)-\widetilde{g}_j(t,y,\alpha)|&\lesssim\frac{\varepsilon|y-a_j|}{|\alpha|}+\frac{\varepsilon}{\alpha}\int_{2|y-a_j|}^{\alpha}\frac{\big|\mathfrak q(t,\widetilde{\mathfrak Q}(t,y)-\rho)-\mathfrak q(t,\widetilde{\mathfrak Q}(t,y))\big|}{|\rho|}\,d\rho\\
	&\lesssim \frac{\varepsilon|y-a_j|}{|\alpha|}+\frac{\varepsilon}{\alpha}\int_{2|y-a_j|}^{\alpha}\frac{|\rho|^{3/4}\|\partial_x\mathfrak q(t,.)\|_{L^4}}{|\rho|}\,d\rho\\
	&\lesssim \frac{\varepsilon|y-a_j|}{|\alpha|}+\varepsilon^2 t^{1/4}|\alpha|^{-1/4},
	\end{split}
	\end{equation*}
	using H\"older's inequality in the second line and the assumption \eqref{q_boundsN} to prove the last bounds. 

	{\bf{Step 2.}} We define the first $L^\infty$ error
	\begin{equation*}\label{lpo12}
	V_2^1[g_1,g_2](t,y):=-\frac{1}{\pi}\int_{|\alpha|\geq \max(t,2^{-20})}\frac{\widetilde{p}_1(t,y,\alpha)\widetilde{p}_2(t,y,\alpha)}{\alpha}\,d\alpha.
	\end{equation*}
	We prove that $V_2^1$ satisfies the $L^\infty$ bounds \eqref{lpo11.2}. Without loss of generality we may assume that $|y-a_1|\leq |y-a_2|$. Clearly
	\begin{equation}\label{lpo13}
	\int_{|\alpha|\in[\max(t,2^{-20}),|y-a_1|]}\frac{\widetilde{p}_1(t,y,\alpha)\widetilde{p}_2(t,y,\alpha)}{\alpha}\,d\alpha=0,
	\end{equation}
	due to the definition \eqref{pbigDef} and the oddness of the integrand. For $|\alpha|\in[|y-a_1|,|y-a_2|]$, using the fact that $|\widetilde{p}_j(t,y,\alpha)|\lesssim\|g_j(t)\|_{L^\infty}\lesssim\ep$ for $j\in\{1,2\}$, and that $\alpha$ lies in a unit size interval, we get
 \begin{equation*}
     \int_{|\alpha|\in[|y-a_1|,|y-a_2|]}\Big|\frac{\widetilde{p}_1(t,y,\alpha)\widetilde{p}_2(t,y,\alpha)}{\alpha}\Big|\,d\alpha\lesssim\ep^2.
 \end{equation*}
 On the other hand, if $|\alpha|\geq  |y-a_2|$ then we rewrite
 \begin{equation}\label{lpo13.1}
\begin{split}
     \int_{|\alpha|\geq\max{(2^{-20},|y-a_2|)}}\frac{\widetilde{p}_1(t,y,\alpha)\widetilde{p}_2(t,y,\alpha)}{\alpha}d\alpha=\int_{\alpha\geq\max{(2^{-20},|y-a_2|)}}\frac{[\widetilde{p}_1(t,y,\alpha)+\widetilde{p}_1(t,y,-\alpha)]\widetilde{p}_2(t,y,\alpha)}{\alpha}d\alpha&\\-\int_{\alpha\geq\max{(2^{-20},|y-a_2|)}}\frac{[\widetilde{p}_2(t,y,\alpha)+\widetilde{p}_2(t,y,-\alpha)]\widetilde{p}_1(t,y,\alpha)}{\alpha}d\alpha.
\end{split}
 \end{equation}
 Using \eqref{lpo22}-\eqref{lpo23}, we see that
\begin{equation*}
    \widetilde{p}_1(t,y,\alpha)+\widetilde{p}_1(t,y,-\alpha)=\widetilde{g}_j^+(t,y,\alpha)+\widetilde{g}_j^-(t,y,-\alpha).
    \end{equation*}
    From \eqref{lpo28} we get
    \begin{equation*}
    \begin{split}
    |\widetilde{p}_1(t,y,\alpha)+\widetilde{p}_1(t,y,-\alpha)|&\lesssim \varepsilon\frac{|y-a_j|}{|\alpha|}+\varepsilon |t|^{1/4}|\alpha|^{-1/4}+|g_j^*(t,0,\alpha)+g_j^*(t,0,-\alpha)|\\&\lesssim \varepsilon\frac{|y-a_j|}{|\alpha|}+\varepsilon |t|^{1/4}|\alpha|^{-1/4}+|\alpha|^{-1/p}\|g(t,x)+g(t,-x)\|_{L^p}\\&\lesssim \varepsilon\frac{|y-a_j|}{|\alpha|}+\varepsilon |t|^{1/4}|\alpha|^{-1/4}+|\alpha|^{-1/p},
    \end{split}
    \end{equation*}
    for some $p\in[1,\infty)$, where we used \eqref{lpo10} in the last line. Since $|\widetilde{p}_2(t,y,\alpha)|\lesssim \|g_2(t)\|_{L^\infty}\lesssim \varepsilon$ it follows that
	\begin{equation*}
	\int_{\alpha\geq \max(2^{-20},|y-a_1|)}\Big|\frac{[\widetilde{p}_1(t,y,\alpha)-\widetilde{p}_1(t,y,-\alpha)]\widetilde{p}_2(t,y,\alpha)}{\alpha}\Big|\,d\alpha\lesssim \varepsilon.
	\end{equation*}
    A similar argument follows for the second term on the right hand side of \eqref{lpo13.1}. We thus get
\begin{equation}\label{lpo14}
    \int_{|\alpha|\geq\max{(2^{-20},|y-a_2|)}}\Big|\frac{\widetilde{p}_1(t,y,\alpha)\widetilde{p}_2(t,y,\alpha)}{\alpha}\Big|d\alpha\lesssim\ep.
\end{equation}
    
	It follows from \eqref{lpo13} and \eqref{lpo14} that
	\begin{equation*}\label{lpo15}
	|V^1_2[g_1,g_2](t,y)|\lesssim \varepsilon.
	\end{equation*}
 
	{\bf{Step 3.}} We notice that for $t\leq 2^{-20}$
	\begin{equation}\label{lpo16}
	V[g_1,g_2](t,\widetilde{\mathfrak Q}(t,y))-V_2^1[g_1,g_2](t,y)=-\frac{1}{\pi}\int_{|\alpha|\in[t,2^{-20}]}\frac{\widetilde{p}_1(t,y,\alpha)\widetilde{p}_2(t,y,\alpha)}{\alpha}\,d\alpha.
	\end{equation}

	We would like to remove now the $y$-dependence from the functions $\widetilde{g}_j^\pm$. We define
	\begin{equation}\label{lpo26}
	\begin{split}
	r_j(t,y,\alpha):=g_j^*(t,0,\alpha)\varphi_{\leq -4}((y-a_j)/\alpha)+g_j(t,y-a_j)\varphi_{\leq -4}(\alpha/(y-a_j)).
	\end{split}
	\end{equation}
	Moreover, we define
	\begin{equation}\label{lpo27}
	\begin{split}
	&V_1[g_1,g_2](t,y):=-\frac{1}{\pi}\int_{|\alpha|\in [t,2^{-20}]}\frac{r_1(t,y,\alpha)r_2(t,y,\alpha)}{\alpha}\,d\alpha,\\
	&V_2^2[g_1,g_2](t,y):=V[g_1,g_2](t,\widetilde{\mathfrak Q}(t,y))-V_2^1[g_1,g_2](t,y)-V_1[g_1,g_2](t,y).
	\end{split}
	\end{equation}
	
	{\bf{Step 4.}} We would like to show that $V_2^2$ satisfies the $L^\infty$ bounds \eqref{lpo11.2}, which follows from 
 \eqref{lpo28}. Using the formulas \eqref{lpo16} and \eqref{lpo27} we have
	\begin{equation*}
	\begin{split}
	|V_2^2[g_1,g_2](t,y)|&\lesssim \int_{|\alpha|\in[t,2^{-20}]}\Big|\frac{\widetilde{p}_1(t,y,\alpha)\widetilde{p}_2(t,y,\alpha)}{\alpha}-\frac{r_1(t,y,\alpha)r_2(t,y,\alpha)}{\alpha}\Big|\,d\alpha\\
	&\lesssim\varepsilon\sum_{j\in\{1,2\}}\int_{|\alpha|\in[t,2^{-20}]}\frac{|\widetilde{p}_j(t,y,\alpha)-r_j(t,y,\alpha)|}{|\alpha|}\,d\alpha\\
	&\lesssim \varepsilon^2+\varepsilon\sum_{j\in\{1,2\}}\int_{|\alpha|\geq \max(t,|y-a_j|)}\frac{|\widetilde{g}_j(t,y,\alpha)-g_j^*(t,0,\alpha)|}{|\alpha|}\,d\alpha\\
	&\lesssim \varepsilon^2,
	\end{split}
	\end{equation*}
	where, from the second to the third lines, we pulled out finitely many dyadic pieces.
	
	{\bf{Step 5.}} We now prove the bounds \eqref{lpo11.1} on $V_1$. The bounds in the first line follow easily once we notice that the integrand $\frac{r_1(t,y,\alpha)r_2(t,y,\alpha)}{\alpha}$ is odd in $\alpha$ if $|y-a_j|\geq 2^{-10}$ for all $j\in\{1,2\}$, thus $V_1[g_1,g_2](t,y)$ vanishes in this case.
	
	To prove the derivative bounds we begin by rewriting
	\begin{equation*}
    V_1[g_1,g_2]=V_1^1[g_1,g_2]+V_1^2[g_1,g_2]+V_1^3[g_1,g_2]+V_1^4[g_1,g_2]
	\end{equation*}
	where
	\begin{equation*}\label{lpo29}
	\begin{split}
	&V_1^1[g_1,g_2](t,y):=-\frac{1}{\pi}\int_{|\alpha|\in [t,2^{-20}]}\frac{g_{1}^*(t,0,\alpha)\varphi_{\leq -4}((y-a_1)/\alpha)g_2^*(t,0,\alpha)\varphi_{\leq -4}((y-a_2)/\alpha)}{\alpha}\,d\alpha,\\
	&V_1^2[g_1,g_2](t,y):=-\frac{1}{\pi}\int_{|\alpha|\in [t,2^{-20}]}\frac{g_1^*(t,0,\alpha)\varphi_{\leq -4}((y-a_1)/\alpha)g_2(t,y-a_2)\varphi_{\leq -4}(\alpha/(y-a_2))}{\alpha}\,d\alpha,\\
	&V_1^3[g_1,g_2](t,y):=-\frac{1}{\pi}\int_{|\alpha|\in [t,2^{-20}]}\frac{g_1(t,y-a_1)\varphi_{\leq -4}(\alpha/(y-a_1))g_2^*(t,0,\alpha)\varphi_{\leq -4}((y-a_2)/\alpha)}{\alpha}\,d\alpha,\\
	&V_1^4[g_1,g_2](t,y):=-\frac{1}{\pi}\int_{|\alpha|\in [t,2^{-20}]}\frac{g_1(t,y-a_1)\varphi_{\leq -4}(\alpha/(y-a_1))g_2(t,y-a_2)\varphi_{\leq -4}(\alpha/(y-a_2))}{\alpha}\,d\alpha.
	\end{split}
	\end{equation*}
	To begin with, due to the oddness of $1/\alpha$, we have
	\begin{equation*}
	V_1^4[g_1,g_2](t,y)\equiv 0.
	\end{equation*}
	
	Without loss of generality, in proving the derivative bounds in \eqref{lpo11.1} we may assume that $|y-a_1|\leq |y-a_2|$. In this case we notice that $V_1^3[g_1,g_2](t,y)\equiv 0$ as well.
	
	We calculate
	\begin{equation*}
	\begin{split}
	\partial_y V_1^1[g_1,g_2](t,y)&=-\frac{1}{\pi}\int_{|\alpha|\in [t,2^{-20}]}\frac{g_1^*(t,0,\alpha)\frac{1}{\alpha}\varphi'_{\leq -4}((y-a_1)/\alpha)g_2^*(t,0,\alpha)\varphi_{\leq -4}((y-a_2)/\alpha)}{\alpha}\,d\alpha\\
 &\quad-\frac{1}{\pi}\int_{|\alpha|\in [t,2^{-20}]}\frac{g_1^*(t,0,\alpha)\varphi_{\leq -4}((y-a_1)/\alpha)g_2^*(t,0,\alpha)\frac{1}{\alpha}\varphi'_{\leq -4}((y-a_2)/\alpha)}{\alpha}\,d\alpha.
	\end{split}	
	\end{equation*}
	Since $\varphi'_{\leq -4}\big(\frac{y-a_i}{\alpha}\big)=0$ unless $\frac{|y-a_i|}{|\alpha|}\in[2^{-5},2^{-3}]$, and  $|\alpha|\geq t$, we easily get
	\begin{equation}\label{der1}
	|\partial_yV_1^1[g_1,g_2](t,y)|\lesssim\varepsilon^2
	\begin{cases}
	\frac{1}{|y-a_1|} & \text{ if }|y-a_1|,|y-a_2|\geq t,\\
	\frac{1}{|y-a_2|} & \text{ if } |y-a_1|\leq t\leq |y-a_2|,\\
	0 & \text{ if }|y-a_1|,|y-a_2|\leq t,
	\end{cases}
	\end{equation} 
	since we estimate $|g_j^*(t,0,\alpha)|\lesssim \varepsilon$ and integrate $\alpha$ only over one dyadic piece at a time, yielding integrals of order 1. Similarly, for $n\in\{2,3\}$ we get 
	\begin{equation}\label{der2}
	|\partial_y^nV_1^1[g_1,g_2](t,y)|\lesssim\varepsilon^2
	\begin{cases}
	\frac{1}{|y-a_1|^n} & \text{ if }|y-a_1|,|y-a_2|\geq t,\\
	\frac{1}{|y-a_2|^n} & \text{ if } |y-a_1|\leq t\leq |y-a_2|,\\
	0 & \text{ if }|y-a_1|,|y-a_2|\leq t.
	\end{cases}
	\end{equation}

	Taking the derivative in $y$ of $V_1^2$ yields
	\begin{equation*}
	\begin{split}
	\partial_yV_1^2[g_1,g_2](t,y)&=-\frac{1}{\pi}\int_{|\alpha|\in [t,2^{-20}]}\frac{g_1^*(t,0,\alpha)\frac{1}{\alpha}\varphi'_{\leq -4}((y-a_1)/\alpha)g_2(t,y-a_2)\varphi_{\leq -4}(\alpha/(y-a_2))}{\alpha}\,d\alpha
	\\
 &\quad-\frac{1}{\pi}\int_{|\alpha|\in [t,2^{-20}]}\!\!\!\!\!\!\!\!\!\frac{g_1^*(t,0,\alpha)\varphi_{\leq -4}((y-a_1)/\alpha)g_2(t,y-a_2)\frac{\alpha}{(y-a_2)^2}\varphi'_{\leq -4}(\alpha/(y-a_2))}{\alpha}\,d\alpha
	\\
 &\quad-\frac{1}{\pi}\int_{|\alpha|\in [t,2^{-20}]}\frac{g_1^*(t,0,\alpha)\varphi_{\leq -4}((y-a_1)/\alpha)g'_2(t,y-a_2)\varphi_{\leq -4}(\alpha/(y-a_2))}{\alpha}\,d\alpha.
	\end{split}	
	\end{equation*}
	The first two terms are treated as for $V_1^1$, where the integral in $\alpha$ is taken over each dyadic piece at a time. For the last term, use the fact that $|x\partial_{x}g(x)|\lesssim \varepsilon$ and we integrate $1/\alpha$ over the interval $[t,|y-a_2|]$, and hence we get
	\begin{equation}\label{der3}
	|\partial_yV_1^2[g_1,g_2](t,y)|\lesssim\varepsilon^2
	\begin{cases}
	\frac{1}{|y-a_1|}+\frac{1}{|y-a_2|}\log\big(\frac{2|y-a_2|}{t}\big) & \text{ if }|y-a_1|,|y-a_2|\geq t,\\
	\frac{1}{|y-a_2|}\log\big(\frac{2|y-a_2|}{t}\big) & \text{ if } |y-a_1|\leq t\leq |y-a_2|,\\
	0 & \text{ if }|y-a_1|,|y-a_2|\leq t.
	\end{cases}
	\end{equation} 
	Similarly, for $n\in\{2,3\}$ we get 
	\begin{equation}\label{der4}
	|\partial_y^nV_1^2[g_1,g_2](t,y)|\lesssim\varepsilon^2
	\begin{cases}
	\frac{1}{|y-a_1|^n}+\frac{1}{|y-a_2|^n}\log\big(\frac{y-a_2}{t}\big) & \text{ if }|y-a_1|,|y-a_2|\geq t,\\
	\frac{1}{|y-a_2|^n}\log\big(\frac{2|y-a_2|}{t}\big) & \text{ if } |y-a_1|\leq t\leq |y-a_2|,\\
	0 & \text{ if }|y-a_1|,|y-a_2|\leq t.
	\end{cases}
	\end{equation} 
	
	The desired bounds \eqref{lpo11.1} follow from \eqref{der1}--\eqref{der2} and \eqref{der3}--\eqref{der4} in the case $|y-a_1|\leq |y-a_2|$ (once we recall that $V_1^3[g_1,g_2](t,y)\equiv 0$ and $V_1^4[g_1,g_2](t,y)\equiv 0$ in this case.
\end{proof}

\begin{corollary}\label{lemma:L-inf2}
	With the assumptions and the notation of Lemma \ref{lemma:L-inf} above, let
	\begin{equation*}\label{lpo31}
	W[g_1,g_2](t,y):=\int_0^tV_1[g_1,g_2](s,y)\,ds.
	\end{equation*}
	Then, for $n\in\{1,2,3\}$,
	\begin{equation*}\label{lpo32}
	\begin{split}
	&|W[g_1,g_2](t,y)|\lesssim \varepsilon^2t\log (2/t)\sum_{j\in\{1,2\}}\mathbf{1}_{[a_j-2^{-10},a_j+2^{-10}]}(y),\\
	&|\partial_y^nW[g_1,g_2](t,y)|\lesssim\varepsilon^2\sum_{j\in\{1,2\},\,|y-a_j|\geq t}\frac{t}{|y-a_j|^n}\ln\Big(\frac{2|y-a_j|}{t}\Big)+\varepsilon^2\sum_{j\in\{1,2\},\,|y-a_j|\leq t}\frac{1}{|y-a_j|^{n-1}}.
	\end{split}
	\end{equation*}
 In particular,
 \begin{equation*}
     \|\partial_y W[g_1,g_2](t,y)\|_{L_x^4}\lesssim\ep t^{1/4}.
 \end{equation*}
\end{corollary}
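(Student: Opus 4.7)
The proof is essentially a direct time-integration of the pointwise-in-$s$ bounds already established for $V_1[g_1,g_2]$ in Lemma \ref{lemma:L-inf}, so I do not anticipate any serious obstacle; the main work is bookkeeping the $s$-integration against the cutoffs $\mathbf{1}_{|y-a_j|\ge s}$ that appear in the derivative bounds \eqref{lpo11.1}.

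For the first estimate, I apply the first line of \eqref{lpo11.1} and use that $V_1(s,\cdot)$ is supported in $s\in[0,2^{-20}]$, so that
\[
|W[g_1,g_2](t,y)|\lesssim \varepsilon^2\sum_{j\in\{1,2\}}\mathbf{1}_{[a_j-2^{-10},a_j+2^{-10}]}(y)\int_0^{\min(t,2^{-20})}\log(2/s)\,ds\lesssim \varepsilon^2\,t\log(2/t),
\]
using the elementary primitive $\int_0^t\log(c/s)\,ds=t\bigl(1+\log(c/t)\bigr)$ with $c=2$.

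For the derivative bounds I differentiate under the integral, $\partial_y^n W=\int_0^t\partial_y^n V_1(s,y)\,ds$, and plug in the second line of \eqref{lpo11.1}. I split on $|y-a_j|$ versus $t$:
\begin{itemize}
\item If $|y-a_j|\ge t$, then for every $s\in[0,t]$ we have $|y-a_j|\ge s$, so the full bound $\varepsilon^2|y-a_j|^{-n}\log(2|y-a_j|/s)$ applies throughout $[0,t]$. The same primitive above gives $\int_0^t\log(2|y-a_j|/s)\,ds\lesssim t\log(2|y-a_j|/t)$, producing the first sum on the right-hand side of the claimed bound.
\item If $|y-a_j|<t$, I split the $s$-integral at $s=|y-a_j|$. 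On $[0,|y-a_j|]$ the condition $|y-a_j|\ge s$ still holds, so the integrand is controlled by $\varepsilon^2|y-a_j|^{-n}\log(2|y-a_j|/s)$, and integrating yields $\varepsilon^2|y-a_j|^{1-n}$. On $[|y-a_j|,t]$ the contribution of this index $j$ to the sum in \eqref{lpo11.1} vanishes.
\end{itemize}
Adding the two contributions gives the stated pointwise bound for $n\in\{1,2,3\}$.

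Finally, for the $L^4$ estimate I use the $n=1$ bound just obtained. Writing $\R$ as the union of $\{|y-a_j|\le t\}$ and $\{|y-a_j|\ge t\}$ for each $j$, I estimate
\[
\|\partial_y W[g_1,g_2](t,\cdot)\|_{L^4}^{4}\lesssim\varepsilon^8\sum_{j}\!\left(\int_{|y-a_j|\le t}\!\!1\,dy+\int_{|y-a_j|\ge t}\!\!\Bigl(\frac{t}{|y-a_j|}\Bigr)^{\!4}\log^{4}\!\Bigl(\frac{2|y-a_j|}{t}\Bigr)dy\right)\!\lesssim\varepsilon^{8}t,
\]
where the second integral is handled by the substitution $u=|y-a_j|/t$, reducing it to $t\int_1^\infty u^{-4}\log^4(2u)\,du\lesssim t$. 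Taking fourth roots yields $\|\partial_y W\|_{L^4}\lesssim \varepsilon^2 t^{1/4}\lesssim \varepsilon t^{1/4}$, as required.
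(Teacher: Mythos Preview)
Your proof is correct and follows the same approach as the paper: the paper's proof is a single line stating that the bounds follow by integrating \eqref{lpo11.1} over $s\in[0,t]$, and you have carried out exactly this integration with the appropriate case split on $|y-a_j|$ versus $t$. Your additional $L^4$ computation via the substitution $u=|y-a_j|/t$ is also correct and makes explicit what the paper leaves implicit.
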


\begin{proof} The bounds follow directly by integrating the bounds \eqref{lpo11.1} for $s\in[0,t]$. 
\end{proof}

The following lemma provides additional bounds on $V_1$ which will be very useful for constructing the solution in the next section. 
\begin{lemma}\label{lem:V1pk}
	Let $V_1[f_1,f_2](t,y)$ be defined as in \eqref{lpo27}, with $(f_1,f_2)\in Z_1\times Z_1$. Then, for $t<2^{-20}$,
	\begin{equation}\label{V1_bounds}
	\big|V_1[P_kf_1,f_2](t,y) \big|\lesssim(2^kt)^{-1/10}\|f_{1}\|_{Z_1}\|f_{2}\|_{Z_1}.
	\end{equation}
\end{lemma}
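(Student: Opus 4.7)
The plan is to invoke the decomposition $V_1 = V_1^1 + V_1^2 + V_1^3 + V_1^4$ constructed in the proof of Lemma~\ref{lemma:L-inf}, in which $V_1^4 \equiv 0$ by oddness of the integrand, and to beat the $\log(2/t)$ bound present in that lemma by exploiting the frequency localization of $P_k f_1$. The factor $P_k f_1$ enters in two different guises: through the sliced average $(P_k f_1)^*(t,0,\alpha)$ in $V_1^1, V_1^2$, and through the pointwise value $(P_k f_1)(t, y - a_1)$ in $V_1^3$. Correspondingly, two distinct smoothing estimates are needed, and the logarithmic factors that remain will be absorbed using the elementary inequality $\log(1/x) \lesssim x^{-1/10}$ for $x \in (0, 1]$ (together with $1/x \leq x^{-1/10}$ for $x \geq 1$).

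For $V_1^1$ and $V_1^2$ the key input is the identity $(P_k f_1)^*(t, 0, \alpha) = (L_k(\cdot, \alpha) * P_k f_1)(0)$, which follows from $\widetilde{\varphi}_k \varphi_k = \varphi_k$ and Fourier inversion; combined with the $L^1$ bound on $L_k$ from Lemma~\ref{lemma:K}, this gives $|(P_k f_1)^*(t, 0, \alpha)| \lesssim \min(1, (2^k|\alpha|)^{-1})\,\|f_1\|_{Z_1}$. Inserting this into the integrals defining $V_1^1$ and $V_1^2$ and using $|f_2^*|,|f_2| \lesssim \|f_2\|_{Z_1}$, the computation
\[
\int_t^{2^{-20}} \min\bigl(1, (2^k|\alpha|)^{-1}\bigr)\,\frac{d\alpha}{|\alpha|} \lesssim \log(1/(2^k t)) + 1 \;\text{ if } 2^k t \leq 1, \qquad \lesssim \frac{1}{2^k t} \;\text{ if } 2^k t \geq 1,
\]
shows both contributions are dominated by $C(2^k t)^{-1/10}\|f_1\|_{Z_1}\|f_2\|_{Z_1}$ thanks to the elementary inequality above.

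The main obstacle is $V_1^3$, where $(P_k f_1)(t, y - a_1)$ is independent of $\alpha$ and the remaining integral is only logarithmically bounded. Here the plan is to prove and exploit the pointwise spatial decay
\[
|(P_k f_1)(t, x)| \lesssim \|f_1\|_{Z_1}\,\min\bigl(1, 2^{-k}/|x|\bigr),
\]
obtained by writing $(P_k f_1)(x) = \int \phi_k(x-z)[f_1(t,z) - f_1(t,x)]\,dz$, using that the Littlewood--Paley kernel $\phi_k = \mathcal{F}^{-1}(\varphi_k)$ has mean zero (since $\varphi_k(0)=0$), splitting the $z$-integral at $|z-x| = |x|/2$, and applying the Lipschitz-type estimate $|f_1(t,z) - f_1(t,x)| \lesssim (|z-x|/|x|)\|f_1\|_{Z_1}$ in the near region (from $\|x\partial_x f_1\|_{L^\infty} \lesssim \|f_1\|_{Z_1}$) together with Schwartz decay of $\phi_k$ in the far region. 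Then $V_1^3$ is handled by splitting into $|y - a_1| \lesssim 2^{-k}$ and $|y - a_1| \gtrsim 2^{-k}$: in the first case the crude bound $\|P_k f_1\|_{L^\infty} \lesssim \|f_1\|_{Z_1}$ together with the restricted range $|\alpha| \lesssim |y-a_1| \lesssim 2^{-k}$ yields $\lesssim \log(1/(2^k t))$, while in the second case the extra factor $2^{-k}/|y - a_1|$ reduces matters to estimating $\max_{r \geq 2^{-k}}(2^{-k}/r)\log(r/t)$, which a one-variable calculation bounds by $\max\bigl(\log(1/(2^k t)), (2^k t)^{-1}\bigr)$. In all subcases the bound $\lesssim (2^k t)^{-1/10}\|f_1\|_{Z_1}\|f_2\|_{Z_1}$ follows after absorbing the logarithm.
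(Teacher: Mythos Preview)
Your proof is correct and follows essentially the same approach as the paper: both rely on the two pointwise estimates $|(P_kf_1)^*(t,0,\alpha)|\lesssim\min(1,(2^k|\alpha|)^{-1})\|f_1\|_{Z_1}$ and $|(P_kf_1)(t,x)|\lesssim\min(1,(2^k|x|)^{-1})\|f_1\|_{Z_1}$, the latter proved via the mean-zero property of the Littlewood--Paley kernel together with $\|x\partial_xf_1\|_{L^\infty}\lesssim\|f_1\|_{Z_1}$. The only organizational difference is that you route the argument through the $V_1^1+V_1^2+V_1^3+V_1^4$ decomposition of Lemma~\ref{lemma:L-inf}, whereas the paper bounds $\int|r_1|/|\alpha|\,d\alpha$ directly after using $|r_2|\lesssim\|f_2\|_{Z_1}$ and splits instead at $|\alpha|=2^{-k}$; your derivation of the $(P_kf_1)^*$ bound via the identity $(P_kf_1)^*(\cdot,\alpha)=L_k(\cdot,\alpha)\ast P_kf_1$ and Lemma~\ref{lemma:K} is a clean justification of a step the paper leaves implicit.
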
	
\begin{proof}	We may assume that $\|f_{1}\|_{Z_1}=\|f_{2}\|_{Z_1}=1$ and that the function $\widehat{f_1}$ is supported in the set $\{\xi:|\xi|\in [2^{k-2},2^{k+2}]\}$.
	Recalling the definition of $V_1$ in \eqref{lpo27}, we have
	\begin{equation}\label{V1aux}
	\big|V_1[P_kf_1,f_2](t,y) \big|\lesssim\int_{|\alpha|\in [t,2^{-20}]}\frac{|r_1(t,y,\alpha)|}{|\alpha|}d\alpha,
	\end{equation}
	with 
	\begin{equation}\label{r_1}
	r_1(t,y,\alpha)=f_1^*(t,0,\alpha)\varphi_{\leq -4}\bigg(\frac{y-a_1}{\alpha}\bigg)+P_kf_1(t,y-a_1)\varphi_{\leq -4}\bigg(\frac{\alpha}{y-a_1}\bigg),
	\end{equation}
	where
	\begin{equation*}
	f_1^*(t,0,\alpha)=\frac{1}{\alpha}\int_{0}^\alpha P_kf_1(t,-\rho)\,d\rho.
	\end{equation*}

 We split the integral in \eqref{V1aux},
 	\begin{equation*}
	\int_{|\alpha|\in [t,2^{-20}]}\frac{|r_1(t,y,\alpha)|}{|\alpha|}d\alpha=I_1+I_2,
	\end{equation*}
 so that $|\alpha|>2^{-k}$ in $I_1$. We begin with $I_1$. We find that
	\begin{equation*}
	|f_1^*(t,0,\alpha)|\lesssim (2^k|\alpha|)^{-1}\|P_kf_1\|_{L^\infty}\lesssim(1+2^kt)^{-1/10}(2^k|\alpha|)^{-1}\|f_1\|_{Z_1}.
	\end{equation*}
	Moreover, setting $x=y-a_1$ we have
	\begin{equation*}
	P_kf_1(x)=\int_{\R}f_1(x-z)K_k(z)\,dz=\int_{\R}[f_1(x-z)-f_1(x)]K_k(z)\,dz
	\end{equation*}
	where $K_k(z)$, the inverse Fourier transform of the cutoff function, is a dilation of a Schwarz function. We thus get
	\begin{equation*}
	\begin{split}
	|P_kf_1(x)|&=\bigg|\int_{|z|\leq|x|/2}[f_1(x-z)-f_1(x)]K_k(z)\,dz+\int_{|z|\geq|x|/2}[f_1(x-z)-f_1(x)]K_k(z)\,dz \bigg|\\
	&\lesssim(1+2^k t)^{-1/10}\bigg(\int_{|z|\leq|x|/2}\frac{|z|}{|x|}|K_k(z)|\,dz+\int_{|z|\geq|x|/2}|K_k(z)|\,dz\bigg)\\
	&\lesssim (1+2^k t)^{-1/10}(2^k|x|)^{-1},
	\end{split}
	\end{equation*}
	where we used the fact that $f\in Z_1$ in the second line and that $K_k$ is rapidly decaying in the last line. Since the second term in \eqref{r_1} requires that $|x|>|\alpha|$, we get $$|r_1(t,y,\alpha)|\lesssim(1+2^kt)^{-1/10}(2^k|\alpha|)^{-1}\qquad\text{for }|\alpha|>2^{-k},$$ which upon integration yields $$I_1\lesssim(1+2^k t)^{-1/10}.$$
 For $|\alpha|<2^{-k}$, we simply integrate to get
	$$I_2\lesssim |\log(2^kt)|,$$
	thus concluding the proof of \eqref{V1_bounds}.
\end{proof}

\section{The full nonlinearity}\label{sect:full}

Before we proceed to run the fixed point argument to conclude the proof of our main result, we now show how the previous results can be extended to the full nonlinearity as defined in \eqref{nonlin2}. We recall the notation $\underline{\xi}\in\R^{2n+1}$ as $\underline{\xi}:=(\xi_1,\ldots,\xi_{2n+1})$ for scalars and similarly, for functions, $\underline{f}:=(f_1,f_2,\ldots,f_{2n+1})$. Moreover, we will use the symbol $\lesssim_n$ for whenever a term is bounded by a constant of size $C^n$.

We begin by reproving the localized $L^2$ estimates for the pseudoproduct of the full nonlinearity. We get the following lemma.
\begin{lemma}\label{L2_est_full}
	Let $\mathcal{T}_n(P_{k_1}f_1,P_{k_2}f_2,P_{k_3}f_3,\ldots,P_{k_{2n+1}}f_{2n+1})$ be defined as in \eqref{nonlin8.1}. Then we have
	\begin{equation*}\label{nonlin10}
	\begin{split}
		\|P_k\big[\mathcal{T}_n(P_{k_1}f_1,\ldots,P_{k_{2n+1}}f_{2n+1}) \big]\|_{L^2}\lesssim_n 2^k \|P_{k_1}f_1\|_{L^2}\prod_{\ell=2}^{2n+1}\|P_{k_\ell}f_\ell\|_{L^\infty}.
	\end{split}
	\end{equation*}
	Alternatively, we also have
	\begin{equation*}\label{nonlin11}
	\begin{split}
		\|P_k\big[\mathcal{T}_n(P_{k_1}f_1,\ldots,P_{k_{2n+1}}f_{2n+1}) \big]\|_{L^2}&\lesssim_n2^k2^{k/2} \|P_{k_1}f_1\|_{L^2}\| P_{k_2}f_2\|_{L^2}\prod_{\ell=3}^{2n+1}\|P_{k_\ell
  }f_\ell\|_{L^\infty}.
	\end{split}
	\end{equation*}
\end{lemma}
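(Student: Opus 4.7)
The argument mirrors the trilinear proof of Lemma~\ref{lemma:TP}; the additional work is combinatorial rather than analytical. Without loss of generality assume $k_1\geq k_2\geq\cdots\geq k_{2n+1}$. Using \eqref{modified_kernel}, decompose each $L_{k_\ell}$ for $\ell\geq 2$ as
\[
L_{k_\ell}(z,\alpha)=\widetilde L_{k_\ell}(z,\alpha)+B_{k_\ell}(z,\alpha),\qquad B_{k_\ell}(z,\alpha):=\frac{\min(1,2^{k_\ell}\alpha)}{\alpha}\psi'_0(2^{k_\ell}z),
\]
and expand $K_{k_1,\dots,k_{2n+1}}$ from \eqref{kernel1} into $2^{2n}$ pieces. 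The key uniform bounds are $\|\widetilde L_{k_\ell}(\cdot,\alpha)\|_{L^1_z}\lesssim 1$ from \eqref{tilde_L} and $\|B_{k_\ell}(\cdot,\alpha)\|_{L^1_z}\lesssim \min(1,2^{-k_\ell}/|\alpha|)\lesssim 1$, both holding uniformly in $\alpha$.

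For each piece in which the choice at index $2$ is the regularized factor $\widetilde L_{k_2}$, the uniform bounds above let me absorb the $(z_3,\dots,z_{2n+1})$-integrations, reducing the full $L^1(\R^{2n+1})$-norm to $\int\|L_{k_1}(\cdot,\alpha)\|_{L^1}\cdot\|\widetilde L_{k_2}(\cdot,\alpha)\|_{L^1}\,d\alpha \lesssim 2^{-k_1}$ via Lemmas~\ref{lemma:K}--\ref{lemma:K2}. For pieces carrying $B_{k_2}$ instead, I split the $\alpha$-integration into the three regions $|\alpha|\leq 2^{-k_1}$, $|\alpha|\in[2^{-k_1},2^{-k_2}]$, and $|\alpha|\geq 2^{-k_2}$. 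The outer two regions yield $L^1$-norm $\lesssim 2^{-k_1}$ by size computations identical to those for $K^{2b}$ and $K^{2c}$ in the trilinear proof. In the middle region each $B_{k_\ell}$ with $\ell\geq 2$ collapses to $2^{k_\ell}\mathrm{sgn}(\alpha)\psi'_0(2^{k_\ell}z_\ell)$, so any subterm containing at least one factor $\widetilde L_{k_\ell}$ with $\ell\geq 3$ gains the extra smallness $\|\widetilde L_{k_\ell}(\cdot,\alpha)\|_{L^1}\lesssim 2^{k_\ell}|\alpha|$ from \eqref{tilde_L}, and the $\alpha$-integral then evaluates to $2^{k_\ell-k_1-k_2}\lesssim 2^{-k_1}$ (since $k_\ell\leq k_2$). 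In all these cases Young's inequality, together with the $2^k$ from the outer $d/dx$ and the $2^{k_1}$ from the inner $d/dy_1$, yields the first $L^2$-bound claimed in the lemma.

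The only surviving contribution is the middle-region ``all-$B$'' piece, where $\widetilde L_{k_2}$ is replaced by $B_{k_2}$ and every $L_{k_\ell}$ with $\ell\geq 3$ is replaced by $B_{k_\ell}$. Here the kernel factorizes as $\prod_{\ell\geq 2}2^{k_\ell}\psi'_0(2^{k_\ell}z_\ell)$ times $\int_{|\alpha|\in[2^{-k_1},2^{-k_2}]}L_{k_1}(z_1,\alpha)\,d\alpha$, exactly mirroring $K^3$ in the trilinear proof. The convolution in $(y_2,\dots,y_{2n+1})$ reproduces $\prod_{\ell\geq 2}P_{k_\ell}h_\ell(x)$ as in \eqref{T5}, and the residual bound reduces to the truncated Hilbert transform estimate \eqref{musk12}. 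Summing the $2^{2n}$ contributions produces the stated bound with constant $\lesssim_n 1$; the second inequality follows by interpolation (placing a second input in $L^2$) exactly as in the trilinear case. The principal obstacle I anticipate is purely bookkeeping: confirming that every non-extreme middle-region piece recovers the full $2^{-k_1}$ gain without logarithmic losses, which is precisely the purpose of the sharp decay inherited from \eqref{tilde_L}.
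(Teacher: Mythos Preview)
Your proposal is correct and uses essentially the same ingredients as the paper: the modified-kernel decomposition \eqref{modified_kernel}, the improved $L^1$ bounds of Lemma~\ref{lemma:K2}, and the truncated Hilbert transform for the single ``all-$B$'' middle-region piece. The only difference is organizational: the paper peels off one factor at a time (writing $K^{1,3}=K^2+K^3+\cdots+K^{2n+1}$, replacing $L_{k_\ell}$ by $\widetilde L_{k_\ell}$ sequentially for $\ell=3,4,\dots$), yielding $O(n)$ pieces rather than your full $2^{2n}$ expansion, but the resulting bounds and the final step are identical.
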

\begin{proof}
We only show here the case when $k_1$ is the highest frequency, as the others are easier. Then, without loss of generality, we assume the ordering $k_1\geq k_2\geq \ldots\geq k_{2n+1}$.
 Following the same ideas as in Lemma~\ref{lemma:TP}, in order to introduce the modified kernel, we split
	\begin{equation*}\label{nonlin13}
		K_{\underline{k}}(\underline{z})=K^{1,1}_{\underline{k}}(\underline{z})+K^{1,2}_{\underline{k}}(\underline{z})+K^{1,3}_{\underline{k}}(\underline{z}),
	\end{equation*}
	with
	\begin{equation*}\label{nonlin14}
		\begin{split}
		&K^{1,1}_{\underline{k}}(\underline{z})=\int_{\R}L_{k_1}(z_1,\alpha)\widetilde{L}_{k_2}(z_2,\alpha)L_{k_3}(z_3,\alpha)\ldots L_{k_{2n+1}}(z_{2n+1},\alpha)\,d\alpha,\\
		&K^{1,2}_{\underline{k}}(\underline{z})=\int_{|\alpha|\notin[2^{-k_1},2^{-k_2}]}L_{k_1}(z_1,\alpha)\frac{1}{\alpha}\min(1,2^{k_2}\alpha)\psi_{0}'(2^{k_2}z_2)L_{k_3}(z_3,\alpha)\ldots L_{k_{2n+1}}(z_{2n+1},\alpha)\,d\alpha,\\
		&K^{1,3}_{\underline{k}}(\underline{z})=\int_{|\alpha|\in[2^{-k_1},2^{-k_2}]}L_{k_1}(z_1,\alpha)2^{k_2}\psi_{0}'(2^{k_2}z_2)L_{k_3}(z_3,\alpha)\ldots L_{k_{2n+1}}(z_{2n+1},\alpha)\,d\alpha.
		\end{split}
		\end{equation*}
		Using Lemmas~\ref{lemma:K}-\ref{lemma:K2} we get
		\begin{equation*}\label{nonlin14.1}
		\|K^{1,1}_{\underline{k}}(\underline{z})\|_{L^1(\R^{2n+1})}+\|K^{1,2}_{\underline{k}}(\underline{z})\|_{L^1(\R^{2n+1})}\lesssim_n2^{-k_1}.
		\end{equation*}
		To estimate $\|K^{1,3}_{\underline{k}}(\underline{z})\|_{L^1(\R^{2n+1})}$, we now further split
		\begin{equation*}\label{nonlin12}
		K^{1,3}_{\underline{k}}(\underline{z})=K^2_{\underline{k}}(\underline{z})+K^3_{\underline{k}}(\underline{z})+\ldots+K^{2n+1}_{\underline{k}}(\underline{z}),
		\end{equation*} 
		where
	\begin{equation*}\label{nonlin15}
		\begin{split}
		K^2_{\underline{k}}(\underline{z})&=\int_{|\alpha|\in[2^{-k_1},2^{-k_2}]}L_{k_1}(z_1,\alpha)2^{k_2}\psi_{0}'(2^{k_2}z_2)\widetilde{L}_{k_3}(z_3,\alpha)\ldots L_{k_{2n+1}}(z_{2n+1},\alpha)\,d\alpha,\\
		K^3_{\underline{k}}(\underline{z})&=\int_{|\alpha|\in[2^{-k_1},2^{-k_2}]}L_{k_1}(z_1,\alpha)2^{k_2}\psi_{0}'(2^{k_2}z_2)2^{k_3}\psi_{0}'(2^{k_3}z_3)\widetilde{L}_{k_4}(z_4,\alpha)\ldots L_{k_{2n+1}}(z_{2n+1},\alpha)\,d\alpha,\\
		 & \vdots \\
		K^{2n}_{\underline{k}}(\underline{z})&=\int_{|\alpha|\in[2^{-k_1},2^{-k_2}]}L_{k_1}(z_1,\alpha)2^{k_2}\psi_{0}'(2^{k_2}z_2)2^{k_3}\psi_{0}'(2^{k_3}z_3)\ldots 2^{k_{2n}}\psi_{0}'(2^{k_{2n}}z_{2n})\widetilde{L}_{k_{2n+1}}(z_{2n+1},\alpha)\,d\alpha,\\
		K^{2n+1}_{\underline{k}}(\underline{z})&=\int_{|\alpha|\in[2^{-k_1},2^{-k_2}]}L_{k_1}(z_1,\alpha)2^{k_2}\psi_{0}'(2^{k_2}z_2)2^{k_3}\psi_{0}'(2^{k_3}z_3)\ldots 2^{k_{2n+1}}\psi_{0}'(2^{k_{2n+1}}z_{2n+1})\,d\alpha.
		\end{split}
	\end{equation*}
	From Lemmas~\ref{lemma:K}-\ref{lemma:K2} we get
	\begin{equation*}\label{nonlin15.1}
	\|K^{2}_{\underline{k}}(\underline{z})\|_{L^1(\R^{2n+1})}+\ldots+\|K^{2n}_{\underline{k}}(\underline{z})\|_{L^1(\R^{2n+1})}\lesssim_n2^{-k_1}.
	\end{equation*}
	It remains to study $K^{2n+1}$. By expressing $L_{k_1}$ as in \eqref{eq.4.20}, and inserting it into $K^{2n+1}$, we get
	\begin{equation}\label{nonlin16}
	\begin{split}
	K^{2n+1}_{\underline{k}}(\underline{z})=\prod_{i=2}^{2n+1}2^{k_i}\psi_{0}'(2^{k_i}z_i)\int_{|\alpha|\in[2^{-k_1},2^{-k_2}]}\frac{[-\psi_{0}(2^{k_1}(x-\alpha))]}{\alpha}\,d\alpha.
	\end{split}
	\end{equation}
	As in the proof of Lemma~\ref{lemma:TP}, upon reinserting \eqref{nonlin16} back into the pseudoproduct \eqref{nonlin8.1} and using the fact that
	\begin{equation*}\label{nonlin17}
		P_{k_i}f_i(x)=\int_{\R}2^{k_i}(P_{k_i}f_i)(x-y_i)\psi_{0}'(2^{k_i}y_i)\,dy_i,\qquad i\in\{1,2,\ldots,2n+1\}.
	\end{equation*}
	and the boundedness of the truncated Hilbert transform in $L^2$ yields the desired bounds.
\end{proof}
We now localize and decompose the nonlinearities. For any tuple $\underline{k}\in\mathbb{Z}^{2n+1}$, we let $\underline{k^*}\in\mathbb{Z}^{2n+1}$ denote its non-increasing rearrangement $k_1^*\geq k_2^*\geq\ldots\geq k_{2n+1}^*$. Given $k\in\mathbb{Z}$, we now define the two sets 
\begin{equation*}\label{nonlin18}
	\begin{split}
	&S_{k,1}^n:=\{\underline{k}\in\mathbb{Z}^{2n+1}:k_1^*\in[k-3n,k+3n]\text{ and } k_2^*,k_3^*,\ldots,k_{2n+1}^*\leq k-6n \},\\
	&S_{k,2}^n:=\{\underline{k}\in\mathbb{Z}^{2n+1}:|k_1^*- k_2^*|\leq10n \text{ where }k_1^*\geq k-3n\text{ and }k_{2}^*\geq k-5n \}.
	\end{split}
\end{equation*}
From \eqref{nonlin8.1}, we thus get
\begin{equation*}\label{nonlin19}
	P_k\big[\mathcal{T}_n(\underline{h}) \big](x)=G_{k,1}^n(x)+G_{k,2}^n(x)
\end{equation*}
where, for $l\in\{1,2 \}$,
\begin{equation*}\label{nonlin20}
G_{k,l}^n[\underline{h}](x):=\frac{(-1)^n}{\pi}\sum_{\underline{k}\in S_{k,l}}P_k\frac{d}{dx}\int_{\R^{2n+1}}\frac{d}{dy_1}\prod_{\ell=1}^{2n+1}P_{k_\ell}h_\ell(y_\ell)K_{\underline{k}}(x-\underline{y})\,d\underline{y}.
\end{equation*}
For the high-high-to-low interactions, from the Lemmas~\ref{lemma:K}-\ref{lemma:K2} and the Lemmas~\ref{L2_est_full} and~\ref{lem:g_h_Z} we get the following bounds.
\begin{lemma}\label{lem:hh_full}
	Assume that $f_1,f_2,\ldots,f_{2n+1}\in Z$, and define $h_j(t,x)=f_j(t,x-a_j+q(t,x))$ for some points $a_j\in\R$. Then for any $k\in\mathbb{Z}$ and $t\in[0,\infty)$ we have
	\begin{equation*}\label{nonlin21}
		\|G_{k,2}[\underline{h}](t)\|_{L^2}\lesssim_n2^{k/2}(1+2^kt)^{-2/10}\prod_{i=1}^{2n+1}\|f_{j_i}\|_{Z}.
	\end{equation*}
\end{lemma}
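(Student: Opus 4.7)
The proof will mirror the trilinear argument in Lemma \ref{lemmaG2}, with the $2n+1$ inputs organized by which index attains the maximum frequency. Concretely, partition
\[
S^n_{k,2} = \bigsqcup_{a=1}^{2n+1} S^{n,a}_{k,2}, \qquad S^{n,a}_{k,2} := \{\underline{k}\in S^n_{k,2}:\, k_a = \max_\ell k_\ell\},
\]
and decompose $G^n_{k,2} = \sum_a G^{n,a}_{k,2}$ accordingly. On each piece, the top two frequencies $k_1^\ast = k_a$ and $k_2^\ast = k_b$ (for some $b \neq a$) satisfy $|k_a - k_b| \leq 10n$ and both are $\geq k - 5n$, while all other frequencies are bounded by $k - 5n$.

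The plan is to apply Lemma \ref{L2_est_full} in the $L^2 \cdot L^2 \cdot L^\infty \cdots L^\infty$ form, placing $P_{k_1}h_1$ (forced by the inner $d/dy_1$ derivative) and $P_{k_b}h_b$ in $L^2$, and the remaining $2n-1$ factors in $L^\infty$. When $a=1$ this is direct. When $a \neq 1$, we exploit the symmetry of the multilinear kernel in its last $2n$ variables (up to the position of the derivative, the kernel $K_{\underline{k}}$ is symmetric in the indices $2,\ldots,2n+1$, and the two highest-frequency kernel slots are the ones that generate the $2^{-k_1^\ast}$ decay via Lemmas \ref{lemma:K}–\ref{lemma:K2}); this gives the same $L^2$ product estimate after renaming. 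The resulting bound is
\[
\|G^{n,a}_{k,2}(t)\|_{L^2} \lesssim_n \sum_{\underline{k}\in S^{n,a}_{k,2}} 2^k \cdot 2^{k/2}\cdot 2^{k_1} \|P_{k_1}h_1\|_{L^2}\|P_{k_b}h_b\|_{L^2}\prod_{j\neq 1,b}\|P_{k_j}h_j\|_{L^\infty} \cdot 2^{-k_1^\ast}.
\]

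Next, substitute the function-space bounds from Lemma \ref{lem:g_h_Z}: each $L^2$ factor contributes $2^{-k^\ast_\cdot/2}\min\{1,(2^{k^\ast_\cdot}t)^{-1/10}\}\|f_\cdot\|_Z$, and each low-frequency $L^\infty$ factor contributes $\min\{1,(2^{k_j}t)^{-1/10}\}\|f_j\|_Z$. Since $k_1^\ast \approx k_2^\ast \gtrsim k$, the two high-frequency factors combine to $2^{-k}\min\{1,(2^k t)^{-2/10}\}$. The remaining low-frequency sums are indexed by $k_j \leq k - 5n$ and each is either $O(1)$ (via the trivial bound $\min\{1,(2^{k_j}t)^{-1/10}\} \leq 1$, using geometric convergence from the explicit $2^{k_1}$ factor when $a=1$ produces no net growth because it is cancelled by the $2^{-k_1^\ast}$ kernel gain) or controlled by a single $\log$ (absorbed into the geometric sums from the $Z$-norm gain at low frequencies). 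Either way, each such sum is $O_n(1)$. Collecting everything yields the claimed bound $\|G^n_{k,2}\|_{L^2} \lesssim_n 2^{k/2}(1+2^k t)^{-2/10}\prod \|f_j\|_Z$.

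The main obstacle is not analytical but combinatorial: verifying that in every configuration $a \in \{1,\ldots,2n+1\}$ the derivative $d/dy_1$ can be paired with the $2^{-k_1^\ast}$ from the kernel $L^1$ bound, so that no net derivative loss remains. When $a=1$ this cancellation is immediate; when $a\neq 1$ one must check that, after reorganizing the integral and using the symmetric form of the kernel $L^1$ estimate from Lemma \ref{lemma:K} (ii), the factor $2^{k_1} \leq 2^{k_1^\ast}$ is absorbed by $2^{-k_1^\ast}$ from the kernel, so that the remaining sum over low frequencies $k_1 \leq k-5n$ converges.
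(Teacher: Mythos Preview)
Your strategy—put two factors in $L^2$, the rest in $L^\infty$, and use the kernel bound~\eqref{musk10.5}—matches the paper's trilinear argument in Lemma~\ref{lemmaG2}. But there is a genuine gap in how you handle the low-frequency factors.

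First, a misstatement: in $S^n_{k,2}$ the frequencies $k_3^\ast,\ldots,k_{2n+1}^\ast$ are only constrained to lie below $k_2^\ast$ (which may be arbitrarily large compared to $k$), not below $k-5n$ as you write.

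More importantly, your termwise sum over each low-frequency index diverges. Each factor satisfies $\|P_{k_j}h_j(t)\|_{L^\infty}\lesssim\min\{1,(2^{k_j}t)^{-1/10}\}\|f_j\|_Z$, but
\[
\sum_{k_j\leq k_2^\ast}\min\{1,(2^{k_j}t)^{-1/10}\}=+\infty,
\]
since the summand equals $1$ for all $k_j\leq-\log_2 t$. Your remark about ``geometric convergence from the explicit $2^{k_1}$ factor'' can rescue at most the sum in the index $k_1$ (relevant when $1\notin\{a,b\}$); the remaining $2n-2$ low-frequency sums carry no such weight and cannot be controlled this way. The correct move—exactly what the trilinear proof does with the factor $\|h_3(t)\|_{L^\infty}$—is to sum the low-frequency Littlewood--Paley pieces \emph{before} taking the $L^\infty$ norm: replace each $\sum_{k_j\leq k_2^\ast}P_{k_j}h_j$ by $P_{\leq k_2^\ast}h_j$ and bound $\|P_{\leq k_2^\ast}h_j\|_{L^\infty}\leq\|h_j\|_{L^\infty}\lesssim\|f_j\|_Z$. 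This is precisely why the kernel bound~\eqref{musk10.5} is stated for the partially summed kernel $K_{k_1,k_2,\leq k_3,\ldots,\leq k_{2n+1}}$ rather than the fully localized one.
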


For the high-low-to-high interactions, we decompose $G_{k,1}^n$ as we did previously. We write the disjoint union $S_{k,1}=\bigcup_{i=1}^{2n+1}S_{k,1}^i$, where $$S_{k,1}^i:=\{\underline{k}\in S_{k,1}:k_i=\max(k_1,k_2,\ldots,k_{2n+1})\},$$ for all $i\in\{1,2,\ldots,2n+1\}$. As in the trilinear case, whenever the function with the highest frequency is not hit by the derivative, the following lemma follows immediately using Lemmas~\ref{lemma:K} and~\ref{lemma:K2}, and Lemmas~\ref{L2_est_full} and~\ref{lem:g_h_Z}.

\begin{lemma}\label{lem:der_full}
	Assume that $f_1,f_2,\ldots,f_{2n+1}\in Z$ and define $h_j(t,x):=f_j(t,x-a_j+q(t,x))$ for some points $a_j\in\R$. Then for $i\in\{2,3,\ldots,2n+1\}$, $k\in\Z$ and $t\in[0,\infty)$ we have
	\begin{equation*}\label{nonlin22}
		\|G_{k,1}^i(t)\|_{L^2}\lesssim_n2^{k/2}(1+2^kt)^{-2/10}\prod_{\ell=1}^{2n+1}\|f_{j_\ell}\|_Z.
	\end{equation*}
\end{lemma}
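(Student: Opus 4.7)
The plan is to follow the structure of Lemma~\ref{lemmaG12G13} and combine the full-nonlinearity kernel estimates with Lemma~\ref{lem:g_h_Z}, which transfers $Z$-norm control from the $g_j$'s to the $h_j$'s. The key structural observation is that for $i\geq 2$ the derivative $d/dy_1$ does not fall on the highest-frequency factor $P_{k_i}h_i$; instead it produces a small factor $2^{k_1}$ with $k_1\leq k-6n$, while the absent derivative on $P_{k_i}h_i$ frees us to place it in $L^2$ without loss of derivative. As in the trilinear setting, this allows us to close the estimate with two powers of the $(1+2^kt)^{-1/10}$ decay factor.

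First I would sum the Littlewood--Paley pieces over the $2n-1$ indices $\{k_\ell\}_{\ell\neq 1,i}$, each constrained by $k_\ell\leq k-6n$, so that every such factor $P_{k_\ell}h_\ell$ is replaced by a single low-frequency projection of uniformly bounded $L^\infty$ norm $\lesssim\|f_\ell\|_Z$. This is important: it avoids the otherwise divergent sums $\sum_{k_\ell}(1+2^{k_\ell}t)^{-1/10}$ and keeps the implicit constant at order $C^n$. The resulting expression is effectively bilinear in the two distinguished factors $P_{k_1}h_1$ and $P_{k_i}h_i$, with kernel
\begin{equation*}
K_{k_1,k_i,\leq k-6n,\ldots,\leq k-6n}(\underline{z})=\int_\R L_{k_1}(z_1,\alpha)\,L_{k_i}(z_i,\alpha)\prod_{\ell\neq 1,i}L_{\leq k-6n}(z_\ell,\alpha)\,d\alpha,
\end{equation*}
whose $L^1$ norm, by a direct generalization of~\eqref{musk10.5}, should satisfy $\|K\|_{L^1(\R^{2n+1})}\lesssim_n 2^{-k}(1+|k-k_1|)$, since $k_i\in[k-3n,k+3n]$ is the maximum and each additional factor $L_{\leq k-6n}$ contributes only an $O(1)$ bound via~\eqref{Lleq_k}.

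Putting together the factor $2^k$ from the outer $d/dx$, the factor $2^{k_1}$ from $d/dy_1$ falling on $P_{k_1}h_1$, the kernel $L^1$ bound above, the $L^\infty$ estimate $\|P_{k_1}h_1\|_{L^\infty}\lesssim(1+2^{k_1}t)^{-1/10}$ (obtained from Lemma~\ref{lem:g_h_Z} via Sobolev embedding), and the $L^2$ bound $\|P_{k_i}h_i\|_{L^2}\lesssim 2^{-k/2}(1+2^kt)^{-1/10}$ from the same lemma, I expect to arrive at
\begin{equation*}
\|G_{k,1}^i(t)\|_{L^2}\lesssim_n 2^{k/2}(1+2^kt)^{-1/10}\sum_{k_1\leq k-6n}2^{k_1-k}(1+|k-k_1|)(1+2^{k_1}t)^{-1/10}.
\end{equation*}
The final $k_1$-sum is then handled by the standard dyadic split into the regimes $2^{k_1}t\lesssim 1$ and $2^{k_1}t\gtrsim 1$, exactly as in Lemma~\ref{lemmaG12G13}: the geometric factor $2^{k_1-k}$ makes the sum convergent and extracts an additional $(1+2^kt)^{-1/10}$, yielding the desired $(1+2^kt)^{-2/10}$ decay. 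I do not anticipate any genuine obstacle beyond the combinatorial bookkeeping of the $C^n$-dependence; because the derivative lands on a strictly lower-frequency factor, this case is noticeably easier than the more delicate $G_{k,1}^{1}$ analysis that required identifying the velocity field $V$.
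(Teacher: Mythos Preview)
Your proposal is correct and follows essentially the same approach as the paper. The paper's own treatment of this lemma is a one-line remark that it follows from Lemmas~\ref{lemma:K}--\ref{lemma:K2}, \ref{L2_est_full} and~\ref{lem:g_h_Z} as a direct generalization of Lemma~\ref{lemmaG12G13}; what you have written is precisely the natural fleshing out of that argument, including the summation of the $2n-1$ inert low-frequency factors into a single $P_{\leq k-6n}$ projection and the placement of $P_{k_i}h_i$ in $L^2$ and $P_{k_1}h_1$ in $L^\infty$.
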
 

Strikingly, we get the desired nonlinear bounds on $G_{k,1}^1$ provided at least one of the lower frequency functions is in the space $Z_2$. The proof follows a similar argument to that of Lemma~\ref{L2_est_full}.

\begin{lemma}\label{lem:mix_full}
	Assume that $(f_1,\ldots,f_{2n+1})\in Z\times\ldots \times Z$ and there is some $\ell\in\{2,\ldots,2n+1\}$ such that $f_\ell\in Z_2$. Define $h_j(t,x)=f_j(t,x-a_j+q(t,x))$ as before. Then, for any $k\in\Z$ and $t\in[0,\infty)$,
	\begin{equation*}\label{nonlin23}
		\|G_{k,1}^1\|_{L^2}\lesssim_n2^{k/2}\min\{1,(2^kt)^{-1/10}\}\|f_\ell\|_{Z_2}\prod_{p\neq\ell}\|f_{j_p}\|_{Z}.
	\end{equation*}
\end{lemma}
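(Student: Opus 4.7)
The plan is to extend the trilinear argument of Lemma~\ref{lemmaG1Z2} to the $(2n+1)$-linear setting. By the triangle inequality and the splitting $Z=Z_1+Z_2$, we first reduce to the case where each $f_p$ with $p\in\{2,\ldots,2n+1\}\setminus\{\ell\}$ belongs to exactly one of $Z_1$ or $Z_2$. If at least one such $f_p$ also lies in $Z_2$, the estimate follows directly from Lemma~\ref{L2_est_full} by placing $P_{k_1}h_1$ in $L^2$ and all remaining factors in $L^\infty$: the two $Z_2$ factors then contribute bounds $\min\{(2^{k_j}t)^{1/10},(2^{k_j}t)^{-1/10}\}$ that sum geometrically in their own frequencies, while very low frequencies of the $Z_1$ factors are lumped into a single projection $P_{\leq k'}$ whose $L^\infty$ norm is bounded uniformly by the $Z_1$ norm.

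The main case is when $f_\ell$ is the unique $Z_2$ factor and every $f_p$ with $p\neq 1,\ell$ lies in $Z_1$. Following the trilinear proof, we decompose $G_{k,1}^1=G_{k,1}^{1,a}+G_{k,1}^{1,b}$, where $G_{k,1}^{1,a}$ collects the frequency tuples in which every $k_p$ (for $p\neq 1,\ell$) lies in the window $[k_\ell-C_n,k-6n]$, and $G_{k,1}^{1,b}$ collects the configurations in which at least one $k_p$ is strictly less than $k_\ell-C_n$. In the latter, such very low frequencies are lumped into appropriate low-frequency projections $P_{\leq k'}$ so that their contributions are uniformly bounded in $L^\infty$. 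For $G_{k,1}^{1,a}$, Lemma~\ref{L2_est_full} with $h_1\in L^2$ yields a product of $L^\infty$ factors in which the $Z_2$ factor decays geometrically in $k_\ell$, while each of the $2n-1$ remaining summation windows has size $O(k-k_\ell)$; the resulting polynomial weight in $k-k_\ell$ is absorbed by the geometric decay of the $Z_2$ factor.

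For $G_{k,1}^{1,b}$, the naive $L^1$ bound on the kernel carries an unwanted logarithmic factor $|k_1-k_p|$ for the highest of the low frequencies $k_p$ (which by construction dominates $k_\ell$). To remove it, we invoke the modified-kernel decomposition of Lemma~\ref{lemma:K2} around the factor $L_{k_p}$: as in the sub-pieces $G_{k,1}^{1,c},\ldots,G_{k,1}^{1,g}$ of the trilinear proof, we isolate the singular remainder $\tfrac{1}{\alpha}\min(1,2^{k_p}\alpha)\psi_0'(2^{k_p}z_p)$ and, after integration by parts in $y_1$ via \eqref{simple_id2}, invoke the $L^2$-boundedness of the truncated Hilbert transform to absorb the logarithm. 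The main obstacle is bookkeeping: the decomposition now yields analogues of $G_{k,1}^{1,c},\ldots,G_{k,1}^{1,g}$ with $2n-2$ additional $Z_1$ factors sitting in $L^\infty$ (possibly lumped into $P_{\leq k'}$-projections), and one must verify in each sub-term that the $Z_2$ factor still supplies the decisive geometric summability in $k_\ell$, producing the desired bound $2^{k/2}\min\{1,(2^kt)^{-1/10}\}$.
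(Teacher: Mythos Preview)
Your overall strategy is correct and parallels the paper's extension of Lemma~\ref{lemmaG1Z2}. However, there is a genuine error in your treatment of $G_{k,1}^{1,b}$. You claim that the highest low frequency $k_p$ (with $p\neq 1,\ell$) ``by construction dominates $k_\ell$'', but this fails precisely when \emph{all} the $Z_1$ low frequencies lie below $k_\ell-C_n$: in that configuration the second-highest frequency is $k_\ell$ itself, and the modified kernel must be applied to $L_{k_\ell}$ (the $Z_2$ factor), not to a $Z_1$ factor. This is exactly what happens in the trilinear case you cite --- in the piece $G_{k,1}^{1,c}$ of Lemma~\ref{lemmaG1Z2}, the modified kernel $\widetilde K_{k_1,k_2,\le k_2-4}$ replaces $L_{k_2}$, the $Z_2$ factor. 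The paper avoids this confusion by splitting into \emph{three} pieces rather than two (all $Z_1$ frequencies above $k_\ell$; all below $k_\ell$; mixed). In the ``all below'' case it first modifies $L_{k_\ell}$ and then recursively peels off the remaining $L_{\le k_\ell}$ factors exactly as in the kernel decomposition $K^{1,3}=K^2+\cdots+K^{2n+1}$ of Lemma~\ref{L2_est_full}; the mixed case is a hybrid.

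A smaller point: in your $G_{k,1}^{1,a}$, absorbing the polynomial weight $(k-k_\ell)^{2n-1}$ using the $Z_2$ decay \emph{alone} leaves a polylog loss $(\log 2^kt)^{2n-1}$ when $2^kt\gg 1$, since the sum $\sum_{k_\ell\le k}(k-k_\ell)^{2n-1}\min\{(2^{k_\ell}t)^{\pm 1/10}\}$ peaks at $k_\ell\approx -\log_2 t$. You must also use the $Z_1$ decay $(1+2^{k_p}t)^{-1/10}$ on each $P_{k_p}h_p$, as done in \eqref{musk37} in the trilinear case; then each sum over $k_p$ in the window $[k_\ell-C_n,k-6n]$ is $O_n(1)$ in the regime $2^{k_\ell}t\gtrsim 1$, and the polynomial weight only appears where the $Z_2$ factor is already geometrically small.
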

\begin{proof}
	We assume without loss of generality that $f_2\in Z_2$. If all lower frequency functions lie in the space $Z_2$, the bounds follow easily from Lemmas~\ref{lemma:K}-\ref{lemma:K2} and Lemmas~\ref{L2_est_full} and~\ref{lem:g_h_Z}. We therefore focus on the case when all other lower frequency functions lie in the space $Z_1$.
	
	We decompose the set of indices $\{3,\ldots,2n+1\}=A+B$ where for all $j\in A$ we have $k_j>k_2$ and for all $i\in B$ we have $k_i<k_2$. We will then consider the following sets.
	\begin{equation*}\label{nonlin23.1}
		\begin{split}
		&S_{k,a}:=\{\underline{k}\in\mathbb{Z}^{2n+1}:\forall (j,i)\in A\times B,|k_1-k|\leq3n, k_2\leq k-6n, k_j\in[k_2-3n,k-6n], B=\emptyset \}\\
		&S_{k,b}:=\{\underline{k}\in\mathbb{Z}^{2n+1}:\forall (j,i)\in A\times B,|k_1-k|\leq3n, k_2\leq k-6n, k_j\in[k_2-3n,k-6n], k_i\leq k-6n-1  \}\\
		&S_{k,c}:=\{\underline{k}\in\mathbb{Z}^{2n+1}:\forall (j,i)\in A\times B,|k_1-k|\leq3n, k_2\leq k-6n, k_i\leq k-6n-1, A=\emptyset \}
		\end{split}
	\end{equation*}
	
	As in the proof of Lemma~\ref{lemmaG1Z2}, we further decompose $G_{k,1}^1$ into $G_{k,1}^1:=G_{k,1}^{1,a}+G_{k,1}^{1,b}+G_{k,1}^{1,c}$ where for $j\neq i\in\{3,\ldots,2n+1\}$
	\begin{equation*}\label{nonlin24}
		G_{k,1}^{1,a}[\underline{h}](x):=\frac{(-1)^n}{\pi}\sum_{\underline{k}\in S_{k,a}}P_k\frac{d}{dx}\int_{\R^{2n+1}}\frac{d}{dy_1}\prod_{\ell=1}^{2n+1}P_{k_\ell}h_\ell(y_{\ell})K_{k_1,\ldots,k_{2n+1}}(x-\underline{y})\,d\underline{y},
	\end{equation*}
	\begin{equation*}\label{nonlin25}
	G_{k,1}^{1,b}[\underline{h}](x):=\frac{(-1)^n}{\pi}\sum_{\underline{k}\in S_{k,b}}P_k\frac{d}{dx}\int_{\R^{2n+1}}\frac{d}{dy_1}\prod_{\ell=1}^{2n+1}P_{k_\ell}h_\ell(y_\ell)K_{k_1,k_2,\ldots,k_j\ldots\leq k_2-3n-1}(x-\underline{y})\,d\underline{y},
	\end{equation*}
	\begin{equation*}\label{nonlin26}
	G_{k,1}^{1,c}[\underline{h}](x):=\frac{(-1)^n}{\pi}\sum_{\underline{k}\in S_{k,c}}P_k\frac{d}{dx}\int_{\R^{2n+1}}\frac{d}{dy_1}\prod_{\ell=1}^{2n+1}P_{k_\ell}h_\ell(y_\ell)K_{k_1,k_2,\leq k_2\ldots}(x-\underline{y})\,d\underline{y}.
	\end{equation*}
	The function $G_{k,1}^{1,a}$ can be estimated simply by using Lemmas~\ref{L2_est_full} and~\ref{lem:g_h_Z}. We now consider the function $G_{k,1}^{1,c}$. We decompose the kernel 
	\begin{equation*}\label{nonlin27}
		K_{k_1,k_2,\leq k_2\ldots}(\underline{x})=K_{k_1,k_2,\leq k_2\ldots}^{1,1}(\underline{x})+K_{k_1,k_2,\leq k_2\ldots}^{1,2}(\underline{x})+K_{k_1,k_2,\leq k_2\ldots}^{1,3}(\underline{x}),
	\end{equation*} 
	where
	\begin{equation*}\label{nonlin28}
	\begin{split}
	&K^{1,1}_{k_1,k_2,\leq k_2\ldots}(\underline{z})=\int_{\R}L_{k_1}(z_1,\alpha)\widetilde{L}_{k_2}(z_2,\alpha)L_{\leq k_2-3n-1}(z_3,\alpha)\ldots L_{\leq k_2-3n-1}(z_{2n+1},\alpha)\,d\alpha,\\
	&K^{1,2}_{k_1,k_2,\leq k_2\ldots}(\underline{z})=\int_{|\alpha|\notin[2^{-k_1},2^{-k_2}]}L_{k_1}(z_1,\alpha)\frac{1}{\alpha}\min(1,2^{k_2}\alpha)\psi_{0}'(2^{k_2}z_2)L_{\leq k_2-3n-1}(z_3,\alpha)\ldots L_{\leq k_2-3n-1}(z_{2n+1},\alpha)\,d\alpha,\\
	&K^{1,3}_{k_1,k_2,\leq k_2\ldots}(\underline{z})=\int_{|\alpha|\in[2^{-k_1},2^{-k_2}]}L_{k_1}(z_1,\alpha)2^{k_2}\psi_{0}'(2^{k_2}z_2)L_{\leq k_2-3n-1}(z_3,\alpha)\ldots L_{\leq k_2-3n-1}(z_{2n+1},\alpha)\,d\alpha.
	\end{split}
	\end{equation*}
	Using Lemmas~\ref{lemma:K}-\ref{lemma:K2} we get
	\begin{equation*}\label{nonlin29}
	\|K^{1,1}_{k_1,k_2,\leq k_2\ldots}(\underline{z})\|_{L^1(\R^{2n+1})}+\|K^{1,2}_{k_1,k_2,\leq k_2\ldots}(\underline{z})\|_{L^1(\R^{2n+1})}\lesssim_n2^{-k_1}.
	\end{equation*}
	To estimate $\|K^{1,3}_{k_1,k_2,\leq k_2\ldots}(\underline{z})\|_{L^1(\R^{2n+1})}$, we now further split
	\begin{equation*}\label{nonlin30}
	K^{1,3}_{k_1,k_2,\leq k_2\ldots}(\underline{z})=K^2_{k_1,k_2,\leq k_2\ldots}(\underline{z})+K^3_{k_1,k_2,\leq k_2\ldots}(\underline{z})+\ldots+K^{2n+1}_{k_1,k_2,\leq k_2\ldots}(\underline{z}),
	\end{equation*} 
	where
	\begin{equation*}\label{nonlin31}
	\begin{split}
	K^2_{k_1,k_2,\leq k_2\ldots}(\underline{z})&=\int_{|\alpha|\in[2^{-k_1},2^{-k_2}]}L_{k_1}(z_1,\alpha)2^{k_2}\psi_{0}'(2^{k_2}z_2)\widetilde{L}_{\leq k_2-3n-1}(z_3,\alpha)\ldots L_{\leq k_2-3n-1}(z_{2n+1},\alpha)\,d\alpha,\\
	K^3_{k_1,k_2,\leq k_2\ldots}(\underline{z})&=\int_{|\alpha|\in[2^{-k_1},2^{-k_2}]}L_{k_1}(z_1,\alpha)2^{k_2}\psi_{0}'(2^{k_2}z_2)2^{k_3}\psi_{0}'(2^{k_3}z_3)\widetilde{L}_{\leq k_2-3n-1}(z_4,\alpha)\ldots L_{\leq k_2-3n-1}(z_{2n+1},\alpha)\,d\alpha,\\
	&\vdots\\
	K^{2n}_{k_1,k_2,\leq k_2\ldots}(\underline{z})&=\int_{|\alpha|\in[2^{-k_1},2^{-k_2}]}L_{k_1}(z_1,\alpha)2^{k_2}\psi_{0}'(2^{k_2}z_2)2^{k_3}\psi_{0}'(2^{k_3}z_3)\ldots 2^{k_{2n}}\psi_{0}'(2^{k_{2n}}z_{2n})\widetilde{L}_{\leq k_2-3n-1}(z_{2n+1},\alpha)\,d\alpha,\\
	K^{2n+1}_{k_1,k_2,\leq k_2\ldots}(\underline{z})&=\int_{|\alpha|\in[2^{-k_1},2^{-k_2}]}L_{k_1}(z_1,\alpha)2^{k_2}\psi_{0}'(2^{k_2}z_2)2^{k_3}\psi_{0}'(2^{k_3}z_3)\ldots 2^{k_{2n+1}}\psi_{0}'(2^{k_{2n+1}}z_{2n+1})\,d\alpha.
	\end{split}
	\end{equation*}
	Using once more Lemmas~\ref{lemma:K}-\ref{lemma:K2} we get
	\begin{equation*}\label{nonlin32}
	\|K^{2}_{k_1,k_2,\leq k_2\ldots}(\underline{z})\|_{L^1(\R^{2n+1})}+\ldots+\|K^{2n}_{k_1,k_2,\leq k_2\ldots}(\underline{z})\|_{L^1(\R^{2n+1})}\lesssim_n2^{-k_1}.
	\end{equation*}
	It remains to study $K^{2n+1}$. By expressing $L_{k_1}$ as in \eqref{eq.4.20}, and inserting it into $K^{2n+1}$, we get
	\begin{equation*}\label{nonlin33}
	\begin{split}
	K^{2n+1}_{k_1,k_2,\leq k_2\ldots}(\underline{z})=\prod_{i=2}^{2n+1}2^{k_i}\psi_{0}'(2^{k_i}z_i)\int_{|\alpha|\in[2^{-k_1},2^{-k_2}]}\frac{[-\psi_{0}(2^{k_1}(x-\alpha))]}{\alpha}\,d\alpha.
	\end{split}
	\end{equation*}
	Using the fact that
	\begin{equation*}\label{nonlin34}
	P_{k_i}f_i(x)=\int_{\R}2^{k_i}(P_{k_i}f_i)(x-y_i)\psi_{0}'(2^{k_i}y_i)\,dy_i,\qquad i\in\{1,2,\ldots,2n+1\}.
	\end{equation*}
	and the boundedness of the truncated Hilbert transform in $L^2$, we obtain the desired bounds.
	
	We now consider the function $G_{k,1}^{1,b}$. In this case, we have some frequencies which are greater than $k_2$ and some which are less. For this function, we use a hybrid argument of those used for $G_{k,1}^{1,a}$ and $G_{k,1}^{1,c}$. That is, we treat the functions with frequencies greater than $k_2$ as though they were in the $Z_2$ space and handle the functions with frequencies less than $k_2$ as we did above for $G_{k,1}^{1,c}$.
\end{proof}

It remains to estimate the high-low-to-high term for $Z_1$ inputs. As for the trilinear setting, we further decompose $G_{k,1}^1$ as follows.
\begin{equation}\label{nonlin36}
	G_{k,1}^1:=G_{k,1}^{1,1}+G_{k,1}^{1,2}+G_{k,1}^{1,3}+G_{k,1}^{1,4}
\end{equation}
where
\begin{equation*}
	\begin{aligned}
	G_{k,1}^{1,1}[\underline{h}](x):=\frac{(-1)^n}{\pi}\sum_{k_1\in[k-3,k+3]}P_k\frac{d}{dx}\int_{|\alpha|\leq 2^{-k}}&\int_{\R^{2n+1}}\frac{d}{dx}P_{k_1}h_1(x-y_1)L_{k_1}(y_1,\alpha)\\
    &\qquad\times\prod_{\ell=2}^{2n+1}P_{\leq k-6n}h_\ell(x-y_\ell)L_{\leq k-6n}(y_\ell,\alpha)\,d\alpha d\underline{y},
 	\end{aligned}
\end{equation*}
\begin{equation*}
	\begin{aligned}
	G_{k,1}^{1,2}[\underline{h}](x):=\frac{(-1)^n}{\pi}\sum_{k_1\in[k-3,k+3]}P_k\int_{|\alpha|\geq 2^{-k}}&\int_{\R^{2n+1}}\frac{d}{dx}P_{k_1}h_1(x-y_1)L_{k_1}(y_1,\alpha)\frac{d}{dx}\\
    &\qquad\times\prod_{\ell=2}^{2n+1}P_{\leq k-6n}h_\ell(x-y_\ell)L_{\leq k-6n}(y_\ell,\alpha)\,d\alpha d\underline{y},
 	\end{aligned}
\end{equation*}
\begin{equation*}
	\begin{aligned}
     G_{k,1}^{1,3}[\underline{h}](x):=\frac{(-1)^{n+1}}{\pi}\!\!\!\!\sum_{k_1\in[k-3,k+3]}\!\!\!P_k\int_{|\alpha|\geq 2^{-k}}&\int_{\R^{2n}}\frac{d}{dx}P_{k_1}h_1(x-\alpha)\\
    &\quad\times\prod_{\ell=2}^{2n+1}P_{\leq k-6n}h_\ell(x-y_\ell)\frac{L_{\leq k-6n}(y_\ell,\alpha)}{\alpha}\,d\alpha dy_2\ldots dy_{2n+1},
  	\end{aligned}
\end{equation*}
 \begin{equation*}
	\begin{aligned}
	G_{k,1}^{1,4}[\underline{h}](x):=\frac{(-1)^n}{\pi}\sum_{k_1\in[k-3,k+3]}&P_k\Big\{\frac{d}{dx}P_{k_1}h_1(x)\\
    &\times\int_{|\alpha|\geq 2^{-k}}\int_{\R^{2n+1}}\prod_{\ell=2}^{2n+1}P_{\leq k-6n}h_\ell(x-y_\ell)\frac{L_{\leq k-6n}(y_\ell,\alpha)}{\alpha}\,d\alpha dy_2\ldots dy_{2n+1}\Big\}.
	\end{aligned}
\end{equation*}
Using Lemmas~\ref{L2_est_full} and~\ref{lem:g_h_Z}, we obtain the following.
\begin{lemma}\label{lem:hl_full}
	Assume that $f_1\in Z$, $f_2,\ldots,f_{2n+1}\in Z_1$ and define $h_j(t,x):=f_j(t,x-a_j+q(t,x))$ for some points $a_j\in\R$. Then
	\begin{equation*}\label{nonlin37}
		\|G_{k,1}^{1,1}(t)\|_{L^2}+\|G_{k,1}^{1,2}(t)\|_{L^2}+\|G_{k,1}^{1,3}(t)\|_{L^2}\lesssim_n 2^{k/2}\min\{1,(2^kt)^{-1/10}\}\|f_1\|_{Z}\prod_{\ell=1}^{2n}\|f_{j_\ell}\|_{Z_1}.
	\end{equation*}
\end{lemma}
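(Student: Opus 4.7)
The plan is to handle the three pieces $G^{1,1}_{k,1}$, $G^{1,2}_{k,1}$, $G^{1,3}_{k,1}$ one at a time, following the blueprints of Lemmas~\ref{lem:G1less}, \ref{lem:G1greq1}, and \ref{lemma:L2} respectively. The only new feature compared to the trilinear case is the presence of $2n$ low-frequency factors instead of two, so the argument is largely a bookkeeping exercise: the combinatorial constants introduced by the extra factors must be absorbed into $\lesssim_n$, and one must verify that the product structure of the kernel does not spoil the $L^1$ bounds used in the trilinear analysis.

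For $G^{1,1}_{k,1}$, restriction to $|\alpha| \leq 2^{-k}$ combined with the bound $\int_{|\alpha| \leq 2^{-k}} \min(1, (2^{k_1}|\alpha|)^{-1})\, d\alpha \lesssim 2^{-k_1}$ from \eqref{min} and the uniform bounds $\lesssim 1$ from \eqref{Lleq_k} on each $y_\ell$-integral give an $L^1$ kernel bound of $\lesssim 2^{-k_1}$. Placing $P_{k_1} h_1$ in $L^2$ via Lemma~\ref{lem:g_h_Z} and each of the $2n$ low-frequency factors in $L^\infty$ (using $\|f_{j_\ell}\|_{L^\infty} \lesssim \|f_{j_\ell}\|_{Z_1}$) closes this term as in Lemma~\ref{lem:G1less}. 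For $G^{1,2}_{k,1}$, I distribute the outer $d/dx$ across the $2n$ low-frequency factors via the product rule, producing at most $2n$ terms each carrying a gain $2^{k_\ell}$ with $k_\ell \leq k - 6n$; this gain is summed against the kernel $L^1$-bound $\lesssim 2^{-k_1}(1 + |k_1 - k_\ell|)$ from the $(2n+1)$-factor analogue of Lemma~\ref{lemma:K}(ii), producing a convergent geometric series in $k_\ell$, exactly as in Lemma~\ref{lem:G1greq1}.

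The term $G^{1,3}_{k,1}$ requires the most care. After the $y_1$-integration by parts in the spirit of \eqref{simple_id2} it takes the form
\begin{equation*}
G^{1,3}_{k,1}(x) = \frac{(-1)^{n+1}}{\pi} \sum_{k_1 \in [k-3,k+3]} P_k \int_{|\alpha| \geq 2^{-k}} (\partial_x P_{k_1} h_1)(x-\alpha)\, \widetilde{K}(x,\alpha)\, d\alpha,
\end{equation*}
with $\widetilde{K}(x,\alpha) := \alpha^{-1} \prod_{\ell=2}^{2n+1} T_\ell(x, \alpha, k-6n-1)$ and $T_\ell = T(f_\ell)$ from \eqref{T}. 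The uniform estimate $|T_\ell| \lesssim \|f_{j_\ell}\|_{Z_1}$ yields $|\widetilde{K}| \lesssim |\alpha|^{-1}$. The central step is the pointwise bound $|\partial_\alpha \widetilde{K}(x,\alpha)| \lesssim_n |\alpha|^{-2}$, obtained by Leibniz: the $2n$ cross-terms each contain one factor $\partial_\alpha T_\ell$, which I bound by $|\alpha|^{-1}\|f_{j_\ell}\|_{Z_1}$ via direct differentiation of the formula \eqref{T}, using the Schwartz decay of $\psi'_{\leq 0}$ together with the $L^\infty$ control on $f_{j_\ell}$. Writing $\partial_x P_{k_1} h_1(x - \alpha) = -\partial_\alpha P_{k_1} h_1(x - \alpha)$ and integrating by parts in $\alpha$, both the bulk term (a convolution in $\alpha$ against $\partial_\alpha \widetilde{K}$, whose $L^1_\alpha$-norm is $\lesssim 2^k$ uniformly in $x$) and the boundary terms at $|\alpha| = 2^{-k}$ (with $|\widetilde{K}| \lesssim 2^k$) are estimated by $\|P_{k_1} h_1\|_{L^2} \lesssim 2^{-k/2}(1 + 2^k t)^{-1/10}\|f_1\|_Z$ from Lemma~\ref{lem:g_h_Z}, delivering the claimed bound. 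The main obstacle is establishing the $\partial_\alpha T_\ell$ estimate, which is the only substantive step beyond the trilinear proof of Lemma~\ref{lemma:L2}; once in hand, the Leibniz rule produces $|\partial_\alpha \widetilde{K}| \lesssim_n |\alpha|^{-2}$ with an implicit constant depending at worst linearly on $n$.
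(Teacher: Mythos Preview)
Your proposal is correct and follows the same approach as the paper. The paper's own proof is a one-line pointer (``Using Lemmas~\ref{L2_est_full} and~\ref{lem:g_h_Z}''), which amounts to saying that the trilinear arguments of Lemmas~\ref{lem:G1less}, \ref{lem:G1greq1}, and~\ref{lemma:L2} carry over verbatim with the extra $2n-2$ low-frequency factors controlled in $L^\infty$; your write-up spells out exactly these generalizations, including the key observation that $|\partial_\alpha T_\ell|\lesssim |\alpha|^{-1}\|f_{j_\ell}\|_{Z_1}$ (implicit in \eqref{K}) survives unchanged and feeds the Leibniz bound $|\partial_\alpha\widetilde K|\lesssim_n|\alpha|^{-2}$.
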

To understand the term $G_{k,1}^{1,4}$ we define, for any functions $g_1,\ldots g_{2n}\in Z_1$ and any base points $a_1,\ldots, a_{2n}\in\R$ the following multilinear expression
\begin{equation}\label{nonlin38}
	V[g_1,\ldots,g_{2n}](t,x):=\frac{(-1)^n}{\pi}\int_{|\alpha|\geq t}\prod_{\ell=1}^{2n}\frac{p_\ell(t,x,\alpha)}{\alpha}d\alpha,
\end{equation}
where the $p_\ell$ are defined as in \eqref{pbigDef}. As we did in the trilinear setting, by exploiting the structure of the $Z_1$ functions in Lemma~\ref{lemma:error}, we get the following lemma.
\begin{lemma}\label{lem:G4_full}
	Assume that $f_1\in Z$, $f_2,\ldots,f_{2n+1}\in Z_1$, and define $h_j(t,x):=f_j(t,x-a_j+q(t,x))$ for some points $a_j\in\R$. Then with $G_{k,1}^{1,4}$ defined as in \eqref{nonlin36}, we have
	\begin{equation*}\label{nonlin39}
		\Big\|G_{k,1}^{1,4}(t,x)-P_k\Big\{\frac{d}{dx}h_1(t,x)\cdot P_{\leq k-4n}V[f_2,\ldots,f_{2n+1}](t,x)\Big\}\Big\|_{L^2_x}\lesssim_n2^{k/2}(2^kt)^{-1/10}\|f_1\|_Z\prod_{\ell=2}^{2n+1}\|f_{j_\ell}\|_{Z_1}.
	\end{equation*}
\end{lemma}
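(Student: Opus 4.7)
The plan is to mirror the proof of Lemma~\ref{lem:G1greq4} from the trilinear case, carefully handling the combinatorics that arise from the $2n$ low-frequency factors. Starting from the formula for $G_{k,1}^{1,4}$ read off from \eqref{nonlin36}, this expression is a pointwise product of $(d/dx)P_{k_1}h_1(x)$ with an integral depending only on $f_2,\ldots,f_{2n+1}$. Invoking Lemma~\ref{lem:g_h_Z} on the $(d/dx)P_{k_1}h_1$ factor reduces the desired $L^2$ bound to the $L^\infty$ estimate
\begin{equation*}
\Big\|\frac{(-1)^n}{\pi}\int_{|\alpha|\geq 2^{-k}}\prod_{\ell=2}^{2n+1}T_\ell(t,x,\alpha,k-6n)\,\frac{d\alpha}{\alpha}-V[f_2,\ldots,f_{2n+1}](t,x)\Big\|_{L^\infty_x}\lesssim_n \bigl[1+(2^kt)^{-1/10}\bigr]\prod_{\ell=2}^{2n+1}\|f_\ell\|_{Z_1},
\end{equation*}
with $T_\ell=T(f_\ell)$ defined as in \eqref{T} with base point $a_\ell$ and level $k-6n$.

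First I would swap each $T_\ell$ for its core $p_\ell$ from \eqref{pbigDef} using Lemma~\ref{lemma:error}. Since $|T_\ell|+|p_\ell|\lesssim\|f_\ell\|_{Z_1}\lesssim 1$, the telescoping identity
\begin{equation*}
\prod_{\ell=2}^{2n+1}T_\ell-\prod_{\ell=2}^{2n+1}p_\ell=\sum_{\ell=2}^{2n+1}\Big(\prod_{j<\ell}p_j\Big)(T_\ell-p_\ell)\Big(\prod_{j>\ell}T_j\Big)
\end{equation*}
bounds the discrepancy pointwise by $\sum_{\ell=2}^{2n+1}|T_\ell-p_\ell|$ up to an $O_n(1)$ factor. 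Dividing by $|\alpha|$ and integrating on $|\alpha|\geq 2^{-k}$, the $9/10$-power term in Lemma~\ref{lemma:error} is absolutely integrable uniformly in $x$ (a short splitting according to whether $|\alpha|$ is smaller or larger than $|x-a_\ell+\mathfrak q(t,x)|$), while the residual $(2^k|\alpha|)^{-1}$ contributes $O(1)$ over the same region.

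Next I would compare the resulting integral on $|\alpha|\geq 2^{-k}$ to the $V$-integral on $|\alpha|\geq t$. In the regime $2^kt\lesssim 1$ the annular difference $|\alpha|\in[t,2^{-k}]$ contributes at most $\log(2+(2^kt)^{-1})\lesssim(2^kt)^{-1/10}$, thanks to $|p_\ell|\lesssim 1$ and the single surviving factor $1/\alpha$. In the regime $2^kt\geq 2$ I would invoke Lemma~\ref{lemma:error2} to replace each $p_\ell$ by $p_\ell^\ast$ on $|\alpha|\in[2^{-k},t]$ at the cost of an $[|\alpha|/t]^{1/50}$ error per factor, still integrable against $d\alpha/|\alpha|$; since every $p_\ell^\ast$ is even in $\alpha$ and $1/\alpha$ is odd, the integral over the symmetric region $|\alpha|\in[2^{-k},t]$ vanishes, so only the $V$-contribution on $|\alpha|\geq t$ survives.

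The main obstacle will be the telescoping step: the $2n$ error terms depend on distances $|x-a_\ell+\mathfrak q(t,x)|$ from potentially different base points $a_\ell$, and one must verify that each resulting $\alpha$-integral is $O(1)$ uniformly in $x$ irrespective of how these distances compare to one another and to $2^{-k}$. The $9/10$ exponent in Lemma~\ref{lemma:error} is precisely the gain needed to make the local singularity at $|\alpha|\sim|x-a_\ell+\mathfrak q(t,x)|$ integrable, and the combinatorial multiplicity $2n$ is absorbed into the implicit constant $\lesssim_n$.
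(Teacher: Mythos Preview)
Your proposal is correct and follows essentially the same approach as the paper: the paper does not spell out a separate proof for this lemma but simply refers back to the trilinear argument of Lemma~\ref{lem:G1greq4}, and your outline reproduces that argument with the natural telescoping modification to handle the $2n$ low-frequency factors. The reduction to the $L^\infty$ bound via Lemma~\ref{lem:g_h_Z}, the replacement of each $T_\ell$ by $p_\ell$ using Lemma~\ref{lemma:error}, the logarithmic bound on the annulus $|\alpha|\in[t,2^{-k}]$ when $2^kt\lesssim 1$, and the appeal to Lemma~\ref{lemma:error2} and the evenness of the $p_\ell^\ast$ when $2^kt\geq 2$ are precisely the paper's steps; the telescoping identity you write is the standard way to extend the two-factor difference in \eqref{lpo3} to $2n$ factors, and the resulting sum of $2n$ error integrals is absorbed into the $\lesssim_n$ constant exactly as you describe.
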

Finally, we need to check that the multilinear expression $V$ satisfies all the necessary bounds. Assume that $q$ satisfies the bounds \eqref{q_bounds} and, in addition,
\begin{equation}\label{nonlin41}
\|\partial_xq(t,.)\|_{L^4_x}\lesssim\varepsilon t^{1/4}\qquad\text{ for any }t\in[0,T].
\end{equation}
Further assume that the functions $g_1,\ldots,g_{2n}\in Z_1$ satisfy the bounds
\begin{equation*}\label{nonlin42}
\sum_{\ell=1}^{2n}\|g_\ell\|_{Z_1}\lesssim_n \varepsilon^{n},\qquad \sum_{\ell=1}^{2n}\|g_\ell\|_{L^\infty_tL^2_x}\lesssim_n 1.
\end{equation*}
Finally, assume that $a_1,\ldots,a_{2n}\in\R$ are $2n$ points, and define the velocity field $V[g_1,\ldots,g_{2n}]$ as in \eqref{nonlin38}.
Then the following lemma is a straightforward generalization to $2n$ functions of Lemma~\ref{lemma:L-inf}.
\begin{lemma}\label{lem:V_full}
	With the assumptions above and $\widetilde{Q}$ defined as in \eqref{changeV}, we can decompose
	\begin{equation*}
	V[g_1,\ldots,g_{2n}](t,\widetilde{Q}(t,y))=V_1[g_1,\ldots,g_{2n}](t,y)+V_2[g_1,\ldots,g_{2n}](t,y),
	\end{equation*}
	where
	\begin{equation}\label{nonlin40}
	\begin{split}
	&|V_1[g_1,\ldots,g_{2n}](t,y)|\lesssim_n \varepsilon^{2n}\log (2/t)\mathbf{1}_{[0,2^{-20}]}(t)\sum_{j\in\{1,\ldots,2n\}}\mathbf{1}_{[a_j-2^{-10},a_j+2^{-10}]}(y),\\
	&|\partial_y^mV_1[g_1,\ldots,g_{2n}](t,y)|\lesssim_n\varepsilon^{2n}\sum_{j\in\{1,\ldots,2n\},\,|y-a_j|\geq t}\frac{1}{|y-a_j|^n}\ln\Big(\frac{2|y-a_j|}{t}\Big),
	\end{split}
	\end{equation}
	for $m\in\{1,2,3\}$, and
	\begin{equation}\label{nonlin45}
	|V_2[g_1,\ldots,g_{2n}](t,y)|\lesssim_n \varepsilon^{2n}.
	\end{equation}
	The sum in the second line of \eqref{nonlin40} is taken over all the indices $j$ with the property that $|y-a_j|\geq t$ (so it vanishes if $|y-a_j|\leq t$ for all $j\in\{1,\ldots,2n\}$).
\end{lemma}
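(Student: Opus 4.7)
The plan is to follow verbatim the architecture of Lemma \ref{lemma:L-inf} (whose 2-function special case is precisely what we need to generalize), and to check that each of its five steps admits a $2n$-multilinear analogue. Specifically, we decompose
\begin{equation*}
V[g_1,\ldots,g_{2n}](t,\widetilde{Q}(t,y))=V_2^1(t,y)+V_1(t,y)+V_2^2(t,y),
\end{equation*}
where $V_2^1$ is the part of the integral over $|\alpha|\ge\max(t,2^{-20})$, and then (assuming $t\le 2^{-20}$) $V_1$ is an approximation of the remaining $|\alpha|\in[t,2^{-20}]$ piece in which the $y$-dependence and the $\alpha$-dependence of each $\widetilde p_\ell$ have been disentangled via the replacement $r_\ell$ modeled on \eqref{lpo26}; finally $V_2^2$ is the error.

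As in Step 1 of Lemma \ref{lemma:L-inf}, the assumption \eqref{nonlin41} together with H\"older's inequality yields, for each $\ell$ and every $(t,y,\alpha)$ with $|\alpha|\ge\max(t,4|y-a_\ell|)$, the pointwise estimate
\begin{equation*}
|g_\ell^*(t,0,\alpha)-\widetilde g_\ell(t,y,\alpha)|\lesssim\varepsilon\frac{|y-a_\ell|}{|\alpha|}+\varepsilon\,t^{1/4}|\alpha|^{-1/4}.
\end{equation*}
To bound $V_2^1$, order the base points so that $|y-a_1|\le\cdots\le|y-a_{2n}|$. On $|\alpha|<|y-a_1|$ every $\widetilde p_\ell$ is constant in $\alpha$, and the $1/\alpha$ factor gives vanishing by oddness. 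On each intermediate dyadic annulus $|\alpha|\in[|y-a_k|,|y-a_{k+1}|]$ we use just $|\widetilde p_\ell|\lesssim\varepsilon$ and integrate $1/|\alpha|$, absorbing the resulting $\log$ into the $\varepsilon^{2n}$ prefactor. The delicate piece is $|\alpha|\ge|y-a_{2n}|$: writing each $\widetilde p_\ell=\bar p_\ell+\tilde p_\ell$ (its even/odd parts in $\alpha$), one expands $\prod_\ell\widetilde p_\ell$ as a sum over subsets $S\subset\{1,\ldots,2n\}$; terms with $|S|$ even contribute $0$ by parity against $1/\alpha$, and the remaining terms (with $|S|$ odd, hence $|S|\le 2n-1$) contain at least one factor $\bar p_\ell$, which by Step 1 plus the $L^p$ bound on the symmetrized $\widetilde p_\ell$ satisfies $|\bar p_\ell|\lesssim\varepsilon|y-a_\ell|/|\alpha|+\varepsilon t^{1/4}|\alpha|^{-1/4}+|\alpha|^{-1/p}$, which is integrable against $d\alpha/|\alpha|$ over $|\alpha|\ge|y-a_{2n}|$.

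For Steps 3--5, define $r_\ell$ as in \eqref{lpo26} and set $V_1:=\frac{(-1)^n}{\pi}\int_{|\alpha|\in[t,2^{-20}]}\prod_\ell r_\ell(t,y,\alpha)\,d\alpha/\alpha$, with $V_2^2$ the remainder. The bound $|V_2^2|\lesssim\varepsilon^{2n}$ comes from pulling finitely many dyadic slices and applying Step 1 to each difference $\widetilde p_\ell-r_\ell$. For the bounds \eqref{nonlin40} on $V_1$, one expands the product $\prod_\ell r_\ell$ into $2^{2n}$ subproducts according to which of the two summands in the definition of each $r_\ell$ is chosen. The all-$g_\ell^*(t,0,\alpha)$ term vanishes by oddness of $1/\alpha$ (the integrand depends on $y$ only through cutoffs that, by the ordering assumption on $|y-a_\ell|$, drop out); analogously, any term in which the cutoff forces $\alpha/(y-a_{\ell_0})$ to lie near zero for the index $\ell_0$ achieving $\max_\ell|y-a_\ell|$ also vanishes. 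The remaining terms are estimated directly by counting: at each $y$-derivative one picks up either a factor $1/\alpha\cdot\varphi'$ (localizing $|\alpha|\sim|y-a_\ell|$ and thus contributing $\varepsilon^{2n}/|y-a_\ell|^m$), or a derivative of $g_\ell$ (contributing one $\log(2|y-a_\ell|/t)$ via $\int_t^{|y-a_\ell|}d\alpha/\alpha$).

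The main obstacle is the combinatorial/parity argument in Step 2 for $|\alpha|\ge|y-a_{2n}|$: one has to verify that every surviving subproduct contains at least one small $\bar p_\ell$ factor, and that the resulting estimate closes uniformly in $n$ with an $\varepsilon^{2n}$ gain. Everything else is a bookkeeping exercise tracking $n$ copies of the same dyadic/logarithmic estimates that already appear in the proof of Lemma \ref{lemma:L-inf}.
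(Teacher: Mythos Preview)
Your approach is correct and matches the paper, which simply states that the lemma ``is a straightforward generalization to $2n$ functions of Lemma~\ref{lemma:L-inf}.''

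One slip in your Step~5: it is the \emph{all-$g_\ell(t,y-a_\ell)$} term (the product of the second summands of each $r_\ell$, constant in $\alpha$) that vanishes by oddness of $1/\alpha$, not the all-$g_\ell^*(t,0,\alpha)$ term. The $g_\ell^*$ factors have no definite parity in $\alpha$; the corresponding term is the multilinear analogue of $V_1^1$ in Lemma~\ref{lemma:L-inf} and must be bounded directly, exactly as in \eqref{der1}--\eqref{der2}. With this correction---and the companion observation that a mixed term (first summand for $\ell\in S$, second summand for $\ell\notin S$) vanishes by cutoff incompatibility unless the indices in $S$ are precisely those with the smallest values of $|y-a_\ell|$---the remaining terms are the $2n$ analogues of $V_1^1,V_1^2$ and are handled by your final sentence.
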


By integrating the bounds \eqref{nonlin40} for $s\in[0,t]$, an analogue of Corollary~\ref{lemma:L-inf2} for the full nonlinearity is now immediate.
\begin{corollary}\label{cor:full}
	With the assumptions and the notation of Lemma \ref{lem:V_full} above, let
	\begin{equation*}
	W[g_2,\ldots,g_{2n+1}](t,y):=\int_0^tV_1[g_2,\ldots,g_{2n+1}](s,y)\,ds.
	\end{equation*}
	Then, for $m\in\{1,2,3\}$,
	\begin{equation*}
	\begin{split}
	&|W[g_2,\ldots,g_{2n+1}](t,y)|\lesssim \varepsilon^2t\log (2/t)\sum_{j\in\mathcal{J}}\mathbf{1}_{[a_j-2^{-10},a_j+2^{-10}]}(y),\\
	&|\partial_y^mW[g_2,\ldots,g_{2n+1}](t,y)|\lesssim\varepsilon^2\sum_{j\in\mathcal{J},\,|y-a_j|\geq t}\frac{t}{|y-a_j|^m}\ln\Big(\frac{2|y-a_j|}{t}\Big)+\varepsilon^2\sum_{j\in\mathcal{J},\,|y-a_j|\leq t}\frac{1}{|y-a_j|^{m-1}}.
	\end{split}
	\end{equation*}
  In particular,
 \begin{equation*}
     \|\partial_y W[g_{2},\ldots,g_{2n+1}](t,y)\|_{L_x^4}\lesssim\ep t^{1/4}.
 \end{equation*}
\end{corollary}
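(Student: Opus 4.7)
The plan is to obtain both pointwise bounds by integrating the corresponding pointwise bounds from Lemma \ref{lem:V_full} in the time variable $s \in [0,t]$, taking care to split the integration domain according to whether $s \lessgtr |y - a_j|$.

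For the zeroth-order bound, I would start from the first line of \eqref{nonlin40}: for $s \in [0, 2^{-20}]$ (which covers the whole range when $t$ is small, and otherwise one pulls out the easy part $s \in [2^{-20}, t]$), we have $|V_1(s,y)| \lesssim \varepsilon^2 \log(2/s) \sum_{j} \mathbf{1}_{[a_j - 2^{-10}, a_j + 2^{-10}]}(y)$. Since $\int_0^t \log(2/s)\,ds \lesssim t \log(2/t)$, the desired bound on $|W(t,y)|$ follows immediately, with the indicator function surviving because it does not depend on $s$.

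For the derivative bounds, I would use the second line of \eqref{nonlin40}, which gives $|\partial_y^m V_1(s,y)| \lesssim \varepsilon^{2} \sum_{j: |y-a_j| \geq s} |y-a_j|^{-m} \ln(2|y-a_j|/s)$. Fix $y$ and $j$; the integrand vanishes unless $s \leq |y - a_j|$, so the time integral is really over $s \in [0, \min(t, |y-a_j|)]$. Two cases arise:
\begin{itemize}
\item If $|y - a_j| \geq t$, the integral runs over $[0,t]$ and evaluates to a quantity of order $\tfrac{t}{|y-a_j|^m}\ln\!\bigl(\tfrac{2|y-a_j|}{t}\bigr)$, using that $\int_0^t \ln(A/s)\,ds = t\ln(A/t) + t$ for $A \geq t$.
\item If $|y - a_j| \leq t$, the integral runs over $[0, |y-a_j|]$ and yields $\tfrac{1}{|y-a_j|^{m-1}}$ (up to an absolute constant) by the same elementary computation.
\end{itemize}
Summing over $j \in \mathcal{J}$ gives the stated bound on $|\partial_y^m W(t,y)|$.

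Finally, the $L^4_x$ bound follows by plugging $m=1$ into the pointwise bound just proved and computing. Near each corner (the region $|y - a_j| \leq t$), the integrand is $\lesssim \varepsilon^2$ on an interval of length $2t$, contributing $\lesssim \varepsilon^8 t$ to $\|\partial_y W\|_{L^4}^4$. Far from each corner ($|y - a_j| \geq t$), one computes $\int_t^\infty \bigl(\tfrac{t}{r}\ln(2r/t)\bigr)^4 dr \lesssim t$ by a change of variables $r = t\rho$. Summing the finitely many contributions (one per $j \in \mathcal{J}$) yields $\|\partial_y W(t)\|_{L^4}^4 \lesssim \varepsilon^8 t$, hence $\|\partial_y W(t)\|_{L^4} \lesssim \varepsilon^2 t^{1/4} \lesssim \varepsilon t^{1/4}$. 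There is no real obstacle here beyond a careful bookkeeping of the two regimes; the entire content of the corollary is that the logarithmic divergences in \eqref{nonlin40} are integrable in time.
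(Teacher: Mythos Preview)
Your proposal is correct and follows essentially the same approach as the paper, which simply states that the bounds follow by integrating \eqref{nonlin40} over $s\in[0,t]$. You have spelled out the routine details (splitting according to $s\lessgtr |y-a_j|$ and the elementary $L^4$ computation) that the paper leaves implicit.
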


Finally, for the purposes of the fixed point argument in the next section, we provide the extension of Lemma~\ref{lem:V1pk} to the full nonlinearity.
\begin{lemma}\label{lem:V1pk_full}
	Let $V_1[f_2,\ldots,f_{2n+1}](t,y)$ be defined as
 \begin{equation*}\label{V1_full}
     V_1[g_2,\ldots,g_{2n+1}](t,y):=-\frac{1}{\pi}\int_{|\alpha|\in[t,2^{-20}]}\frac{\prod_{\ell=2}^{2n+1}r_{\ell}(t,y,\alpha)}{\alpha}d\alpha,
 \end{equation*} 
 where $r_\ell$ is defined as in \eqref{lpo26}, with $(f_2,\ldots,f_{2n+1})\in Z_1\times\ldots\times Z_1$. Then, for $t<2^{-20}$,
	\begin{equation*}\label{V1_bounds_full}
	\big|V_1[P_kf_2,\ldots,f_{2n+1}](t,y) \big|\lesssim(2^kt)^{-1/10}\prod_{\ell=2}^{2n+1}\|f_{\ell}\|_{Z_1}.
	\end{equation*}
\end{lemma}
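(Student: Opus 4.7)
The plan is to mimic verbatim the argument of Lemma \ref{lem:V1pk}, reducing the multilinear statement to the bilinear one by absorbing the extra factors $r_3,\ldots,r_{2n+1}$ into trivial $L^\infty$ bounds. Normalize $\|f_\ell\|_{Z_1}=1$ for every $\ell\in\{2,\ldots,2n+1\}$ and assume $\widehat{f_2}$ is supported in $\{\xi:|\xi|\in[2^{k-2},2^{k+2}]\}$. Inspecting the definition \eqref{lpo26}, we see that each $r_\ell(t,y,\alpha)$ is a convex combination (in the $y$-variable) of values of $f_\ell^\ast(t,0,\cdot)$ and $f_\ell(t,y-a_\ell)$, both of which are controlled by $\|f_\ell\|_{L^\infty}\leq\|f_\ell\|_{Z_1}=1$. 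Hence $|r_\ell(t,y,\alpha)|\lesssim 1$ for $\ell\geq 3$, which reduces the claim to
\begin{equation*}
|V_1[P_kf_2,\ldots,f_{2n+1}](t,y)|\lesssim_n\int_{|\alpha|\in[t,2^{-20}]}\frac{|r_{P_kf_2}(t,y,\alpha)|}{|\alpha|}\,d\alpha,
\end{equation*}
with $r_{P_kf_2}$ given exactly by \eqref{r_1} (after relabeling $a_1\mapsto a_2$).

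At this point the proof is identical to that of Lemma \ref{lem:V1pk}. Split the $\alpha$-integral into $I_1$ on $\{|\alpha|>2^{-k}\}$ and $I_2$ on $\{|\alpha|\leq 2^{-k}\}$. For $I_1$ use the two pointwise estimates already established there, namely $|(P_kf_2)^\ast(t,0,\alpha)|\lesssim (1+2^kt)^{-1/10}(2^k|\alpha|)^{-1}$ coming from $\|P_kf_2\|_{L^\infty}\lesssim (1+2^kt)^{-1/10}$ and the identity $(P_kf_2)^\ast(t,0,\alpha)=\alpha^{-1}\int_0^\alpha P_kf_2(t,-\rho)\,d\rho$, together with the quantitative Schwartz-kernel argument that gives $|P_kf_2(x)|\lesssim (1+2^kt)^{-1/10}(2^k|x|)^{-1}$ for the second term in $r_{P_kf_2}$ (which is supported where $|y-a_2|>|\alpha|$). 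The resulting integrand is $\lesssim (1+2^kt)^{-1/10}(2^k|\alpha|)^{-1}|\alpha|^{-1}$, whose integral from $2^{-k}$ yields $I_1\lesssim(1+2^kt)^{-1/10}$. For $I_2$, the region is nonempty only when $t<2^{-k}$, and the trivial bound $|r_{P_kf_2}|\lesssim 1$ gives $I_2\lesssim|\log(2^kt)|$.

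Combining, when $2^kt\geq 1$ only $I_1$ contributes and we get $(1+2^kt)^{-1/10}\lesssim(2^kt)^{-1/10}$; when $2^kt<1$ we have $I_1+I_2\lesssim 1+|\log(2^kt)|\lesssim(2^kt)^{-1/10}$ since $|\log x|\lesssim x^{-1/10}$ on $(0,1]$. This yields the asserted bound. There is no real obstacle here: the only point worth checking is that the estimates on $r_\ell$ for $\ell\geq 3$ are uniform and do not introduce any $t$ or $k$ dependence, which is automatic from \eqref{lpo26} and $\|f_\ell\|_{Z_1}=1$; once this is noted, the argument is a word-for-word repetition of the bilinear case with harmless multiplicative constants depending on $n$.
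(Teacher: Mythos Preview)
Your proposal is correct and follows exactly the approach intended by the paper: since the paper states Lemma~\ref{lem:V1pk_full} as ``the extension of Lemma~\ref{lem:V1pk} to the full nonlinearity'' without supplying a separate proof, the intended argument is precisely to absorb the extra factors $r_3,\ldots,r_{2n+1}$ in $L^\infty$ via $|r_\ell|\lesssim\|f_\ell\|_{Z_1}$ and then repeat the bilinear proof verbatim. Your treatment of $I_1$ and $I_2$ reproduces the paper's estimates line for line.
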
	

	\section{Proof of the main result}\label{CloseProof}

	\subsection{Determining $\tilde{q}$ from the free evolution}
	We define the free evolution for initial data $g_{0,j}(x)\in Z_1$ for all $j\in\mathcal{J}=\{1,\ldots,M\}$ as
	\begin{equation*}
		g_j^{(0)}(t,x):=e^{-t|\nabla|}g_{0,j}(x).
	\end{equation*}
	Moreover, we define
	\begin{equation*}\label{free_V1}
		V_1^*[g_{{j,2}}^{(0)},\ldots,g_{{j,2n+1}}^{(0)}](t,x):=-\frac{1}{\pi}\int_{|\alpha|\in [t,2^{-20}]}\frac{\prod_{i=2}^{2n+1}r_{j,i}^{(0)}(t,x,\alpha)}{\alpha}d\alpha,
	\end{equation*}
	where we define, as in \eqref{lpo25}-\eqref{lpo26}, for $j\in\mathcal{J}$,
	\begin{equation*}\label{lpo261}
	\begin{split}
	r_{j}^{(0)}(t,x,\alpha):=g_j^{*(0)}(t,0,\alpha)\varphi_{\leq -4}((x-a_{j})/\alpha)+g_{j}^{(0)}(t,x-a_{j})\varphi_{\leq -4}(\alpha/(x-a_j)),
	\end{split}
	\end{equation*}
	where we recall that
\begin{equation*}\label{lpo251}
\begin{aligned}
g_j^{*(0)}(t,0,\alpha):=\frac{1}{\alpha}\int_{0}^\alpha g_{j}^{(0)}(t,-\rho)\,d\rho.
\end{aligned}
\end{equation*}
We can now define for $j\in\mathcal{J}$
\begin{equation}\label{real_qtilde}
	\tilde{q}(t,x):=\sum_{j\in\mathcal{J}}\int_{0}^{t}V_1^*[g_{{j,2}}^{(0)},\ldots, g_{{j,2n+1}}^{(0)}](s,x)\,ds.
\end{equation}
From Corollary~\ref{cor:full}, we see that $\tilde{q}$ satisfies the bounds we assumed in \eqref{q_bounds}-\eqref{qtilde_bounds} and \eqref{nonlin41}.

\subsection{Construction of the solution}
	
	We can set up a fixed point argument using equation \eqref{ga_eq} and Duhamel's formula. More precisely, we define 
	\begin{equation}\label{duhamel}
	\begin{aligned}
	g_j^{(m+1)}(t,x):=e^{-t|\nabla|}g_{0,j}(x)+\int_0^t e^{-(t-s)|\nabla|}F_j^{(m)}(s,x)ds,
	\end{aligned}
	\end{equation}
	where
	\begin{equation}\label{fn1}
	\begin{aligned}
	F_j^{(m)}(t,x+a_j)&=\partial_{x}h_j^{(m)}(t,\tilde{Q}(t,x+a_j))\partial_t\tilde{q}(t,x+\!a_j)+\mathcal{N}_{h_j}^{(m)}(t,\tilde{Q}(t,x+a_j))\\
	&\quad-|\nabla|g_j^{(m)}(t,x)\partial_{x}q(t,\tilde{Q}(t,x+a_j))+E_j^{(m)}(t,\tilde{Q}(t,x+a_j)),
	\end{aligned}
	\end{equation}
 and $E$ was defined in Lemma \eqref{lem:ga_eq}.
%	The aim is to construct the solutions using a fixed point argument.
 The goal is to show that
	\begin{equation*}\label{constr1}
		\|g_j^{(m+1)}(t,x)-g_j^{(m)}(t,x)\|_{Z_2}\lesssim\ep^2\|g_j^{(m)}(t,x)-g_j^{(m-1)}(t,x)\|_{Z_2},
	\end{equation*}
	for all $m\in\mathbb{N}$. We begin by proving that for all $m\in\mathbb{N}$, the functions $g_j^{(m)}$ are small in the $Z$ norm.
	
	\begin{lemma}\label{lem:g_j_small}
		For all $g_j^{(m)}\in Z$, $j\in\mathcal{J}$ and $m\in\mathbb{N}$, we have 
		\begin{equation}\label{small_g}
			\|g_j^{(m)}-g_{j}^{(0)}\|_{Z_2}\lesssim\ep.
		\end{equation}
	\end{lemma}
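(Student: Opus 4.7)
The argument is by induction on $m$. The base $m=0$ is vacuous since $g_j^{(0)}-g_j^{(0)}=0$. For the inductive step, the Duhamel formula \eqref{duhamel} together with the parabolic smoothing of $e^{-t|\nabla|}$ reduces the task to controlling $\|F_j^{(m)}\|_N\lesssim\ep^2$: the standard estimate $\int_0^t e^{-(t-s)2^k}(2^k s)^{1/10}\,ds\lesssim 2^{-k}\min\{(2^kt)^{1/10},(2^kt)^{-1/10}\}$, combined with the pointwise bound on $\|P_k F_j^{(m)}(s)\|_{L^2}$ coming from the $N$-norm, yields the desired $Z_2$ bound. The inductive hypothesis, together with $\|g_j^{(0)}\|_{Z_1}\lesssim\ep$ from the initial data, gives $\|g_j^{(m)}\|_Z\lesssim\ep$, which is the input for all nonlinear estimates below.

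The forcing $F_j^{(m)}$ decomposes into four pieces per \eqref{fn1}. The linear error term $E_j^{(m)}$ is controlled by Lemma \ref{lem:ga_eq}, and the drift $|\nabla|g_j^{(m)}\cdot\partial_x q$ by Lemma \ref{lem:err2}. After the change of variable handled by Lemma \ref{lem:g_h_Z}, the full nonlinearity $\mathcal{N}_{h_j}^{(m)}$ is expanded as a pseudoproduct (Section \ref{sec:LitPal}) and each Littlewood--Paley configuration is bounded via Lemmas \ref{lem:hh_full}, \ref{lem:der_full}, \ref{lem:mix_full}, and \ref{lem:hl_full}, leaving only the dangerous piece $G_{k,1}^{1,4}$.

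The crux is the cancellation between $G_{k,1}^{1,4}$ and the renormalization $\partial_x h_j^{(m)}\cdot\partial_t\tilde q$. By Lemma \ref{lem:G4_full}, $G_{k,1}^{1,4}$ agrees, up to an acceptable $N$-error, with $P_k\{\partial_x h_j^{(m)}\cdot V[g_{j,2}^{(m)},\ldots,g_{j,2n+1}^{(m)}]\}$; Lemma \ref{lem:V_full} then isolates the log-divergent core $V_1$ from the bounded remainder $V_2$. The choice \eqref{real_qtilde} of $\tilde q$ was designed exactly so that $\partial_t\tilde q=\sum_j V_1[g_{j,2}^{(0)},\ldots,g_{j,2n+1}^{(0)}]$, producing an exact cancellation of the $V_1[g^{(0)}]$ part. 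The residual $\partial_x h_j^{(m)}\cdot(V_1[g^{(m)}]-V_1[g^{(0)}])$ is then expanded by multilinearity into a telescoping sum in which each term carries at least one factor $g_\ell^{(m)}-g_\ell^{(0)}$, inductively of $Z_2$-size $O(\ep)$; localizing this factor at frequency $2^k$ and applying Lemma \ref{lem:V1pk_full} supplies the $(2^kt)^{-1/10}$ decay that overcomes the logarithm, producing the desired $N$-bound of order $\ep^2$.

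The principal difficulty lies in this last step: $\partial_t\tilde q$ depends only on the free evolution $g^{(0)}$, while the nonlinearity generates a velocity field in the iterates $g^{(m)}$, and the multilinear difference $V_1[g^{(m)}]-V_1[g^{(0)}]$ must be shown to be perturbative in $N$ using only the $Z_2$ smallness provided by the inductive hypothesis. Lemma \ref{lem:V1pk_full} is the key tool that converts this $Z_2$ gap into the pointwise $(2^kt)^{-1/10}$ decay at the $P_k$-localized slot, matching the $N$-norm's target rate and closing the induction.
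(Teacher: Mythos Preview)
Your overall architecture is correct, but the handling of $G_{k,1}^{1,4}$ has the order of operations inverted in a way that breaks the argument. Lemmas~\ref{lem:G4_full}, \ref{lem:V_full}, and~\ref{lem:V1pk_full} all require the low-frequency inputs to lie in $Z_1$ (their proofs use the $\|x\partial_x g\|_{L^\infty}$ portion of the $Z_1$ norm via Lemma~\ref{lemma:error}), yet you apply them to $g^{(m)}\in Z=Z_1+Z_2$. Your proposed fix---telescoping $V_1[g^{(m)}]-V_1[g^{(0)}]$ and invoking Lemma~\ref{lem:V1pk_full} on the $Z_2$ slot $g^{(m)}-g^{(0)}$---is not supported: the estimate in that lemma is a statement about frequency localization of a $Z_1$ function, and its proof does not survive replacing $Z_1$ by $Z_2$.

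The paper's remedy is to telescope \emph{before} extracting the velocity: one writes $G_{k,1}^{1,4}[h_{1,j}^{(m)},h_2^{(m)},\ldots]$ as $G_{k,1}^{1,4}[h_{1,j}^{(m)},h_2^{(0)},\ldots,h_{2n+1}^{(0)}]$ plus differences each carrying a factor $h_\ell^{(m)}-h_\ell^{(0)}\in Z_2$ in a low-frequency slot, and these differences are disposed of directly by Lemma~\ref{lem:mix_full}. Only then does one apply Lemma~\ref{lem:G4_full} (now legitimately, with $g^{(0)}\in Z_1$) and split $V[g^{(0)}]=V_1+V_2$. You also omit a genuine residual term: Lemma~\ref{lem:G4_full} produces $P_{\leq k-4}V$, not the full $V$, so matching with $\partial_t\tilde q=V_1^\ast[g^{(0)}]$ leaves $P_k\{\partial_x h_{1,j}^{(m)}\cdot P_{\geq k-3}V_1[g^{(0)}]\}$. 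This high-high-to-low remainder is precisely where Lemma~\ref{lem:V1pk_full} is used in the paper---on $g^{(0)}\in Z_1$---and it is not the term you identified.
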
 
	\begin{proof}
		We argue by induction: by hypothesis, \eqref{small_g} clearly holds for the free evolution $g_j^{(0)}$. That is, there exists some constant $C_0$ such that $\|g_j^{(0)}\|_Z\leq C_0\ep$. We now assume that \eqref{small_g} holds for some given $m$. This implies that $\|g_j^{(m)}\|_{Z}\leq2C_0\ep$. To show that the inequality holds for $m+1$, we note from Duhamel's formula \eqref{duhamel}, that
		\begin{equation*}\label{claim9}
			\begin{split}
			\|g_j^{(m+1)}-g_j^{(0)}\|_{Z_2}&\leq \bigg\|\int_{0}^{t}e^{-(t-s)|\nabla|}F_j^{(m)}(s,x)\,ds\bigg\|_{Z_2}.
			\end{split}
		\end{equation*}
	We split the nonlinearity as in Sections~\ref{sec:G1G2} and~\ref{sect:full} to get
	\begin{equation}\label{fn}
	\begin{split}
F_j^{(m)}(t,x)&=\sum_{n\in\mathbb N}\sum_{k\in\mathbb{Z}}\,\sum_{j_2,\ldots,j_{2n+1}\in\mathcal{J}}\Big(G_{k,2}[h_{1,j}^{(m)},h_{2,j_2}^{(m)},\ldots,h_{2n+1,j_{2n+1}}^{(m)}](t,\tilde{Q}(t,x+a_j))\\&\quad+\sum_{i\in\{1,2,3,4\}}G_{k,1}^i[h_{1,j}^{(m)},h_{2,j_2}^{(m)},\ldots,h_{2n+1,j_{2n+1}}^{(m)}](t,\tilde{Q}(t,x+a_j))\Big)\\
&\quad-|\nabla|g_j^{(m)}(t,x)\partial_{x}q(t,\tilde{Q}(t,x+a_j))+\text{E}_j^{(m)}(t,\tilde{Q}(t,x+a_j)).
	\end{split}		
	\end{equation}
	The terms in the second line of \eqref{fn} are dealt with in Lemmas~\ref{lem:ga_eq} and \ref{lem:err2}. Moreover, $G_{k,2}$ and $G_{k,1}^i$ for $i\in\{1,2,3\}$ are bounded in a straightforward way using Lemmas~\ref{lem:hh_full}-- \ref{lem:hl_full}, combined with Lemma~\ref{lem:g_h} to take into account the change of variables. 
	
	It remains to consider $G_{k,1}^4$. From \eqref{fn1} and \eqref{real_qtilde}, we see that we need to find bounds on $$G_{k,1}^4-P_k\Big\{\frac{d}{dx}h_{1,j}^{(m)}\cdot\sum_{j_2,\ldots,j_{2n+1}\in\mathcal{J}} V_1^*[h_{2,j_2}^{(0)},\ldots,h_{2n+1,j_{2n+1}}^{(0)}]\Big\}.$$ We begin by defining the renormalized free evolution by
	\begin{equation*}
		h^{(0)}(t,x):=\sum_{j\in\mathcal{J}}g_j^{(0)}(t,x-a_j+q(t,x)),
	\end{equation*}
	and rewrite
	\begin{equation}\label{G4}
		\begin{split}
		G_{k,1}^{1,4}[h_{1,j}^{(m)},h_2^{(m)},\ldots,h_{2n+1}^{(m)}](t,x)&=G_{k,1}^{1,4}[h_{1,j}^{(m)},h_2^{(m)},\ldots,h_{2n+1}^{(m)}](t,x)-G_{k,1}^{1,4}[h_{1,j}^{(m)},h_2^{(0)},\ldots,h_{2n+1}^{(0)}](t,x)\\&\qquad+G_{k,1}^{1,4}[h_{1,j}^{(m)},h_2^{(0)},\ldots,h_{2n+1}^{(0)}](t,x)\\&=
		G_{k,1}^{1,4}[h_{1,j}^{(m)},h_2^{(m)}-h_2^{(0)},\ldots,h_{2n+1}^{(m)}](t,x)+\ldots+\\&\qquad+G_{k,1}^{1,4}[h_{1,j}^{(m)},h_2^{(0)},\ldots,h_{2n+1}^{(m)}-h_{2n+1}^{(0)}](t,x)\\&\qquad+G_{k,1}^{1,4}[h_{1,j}^{(m)},h_2^{(0)},\ldots,h_{2n+1}^{(0)}](t,x).
		\end{split}
	\end{equation}
	We notice from Lemma~\ref{lem:g_h_Z} that $h_i^{(m)}-h_i^{(0)}$ for $i\in\{2,\ldots,2n+1\}$ are (renormalized) elements in $Z_2$, and therefore the first $2n$ terms in the second equality of \eqref{G4} can be bounded by Lemma~\ref{lem:mix_full}. For the last term, we rewrite 
	\begin{equation}\label{G42}
		\begin{split}
	G_{k,1}^{1,4}[h_{1,j}^{(m)},h_2^{(0)},\ldots,h_{2n+1}^{(0)}](t,x)=&G_{k,1}^{1,4}[h_{1,j}^{(m)},h_2^{(0)},\ldots,h_{2n+1}^{(0)}](t,x)\\&-\sum_{j_2,\ldots,j_{2n+1}\in\mathcal{J}}P_{k}\Big\{\frac{d}{dx}h_{1,j}^{(m)}(t,x)\cdot P_{\leq k-4}V[g_{2,j_2}^{(0)},\ldots,g_{2n+1,j_{2n+1}}^{(0)}](t,x)\Big\}\\&+
	\sum_{j_2,\ldots,j_{2n+1}\in\mathcal{J}}P_{k}\Big\{\frac{d}{dx}h_{1,j}^{(m)}(t,x)\cdot P_{\leq k-4}V_2[g_{2,j_2}^{(0)},\ldots,g_{2n+1,j_{2n+1}}^{(0)}](t,x)\Big\}\\&+
	\sum_{j_2,\ldots,j_{2n+1}\in\mathcal{J}}P_{k}\Big\{\frac{d}{dx}h_{1,j}^{(m)}(t,x)\cdot P_{\leq k-4}V_1[g_{2,j_2}^{(0)},\ldots,g_{2n+1,j_{2n+1}}^{(0)}](t,x)\Big\},
		\end{split}
	\end{equation}
	where $V_1$ and $V_2$ are defined as in Lemma~\ref{lemma:L-inf}. The first term in \eqref{G42} can be bounded using Lemma~\ref{lem:G4_full}. Using \eqref{nonlin45} in Lemma~\ref{lem:V_full}, we estimate
	\begin{equation*}
    \begin{split}
		\bigg\|\sum_{j_2,\ldots,j_{2n+1}\in\{1,\ldots,M\}}P_{k}\Big\{\frac{d}{dx}h_{1,j}^{(m)}(t,x)\cdot P_{\leq  k-4}&V_2[g_{2,j_2}^{(0)},\ldots,g_{2n+1,j_{2n+1}}^{(0)}](t,x)\Big\}\bigg\|_{L^2}\\&\lesssim_n (1+2^kt)^{-1/10}2^{k/2}\|g_{1,j}^{(m)}\|_{Z}\prod_{i=2}^{2n+1}\sum_{j_i\in\mathcal{J}}\|g_{j_i}^{(m)}\|_{Z_1}.
    \end{split}
	\end{equation*}
	Finally, we notice that in the last term of \eqref{G42} can be rewritten as 
	\begin{equation}\label{G43}
	\begin{split}
	&\sum_{j_2,\ldots,j_{2n+1}\in\mathcal{J}}P_{k}\Big\{\frac{d}{dx}h_{1,j}^{(m)}(t,x)\cdot P_{\leq k-4}V_1[g_{2,j_2}^{(0)},\ldots,g_{2n+1,j_{2n+1}}^{(0)}](t,x)\Big\}=\\&\qquad
	\sum_{j_2,\ldots,j_{2n+1}\in\mathcal{J}}P_{k}\Big\{\frac{d}{dx}h_{1,j}^{(m)}(t,x)\cdot V_1^*[g_{2,j_2}^{(0)},\ldots,g_{2n+1,j_{2n+1}}^{(0)}](t,x)\Big\}\\&\quad-
	\sum_{j_2,\ldots,j_{2n+1}\in\mathcal{J}}P_{k}\Big\{\frac{d}{dx}h_{1,j}^{(m)}(t,x)\cdot P_{\geq  k-3}V_1[g_{2,j_2}^{(0)},\ldots,g_{2n+1,j_{2n+1}}^{(0)}](t,x)\Big\}.
	\end{split}
	\end{equation}
	Let us now consider the last term in \eqref{G43}, remarking that it is a high-high-to-low interaction. Applying Lemma~\ref{lem:V1pk_full} then yields  
	\begin{equation*}
    \begin{split}
		\bigg\|\sum_{j_2,\ldots,j_{2n+1}\in\mathcal{J}}P_{k}\Big\{\frac{d}{dx}h_{1,j}^{(m)}(t,x)\cdot P_{\geq  k-3}&V_1[g_{2,j_2}^{(0)},\ldots,g_{2n+1,j_{2n+1}}^{(0)}](t,x)\Big\}\bigg\|_{L^2}\\&\lesssim_n2^{k/2}(2^kt)^{-1/10}\|g_{1,j}^{(m)}\|_Z\prod_{i=2}^{2n+1}\sum_{j_i\in\mathcal{J}}\|g_{j_i}^{(m)}\|_{Z_1}.
    \end{split}
	\end{equation*}
	
	Combining all the above yields 
	\begin{equation*}
	\begin{split}
	\bigg\|G_{k,1}^{1,4}(t,x)-\sum_{j_2,\ldots,j_{2n+1}\in\mathcal{J}}P_{k}\Big\{\frac{d}{dx}h_{1,j}^{(m)}(t,x)\cdot &V_1^*[g_{2,j_2}^{(0)},\ldots,g_{2n+1,j_{2n+1}}^{(0)}](t,x)\Big\}\bigg\|_{L_x^2}\\&\lesssim_n2^{k/2}(2^kt)^{-1/10}\|g_{j,1}^{(m)}\|_{Z}\prod_{i=2}^{2n+1}\sum_{j_i\in\mathcal{J}}\|g_{j_i}^{(m)}\|_{Z_1}.
	\end{split}
	\end{equation*}
	Combining all the estimates and using \eqref{N1} from Lemma~\ref{lem:g_h_Z}, we have that
	\begin{equation}\label{Fka_bound}
	\begin{aligned}
	\|P_{k'} F_j^{(m)}\|_{L^2}&\lesssim \sum_{n\in\mathbb N}C^n2^{k/2}(2^k t)^{-1/10}\|g_{j}^{(m)}\|_Z\prod_{i=2}^{2n+1}\sum_{j_i\in\mathcal{J}}\|g_{j_i}^{(m)}\|_{Z}+\varepsilon2^{k/2}(2^kt)^{-1/10}\|g_j\|_{Z}
	\end{aligned}
	\end{equation}
    where we picked up the constant $C^n$ from the $\lesssim_n$.
    Using the induction assumption $\|g_j^{(m)}\|_Z\leq2C_0\ep$, we recover a factor of $\ep^{2n+1}$ in the first term on the left hand side, which, by choosing $\ep$ to be sufficiently small, will trump the constant $C^n$. From the sum over $n$ we pick the biggest case, being $n=1$, and we can thus rewrite \eqref{Fka_bound} as
    \begin{equation*}\label{Fka_bound2}
   \|P_{k'} F_j^{(m)}\|_{L^2}\lesssim \ep^2 2^{k/2}(2^k t)^{-1/10}.    
    \end{equation*}
	We now claim that 
	\begin{equation}\label{claim10}
	\|\mathcal{C}(F_j^{(m)})\|_{Z_2}\lesssim\ep^2,
	\end{equation}
	where 
	\begin{equation*}
	\mathcal{C}(F_j^{(m)})(t,x):=\int_0^t e^{-(t-s)|\nabla|}F_j^{(m)}(s,x)\,ds.
	\end{equation*}
	
	We assume that $t\in(0,\infty)$ and $k\in\mathbb{Z}$ are fixed. From the estimate \eqref{Fka_bound} we immediately get that
	\begin{align*}		
	2^{k/2}(1+2^k t)^{2/10}&(2^k t)^{-1/10}\|P_k\mathcal{C}(F_j^{(m)})(t,x)\|_{L^2}\\
	&\lesssim\ep^2\int_0^t2^{k/2}(1+2^k t )^{2/10}(2^k t)^{-1/10}e^{-(t-s)2^k/8}\|P_k F_{j}^{(m)}(s,x)\|_{L^2}\,ds\\
	&\lesssim\ep^2\int_0^t2^k\frac{(1+2^k t)^{2/10}}{(2^k s)^{1/10}}\frac{(2^k t)^{-1/10}}{(1+(t-s)2^k)^{8}}\,ds\\
	&\lesssim \ep^2\min\{1,(2^kt)^{8/10}\}.
	\end{align*}
	This concludes the proof of the claim \eqref{claim10}, thus concluding the proof.
	\end{proof}
	
	\begin{lemma}
		For all $g_j^{(m)}\in Z$, $j\in\mathcal{J}$ and $m\in\mathbb{N},$ we have
		\begin{equation}\label{constr2}
			\|g_j^{(m+1)}(t,x)-g_j^{(m)}(t,x)\|_{Z_2}\lesssim\ep\|g_j^{(m)}(t,x)-g_j^{(m-1)}(t,x)\|_{Z_2}.
		\end{equation}
	\end{lemma}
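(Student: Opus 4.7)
The plan is to repeat the structure of the proof of Lemma~\ref{lem:g_j_small}, but now applied to the difference $\delta g_j^{(m+1)} := g_j^{(m+1)} - g_j^{(m)}$. By Duhamel's formula,
\begin{equation*}
\delta g_j^{(m+1)}(t,x) = \int_0^t e^{-(t-s)|\nabla|}\bigl(F_j^{(m)} - F_j^{(m-1)}\bigr)(s,x)\,ds,
\end{equation*}
so, following the time-integral argument at the end of Lemma~\ref{lem:g_j_small}, the claim \eqref{constr2} reduces to the pointwise-in-time $N$-type bound
\begin{equation*}
\|P_k(F_j^{(m)} - F_j^{(m-1)})(t)\|_{L^2} \lesssim \varepsilon^{2}\, 2^{k/2}(2^kt)^{-1/10}\, \sum_{j'\in\mathcal J}\|\delta g_{j'}^{(m)}\|_{Z_2}.
\end{equation*}
To prove this we exploit the multilinearity of $\mathcal N$: write $\delta h^{(m)} := h^{(m)} - h^{(m-1)}$ and, for each $n\geq 1$, telescope the $(2n+1)$-linear expression so that the difference $F_j^{(m)} - F_j^{(m-1)}$ becomes a sum of terms in which exactly one input is $\delta h^{(m)}$ while the remaining $2n$ inputs are either $h^{(m)}$ or $h^{(m-1)}$, each bounded by $O(\varepsilon)$ in $Z$ thanks to Lemma~\ref{lem:g_j_small}.

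For the components $G_{k,2}$, $G_{k,1}^{2}$, $G_{k,1}^{3}$, $G_{k,1}^{1,1}$, $G_{k,1}^{1,2}$, $G_{k,1}^{1,3}$ of the nonlinearity, the estimates of Lemmas~\ref{lem:hh_full}, \ref{lem:der_full}, and \ref{lem:hl_full} (combined with Lemma~\ref{lem:g_h_Z}) produce the desired bound independently of which slot carries $\delta h^{(m)}$; the linear error terms $|\nabla|g_j^{(m)}\partial_x q(\tilde Q)$ and $E_j^{(m)}(\tilde Q)$ are handled by Lemmas~\ref{lem:err2} and \ref{lem:ga_eq} applied to the difference. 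The delicate piece is $G_{k,1}^{1,4}$. When $\delta h^{(m)}$ sits in a low-frequency slot $\ell\in\{2,\dots,2n+1\}$ we invoke Lemma~\ref{lem:mix_full} directly, using that $\delta h^{(m)}$ lives in the (renormalized) $Z_2$-space by Lemma~\ref{lem:g_j_small} applied at both iteration levels. When $\delta h^{(m)}$ occupies the high-frequency slot, we apply the telescoping identity \eqref{G4} once more, writing each remaining $h^{(m)}$ as $h^{(0)} + (h^{(m)} - h^{(0)})$. Every contribution containing at least one factor $h^{(m)} - h^{(0)}$ (which is in $Z_2$) is absorbed again by Lemma~\ref{lem:mix_full}, leaving only the fully free-evolved piece $G_{k,1}^{1,4}[\delta h_j^{(m)},h^{(0)},\dots,h^{(0)}]$.

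The main obstacle, and the reason the whole scheme closes, is pairing this remaining piece with the corresponding term coming from $\partial_t\widetilde q$ in $F_j^{(m)} - F_j^{(m-1)}$. Since $\widetilde q$ is fixed, built from $g^{(0)}$ via \eqref{real_qtilde}, the $\widetilde q$-contribution to the difference is precisely
\begin{equation*}
\partial_x \delta h_j^{(m)}(t,\widetilde Q(t,\cdot))\cdot \partial_t\widetilde q = \partial_x \delta h_j^{(m)}(t,\widetilde Q)\cdot \sum_{j_2,\dots,j_{2n+1}} V_1^\ast[g_{j_2}^{(0)},\dots,g_{j_{2n+1}}^{(0)}](t,\cdot),
\end{equation*}
and Lemma~\ref{lem:G4_full}, together with the $L^\infty$ bounds of Lemma~\ref{lem:V_full} and the frequency-localized bound of Lemma~\ref{lem:V1pk_full}, shows that this exactly cancels $G_{k,1}^{1,4}[\delta h_j^{(m)},h^{(0)},\dots,h^{(0)}]$ up to an acceptable error of size $\varepsilon^{2n}\,2^{k/2}(2^kt)^{-1/10}\|\delta g_j^{(m)}\|_{Z_2}$. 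Summing over $n\geq 1$, the prefactor $\varepsilon^{2n}$ beats the combinatorial constant $C^n$ from Lemmas~\ref{L2_est_full}--\ref{lem:V_full} provided $\varepsilon$ is small, and the $n=1$ contribution dominates, yielding the bound $\|P_k(F_j^{(m)}-F_j^{(m-1)})\|_{L^2}\lesssim \varepsilon^2\,2^{k/2}(2^kt)^{-1/10}\sum_{j'}\|\delta g_{j'}^{(m)}\|_{Z_2}$. Inserting this into the Duhamel estimate, exactly as in the final display of Lemma~\ref{lem:g_j_small}, produces \eqref{constr2}.
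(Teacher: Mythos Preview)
Your proposal is correct and follows essentially the same approach as the paper: telescope the multilinear difference $F_j^{(m)}-F_j^{(m-1)}$, handle all pieces except $G_{k,1}^{1,4}$ by the same lemmas as in Lemma~\ref{lem:g_j_small}, and for $G_{k,1}^{1,4}$ use that $\delta h^{(m)}$ in a low-frequency slot is a $Z_2$-input (Lemma~\ref{lem:mix_full}), while $\delta h^{(m)}$ in the high-frequency slot leaves only $G_{k,1}^{1,4}[\delta h_j^{(m)},h^{(0)},\dots,h^{(0)}]$, which is offset by $\partial_x\delta h_j^{(m)}(\widetilde Q)\,\partial_t\widetilde q$ exactly as in the proof of Lemma~\ref{lem:g_j_small}. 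The paper's proof records the explicit telescoping identity for $G_{k,1}^{1,4}$ (your reference to \eqref{G4} and the further splitting into $h^{(0)}+(h^{(m)}-h^{(0)})$) as equation~\eqref{G44}, and then refers back to Lemma~\ref{lem:g_j_small} for the remaining estimates, just as you do.
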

	\begin{proof}
		Again, we argue by induction. By hypothesis and by Lemma~\ref{lem:g_j_small}, \eqref{constr2} clearly holds for the free evolution $g_j^{(0)}$. We now assume that \eqref{constr2} holds for some given $m$. That is, we have
		\begin{equation*}\label{constr3}
		\|g_j^{(m)}(t,x)-g_j^{(m-1)}(t,x)\|_{Z_2}\lesssim\ep\|g_j^{(m-1)}(t,x)-g_j^{(m-2)}(t,x)\|_{Z_2}.
		\end{equation*}
		Using Duhamel's formula \eqref{duhamel}, we have
		\begin{equation*}\label{constr4}
		\|g_j^{(m+1)}-g_j^{(m)}\|_{Z_2}\leq \bigg\|\int_{0}^{t}e^{-(t-s)|\nabla|}\big[F_j^{(m)}(s,x)-F_j^{(m-1)}(s,x) \big]\,ds\bigg\|_{Z_2}.
		\end{equation*}
		We begin by considering the nonlinearity $F_j^{(m)}-F_j^{(m-1)}$ which we split as in Sections~\ref{sec:G1G2} and~\ref{sect:full} to get
		\begin{equation*}\label{constr5}
			\begin{split}
			(F_j^{(m)}-F_j^{(m-1)})(t,x)=\sum_{n\in\mathbb N}\sum_{k\in\mathbb{Z}}\,\sum_{j_2,\ldots,j_{2n+1}\in\mathcal{J}}&\Big(G_{k,2}[h_{1,j}^{(m)},h_{2,j_2}^{(m)},\ldots h_{2n+1,j_{2n+1}}^{(m)}](t,\tilde{Q}(t,x+a_j))\\&-G_{k,2}[h_{1,j}^{(m-1)},h_{2,j_2}^{(m-1)},\ldots,h_{2n+1,j_{2n+1}}^{(m-1)}](t,\tilde{Q}(t,x+a_j))\\&+\sum_{i=1}^4\Big[G_{k,1}^i[h_{1,j}^{(m)},h_{2,j_2}^{(m)},\ldots h_{2n+1,j_{2n+1}}^{(m)}](t,\tilde{Q}(t,x+a_j))\\&-G_{k,1}^i[h_{1,j}^{(m-1)},h_{2,j_2}^{(m-1)},\ldots, h_{2n+1,j_{2n+1}}^{(m-1)}](t,\tilde{Q}(t,x+a_j))\Big]\Big)\\
			&-|\nabla|g_j^{(m)}(t,x)\partial_{x}q(t,\tilde{Q}(t,x+a_j))-|\nabla|g_j^{(m-1)}(t,x)\partial_{x}q(t,\tilde{Q}(t,x+a_j))\\&+\text{E}_j^{(m)}(t,\tilde{Q}(t,x+a_j))-\text{E}_j^{(m-1)}(t,\tilde{Q}(t,x+a_j)).
			\end{split}
		\end{equation*}
	We remark that all the difference of the form $g_j^{(m)}-g_j^{(m-1)}$ are elements in the $Z_2$ space. From Lemmas~\ref{lem:ga_eq} and \ref{lem:err2}, we easily get
	\begin{equation*}
		\begin{split}
		&\|P_k\big[E_j^{(m)}-E_j^{(m-1)}\big](t,\tilde{Q}(t,x+a_j))\|_{L^2}\lesssim\ep2^{k/2}\min\{1,(2^kt)^{-1/10}\}\|g_j^{(m)}-g_j^{(m-1)}\|_{Z_2},\\
		&\|P_k\big[|\nabla|(g_j^{(m)}-g_j^{(m-1)})(t,x)\partial_xq(t,\tilde{Q}(t,x+a_j)) \big]\|_{L^2}\lesssim\ep2^{k/2}(2^kt)^{-1/10}\|g_j^{(m)}-g_j^{(m-1)}\|_{Z_2}
		\end{split}
	\end{equation*}	
	Using the fact that a difference of multilinear products can be rewritten as
	\begin{equation*}
		G(f_1,\ldots,f_{2n+1})-G(f_1^*,\ldots,f_{2n+1}^*)=G(f_1-f_1^*,\ldots,f_{2n+1})+\ldots+G(f_1^*,\ldots,f_{2n+1}-f_{2n+1}^*)
	\end{equation*}
	we rewrite the high-high-to-low interaction difference as
	\begin{equation}\label{G2}
		\begin{split}
		&\sum_{j_2,\ldots,j_{2n+1}\in\mathcal{J}}\|\big(G_{k,2}[h_{1,j}^{(m)},h_{2,j_2}^{(m)},\ldots, h_{2n+1,j_{2n+1}}^{(m)}]-G_{k,2}[h_{1,j}^{(m-1)},h_{2,j_2}^{(m-1)},\ldots,h_{2n+1,j_{2n+1}}^{(m-1)}]\big)(t,\widetilde Q(t,x+a_j))\|_{L^2}\\&\lesssim_n\quad\sum_{j_2,\ldots,j_{2n+1}\in\mathcal{J}}\|G_{k,2}[h_{1,j}^{(m)}-h_{1,j}^{(m-1)},h_{2,j_2}^{(m)},\ldots,h_{2n+1,j_{2n+1}}^{(m)}](t,x)\|_{L^2}+\ldots+\\&\quad+\sum_{j_2,\ldots,j_{2n+1}\in\mathcal{J}}\|G_{k,2}[h_{1,j}^{(m-1)},h_{2,a}^{(m-1)},\ldots,h_{2n+1,j_{2n+1}}^{(m)}-h_{2n+1,j_{2n+1}}^{(m-1)}](t,x)\|_{L^2}\\
		&\lesssim_n2^{k/2}(1+2^kt)^{-2/10}\sum_{j_2,\ldots,j_{2n+1}\in\mathcal{J}}\Big[\|g_{1,j}^{(m)}-g_{1,j}^{(m-1)}\|_{Z_2}\prod_{i=2}^{2n+1}\|g_{i,j_i}^{(m)}\|_{Z_2}+\ldots+\\ &\qquad\qquad\qquad\qquad\qquad\qquad\qquad\qquad+\|g_{2n+1,j_{2n+1}}^{(m)}-g_{2n+1,j_{2n+1}}^{(m-1)}\|_{Z_2}\|g_{1,j}^{(m-1)}\|_{Z_2}\prod_{i=2}^{2n-1}\|g_{i,j_i}^{(m-1)}\|_{Z_2} \Big]\\
		&\lesssim_n\ep^{2n}2^{k/2}(1+2^kt)^{-2/10}\|g_{j}^{(m)}-g_{j}^{(m-1)}\|_{Z_2}
		\end{split}
	\end{equation}	
	where we used Lemma~\ref{lem:g_h} to take into account the change of variables, the estimate in the third inequality follows from Lemma~\ref{lem:hh_full} and the estimate in the last line follows from Lemma~\ref{lem:g_j_small}. The estimates for $G_{k,1}^{i}$ with $i\in\{1,2,3\}$ follow similarly, using Lemmas~\ref{lem:der_full}-\ref{lem:hl_full}. It remains to consider $G_{k,1}^4$. We rewrite
	\begin{equation}\label{G44}
		\begin{split}
		&\sum_{j_2,\ldots,j_{2n+1}\in\mathcal{J}}G_{k,1}^{1,4}[h_{1,j}^{(m)},h_{2,j_2}^{(m)},\ldots,h_{2n+1,j_{2n+1}}^{(m)}]-\sum_{j_2,\ldots,j_{2n+1}\in\{1,\ldots,M\}}G_{k,1}^{1,4}[h_{1,j}^{(m-1)},h_{2,j_2}^{(m-1)},\ldots,h_{2n+1,j_{2n+1}}^{(m-1)}]\\&=\sum_{j_2,\ldots,j_{2n+1}\in\mathcal{J}}\Big\{G_{k,1}^{1,4}[h_{1,j}^{(m)}-h_{1,j}^{(m-1)},h_{2,j_2}^{(0)},\ldots,h_{2n+1,j_{2n+1}}^{(0)}]\\&\qquad\qquad\qquad\qquad+G_{k,1}^{1,4}[h_{1,j}^{(m-1)},h_{2,j_2}^{(m)}-h_{2,j_2}^{(m-1)},\ldots,h_{2n+1,j_{2n+1}}^{(m)}]+\ldots+\\&\qquad\qquad\qquad\qquad+G_{k,1}^{1,4}[h_{1,j}^{(m-1)},h_{2,j_2}^{(m-1)},\ldots,h_{2n+1,j_{2n+1}}^{(m)}-h_{2n+1,j_{2n+1}}^{(m-1)}]\\&\qquad\qquad\qquad\qquad+G_{k,1}^{1,4}[h_{1,j}^{(m)}-h_{1,j}^{(m-1)},h_{2,j_2}^{(m)}-h_{2,j_2}^{(0)},\ldots, h_{2n+1,j_{2n+1}}^{(m)}]+\ldots+\\&\qquad\qquad\qquad\qquad+G_{k,1}^{1,4}[h_{1,j}^{(m)}-h_{1,j}^{(m-1)},h_{2,j_2}^{(0)},\ldots,h_{2n+1,j_{2n+1}}^{(m)}-h_{2n+1,j_{2n+1}}^{(0)}]\Big\}.
		\end{split}
	\end{equation}
	We can now further decompose and bound \eqref{G44} similarly as in Lemma~\ref{lem:g_j_small} to get bounds as in \eqref{G2}.
	
	Combining all estimates and using \eqref{N1} and \eqref{small_g}, we get
	\begin{equation*}
		\|P_{k'}F_j^{(m)}-P_{k'}F_j^{(m-1)}\|_{L^2}\lesssim\sum_{n\in\mathbb N}C^n\ep^{2n}2^{k/2}(2^kt)^{-1/10}\|g_j^{(m)}-g_j^{(m-1)}\|_{Z_2}+\varepsilon2^{k/2}(2^kt)^{-1/10}\|g_j^{(m)}-g_j^{(m-1)}\|_{Z},
	\end{equation*}
    where here again, we get the factor of $C^n$ from the $\lesssim_n$ which gets trumped by the factor of $\ep^{2n+1}$ we pick up from Lemma~\ref{lem:g_j_small} and the induction assumption.
    
	Arguing exactly as in the end of the proof of Lemma~\ref{lem:g_j_small}, we then obtain
	\begin{equation*}
		\|\mathcal{C}(F_j^{(m)}-F_j^{(m-1)})\|_{Z_2}\lesssim\ep\|g_j^{(m)}-g_j^{(m-1)}\|_{Z_2},
	\end{equation*}
	where here
	\begin{equation*}
		\mathcal{C}(F_j^{(m)}-F_j^{(m-1)})(t,x):=\int_{0}^te^{-(t-s)|\nabla|}\big[F_j^{(m)}-F_j^{(m-1)}\big](s,x)\,ds,
	\end{equation*}
	thus concluding the proof of the lemma.
	\end{proof}
	We have now shown that $g_j^{(m)}-g_j^{(0)}$ is a Cauchy sequence in $Z_2$,  thus concluding the construction of the solution $g_j-g_j^{(0)}$ in $Z_2$.

\section*{Acknowledgements}

E.G-J. and J.G.-S. were partially supported by the AGAUR project 2021-SGR-0087 (Catalunya), by MICINN (Spain) research grant number PID2021-125021NA-I00, and by the European Research Council through ERC-StG-852741-CAPA.
S.V.H is partially supported by the National Science Foundation through the award DMS-2102961. B.P. is supported by NSF grant DMS-2154162. This work is supported by the Spanish State Research Agency, through the
Severo Ochoa and Mar\'ia de Maeztu Program for Centers and Units of Excellence in R\&D (CEX2020-001084-M), and by the European Union’s Horizon 2020 research and innovation program under
the Marie Sklodowska-Curie grant agreement CAMINFLOW No 101031111.
 
\bibliographystyle{abbrv}
\bibliography{references}	

\vskip 1cm

\begin{tabular}{l}
  \textbf{Eduardo Garc\'ia-Ju\'arez} \\
  {Departamento de An\'alisis Matem\'atico} \\
  {Universidad de Sevilla} \\
  {C/ Tarfia s/n, Campus Reina Mercedes} \\
  {41012, Sevilla, Spain} \\
  {Email: egarcia12@us.es} \\ \\
  \textbf{Javier G\'omez-Serrano}\\
  {Department of Mathematics} \\
  {Brown University} \\
  {Kassar House, 151 Thayer St.} \\
  {Providence, RI 02912, USA} \\
  {Email: javier\_gomez\_serrano@brown.edu} \\ \\
  \textbf{Susanna V. Haziot}\\
  {Department of Mathematics} \\
  {Brown University} \\
  {Kassar House, 151 Thayer St.} \\
  {Providence, RI 02912, USA} \\
  {Email: susanna\_haziot@brown.edu} \\ \\
  \textbf{Beno\^it Pausader}\\
  {Department of Mathematics} \\
  {Brown University} \\
  {Kassar House, 151 Thayer St.} \\
  {Providence, RI 02912, USA} \\
  {Email: benoit\_pausader@brown.edu} \\ \\
\end{tabular}

\end{document}